\theoremstyle{definition}
\newtheorem{thm}{Theorem}[section]
\newtheorem{dfn}[thm]{Definition}
\newtheorem{lem}[thm]{Lemma}
\newtheorem{prp}[thm]{Proposition}
\newtheorem{cor}[thm]{Corollary}
\newtheorem{rmk}[thm]{Remark}
\newtheorem*{thm*}{Theorem}
\newtheorem*{cor*}{Corollary}
\newtheorem*{prp*}{Proposition}
\newtheorem*{clm}{Claim}
\newtheorem*{ntt}{Notation}
\newcommand{\nn}{\mathbb{N}}
\newcommand{\inn}{\in\mathbb{N}}
\newcommand{\e}{\varepsilon}
\newcommand{\al}{\alpha}
\newcommand{\de}{\delta}
\newcommand{\adkx}{\al_k\big(\{x_i\}_i\big)}
\newcommand{\adnminusx}{\al_{n-1}\big(\{x_i\}_i\big)}
\newcommand{\alf}{$\alpha$-average}
\newcommand{\schr}{Schreier functional}
\newcommand{\la}{\lambda}
\newcommand{\La}{\Lambda}
\newcommand{\be}{\beta}
\newcommand{\X}{\mathfrak{X}_{_{^{0,1}}}^n}
\newcommand{\Xxi}{\mathfrak{X}_{_{^{0,1}}}^\xi}
\newcommand{\Xomega}{\mathfrak{X}_{_{^{0,1}}}^\omega}
\newcommand{\Sn}{\mathcal{S}_n}
\newcommand{\Sk}{\mathcal{S}_k}
\DeclareMathOperator{\supp}{supp}
\DeclareMathOperator{\ran}{ran}
\DeclareMathOperator{\scc}{succ}
\long\def\symbolfootnote[#1]#2{\begingroup%
\def\thefootnote{\fnsymbol{footnote}}\footnote[#1]{#2}\endgroup}
\begin{document}

\title [Strictly singular
operators in Tsirelson like spaces]{Strictly singular operators in
Tsirelson like spaces}

\author[S.A. Argyros]{Spiros A. Argyros}
\address{National Technical University of Athens, Faculty of Applied Sciences,
Department of Mathematics, Zografou Campus, 157 80, Athens,
Greece} \email{sargyros@math.ntua.gr}

\author[K. Beanland]{Kevin Beanland}
\address{Department of Mathematics and Applied Mathematics,
    Virginia Commonwealth University,
    Richmond, VA 23284}
\email{kbeanland@vcu.edu}

\author[P. Motakis]{Pavlos Motakis}
\address{National Technical University of Athens, Faculty of Applied Sciences,
Department of Mathematics, Zografou Campus, 157 80, Athens,
Greece} \email{pmotakis@central.ntua.gr}

\maketitle

\symbolfootnote[0]{\textit{2010 Mathematics Subject
Classification:} Primary 46B03, 46B06, 46B25, 46B45, 47A15}

\symbolfootnote[0]{\textit{Key words:} Spreading models, Strictly
singular operators, Invariant subspaces, Reflexive spaces}

\begin{abstract}
For each $n \inn$ a Banach space $\X$ is constructed is  
having the property that every normalized weakly null sequence generates either a $c_0$ or $\ell_1$ spreading models
and every subspace has weakly null sequences generating both $c_0$ and $\ell_1$ spreading models. The space $\X$ is also quasiminimal and 
for every infinite dimensional closed
subspace $Y$ of $\X$, for every $S_1,S_2,\ldots,S_{n+1}$ strictly
singular operators on $Y$, the operator $S_1S_2\cdots S_{n+1}$ is
compact. Moreover, for every subspace $Y$ as above, there exist
$S_1,S_2,\ldots,S_n$ strictly singular operators on $Y$, such that
the operator $S_1S_2\cdots S_n$ is non-compact.  
\end{abstract}








\section*{Introduction}

The strictly singular operators\footnote[1]{A bounded linear
operator is called strictly singular, if its restriction on any
infinite dimensional subspace is not an isomorphism.} form a two
sided ideal which includes the one of the compact
operators. In many cases, the two ideal coincide. This happens for
the spaces $\ell_p, 1\leqslant p <\infty, c_0$, as well as
Tsirelson space $T$ (see \cite{FJ}, \cite{T}). On the other hand,
in the spaces $L^p[0,1], 1\leqslant p < \infty, p\neq 2, C[0,1]$ the two
ideals are different. However, a classical result of V. Milman
\cite{M}, explains that in all the above spaces, the composition
of two strictly singular operators is a compact one. The aim of
the present paper, is to present examples of spaces where similar
properties occur in a hereditary manner. More precisely we prove
the following.

\begin{thm} For every $n\inn$ there exists a reflexive space with a
1-unconditional basis, denoted by $\X$, such that for every infinite
dimensional subspace $Y$ of $\X$ we have the following.

\begin{enumerate}

\item[(i)] The
ideal $\mathcal{S}(Y)$ of the strictly singular operators is
non-separable.

\item[(ii)] For every family
$\{S_i\}_{i=1}^{n+1}\subset \mathcal{S}(Y)$, the composition
$S_1S_2\cdots S_{n+1}$ is a compact operator.

\item[(iii)] There are
$S_1,\ldots,S_n \in S(Y)$, such that the
composition $S_1\cdots S_n$ is non-compact.

\end{enumerate}\label{thm0.1}

\end{thm}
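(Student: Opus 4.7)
The plan is to construct $\X$ as a mixed Tsirelson-type space, in the spirit of $T[(\Sk,\theta_k)_k]$, using Schreier families of orders up to $n$ together with a saturation by iterated $\al$-averages that caps how many times an $\ell_1$-like averaging operation can be chained before the norm collapses. The construction should be calibrated so that the resulting norm reflects a Schreier hierarchy of height $n$: at the topmost level the associated $\al$-averages behave essentially like a $c_0$-basis, while at each of the $n$ lower levels one still encounters $\ell_1$-spreading models. The target property is that every normalised weakly null sequence generates, on a subsequence, either a $c_0$- or an $\ell_1$-spreading model, with both alternatives hereditarily available; the three statements of the theorem will then follow from this structural dichotomy.

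\textbf{Dichotomy and the compactness in (ii).} First I would prove, using the admissibility structure of the norm, that any normalised weakly null sequence in $\X$ generates on a subsequence either a $c_0$- or an $\ell_1$-spreading model, by analysing norming \schr s and separating sequences for which $\adkx\to 0$ for some critical level $k$ from those for which $\adnminusx$ remains bounded below. With this dichotomy in hand, the heart of (ii) is the lemma that a strictly singular operator $S$ on a subspace $Y$ strictly lowers spreading-model complexity: for every normalised weakly null $\{x_i\}_i$ generating an \alf\ of Schreier order $k>0$, a suitable subsequence of $\{Sx_i\}_i$ either tends to $0$ in norm or generates an \alf\ of Schreier order at most $k-1$. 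Applied inductively along $S_1,S_2,\ldots,S_{n+1}$, this exhausts the $n$ available levels, so $S_1S_2\cdots S_{n+1}$ sends every bounded weakly null sequence to one converging in norm to $0$; reflexivity of $\X$ then upgrades this to compactness.

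\textbf{Non-compact $n$-fold composition and non-separability.} For (iii), on any subspace $Y$ I would build $n$ strictly singular operators by composing suitable averaging maps or weighted block shifts tied to tree branches of Schreier orders $1,2,\ldots,n$; each factor is strictly singular because, thanks to the dichotomy, it sends certain $\ell_1$-directions into $c_0$-directions and therefore cannot be an isomorphism on any infinite dimensional subspace, yet the $n$-fold composition preserves a prescribed $\ell_1$-averaged block vector and so fails to be compact. For (i), the non-separability of $\mathcal{S}(Y)$ would be obtained by a continuum-size antichain argument: an uncountable almost-disjoint family on the tree indexing the admissibility produces $2^{\aleph_0}$ strictly singular operators pairwise at distance bounded away from $0$.

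\textbf{Main obstacle.} The chief difficulty is the construction itself. One must calibrate the mixed Tsirelson-type norm so that \emph{exactly} $n+1$, not more and not fewer, strictly singular compositions are required to force compactness, and so that the $c_0/\ell_1$ dichotomy holds uniformly throughout the lattice of infinite dimensional subspaces. Balancing the saturation weights, the admissibility rules, and the interaction of the Schreier families $\{\Sk\}_{k\leqslant n}$ so as to guarantee simultaneously the dichotomy, quasiminimality, and the sharp upper bound in (ii) will be the crux of the argument; once the norm on $\X$ has been shown to have the right structure, statements (i) and (iii) should come out essentially as corollaries.
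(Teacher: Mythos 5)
Your high-level plan correctly identifies the shape of the argument the paper follows — a $c_0/\ell_1$ spreading-model dichotomy, an inductive ``each strictly singular factor lowers the $\ell_1^k$-level'' lemma combined with reflexivity to get (ii), an $n$-fold composition of averaging operators for (iii), and a continuum-size family of such operators for (i). But there are two genuine gaps.

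\textbf{The proposed construction would not work.} You suggest building $\X$ as a mixed Tsirelson space ``in the spirit of $T[(\Sk,\theta_k)_k]$.'' Such spaces are asymptotic $\ell_1$: every normalized weakly null sequence admits an $\ell_1$-spreading model, so the $c_0$ alternative in your dichotomy is simply absent, and the entire mechanism — Proposition \ref{propmmm12} (producing $c_0$ spreading models in every block subspace) and the implication (i)$\Rightarrow$(ii) of Theorem \ref{prop13} — collapses. What the paper actually uses is a \emph{saturation under constraints}: the norming set $W$ is closed under $\al$-averages and under $\Sn$-admissible sums of \emph{very fast growing} sequences of $\al$-averages. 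The very-fast-growing condition ($s(\al_q)>\max\supp\al_{q-1}$) is exactly what prevents the norm from becoming asymptotically $\ell_1$ and allows $c_0$ spreading models; the level parameter $k$ in the hierarchy is then realised through the $\al_k$-indices and $(k,\e)$-special convex combinations, not through a sequence of weights $\theta_k$. Without this constraint mechanism, the dichotomy that your whole argument rests on fails.

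\textbf{The key technical lemma is missing.} Even granting the dichotomy, to pass from ``$T$ strictly singular'' to ``the $\ell_1^k$-level strictly drops along $T$'' you need the converse direction: if both $\{x_m\}$ and $\{Tx_m\}$ generate the same highest $\ell_1^k$-spreading model (equivalently, both generate $c_0$ after the appropriate blocking), then $T$ is \emph{not} strictly singular. This is implication (ii)$\Rightarrow$(i) of Theorem \ref{prop13}, and it is not a soft consequence of the dichotomy. It requires Proposition \ref{cor11} — the statement that two interlaced normalized block sequences, both generating $\ell_1^n$ spreading models, can be blocked in the same pattern so that the resulting blockings are \emph{equivalent}. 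The proof of that proposition occupies Section 4, via a tree analysis of norming functionals and the delicate decomposition $z_m=z_m^1+z_m^2+z_m^3$ with Lemmas \ref{prop10}, \ref{lemqqq11}, \ref{lemqqq12}. Your proposal does not mention any mechanism for comparing norms of block sequences, so the claim that ``each factor sends $\ell_1$-directions into $c_0$-directions and therefore cannot be an isomorphism on any subspace'' has the implication running the wrong way: you need to know that if a map \emph{preserves} the spreading-model level then it \emph{is} an isomorphism somewhere, and that is exactly the missing quasi-minimality argument.

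A smaller point: for (iii) the paper does not just take arbitrary averaging maps; it carefully builds operators $S_k(x)=\sum_i x_{k,i}^*(x)x_{k,i}$ with $\{x_{k,i}^*\}$ generating a $c_0^k$ spreading model, $\{x_{k,i}\}$ generating an $\ell_1^{k-1}$ spreading model but not $\ell_1^k$, and the biorthogonality condition $x^*_{k+1,i}(x_{k,i})>\e_k$ (Proposition \ref{counter}). The boundedness and strict singularity of each factor (Proposition \ref{operator}) depend on a Nash-Williams/Ramsey argument (Lemma \ref{fixedm}), which your sketch does not account for.

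So while your instinct about the overall architecture matches the paper, the construction you propose would not produce the required spreading-model behaviour, and the central equivalence-of-block-sequences step that powers the strict-singularity characterization is absent.
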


The construction of the spaces $\X$ is based on T. Figiel's and W.B.
Johnson's construction of Tsirelson space \cite{FJ}, which is
actually the dual of Tsirelson's initial space \cite{T}. Therefore
the spaces $\X$ are Tsirelson like spaces and their norm is
defined through a saturation with constraints, described by the
following implicit formula, which uses the $n$\textsuperscript{th}
Schreier family $\Sn$.

For $x\in c_{00}$

\begin{equation*}
\|x\| = \max\big\{\|x\|_0,\;
\sup\{\sum_{q=1}^d\|E_qx\|_{j_q}\}\big\}
\end{equation*}
where the supremum is taken over all $\{E_q\}_{q=1}^d$ which are
$\Sn$-admissible successive finite subsets of $\mathbb{N}$,
$\{j_q\}_{q=1}^d$ very fast growing (i.e. $2\leqslant
j_1<\cdots<j_q$ and $j_q > \max E_{q-1}$, for $q>1$) natural
numbers and

\begin{equation*}
\|x\|_j = \sup\{\frac{1}{j}\sum_{q=1}^d\|E_qx\|\}
\end{equation*}
where the supremum is taken over all successive finite subsets of
the naturals $E_1<\cdots<E_d, d\leqslant j$.

Saturated norms under constraints were introduced by E. Odell and
Th. Schlumprecht \cite{OS1,OS2}. In particular the space defined in \cite{OS2}
has the property that every bimonotone basis is finitely block represented in
every subspace. Recently, in \cite{AM}, the first and third authors have used these techniques
to construct a reflexive hereditarily indecomposable space such that every
operator on an infinite dimensional subspace has a non-trivial invariant subspace.

Property (ii) of Theorem \ref{thm0.1}, combined with N. D. Hooker's and G. Sirotkin's
real version \cite{H},\cite{S} of V.I. Lomonosov's theorem \cite{L}, yields
that the strictly singular operators on the subspaces of $\X$
admit non trivial hyperinvariant subspaces.

Unlike the Tsirelson type spaces, the spaces $\X$ have
non-homogeneous asymptotic structure. In particular, every seminormalized
weakly null sequence admits either $\ell_1$ or $c_0$ as a
spreading model and every subspace $Y$ contains weakly null
sequences generating both $\ell_1$ and $c_0$ as spreading models.
As a result, the spaces $\X$ do not contain any asymptotic
$\ell_p$ subspace and, as a consequence, the spaces $\X$ do not contain
a boundedly distortable subspace \cite{MT}. The sequences in $\X$
generating $\ell_1$ spreading models admit a further classification
in terms of higher order $\ell_1$ spreading models. Recall that
for $k\inn$, a bounded sequence $\{x_i\}_{i\inn}$ generates an
$\ell_1^k$ spreading model if there exists $C>0$ such that
$\|\sum_{i\in F}\la_ix_i\| \geqslant C\sum_{i\in F}|\la_i|$ for
every $F\in\mathcal{S}_k$. The next proposition provides a precise description
of the possible spreading models of $\X$.

\begin{prp} Let $\{x_i\}_{i\inn}$ be a seminormalized weakly null
sequence in $\X$. Then one of the following holds.

\begin{itemize}

\item[(i)] $\{x_i\}_{i\inn}$ admits $c_0$ as a spreading model.

\item[(ii)] There exists $1\leqslant k \leqslant n$ such that
$\{x_i\}_{i\inn}$ admits an $\ell_1^k$ spreading model and it does
not admit  an $\ell_1^{k+1}$ one.

\end{itemize}

\label{0.2}
\end{prp}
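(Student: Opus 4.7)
The plan is to introduce, for each $k \geq 0$, an asymptotic $\Sk$-averaging quantity $\adkx \geq 0$ with the property that $\{x_i\}_i$ admits an $\ell_1^k$ spreading model if and only if $\adkx > 0$. By $1$-unconditionality of the basis, $\adkx$ may be taken as the limiting value of $|F|^{-1}\|\sum_{i \in F} x_i\|$ as $F$ ranges over $\Sk$-sets with $\min F$ tending to infinity, and a diagonal argument produces a single subsequence along which all $\adkx$ are well defined. The proposition then reduces to two statements: first, an upper estimate $\al_{n+1}\big(\{x_i\}_i\big) = 0$ valid for every seminormalized weakly null sequence; second, a dichotomy stating that $\al_1\big(\{x_i\}_i\big) = 0$ forces $\{x_i\}_i$ to admit $c_0$ as a spreading model. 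Granting these, (ii) follows on choosing $k$ to be the largest integer with $\adkx > 0$.

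The main obstacle is the upper estimate. Fix $F \in \mathcal{S}_{n+1}$ with $\min F$ large and, using the factorisation $\mathcal{S}_{n+1} = \Sn * \mathcal{S}_1$, decompose $F = \bigcup_\ell F_\ell$ with $\{F_\ell\}_\ell$ being $\Sn$-admissible and each $F_\ell \in \mathcal{S}_1$. An arbitrary norming functional $\phi$ for $\sum_{i \in F} x_i$ is either a coordinate functional (handled by the $\|\cdot\|_0$ term) or, through one unfolding of the implicit formula, decomposes as $\phi = \sum_{q=1}^d \phi_q$ with $\{\supp \phi_q\}_q$ being $\Sn$-admissible and each $\phi_q$ of the form $(1/j_q)\sum_\ell \psi_{q,\ell}$ coming from $\|\cdot\|_{j_q}$, the $j_q$'s being very fast growing. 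Matching the outer $\Sn$-admissibility of $\{\supp \phi_q\}_q$ against that of $\{F_\ell\}_\ell$, and the inner $\mathcal{S}_1$-admissibility of the $\psi_{q,\ell}$'s against each $F_\ell$, reduces $|\phi(\sum_{i \in F} x_i)|$ to a weighted sum of terms of the form $(1/j_q)|\psi_{q,\ell}(\sum_{i \in F'} x_i)|$ for small $F' \subset F$. The very fast growing condition forces $j_q \geq \min F$ for $q \geq 2$, while seminormalisation and the weakly null hypothesis control the innermost pieces; running an induction on the tree height of $\phi$ implicit in the norm formula then produces the estimate $|\phi(\sum_{i \in F} x_i)| = o(|F|)$, uniform in $\phi$. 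Making this induction rigorous, in particular controlling the case where the tree of $\phi$ does not mesh cleanly with the $\mathcal{S}_{n+1}$-decomposition of $F$, is where the bulk of the technical work lies.

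For the dichotomy, assume $\al_1\big(\{x_i\}_i\big) = 0$, so $\|\sum_{i \in F} x_i\| = o(|F|)$ for $\mathcal{S}_1$-sets $F$ with $\min F$ large. By $1$-unconditionality of the basis, combined with spreading, the associated spreading model $\{e_k\}_k$ inherits the same estimate on $\mathcal{S}_1$-averages; the iterative structure of the norm of $\X$ then bootstraps this into $\|\sum_{k=1}^N e_k\| = O(1)$, whence $\{e_k\}_k$ is equivalent to the unit vector basis of $c_0$ and case (i) holds. In the alternative, letting $k$ be the largest integer with $\adkx > 0$, the upper estimate forces $1 \leq k \leq n$, and $\{x_i\}_i$ then admits $\ell_1^k$ but not $\ell_1^{k+1}$ as spreading models, establishing (ii).
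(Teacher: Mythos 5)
Your overall plan — introduce an averaging index, prove an a priori bound forcing the index to vanish above level $n$, and close the dichotomy by showing that vanishing at level $1$ forces a $c_0$ spreading model — has the right shape, but both of the two claims you single out as the reductions hide the entire substance of the proof and are handled in the paper by a different, more structured route. First, your index $\adkx$ measures norms of averages $\|\sum_{i\in F}x_i\|/|F|$ over $F\in\Sk$; the paper instead defines an index $\al_k$ on the dual side, by asking whether a very fast growing $\mathcal{S}_k$-admissible sequence of $\al$-averages from the norming set $W$ can evaluate nontrivially against the sequence. This distinction is not cosmetic: the dual index is what lets the paper produce the \emph{witness} functionals $x_i^*=\sum_{q\in F_i}\al_q$ proving the strong $\ell_1^{n-k}$ spreading model in Proposition~\ref{prop3}(i), and it is precisely the functional information that powers the later analysis of strictly singular operators. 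Your index contains no such witness information and would have to be supplemented.

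The more serious gap is the upper estimate. You claim that a tree-height induction on norming functionals $\phi$ gives $|\phi(\sum_{i\in F}x_i)|=o(|F|)$ for $F\in\mathcal{S}_{n+1}$ uniformly in $\phi$, exploiting the very fast growing condition $j_q>\max E_{q-1}$. The very fast growing constraint gives $j_q\geqslant\min F$ only at the \emph{top} level of the tree of $\phi$, but your decomposition immediately passes to the inner functionals $\psi_{q,\ell}$ acting on a sub-block $\sum_{i\in F'}x_i$ with $F'\subsetneq F$ — and there is no reason these inner Schreier functionals start from an $\al$-average of size bounded below by $\min F'$. The recursion is therefore not well-founded as written, and the delicate bookkeeping of which pieces of $F$ intersect which nodes of the tree of $\phi$ is not addressed. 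The paper deliberately avoids a direct estimate of this kind: it obtains the absence of $\ell_1^{n+1}$ spreading models (Corollary~\ref{cormmm11}) as a corollary of Proposition~\ref{propmmm7}, which in turn rests on passing to $(n-k,\e_j)$ special convex combinations $w_j$ of the $x_i$, showing $\al_{n-1}(\{w_j\})=0$ by Proposition~\ref{prop3}(ii), and then invoking the $c_0$-type upper bound of Proposition~\ref{cosm}. That chain sidesteps the need to simultaneously control all of the levels of an arbitrary tree and reduces the hard work to a single one-step estimate (Lemma~\ref{lemqqq6}) applied at a fixed depth. Similarly, your ``bootstrapping'' of the $\mathcal{S}_1$-average smallness to boundedness of $\|\sum_{k=1}^Ne_k\|$ is exactly the content of Proposition~\ref{cosm}, whose proof is a full page of induction on the tree analysis of Schreier functionals with a carefully engineered choice of $\e_{i_0}$'s; asserting that the iterative norm structure ``bootstraps'' is not a proof. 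Until both of these points are filled in, your argument is an outline of a possible alternative route rather than a proof.
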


The proof of Theorem \ref{thm0.1} (ii) is based on
Proposition \ref{0.2} and the following characterization of the non-strictly
singular operators on subspaces of $\X$.

\begin{prp} Let $Y$ be an infinite dimensional subspace of $\X$
and $T:Y\rightarrow Y$ a bounded linear operator. Then the
following are equivalent.

\begin{enumerate}

\item[(i)] The operator $T$ is not a strictly singular operator.

\item[(ii)] There exists $1\leqslant k \leqslant n$ and
a bounded weakly null sequence $\{x_i\}_{i\inn}$ such that both
$\{x_i\}_{i\inn}$ and $\{Tx_i\}_{i\inn}$ generate an $\ell_1^k$
spreading model and do not admit an $\ell_1^{k+1}$ one.

\item[(iii)] There exists $\{x_i\}_{i\inn}$ a bounded weakly null
sequence such that both $\{x_i\}_{i\inn}$ and $\{Tx_i\}_{i\inn}$
generate a $c_0$ spreading model.

\end{enumerate}\label{prop0.3}

\end{prp}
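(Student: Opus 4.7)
The plan is to show $(i)\Rightarrow (ii)\text{ or }(iii)$ as a direct application of Proposition \ref{0.2}, and to establish each of the converses by using the structure of $\X$ to convert matched spreading-model behaviour on $\{x_i\}$ and $\{Tx_i\}$ into a lower bound for $T$ on some infinite dimensional subspace.

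For $(i)\Rightarrow (ii)\text{ or }(iii)$: assuming $T$ is not strictly singular, I would fix an infinite dimensional subspace $Z\subseteq Y$ on which $T$ is bounded below, so that $c\|y\|\leqslant\|Ty\|\leqslant\|T\|\,\|y\|$ for every $y\in Z$. By reflexivity of $\X$, choose a normalized weakly null sequence $\{x_i\}_{i\inn}\subseteq Z$; then $\{Tx_i\}_{i\inn}$ is also seminormalized and weakly null. Apply Proposition \ref{0.2} to each. The sandwich
\[
c\Big\|\sum_i\la_i x_i\Big\|\leqslant \Big\|\sum_i\la_i Tx_i\Big\|\leqslant\|T\|\,\Big\|\sum_i\la_i x_i\Big\|
\]
lets any $\ell_1^j$ lower estimate and any failure of an $\ell_1^{j+1}$ one transfer in both directions, so $\{x_i\}$ and $\{Tx_i\}$ must lie in the same clause of Proposition \ref{0.2} and share the same maximal index $k$ in the $\ell_1$ alternative; this is exactly (ii) or (iii).

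For $(ii)\Rightarrow (i)$ and $(iii)\Rightarrow (i)$: starting from such a sequence $\{x_i\}_{i\inn}$, I would pass to a subsequence and perturb slightly so that $\{x_i\}$ is a normalized block sequence with respect to the canonical basis of $\X$ while both it and $\{Tx_i\}$ retain their stated spreading-model data. In the $c_0$ case, the $c_0$ spreading model for $\{x_i\}$, combined with unconditionality of the basis, yields on a further subsequence a uniform upper bound $\|\sum_i\la_ix_i\|\leqslant D\max_i|\la_i|$, while the $c_0$ spreading model of $\{Tx_i\}$ supplies a matching lower bound $\|\sum_i\la_iTx_i\|\geqslant c'\max_i|\la_i|$; comparing, $T$ is bounded below on $\overline{\spn}\{x_i\}$. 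In the $\ell_1^k$ case the ``not $\ell_1^{k+1}$'' clause, combined with a Ramsey-type subsequence selection that forces the $\mathcal{S}_{k+1}$-averages of $\{x_i\}$ to shrink uniformly, should produce through the implicit norm of $\X$ an upper estimate of the form $\|\sum_i\la_ix_i\|\leqslant D\|\sum_i\la_ie_i\|_{k+1}$; paired with the lower $\ell_1^k$ estimate on $\{Tx_i\}$ and the boundedness of $T$, this again yields a subspace on which $T$ is bounded below, contradicting strict singularity.

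The main obstacle is the quantitative upper estimate in the $\ell_1^k$ case: translating the purely combinatorial negation ``$\{x_i\}$ admits no $\ell_1^{k+1}$ spreading model'' into a usable norm bound on arbitrary coefficient sequences requires a diagonal selection of a subsequence that makes higher-order Schreier averages decay uniformly, together with a careful tree-decomposition analysis of the implicit definition of $\|\cdot\|$ through the $\Sn$-admissible families and the auxiliary seminorms $\|\cdot\|_j$. Once this one-sided estimate is available the remaining comparisons reduce to routine block-basis and unconditionality arguments.
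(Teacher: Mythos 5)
The direction $(\mathrm{i})\Rightarrow(\mathrm{ii})\text{ or }(\mathrm{iii})$ in your sketch is fine and matches the paper's argument. The problem is the converse implications, where your plan contains a fatal error.

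In the $c_0$ case you claim that the $c_0$ spreading model of $\{x_i\}$, together with unconditionality, yields after passing to a subsequence a uniform upper estimate $\|\sum_i\la_ix_i\|\leqslant D\max_i|\la_i|$ for arbitrary scalars. This cannot happen: such an estimate would make $\{x_i\}$ equivalent to the unit vector basis of $c_0$, which is impossible in the reflexive space $\X$. A $c_0$ spreading model only controls norms of combinations $\sum_{i\in F}\la_ix_i$ over admissible sets $F\in\mathcal{S}_1$ (in the tail of the sequence); it gives no information whatsoever about arbitrary finite supports. For the same reason the analogous plan in the $\ell_1^k$ case --- deriving an upper bound $\|\sum_i\la_ix_i\|\leqslant D\|\sum_i\la_ie_i\|_{k+1}$ from the failure of an $\ell_1^{k+1}$ spreading model --- is doomed: the negation of a higher-order $\ell_1$ spreading model is again a statement about Schreier-admissible averages, not about arbitrary coefficient sequences. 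Pairing a Schreier-local upper bound on $\{x_i\}$ with a Schreier-local lower bound on $\{Tx_i\}$ does not produce a subspace on which $T$ is bounded below.

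The ingredient you are missing is that the proof must establish \emph{equivalence} of a subsequence of $\{x_i\}$ with the corresponding subsequence of $\{Tx_i\}$, not merely one-sided estimates on each. This is what the paper's Proposition \ref{cor11} supplies: starting from two interlaced normalized block sequences both generating $\ell_1^n$ spreading models, one can block both sequences in the \emph{same} way by special convex combinations so that the resulting sequences $\{z_m\}$ and $\{w_m\}$ are equivalent. Establishing this requires the tree analysis of functionals in $W$ developed in Section 4 --- given $f\in W$ acting on one sequence, one constructs three functionals $g^1,g^2,g^3\in W$ matching its action on the initial, middle, and final parts of the other sequence. Your plan does not involve any such comparison of norming functionals, and without it there is no way to pass from spreading-model information to a lower bound for $T$ on an infinite dimensional subspace. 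The paper's route is therefore $(\mathrm{ii})\Rightarrow(\mathrm{iii})$ via the $\alpha$-index machinery of Section 3, then $(\mathrm{iii})\Rightarrow(\mathrm{i})$ by blocking up to $\ell_1^n$ spreading models (Proposition \ref{propmmm10}) and invoking Proposition \ref{cor11}; your proposal replaces this central step with estimates that do not exist.
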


A space is called quasi-minimal if any two infinite dimensional subspaces have further
subspaces which are isomorphic. A major obstacle in proving the above, is to show
that certain normalized block sequences, that can be found in every subspace, are equivalent.
This also yields that the space $\X$ is
quasi-minimal. 

The above Proposition combined with the properties of the spreading models of the space $\X$ also allows us to study classes of strictly singular operators on subspaces of the spaces $\X$, which were introduced in \cite{ADST}. Recall that a bounded linear operator $T$ defined on a Banach space $X$, is said to be $\mathcal{S}_\xi$-strictly singular (the class is denoted $\mathcal{SS}_\xi(X)), for \xi<\omega_1$, if for every Schauder basic sequence $\{x_i\}_i$ in $X$ and $\e>0$, there exists a vector $x$ in the linear span of $\{x_i\}_{i\in F}$, where $F\in\mathcal{S}_\xi$ such that $\|Tx\| < \e\|x\|$. We prove that for $n\inn$ the space $\X$ satisfies the following: 
$$\mathcal{K}(Y)\subsetneq\mathcal{SS}_1(Y)\subsetneq\mathcal{SS}_2(Y)\subsetneq\cdots\subsetneq\mathcal{SS}_n(Y) = \mathcal{S}(Y)$$ 
and for every $1\leqslant k \leqslant n$, $\mathcal{SS}_k(Y)$ is a two sided ideal. This solves a problem in \cite{Te} by being the first example of a space for which the collection $\mathcal{SS}_k(\X)$ is a ideal not equal to $\mathcal{K}(\X)$ or $\mathcal{SS}(\X)$.

The spaces $\X$ can be extended to a transfinite hierarchy
$\mathfrak{X}_{_{^{0,1}}}^\xi$ for $1\leqslant \xi < \omega_1$.
Roughly speaking, the space $\Xxi$ is
defined with the use of the Schreier family $\mathcal{S}_\xi$ in
the place of $\Sn$. In section 5 we investigate the case the space $\Xomega$
and prove results analogous to those in the case of $\mathfrak{X}^1_{0,1}$.
We also comment, in passing, that for $\xi = \zeta
+ (n - 1)$ with $\zeta$ a limit ordinal satisfying $\eta + \zeta = \zeta$ for $\eta<\zeta$, the strictly singular
operators on the space $\Xxi$ behave in a similar manner as the spaces $\X$.

The paper is organized into six sections. The first one is devoted
to some preliminary concepts and results. In the second section we
introduce the norm of the space $\X$, by defining the norming set
$W$, a subset of $c_{00}$. The third section includes the
study of the spreading models generated by seminormalized
sequences of $\X$. Our approach uses tools similar to those in \cite{AM}. In particular, to each block
sequence $\{x_i\}_{i\inn}$ of $\X$, we assign a family of indices
$\adkx, k=0,\ldots,n-1$ and their behaviour determines the
spreading models generated by the subsequences of
$\{x_i\}_{i\inn}$. The fourth section contains the study of
equivalent block sequences in $\X$. The proof is rather involved
and based on the analysis of the elements of the set $W$. The
equivalence of block sequences is central to our approach and it is critical
in the proofs of Proposition \ref{prop0.3} which, in turn, proves
Theorem \ref{thm0.1}. The proofs of the latter results are given
in section five. In section six we provide the extended hierarchy
$\mathfrak{X}_{_{^{0,1}}}^\zeta, 1\leqslant \zeta < \omega_1$ and
we prove some of the fundamental properties of the spaces.

\section{Preliminaries}

\subsection*{The Schreier families} The Schreier families is
an increasing sequence of families of finite subsets of the
naturals, which first appeared in \cite{AA}, and is inductively defined in the
following manner.

Set $\mathcal{S}_0 = \big\{\{n\}: n\inn\big\}$ and $\mathcal{S}_1
= \{F\subset\mathbb{N}: \#F\leqslant\min F\}$.

Suppose that $\Sn$ has been defined and set $\mathcal{S}_{n+1} =
\{F\subset\mathbb{N}: F = \cup_{j = 1}^k F_j$, where $F_1 <\cdots<
F_k\in\Sn$ and $k\leqslant\min F_1\}$

If for $n,m\inn$ we set $\Sn*\mathcal{S}_m = \{F\subset\mathbb{N}:
F = \cup_{j = 1}^k F_j$, where $F_1 <\cdots< F_k\in\mathcal{S}_m$
and $\{\min F_j: j=1,\ldots,k\}\in\Sn\}$, then it is well known \cite{AD2} and follows easily by induction
that $\Sn*\mathcal{S}_m = \mathcal{S}_{n+m}$ .

\begin{dfn} Let $X$ be a Banach space, $\{x_i\}_{i\inn}$ be a
sequence in $X$, $k\inn$ and $1\leqslant p<\infty$. We say that
$\{x_i\}_{i\inn}$ generates an $\ell_p^k$ spreading model, if
there exists a uniform constant $C\geqslant 1$, such that for any
$F\in\mathcal{S}_k$, $\{x_i\}_{i\in F}$ is $C$-equivalent to the
usual basis of $(\mathbb{R}^{\#F},\|\cdot\|_p)$. The $c_0^k$ spreading models are defined
similarly.
\end{dfn}

\begin{rmk} Let $X, Y$ be Banach spaces and $T:X\rightarrow Y$ be
a bounded linear operator. If $\{x_m\}_{m\inn}$ is a bounded
sequence in $X$ such that $\{Tx_m\}_{m\inn}$ generates an
$\ell^k_1$ spreading model for some $k\inn$, then $\{x_m\}_{m\inn}$ generates an
$\ell^d_1$ spreading model, for some $d\geqslant k$.\label{remarklifting}
\end{rmk}

\begin{dfn} Let $X$ be a Banach space, $\{x_i\}_{i\inn}$ be a
seminormalized sequence in $X$ and $k\inn$. We say that
$\{x_i\}_{i\inn}$ generates a strong $\ell_1^k$ spreading model if
there exists  a seminormalized sequence $\{x_i^*\}_{i\inn}$ in
$X^*$ which generates a $c_0^k$ spreading model and $\e>0$, such
that $x_i^*(x_i) > \e$ for all $i\inn$ and $x^*_i(x_j) = 0$ for
$i\neq j$.
\end{dfn}

\begin{rmk}
If $X$ is a Banach space, $k\inn$, $\{x_i\}_i$ is a seminormalized weakly null sequence in $X$ generating a strong $\ell_1^k$ spreading model and $\{y_i\}_i$ is a sequence in $X$ with $\sum_{i=1}^\infty\|x_i-y_i\|<\infty$, then $\{y_i\}_i$ has a subsequence generating a strong $\ell_1^k$ spreading model.\label{rmkstrongell1knonblock}
\end{rmk}
The above is easily implied by the following.

\begin{lem}
Let $X$ be a Banach space, $\{x_i\}_{i\inn}$ be a
seminormalized weakly null sequence in $X$, $\{x_i^*\}_{i\inn}$ be a seminormalized w$^*$-null sequence in
$X^*$ and $\e>0$ such that $x_i^*(x_i) > \e$ for all $i\inn$ and $x^*_i(x_j) = 0$ for $i\neq j$. If $\{y_i\}_i$ is a sequence in $X$ with $\sum_{i=1}^\infty\|x_i-y_i\|<\infty$, then there exist a strictly increasing sequence of natural numbers $\{m_i\}_i$ and a seminormalized sequence $\{y_i^*\}_i$ in $X^*$ such that $y_i^*(y_{m_i}) > \e/2$ for all $i\inn$, $y^*_i(y_{m_j}) = 0$ for $i\neq j$ and $\sum_{i=1}^\infty\|y_i^* - x_{m_i}^*\| < \infty$.
\end{lem}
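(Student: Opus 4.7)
The plan is to define each $y_i^*$ as a Hahn--Banach extension to $X^*$ of a suitable biorthogonal functional on $Y':=\overline{\mathrm{span}}\{y_{m_j}\}_j$, chosen so as to stay summably close to $x_{m_i}^*$.

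I would first pass to a basic subsequence of $\{x_n\}$ by Bessaga--Pe\l czy\'nski, and then refine $\{m_i\}$ inductively to arrange three properties: (i) $\sum_i\|x_{m_i}-y_{m_i}\|$ is sufficiently small that by the small perturbation theorem $\{y_{m_i}\}$ is also basic and equivalent to $\{x_{m_i}\}$, producing coordinate functionals $\{e_j^{\prime *}\}$ on $Y'$ with $\sup_j\|e_j^{\prime *}\|\le K$ for some constant $K$; (ii) $\|x_{m_i}-y_{m_i}\|<2^{-i}$ for each $i$; and (iii) $|x_{m_i}^*(y_{m_l}-x_{m_l})|<2^{-i-l}$ for each $l<i$, using that $x_n^*\xrightarrow{w^*}0$ evaluated at the already-fixed vectors $y_{m_l}-x_{m_l}$ (with $l<i$) when $m_i$ is chosen.

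Set $\gamma_i:=x_{m_i}^*(x_{m_i})>\e$ and $M:=\sup_n\|x_n^*\|$. Define $\phi_i\in Y'^{*}$ by $\phi_i\bigl(\sum_j a_jy_{m_j}\bigr):=\gamma_ia_i$, so $\phi_i(y_{m_j})=\gamma_i\delta_{ij}$. Let $\delta_i:=\phi_i-x_{m_i}^*|_{Y'}\in Y'^{*}$ and let $\widetilde\delta_i\in X^*$ be a norm-preserving Hahn--Banach extension. Put $y_i^*:=x_{m_i}^*+\widetilde\delta_i$. Then on $Y'$ we have $y_i^*=\phi_i$, so $y_i^*(y_{m_i})=\gamma_i>\e/2$ and $y_i^*(y_{m_j})=0$ for $j\ne i$; seminormalization of $\{y_i^*\}$ follows from these values together with the bounds on $\|y_{m_i}\|$.

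It remains to estimate $\|y_i^*-x_{m_i}^*\|=\|\widetilde\delta_i\|=\|\delta_i\|_{Y'^{*}}$. By (i), $\|\delta_i\|_{Y'^{*}}\le K\sum_j|\delta_i(y_{m_j})|$, and since $x_{m_i}^*(x_{m_j})=\gamma_i\delta_{ij}$ we have $\delta_i(y_{m_j})=-x_{m_i}^*(y_{m_j}-x_{m_j})$ for every $j$. Hence $|\delta_i(y_{m_j})|\le M\|y_{m_j}-x_{m_j}\|$ always, and additionally $|\delta_i(y_{m_l})|<2^{-i-l}$ for $l<i$ by (iii). Summing,
\[
\sum_i\|y_i^*-x_{m_i}^*\|\le K\Bigl(M\sum_i\|y_{m_i}-x_{m_i}\|+\sum_i\sum_{l<i}2^{-i-l}+M\sum_i\sum_{j>i}\|y_{m_j}-x_{m_j}\|\Bigr),
\]
where the first two pieces are plainly finite, and the third, by Fubini, equals $KM\sum_j(j-1)\|y_{m_j}-x_{m_j}\|$, which is finite thanks to the decay $\|y_{m_j}-x_{m_j}\|<2^{-j}$ forced in (ii).

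The main obstacle is precisely this last summand: for the $(l<i)$ off-diagonal entries of the matrix $x_{m_i}^*(y_{m_j}-x_{m_j})$ one can squeeze out geometric decay in both indices by exploiting $w^*$-nullness at the moment $m_i$ is chosen, but for the $(j>i)$ entries the functional $x_{m_i}^*$ is fixed before $m_j$ is selected, so only the pointwise norm estimate is available. To absorb the Fubini factor $(j-1)$ one must engineer super-summable decay of $\|y_{m_j}-x_{m_j}\|$ in the initial subsequence choice; the geometric bound $2^{-j}$ in (ii) is a clean way to secure this.
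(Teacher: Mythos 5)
Your proposal is correct and follows essentially the same strategy as the paper: pass to subsequences so that the off-diagonal evaluations are summable, then obtain $y_i^*$ by Hahn--Banach extending a functional that kills the off-diagonal action of $x_{m_i}^*$ on the closed span of $\{y_{m_j}\}$. The only cosmetic differences are that you set the diagonal value to $x_{m_i}^*(x_{m_i})$ rather than $x_{m_i}^*(y_{m_i})$, and that you control the $j>i$ entries via forced geometric decay of $\|y_{m_j}-x_{m_j}\|$ (plus a Fubini count) where the paper simply invokes the weak nullness of $\{y_j\}$ when choosing $m_j$ — both lead to the same $\sum_{i\neq j}|x_{m_i}^*(y_{m_j})|<\infty$.
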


\begin{proof}
Using the fact that $\{x_i\}_i$ is weakly null, $\{x_i^*\}_i$ is w$^*$-null  and $\sum_{i=1}^\infty\|x_i-y_i\|<\infty$, we may pass to appropriate subsequences and relabel such that $\sum_{i\neq j}|x^*_i(y_j)| < \infty$. We may moreover assume that $\{y_i\}_i$ is Schauder basic and set $Y = [\{y_i\}_i]$. For $i\inn$, define a bounded linear functional $g_i:Y\rightarrow \mathbb{R}$ with $g_i(\sum_{j=1}^\infty c_jy_j) = \sum_{j\neq i}c_jx_i^*(y_j)$ and take $z_i^*$ to be a norm preserving extension of $g_i$ to $X$. Then the $y_i^* = x_i^* - z_i^*$ are the desired functionals.

\end{proof}

\begin{rmk} If a sequence generates a strong $\ell_1^k$ spreading model,
it generates an $\ell_1^k$ spreading model. Moreover, the class of
strong $\ell_1^k$ spreading models is strictly smaller than the
class of $\ell_1^k$ spreading models.
\end{rmk}

\subsection*{Special convex combinations} Next, we recall for $k \inn$ and $\e>0$
the notion of the $(k,\e)$ special convex combinations, (see
\cite{AGR,AT}). This is an important tool used throughout the paper.

\begin{dfn} Let $F\subset \nn$ and $x = \sum_{i\in F}c_ie_i$ be a vector in $c_{00}$.
Then $x$ is said to be a $(k,\e)$ basic special convex combination
(or a $(k,\e)$ basic s.c.c.) if:

\begin{enumerate}

\item[(i)] $F\in\mathcal{S}_k, c_i\geqslant 0$, for $i\in F$ and
$\sum_{i\in F}c_i = 1$.

\item[(ii)] For any $G\subset F, G\in\mathcal{S}_{k-1}$, we have
that $\sum_{i\in G}c_i < \e$.

\end{enumerate}

\end{dfn}

\begin{dfn}
Let $x_1 <\cdots<x_m$ be vectors in $c_{00}$ and $\psi(k) =
\min\supp x_k$, for $k=1,\ldots,m$. Then $x = \sum_{k=1}^mc_kx_k$
is said to be a $(n,\e)$ special convex combination (or $(n,\e)$
s.c.c.), if $\sum_{k=1}^mc_ke_{\psi(k)}$ is a $(n,\e)$ basic
s.c.c.\label{defscc}
\end{dfn}

\subsection*{Repeated averages}

For every $k\inn$ and $F$ a maximal $\mathcal{S}_k$ set we
inductively define the repeated average $x_F = \sum_{i\in
F}c_i^Fe_i$ of $F$, which is a convex combination of the usual
basis of $c_{00}$.

For $k=1$ and $F$ a maximal $\mathcal{S}_1$ set, we define $x_F =
\frac{1}{\#F}\sum_{i\in F}e_i$.

Let now $k>1$ and assume that for any $F$ maximal
$\mathcal{S}_{k-1}$ set the repeated average $x_F$ has been
defined. If $F$ is a maximal $\mathcal{S}_k$ set, then there exist
$F_1 <\cdots <F_d$ maximal $\mathcal{S}_{k-1}$ sets such that $F =
\cup_{q=1}^dF_q$. Set $x_F = \frac{1}{d}\sum_{q=1}^dx_{F_q}$.

The proof of the next proposition can be found in \cite[Chapter 2, Proposition 2.3]{AT}.

\begin{prp} Let $k\inn$ and $F$ be a maximal $\mathcal{S}_k$ set.
Then the repeated average of $F$ $x_F = \sum_{i\in F}c_ie_i$ is a
$(k,\frac{3}{\min F})$ basic s.c.c.
\end{prp}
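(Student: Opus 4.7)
The natural approach is induction on $k$, leveraging the recursive definition of both the Schreier families and the repeated averages.

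For the base case $k=1$, a maximal $\mathcal{S}_1$ set $F$ satisfies $\#F = \min F$, so $x_F$ assigns the uniform weight $1/\min F$ to each coordinate in $F$. Condition (i) is immediate from $F\in\mathcal{S}_1$ and $\sum c_i = 1$. For condition (ii), any $G\subset F$ with $G\in\mathcal{S}_0$ is a singleton, whose $x_F$-mass is $1/\min F < 3/\min F$.

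For the inductive step, suppose the result for $k-1$. Write the maximal $\mathcal{S}_k$-set $F$ as $F_1<\cdots<F_d$ with each $F_q$ a maximal $\mathcal{S}_{k-1}$-set and $d\leqslant \min F_1 = \min F$. Then $x_F=\frac{1}{d}\sum_{q=1}^d x_{F_q}$, so $c_i^F=\frac{1}{d}c_i^{F_q}$ for $i\in F_q$, and by induction each $x_{F_q}$ is a $(k-1, 3/\min F_q)$ basic s.c.c. Condition (i) follows from the definition of $\mathcal{S}_k$ together with the fact that $x_F$ is a convex combination of probability measures.

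For condition (ii), fix $G\subset F$ with $G\in\mathcal{S}_{k-1}$, and set $G_q=G\cap F_q$, so that
\begin{equation*}
\sum_{i\in G}c_i^F=\frac{1}{d}\sum_{q=1}^d\sum_{i\in G_q}c_i^{F_q}.
\end{equation*}
I would split the indices into $Q_1=\{q:G_q=F_q\}$ and $Q_2=\{q:\emptyset\neq G_q\subsetneq F_q\}$. Each $q\in Q_1$ contributes $1$ to the inner sum, giving total contribution $|Q_1|/d$. For $q\in Q_2$, the key combinatorial claim is that $G_q$ belongs to $\mathcal{S}_{k-2}$, so the inductive hypothesis applied to $x_{F_q}$ bounds $\sum_{i\in G_q}c_i^{F_q}$ by $3/\min F_q$. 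The total contribution from $Q_2$ is then at most $\frac{3}{d}\sum_{q\in Q_2}1/\min F_q$, which is controlled because the $F_q$'s are successive and each $\min F_q\geqslant \min F$.

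The main obstacle is the combinatorial bookkeeping hidden in the claim about $Q_1$ and $Q_2$: one must use the identity $\mathcal{S}_{k-1}=\mathcal{S}_1*\mathcal{S}_{k-2}$ together with $|Q_1|+$ (complexity of $Q_2$ pieces) being controlled by $\min G\geqslant \min F\geqslant d$ to argue (a) that a proper subset $G_q$ of a maximal $\mathcal{S}_{k-1}$-set arising this way actually lies in $\mathcal{S}_{k-2}$, and (b) that $|Q_1|\leqslant 2$ (or a similar small absolute bound), with the numerical slack in the constant $3$ absorbing the two ``boundary'' pieces of $G$ that could straddle the decomposition. Combining these contributions yields $\sum_{i\in G}c_i^F<3/\min F$, completing the induction.
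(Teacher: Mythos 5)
The paper does not prove this proposition; it cites Chapter 2, Proposition~2.3 of Argyros--Tolias \cite{AT}, so there is no in-paper argument to compare against. Your inductive framework (induction on $k$ via the decomposition $x_F=\frac{1}{d}\sum_{q=1}^d x_{F_q}$ with $d=\min F$, so that $c_i^F=\frac{1}{d}c_i^{F_q}$ for $i\in F_q$) is the natural one, and the base case and condition (i) are fine. The problem is the central combinatorial claim in your treatment of condition (ii).

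Your claim (a) --- that whenever $q\in Q_2$, i.e. $\varnothing\neq G_q=G\cap F_q\subsetneq F_q$, one automatically has $G_q\in\mathcal{S}_{k-2}$ --- is false. Heredity only gives $G_q\in\mathcal{S}_{k-1}$, and a proper subset of a maximal $\mathcal{S}_{k-1}$ set need not drop a level in the hierarchy. For a concrete counterexample, take $k=3$ and $F$ maximal in $\mathcal{S}_3$ with $\min F=2$, so $F=F_1\cup F_2$ with $F_1=\{2,3,4,5,6,7\}$ (the maximal $\mathcal{S}_2$ set with minimum $2$) and $F_2$ the maximal $\mathcal{S}_2$ set with $\min F_2=8$, which begins $\{8,\ldots,15\}\cup\{16,\ldots,31\}\cup\cdots$. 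The set $G=\{7,8,\ldots,17\}$ lies in $\mathcal{S}_2$ (write it as $\{7\}\cup\{8,\ldots,15\}\cup\{16,17\}$, three $\mathcal{S}_1$ sets with $3\leqslant 7=\min G$), and $G_2=G\cap F_2=\{8,\ldots,17\}$ is a nonempty proper subset of $F_2$; but $\#G_2=10>8=\min G_2$, so $G_2\notin\mathcal{S}_1=\mathcal{S}_{k-2}$. Thus $q=2\in Q_2$, yet the inductive hypothesis cannot be applied to $G_2$, and the estimate $\sum_{i\in G_q}c_i^{F_q}<3/\min F_q$ you rely on is unavailable.

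Two further remarks. First, even on the indices where the inductive hypothesis does apply, the naive bound $\frac{3}{d}\sum_{q\in Q_2}\frac{1}{\min F_q}$ is only small enough because the sequence $\min F_q$ grows at least geometrically ($\min F_{q+1}>\max F_q\geqslant 2\min F_q-1$ once $k\geqslant 2$); this growth, which your sketch never invokes, is precisely what keeps the constant at $3$. Second, your bound $\#Q_1\leqslant 2$ can actually be sharpened to $\#Q_1\leqslant 1$: if $F_{q_1}\cup F_{q_2}\subset G$ with $q_1<q_2$, then heredity would put $F_{q_1}\cup\{\min F_{q_2}\}$ in $\mathcal{S}_{k-1}$, contradicting the maximality of $F_{q_1}$. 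A repaired argument along your lines should instead decompose $G$ as $H_1\cup\cdots\cup H_s$ with $H_r\in\mathcal{S}_{k-2}$ and $s\leqslant\min G$, apply the inductive hypothesis to each $H_r\cap F_q\in\mathcal{S}_{k-2}$ inside $F_q$, and then control the resulting double sum over $(q,r)$ using the geometric growth of $\min F_q$, the bound $s\leqslant\min G$, and $\#Q_1\leqslant 1$; as written, the proof does not close.
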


The above proposition yields the following.

\begin{prp}
For any infinite subset $M$ of $\nn$, $k\inn$ and
$\e>0$, there exists $F\subset M, \{c_i\}_{i\in F}$, such that $x
= \sum_{i\in F}c_ie_i$ is a $(k,\e)$ basic s.c.c.
\end{prp}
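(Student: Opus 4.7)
The plan is to reduce everything to the previous proposition (repeated averages of maximal $\mathcal{S}_k$ sets are $(k, 3/\min F)$ basic s.c.c.'s) by exhibiting, inside the given infinite set $M$, a maximal $\mathcal{S}_k$ subset whose minimum is large enough.

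First I would choose $N \in M$ with $N > 3/\varepsilon$, which is possible since $M$ is infinite. The goal then becomes: construct a maximal $\mathcal{S}_k$ set $F \subset M$ with $\min F = N$. Once this is done, applying the previous proposition to $F$ yields a repeated average $x_F = \sum_{i \in F} c_i^F e_i$ which is a $(k, 3/N)$ basic s.c.c., hence a $(k,\varepsilon)$ basic s.c.c., since $3/N < \varepsilon$.

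To construct such an $F$, I would proceed by induction on $k$. For $k=1$, a maximal $\mathcal{S}_1$ set starting at $N$ must have exactly $N$ elements, so I take $F$ to be the first $N$ elements of $M \cap [N, \infty)$. For the inductive step, assuming the construction works for $k-1$, I set $d = N$ and build successive maximal $\mathcal{S}_{k-1}$ sets $F_1 < F_2 < \cdots < F_d$, each contained in $M$, with $\min F_1 = N$: having chosen $F_1, \ldots, F_{j-1}$, the inductive hypothesis applied to the infinite set $M \cap (\max F_{j-1}, \infty)$ produces a maximal $\mathcal{S}_{k-1}$ set $F_j$ inside it. Setting $F = \bigcup_{j=1}^d F_j$ gives a maximal $\mathcal{S}_k$ subset of $M$ with $\min F = N$.

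There is no real obstacle here; the only point worth verifying carefully is that the inductive construction of maximal $\mathcal{S}_{k-1}$ sets inside arbitrary tails of $M$ terminates after finitely many elements, which is immediate from the inductive definition of the Schreier families. The entire argument is essentially a bookkeeping exercise on top of the previous proposition.
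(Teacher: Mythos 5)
Your proof is correct and follows exactly the route the paper intends (the paper states only that the result "yields" from the previous proposition on repeated averages, and your argument is the natural way to fill that in). The only point I would flag as worth a sentence in a careful write-up is the justification that the inductively constructed $F=\bigcup_{j=1}^{d}F_j$, with $F_1<\cdots<F_d$ maximal $\mathcal{S}_{k-1}$ sets, $d=\min F_1=N$, and the $F_j$'s lying in $M$, is in fact a \emph{maximal} $\mathcal{S}_k$ set in $\mathbb{N}$ (not merely maximal among subsets of $M$) --- this is what lets you invoke the repeated-average proposition --- but this is standard and your construction does achieve it.
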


\section{The space $\X$}

Let us fix a natural number $n$ throughout the rest of the paper. We start with the definition of the norm of the space $\X$.

\begin{ntt} Let $G\subset c_{00}$. If a vector $\al\in G$ is of
the form $\al = \frac{1}{\ell}\sum_{q=1}^df_q$, for some $f_1 <
\ldots < f_d \in G, d\leqslant \ell$ and $2 \leqslant \ell$, then
$\al$ will be called an $\al$-average of size $s(\al) = \ell$.

Let $k\inn$. A finite sequence $\{\al_q\}_{q=1}^d$ of
$\al$-averages in $G$ will be called $\mathcal{S}_k$ admissible if
$\al_1 <\ldots < \al_d$ and $\{\min\supp\al_q:
q=1,\ldots,d\}\in\mathcal{S}_k$.

A sequence $\{\al_q\}_q$ of $\al$-averages in $G$ will be called
{\em very fast growing} if $\al_1 < \al_2 <\ldots$, $s(\al_1) < s(\al_2)
<\cdots$ and $s(\al_q)
> \max\supp\al_{q-i}$ for $1<q$.

If a vector $g\in G$ is of the form $g = \sum_{q=1}^d \al_q$ for
an $\Sn$-admissible and very fast growing sequence
$\{\al_q\}_{q=1}^d\subset G$, then $g$ will be called a
\schr.
\end{ntt}

\subsection*{The norming set} Inductively construct a set
$W\subset c_{00}$ in the following manner.
Set $W_0 = \{\pm e_i\}_{i\inn}$. Suppose that $W_0,\ldots,W_m$ have been constructed. Define:
\begin{equation*}
W_{m+1}^\al = \big\{\al = \frac{1}{\ell}\sum_{q=1}^df_q:\quad f_1
< \ldots < f_d \in W_m, \ell\geqslant 2, \ell\geqslant d\big\}
\end{equation*}
\begin{equation*}
W_{m+1}^S\! =\! \big\{ g = \sum_{q=1}^d \al_q:\;
\{\al_q\}_{q=1}^d\subset W_m\; \Sn\text{-admissible and very fast
growing}\big\}
\end{equation*}
Define $W_{m+1} = W_{m+1}^\al\cup W_{m+1}^S\cup W_m$ and $W =
\cup_{m=0}^\infty W_m$.

For $x\in c_{00}$ define $\|x\| = \sup\{f(x): f\in W\}$ and $\X =
\overline{(c_{00}(\mathbb{N}),\|\cdot\|)}$. Evidently $\X$ has a
1-unconditional basis.

One may also describe the norm on $\X$ with an implicit formula.
For $j\inn, j\geqslant 2, x\in\X$, set $\|x\|_j =
\sup\{\frac{1}{j}\sum_{q=1}^d\|E_qx\|\}$, where the supremum is
taken over all successive finite subsets of the naturals
$E_1<\cdots<E_d, d\leqslant j$. Then by using standard arguments
it is easy to see that
\begin{equation*}
\|x\| = \max\big\{\|x\|_0,\;
\sup\{\sum_{q=1}^d\|E_qx\|_{j_q}\}\big\}
\end{equation*}
where the supremum is taken over all $\mathcal{S}_n$ admissible
finite subsets of the naturals $E_1<\cdots<E_k$, such that $j_q
> \max E_{q-1}$, for $q>1$.

\section{Spreading models of $\X$}
In this section the possible spreading models of block sequences
are determined. The method used for this, is based on the $\al_k$
indices of block sequences, which are defined below and are
similar to the corresponding one in \cite{AM}. We show that every subspace of $\X$ admits the same variety of
spreading models.

\subsection*{ Spreading models of block sequences in $\X$:}

\begin{dfn} Let $0\leqslant k\leqslant n - 1$, $\{x_i\}_{i\inn}$ be a
block sequence in $\X$ that satisfies the following. For any
subsequence $\{x_{i_j}\}_{j\inn}$ of $\{x_i\}_{i\inn}$, for any
very fast growing sequence of $\al$-averages $\{\al_q\}_{q\inn}$
and any $\{F_j\}_{j\inn}$ sequence of increasing subsets of the
naturals such that $\{\al_q\}_{q\in F_j}$ is $\mathcal{S}_k$
admissible we have that $\lim_j\sum_{q\in F_j}|\al_q(x_{i_j})| =
0$. Then we say that the $\al_k$-index of $\{x_i\}_{i\inn}$ is
zero and write $\adkx = 0$. Otherwise we write $\adkx > 0$.
\end{dfn}

The next proposition follow straight from the definition.

\begin{prp}
Let $0\leqslant k \leqslant n-1$ and $\{x_i\}_{i\inn}$ be a block
sequence in $\X$, then the following statements are equivalent.
\begin{itemize}

\item[(i)] $\adkx = 0$.

\item[(ii)] For any $\e>0$ there exist $j_0, i_0\inn$, such that
for any $\{\al_q\}_{q=1}^d$ very fast growing and
$\mathcal{S}_k$-admissible sequence of $\al$-average with
$s(\al_q)\geqslant j_0$ for $q=1,\ldots,d$ and for any $i\geqslant
i_0$, we have that $\sum_{q=1}^d|\al_q(x_i)| < \e$.

\end{itemize}\label{alternate}
\end{prp}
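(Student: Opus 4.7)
The plan is to establish both implications, with (ii)$\Rightarrow$(i) being a direct application and (i)$\Rightarrow$(ii) requiring an inductive concatenation argument based on the negation of (ii).

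For (ii)$\Rightarrow$(i), I would fix data as in the definition of $\adkx=0$ and $\e>0$, and let $j_0,i_0$ be given by (ii). Since $\{\al_q\}_q$ is very fast growing its sizes are strictly increasing, so $s(\al_q)\to\infty$; also $\min F_j\to\infty$ and $i_j\to\infty$. Thus for large $j$, every $q\in F_j$ has $s(\al_q)\geqslant j_0$ and $i_j\geqslant i_0$. As a sub-observation I would check that any subset of a very fast growing sequence is itself very fast growing (via $s(\al_{p_{i+1}})\geqslant s(\al_{p_i+1})>\max\supp\al_{p_i}$), so the finite family $\{\al_q\}_{q\in F_j}$ qualifies for (ii), which then delivers $\sum_{q\in F_j}|\al_q(x_{i_j})|<\e$.

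For (i)$\Rightarrow$(ii) I would argue by contraposition. From a failure of (ii) extract $\e>0$ such that for every $j_0,i_0\inn$ there is a finite very fast growing $\Sk$-admissible family with sizes $\geqslant j_0$ and some $i\geqslant i_0$ for which $\sum_q|\al_q(x_i)|\geqslant\e$. The aim is to stitch countably many such finite families into a single infinite very fast growing sequence $\{\be_q\}_q$, together with successive sets $F_j\subset\nn$ and a subsequence $\{x_{i_j}\}_j$, so that $\sum_{q\in F_j}|\be_q(x_{i_j})|\geqslant\e$ for all $j$, contradicting (i). The inductive step is: given blocks $\tilde B_1<\cdots<\tilde B_j$ already chosen with maximum support $N_j$, apply the negation of (ii) with $j_0:=N_j+1$ and $i_0:=i_j+1$ to produce a raw family $\{\al_q^{(j+1)}\}_{q=1}^{d_{j+1}}$ and an index $i_{j+1}$.

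The main obstacle is that negating (ii) controls only the \emph{sizes}, not the supports, of the new $\al$-averages; consequently the raw family may overlap the preceding blocks, violating $\al_q<\al_{q+1}$. The key trick, which I expect to carry the argument, is that $W$ is closed under interval projections and, crucially, that projecting an $\al$-average $\al=(1/\ell)\sum f_q$ onto an interval leaves its size $\ell$ unchanged (some $f_q$ may disappear, but $\ell$ remains the same constant in the formula). I would therefore set $I_{j+1}:=[\min\supp x_{i_{j+1}},\max\supp x_{i_{j+1}}]$, replace each $\al_q^{(j+1)}$ by $I_{j+1}\al_q^{(j+1)}$, and discard zeros to obtain $\tilde B_{j+1}$. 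The cleaned block lies inside $I_{j+1}$ (hence strictly to the right of $\tilde B_1,\dots,\tilde B_j$, as $\{x_i\}$ is block); the sizes of its $\al$-averages, and thus the junction inequality $s(\tilde\al_1^{(j+1)})\geqslant N_j+1>\max\supp\tilde\al_{d_j}^{(j)}$, are preserved; $\Sk$-admissibility persists because $\Sk$ is both spreading and hereditary, while interval projections can only push $\min\supp$ rightward and removing zeros takes subsets; and because $\supp x_{i_{j+1}}\subset I_{j+1}$, the action on $x_{i_{j+1}}$ is untouched, so $\sum_q|\tilde\al_q^{(j+1)}(x_{i_{j+1}})|\geqslant\e$. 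Concatenating the $\tilde B_j$ and letting $F_j$ be the indices they occupy in $\{\be_q\}_q$ produces the data contradicting (i).
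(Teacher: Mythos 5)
The paper gives no proof of this proposition, merely remarking that it follows from the definition, so there is no reference argument to compare with. Your direction (ii)$\Rightarrow$(i) is correct: every subsequence of a very fast growing family of $\al$-averages is again very fast growing, and since the sizes are strictly increasing while $\min F_j$ and $i_j$ both tend to infinity, condition (ii) applies to $\{\al_q\}_{q\in F_j}$ and $x_{i_j}$ for all sufficiently large $j$, giving the required limit.

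In (i)$\Rightarrow$(ii) by contraposition the stitching idea is right, and you correctly identify that $W$ is closed under restriction to intervals, that this operation preserves $s(\al)$, and that $\mathcal{S}_k$-admissibility survives because $\mathcal{S}_k$ is hereditary and spreading. There is, however, one condition you never verify. Being very fast growing requires \emph{both} $s(\al_q) > \max\supp \al_{q-1}$ and $s(\al_1) < s(\al_2) < \cdots$. Your choice $j_0 := N_j + 1$ forces $s(\tilde\al^{(j+1)}_1) \geqslant N_j + 1 > \max\supp\tilde\al^{(j)}_{d_j}$, which is the first requirement at the junction, but nothing prevents $s(\tilde\al^{(j)}_{d_j}) \geqslant N_j + 1$: the size of an $\al$-average is decoupled from its support (consider $\al = \frac{1}{\ell}e_1^*$ with $\ell$ arbitrarily large), so the concatenated $\{\be_q\}_q$ may fail to have increasing sizes across a junction and thus need not be very fast growing, which would void the contradiction with (i). The fix is straightforward: when invoking the negation of (ii), choose $j_0$ larger than both $N_j$ and every size occurring in $\tilde B_j$. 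With that adjustment the concatenation is genuinely very fast growing and the rest of your argument goes through.
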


\begin{lem}
Let $\al$ be an $\al$-average in $W$, $\{x_k\}_{k=1}^m$ be a
normalized block sequence and $\{c_k\}_{k=1}^m$ non
negative reals with $\sum_{k=1}^mc_k = 1$. Then if $G_\al = \{k:
\ran\al\cap\ran x_k\neq\varnothing\}$, the following holds:
\begin{equation*}
|\al(\sum_{k=1}^m c_k x_k))| < \frac{1}{s(\al)}\sum_{i\in G_\al}c_i + 2\max\{c_i: i\in
G_\al\}.
\end{equation*}

\label{lem4}
\end{lem}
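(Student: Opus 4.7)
The plan is to write $\al = \frac{1}{\ell}\sum_{q=1}^d f_q$ with $\ell = s(\al)$, $d\leqslant\ell$, and $f_1<\cdots<f_d\in W$, and to decompose
\begin{equation*}
\al\Bigl(\sum_{k=1}^m c_k x_k\Bigr) = \frac{1}{\ell}\sum_{q=1}^d f_q\Bigl(\sum_{k\in B_q} c_k x_k\Bigr),
\end{equation*}
where $B_q = \{k : \ran f_q \cap \ran x_k \neq \varnothing\}$. Since $\{x_k\}_{k=1}^m$ is a block sequence, each $B_q$ is an interval of consecutive integers, and the key geometric observation is that only the two extremal elements of $B_q$ can straddle an endpoint of $\ran f_q$; with $B_q^{\mathrm{int}} := \{k\in B_q : \ran x_k\subset\ran f_q\}$, this gives $|B_q\setminus B_q^{\mathrm{int}}|\leqslant 2$.

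The ``boundary'' indices are handled crudely using $\|f_q\|\leqslant 1$ and $\|x_k\|=1$:
\begin{equation*}
\Bigl|f_q\Bigl(\sum_{k\in B_q\setminus B_q^{\mathrm{int}}} c_k x_k\Bigr)\Bigr| \;\leqslant\; \sum_{k\in B_q\setminus B_q^{\mathrm{int}}} c_k \;\leqslant\; 2\max\{c_i:i\in G_\al\}.
\end{equation*}
Summing this estimate over the $d$ values of $q$ and dividing by $\ell$, the total boundary contribution is at most $(2d/\ell)\max\{c_i:i\in G_\al\}\leqslant 2\max\{c_i:i\in G_\al\}$, since $d\leqslant\ell$.

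For the ``interior'' indices, the triangle inequality gives $|f_q(\sum_{k\in B_q^{\mathrm{int}}} c_k x_k)|\leqslant \sum_{k\in B_q^{\mathrm{int}}} c_k$. Crucially, because the ranges $\ran f_1,\ldots,\ran f_d$ are pairwise disjoint (from $f_1<\cdots<f_d$), every index $k$ lies in $B_q^{\mathrm{int}}$ for \emph{at most one} $q$, whence
\begin{equation*}
\sum_{q=1}^d\sum_{k\in B_q^{\mathrm{int}}} c_k \;\leqslant\; \sum_{i\in G_\al} c_i.
\end{equation*}
Dividing by $\ell$, the interior contribution is bounded by $\frac{1}{s(\al)}\sum_{i\in G_\al} c_i$, and adding the two contributions yields the claimed estimate. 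There is no substantive obstacle; the whole argument is essentially bookkeeping about how intervals $\ran f_q$ and $\ran x_k$ interleave. The one point worth highlighting is the single-counting identity in the interior sum: it is the pairwise disjointness of the $\ran f_q$ that converts what would otherwise be a factor of $d$ in front of $\sum c_i$ into the favourable $\tfrac{1}{s(\al)}$, while the leftover ``straddling'' cases are absorbed by the $2\max\{c_i:i\in G_\al\}$ term since there are at most $2d\leqslant 2\ell$ of them.
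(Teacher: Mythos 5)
Your argument is correct and the bound comes out right. It is, however, a genuinely dual decomposition to the one in the paper: the paper partitions the indices $k\in G_\al$ according to how many of the component functionals $f_j$ the block $x_k$ meets (the set $E_1$ of $k$ meeting at most one $f_j$, and $E_2$ of $k$ meeting at least two), bounds $\al$ on the $E_1$-part by $\frac1p\sum_{k\in G_\al}c_k$, and on the $E_2$-part by $2\max c_i$ using $\#E_2\leqslant 2p$; you instead expand $\al=\frac1\ell\sum_q f_q$ and, for each fixed $q$, split the $x_k$ meeting $\ran f_q$ into the ones fully contained in $\ran f_q$ versus the (at most two) that stick past an endpoint. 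Both are elementary interval-interleaving arguments giving the same two-term estimate. Your version has the small advantage that the single-counting step $\sum_q\sum_{k\in B_q^{\mathrm{int}}}c_k\leqslant\sum_{i\in G_\al}c_i$ is immediate from the disjointness of the $\ran f_q$, while the paper's version makes the count $\#E_2\leqslant 2p$ (really $\leqslant d-1$) explicit. One small remark: your estimates, like the paper's, are really non-strict $\leqslant$; the claimed $<$ is harmless informality since the lemma is always applied with some slack, but if you want it literally you should either perturb one of the inequalities or observe that it trivially holds when $G_\al=\varnothing$ is excluded and at least one $c_i>0$ — in the degenerate case $G_\al=\varnothing$ or all $c_i=0$ on $G_\al$ both sides vanish.
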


\begin{proof}
If $\al = \frac{1}{p}\sum_{j=1}^df_j$ with $d \leq p$. Set
\begin{eqnarray*}
E_1 &=& \{k\in G_\al:\;\text{there exists at most one}\;j\;\text{with}\;\ran f_j\cap\ran x_k\neq\varnothing\}\\
E_2 &=& \{1,\ldots,m\}\setminus E_1\\
J_k &=& \{j: \ran f_j\cap\ran
x_k\neq\varnothing\}\quad\text{for}\;k\in E_2.
\end{eqnarray*}
Then it is easy to see that
\begin{equation}
|\al(\sum_{k\in E_1}c_kx_k)| \leqslant \frac{1}{p}\sum_{k\in
G_\al}c_k. \label{lem4.6eq1}
\end{equation}
Moreover
\begin{equation}
|\al(\sum_{k\in E_2}c_kx_k)| < 2\max\{c_k: k\in
G_\al\}.\label{lem4.6eq2}
\end{equation}
Since $\#E_2 \leq 2p$ we have
\begin{equation*}
|\al(\sum_{k\in E_2}c_kx_k)| \leqslant \frac{1}{p}\sum_{k\in
E_2}c_k\big(\sum_{j\in J_k}|f_j(x_k)|\big) < \max\{c_k: k\in
G_\al\}\frac{2p}{p}
\end{equation*}

By summing up \eqref{lem4.6eq1} and \eqref{lem4.6eq2} the result
follows.

\end{proof}

\begin{lem}Let $1\leqslant k\leqslant n$, $x = \sum_{i=1}^mc_ix_i$
be a $(k,\e)$ s.c.c. with $\|x_i\|\leqslant 1$ for $i=1,\ldots,m$.
Let also $\{\al_q\}_{q=1}^d$ be a very fast growing and
$\mathcal{S}_{k-1}$-admissible sequence of $\al$-averages. Then
the following holds.
\begin{equation*}
\sum_{q=1}^d|\al_q(\sum_{i=1}^mc_ix_i)| < \frac{1}{s(\al_1)} + 6\e
\end{equation*}\label{lemqqq6}
\end{lem}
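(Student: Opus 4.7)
The plan is to apply Lemma~\ref{lem4} to each $\al_q$ individually and then sum the resulting estimates over $q=1,\ldots,d$. Lemma~\ref{lem4} gives
$$|\al_q(x)| < \frac{1}{s(\al_q)}\sum_{i\in G_{\al_q}}c_i + 2\max\{c_i : i\in G_{\al_q}\},$$
so
$$\sum_{q=1}^d|\al_q(x)| < \sum_q\frac{c(G_{\al_q})}{s(\al_q)} + 2\sum_q\max_{i\in G_{\al_q}}c_i.$$
The goal is to bound the first sum by $1/s(\al_1)+O(\e)$ and the second by $O(\e)$, with the constants arranged so as to total $6\e$.

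The key step is to decompose $G_{\al_q} = I_q\sqcup\{\ell_q\}$, where $I_q = \{i\in G_{\al_q} : \min\supp x_i\in\ran\al_q\}$ collects the ``interior'' indices and $\ell_q$, if it exists, is the unique ``left--straddling'' index satisfying $\min\supp x_{\ell_q}<\min\supp\al_q\leq\max\supp x_{\ell_q}$, the singleton being forced by the successiveness of the $x_i$. Each $i$ belongs to at most one $I_q$, because the $\al_q$ have pairwise disjoint ranges, so $\sum_q c(I_q)\leq 1$ and hence $\sum_q c(I_q)/s(\al_q)\leq 1/s(\al_1)$.

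For the straddler contribution I would use a monotonicity argument: if $\ell_{q_1}\neq\ell_{q_2}$ with $q_1<q_2$, successiveness of the $x_i$ forces $\min\supp x_{\ell_{q_2}} > \max\supp x_{\ell_{q_1}} \geq \min\supp\al_{q_1}$. Since $\{\min\supp\al_q\}_q\in\mathcal{S}_{k-1}$ by $\mathcal{S}_{k-1}$-admissibility, the spreading property of the Schreier families places the set of distinct $\ell$-values, after discarding the first element, into $\mathcal{S}_{k-1}$. The $(k,\e)$ basic s.c.c.\ hypothesis then gives $\sum_{i\in L} c_i<2\e$, where $L$ is the set of distinct $\ell_q$. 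A parallel argument applied to the interior maximizers $i_q^\ast\in I_q$ (which satisfy $\min\supp x_{i_q^\ast}\geq\min\supp\al_q$, so $\{\min\supp x_{i_q^\ast}\}\in\mathcal{S}_{k-1}$ directly by spreading) yields $\sum_q c_{i_q^\ast} < \e$. Inserting these into $\max_{G_{\al_q}}c_i\leq c_{\ell_q}+c_{i_q^\ast}$ and collecting the pieces should give $1/s(\al_1)+6\e$.

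The main obstacle is the case where a single $x_i$ serves as $\ell_q$ for several consecutive $q$'s, i.e.~$x_i$ fully contains multiple $\al_q$'s; then one cannot pass directly from $\sum_{i\in L}c_i<2\e$ (on distinct $\ell$-values) to $\sum_q c_{\ell_q}<2\e$. This is precisely where the very fast growing hypothesis $s(\al_q)>\max\supp\al_{q-1}$ enters: along the run of $q$'s where $\ell_q=i$, the weights $1/s(\al_q)$ decay past $\max\supp\al_{q-1}\geq\min\supp\al_{q-1}>\min\supp x_i$, absorbing the multiplicity into a factor $O(1/s(\al_1))$ and keeping the weighted straddler contribution within the $O(\e)$ budget.
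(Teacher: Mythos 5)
Your approach is genuinely different from the paper's and, unfortunately, it has a gap at exactly the point you flag as the ``main obstacle.'' The paper does not partition the $\alpha_q$'s and apply Lemma~\ref{lem4} term-by-term; instead it partitions the indices $i$ into $G_1$ (those $x_i$ meeting the range of at most one $\al_q$) and $G_2$ (those meeting at least two). On $G_1$ each $x_i$ contributes to a single $q$, so summing the Lemma~\ref{lem4} estimates over $q$ is harmless, and the $\mathcal{S}_{k-1}$-admissibility is used only to spread out the per-$q$ maximizers, exactly as in your ``interior'' argument. The whole $G_2$ block, however, is handled \emph{in one shot}: one observes that $\{\min\supp x_i : i\in G_2\}$ is an $\mathcal{S}_{k-1}$ set union a singleton (two consecutive elements of $G_2$ must be separated by the start of some $\al_q$), so $\sum_{i\in G_2}c_i<2\e$, and then
$\sum_{q}|\al_q(\sum_{i\in G_2}c_ix_i)|\leqslant\|\sum_{i\in G_2}c_ix_i\|\leqslant\sum_{i\in G_2}c_i<2\e$,
using that $\sum_q\pm\al_q$ is a norming functional. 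This norm estimate is what replaces, and sidesteps, the per-$q$ accounting of the repeated straddler.

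Your fix for the repeated-straddler case does not work. You argue that the very fast growing hypothesis causes the ``weights $1/s(\al_q)$'' to decay along a run $\ell_q=i$. But in the Lemma~\ref{lem4} estimate $|\al_q(x)|<\frac{1}{s(\al_q)}\sum_{G_{\al_q}}c_i+2\max_{G_{\al_q}}c_i$, the term you need to control is the \emph{unweighted} $2\max_{G_{\al_q}}c_i$, and for a run of $q$'s with $\al_q\subset\ran x_i$ (so $G_{\al_q}=\{i\}$) this contributes $2c_i$ for \emph{each} $q$ in the run. Very fast growth of $s(\al_q)$ suppresses the first term but does nothing to the second; summing over the run gives $\approx 2c_i\cdot(\text{run length})$, which is not $O(\e)$. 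The remedy is precisely not to apply Lemma~\ref{lem4} for those $q$'s at all, but to group them and pass to a norm bound $\sum_{q\ \text{in run}}|\al_q(c_ix_i)|\leqslant\|c_ix_i\|\leqslant c_i$ --- at which point you are reproducing the paper's $G_2$ argument. The rest of your outline (the disjointness of the $I_q$'s giving $\sum_q c(I_q)/s(\al_q)\leqslant 1/s(\al_1)$, and the Schreier spreading giving $\sum_q c_{i_q^*}<\e$ and $\sum_{i\in L}c_i<2\e$ on \emph{distinct} straddlers) is sound; the only genuine gap is the multiplicity of straddlers, and it needs the norm-estimate idea, not a decay-of-weights idea, to close.
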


\begin{proof}

Set
\begin{eqnarray*}
G_1 &=& \{i:\;\text{there exists at most
one}\;q\;\text{with}\;\ran\al_q\cap\ran x_i\neq\varnothing\}\\
G_2 &=& \{i:\;\text{there exist at least
two}\;q\;\text{with}\;\ran\al_q\cap\ran x_i\neq\varnothing\}\\
J &=& \{q:\;\text{there exists}\;i\in
G_1\;\text{with}\;\ran\al_q\cap\ran x_i\neq\varnothing\}\\
G^q &=& \{i: \ran\al_q\cap\ran
x_i\neq\varnothing\}\quad\text{for}\;q\in J
\end{eqnarray*}
For $q\in J$, by Lemma \ref{lem4} it follows that
\begin{equation}
|\al_q(\sum_{i=1}^mc_ix_i)| < \frac{1}{s(\al_q)}\sum_{i\in G^q}c_i
+ 2\max\{c_i: i\in G^q\}\label{lemqqq6eq1}
\end{equation}
Choose $i_q\in G^q$ such that $c_{i_q} = \max\{c_i: i\in G^q\}$.
Since $\{\al_q\}_{q=1}^d$ is $\mathcal{S}_{k-1}$-admissible, it
follows that $\{\min\supp x_{i_q}: q\in J\}$ is the union of a
$\mathcal{S}_{k-1}$ set and a singleton. Therefore we conclude the
following.
\begin{equation}
\sum_{q\in J}\max\{c_i: i\in G^q\} < 2\e\label{lemqqq6eq2}
\end{equation}
Hence, combining \eqref{lemqqq6eq1} and \eqref{lemqqq6eq2}, we
have that
\begin{equation}
\sum_{q=1}^d|\al_q(\sum_{i\in G_1}c_ix_i)| < \frac{1}{s(\al_1)} +
4\e\label{lemqqq6eq3}
\end{equation}
Moreover, it is easy to see that $\{\min\supp x_i: i\in G_2\}$ is
the union of a $\mathcal{S}_{k-1}$ set and a singleton and
therefore we have the following.
\begin{equation}
\sum_{q=1}^d|\al_q(\sum_{i\in G_2}c_ix_i)|\leqslant \|\sum_{i\in
G_2}c_ix_i\| \leqslant \sum_{i\in G_2}c_i < 2\e\label{lemqqq6eq4}
\end{equation}
Finally, summing up \eqref{lemqqq6eq3} and \eqref{lemqqq6eq4}, the
desired result follows.

\end{proof}

\begin{prp} Let $0\leqslant k \leqslant n-1$, $\{x_i\}_{i\inn} \subset Ba(\X)$ be a normalized block
sequence.  The following hold:
\begin{enumerate}
\item[(i)] If $\adkx > 0$, then, by passing to a subsequence,
$\{x_i\}_{i\inn}$ generates a strong $\ell_1^{n-k}$ spreading
model. \item[(ii)] If $\al_{k^\prime}\big(\{x_i\}_i\big) = 0$ for
$k^\prime<k$ and $\{w_j\}_{j\inn}$ is a block sequence of
$\{x_i\}_{i\inn}$ such that $w_j = \sum_{i\in F_j}c_ix_i$ is a
$(n-k,\e_j)$ s.c.c. with $\lim_j\e_j = 0$, then
$\al_{n-1}\big(\{w_j\}_j\big) = 0$.
\end{enumerate}
\label{prop3}
\end{prp}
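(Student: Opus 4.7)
For \textbf{(i)}, my plan is to produce a biorthogonal family of Schreier functionals in $W$ whose partial sums along $\mathcal{S}_{n-k}$-admissible index sets remain in $W$. Unpacking $\adkx > 0$ and using $W = -W$, after subsequence extraction I obtain $\delta > 0$, a very fast growing sequence of $\al$-averages $\{\al_q\}_q$ and $\mathcal{S}_k$-admissible blocks $\{F_j\}_j$ such that $\sum_{q \in F_j}\al_q(x_j) \geq \delta$, with every $\al_q$ with $q\in F_j$ meeting $\ran x_j$. A pigeonhole on the at-most-two ``boundary'' $\al$-averages of each $F_j$ reduces matters to the case that the interior $\al_q$'s (those with $\supp\al_q\subset\ran x_j$) account for at least $\delta/2$. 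Setting $F_j^{\mathrm{int}} = \{q\in F_j : \supp \al_q\subset\ran x_j\}$, the functional $g_j = \sum_{q\in F_j^{\mathrm{int}}}\al_q$ lies in $W$ (using $\mathcal{S}_k \subset \mathcal{S}_n$), satisfies $g_j(x_j) \geq \delta/2$, and has $\supp g_j \subset \ran x_j$, giving $g_j(x_i) = 0$ for $i \neq j$. The boundary case is handled analogously by picking one straddling $\al_q$ per $j$ and restricting to $\ran x_j$. For any $F \in \mathcal{S}_{n-k}$ sufficiently spread, the identity $\mathcal{S}_{n-k} \ast \mathcal{S}_k = \mathcal{S}_n$ makes $\sum_{j \in F}g_j \in W$, hence $\|\sum_{j \in F}g_j\| \leq 1$; together with biorthogonality and $g_j(x_j) \geq \delta/4$ this delivers the $c_0^{n-k}$ spreading model of $\{g_j\}$ and the strong $\ell_1^{n-k}$ spreading model of $\{x_j\}$.

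For \textbf{(ii)}, I use Proposition \ref{alternate}: it suffices, for given $\e > 0$, to find $j_0, i_0$ such that $\sum_q|\be_q(w_j)| < \e$ whenever $\{\be_q\}$ is very fast growing and $\mathcal{S}_{n-1}$-admissible with sizes $\geq j_0$ and $j \geq i_0$. The key decomposition is $\mathcal{S}_{n-1} = \mathcal{S}_k \ast \mathcal{S}_{n-k-1}$, which partitions $\{\be_q\} = \bigsqcup_s B_s$ with each $B_s$ very fast growing and $\mathcal{S}_{n-k-1}$-admissible and with $\{\min\supp B_s\}_s \in \mathcal{S}_k$. Since $w_j$ is an $(n-k, \e_j)$ s.c.c., Lemma \ref{lemqqq6} applies to each $B_s$ and yields $\sum_{q \in B_s}|\be_q(w_j)| < 1/s(\be_{\mathrm{first}(s)}) + 6\e_j$; summation over $s$ gives the $1/s$-contributions bounded by $\leq 2/j_0$ via the very fast growth of sizes.

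The main obstacle is controlling $6r\e_j$, where $r$ is the number of pieces, which through the $\mathcal{S}_k$-structure is merely polynomial in $\min\supp\be_1$ and hence not uniform in $\{\be_q\}$. Here the hypothesis $\al_{k'}(\{x_i\}) = 0$ for $k' < k$ is essential: re-decomposing via $\mathcal{S}_{n-1} = \mathcal{S}_{n-k} \ast \mathcal{S}_{k-1}$, each new piece $B_s'$ is $\mathcal{S}_{k-1}$-admissible, so the alternate characterization of $\al_{k-1}(\{x_i\}) = 0$ yields $\sum_{q \in B_s'}|\be_q(x_i)| < \e'$ for all sufficiently large $i$, whence $\sum_{q \in B_s'}|\be_q(w_j)| \leq \sum_i c_i \sum_{q \in B_s'}|\be_q(x_i)| < \e'$ once $\min F_j$ is large enough. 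Using the Lemma \ref{lemqqq6} bound for pieces whose first $\al$-average has large size, the hypothesis-based bound for those with small size, and Lemma \ref{lem4} for the at-most-two $\be_q$'s straddling the boundary of $\ran w_j$ (which gives $|\be_q(w_j)| \leq 1/s(\be_q) + 2\max_i c_i \leq 1/j_0 + 2\e_j$ since individual coefficients of an $(n-k,\e_j)$ s.c.c.\ are bounded by $\e_j$), the required uniform bound follows by appropriate choice of $j_0$ and $i_0$.
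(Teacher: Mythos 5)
Your argument for part (i) follows essentially the same route as the paper: extract a very fast growing sequence of $\al$-averages $\{\al_q\}$ and $\mathcal{S}_k$-admissible blocks $F_j$ with $\sum_{q\in F_j}\al_q(x_j)\geq\e$, form the functionals $x_j^*=\sum_{q\in F_j}\al_q\in W$, and use $\mathcal{S}_{n-k}*\mathcal{S}_k=\mathcal{S}_n$ to see that $\sum_{j\in F}x_j^*$ remains a Schreier functional for $F\in\mathcal{S}_{n-k}$. Your interior/boundary split in the extraction of the biorthogonal functionals is a legitimate and slightly more careful way to arrange $x_j^*(x_i)=0$ for $i\neq j$; the paper handles this by passing to an interlacing subsequence, glossing over the same point. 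Part (i) is fine.

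Part (ii) has a genuine gap which you in fact notice but do not close. Your plan is to decompose the $\mathcal{S}_{n-1}$-admissible sequence $\{\be_q\}_{q\in L}$ into pieces $\{B_s\}_{s=1}^r$ via $\mathcal{S}_{n-1}=\mathcal{S}_k*\mathcal{S}_{n-k-1}$ and apply Lemma \ref{lemqqq6} piece by piece, yielding $\sum_{q\in L}|\be_q(w_j)|\leq 2/j_0 + 6r\e_j$. As you correctly observe, $r$ is bounded only in terms of $\min\supp\be_1$, which is not constrained by the hypothesis $s(\be_q)\geq j_0$; since $\al_{n-1}(\{w_j\}_j)=0$ must be witnessed uniformly over all such $\{\be_q\}$ (Proposition \ref{alternate}), and the only assumption on $\{\e_j\}$ is $\lim_j\e_j=0$ with no rate, the term $6r\e_j$ is genuinely uncontrollable. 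Your proposed repair --- re-decomposing via $\mathcal{S}_{n-1}=\mathcal{S}_{n-k}*\mathcal{S}_{k-1}$ and invoking $\al_{k-1}(\{x_i\}_i)=0$ on each new piece $B_s'$ --- produces a bound $\sum_{q\in B_s'}|\be_q(w_j)|<\e'$ for each $s$, but summing over the $r'$ pieces gives $r'\e'$, and $r'$ (the cardinality of the outer $\mathcal{S}_{n-k}$ set) suffers exactly the same unboundedness as $r$. The two decompositions do not combine: the first can absorb pieces whose leading $\al$-average has large size, the second handles pieces with $\mathcal{S}_{k-1}$-admissible structure, but in either case the final step is a sum over an unbounded number of pieces against a quantity that does not decay with the piece count.

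The paper's proof of (ii) avoids this entirely by partitioning the other index set: not $L$, but the s.c.c.\ index set $F_j$ of $w_j=\sum_{i\in F_j}c_ix_i$. Setting $J_i=\{q\in L:\ran\al_q\cap\ran x_i\neq\varnothing\}$, it splits $F_j$ into $G_1$ (unique $q$), $G_2\setminus G_2'$ (at least two $q$'s, $\{\min\supp\al_q:q\in J_i\}\in\mathcal{S}_{k-1}$), and $G_2'$ ($J_i$ ``deep''). On $G_1$ and $G_2\setminus G_2'$ one uses $\al_{k-1}(\{x_i\}_i)=0$ via Proposition \ref{alternate} together with the convexity $\sum_{i}c_i\leq 1$, so no multiplicity factor ever appears. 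On $G_2'$ the Schreier convolution $\mathcal{S}_\ell*\mathcal{S}_d\subset\mathcal{S}_{\ell+d}$ with $\ell+d\leq n-1$ and $d\geq k$ forces $\{\min\supp x_i:i\in G_2'\}$ to lie in $2\mathcal{S}_{n-k-1}$, and the s.c.c.\ property then gives $\sum_{i\in G_2'}c_i<2\e_j$. The crucial structural point your proposal misses is precisely this: the control has to come from the convexity of the $(c_i)$ and a Schreier-complexity argument on the \emph{vector} side, not from a per-piece application of Lemma \ref{lemqqq6} on the \emph{functional} side, because a per-piece bound of the form $6\e_j$ cannot survive summation over an unbounded number of pieces.

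Finally, your treatment of the boundary $\be_q$'s via Lemma \ref{lem4} and the bound $\max_ic_i\leq\e_j$ is fine; that part is standard and agrees with what the paper does implicitly.
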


\begin{proof}

First we prove (i). Passing to a subsequence of $\{x_i\}_{i \inn}$
and relabeling we can find $\e>0$, a very fast growing sequence of
$\al$-averages $\{\al_q\}_{q \inn}$ and a sequence of successive
finite sets $(F_i)_{i=1}^\infty$  such that for $i \inn$ $\{
\al_q\}_{q \in F_i}$ is $\Sk$ admissible and
$$\sum_{q \in F_i} \al_q(x_i) \geqslant\e$$
for each $i \inn$.  Passing to a further subsequence and relabeling, we can assume that
$$\max \supp (\sum_{q \in F_i} \al_q ) < \min\supp x_{i+1}$$
for each $i \inn$. Set $x_i^* = \sum_{q \in F_i} \al_q$. Then
$x_i^*\in W$, $x^*_i(x_i) > \e$ for all $i\inn$ and $x^*_j(x_i) =
0$ for $i\neq j$. Therefore $\e<\|x_i^*\|\leqslant 1$ and all that
remains to be shown it that $\{x_i^*\}_{i\inn}$ generates a
$c_0^{n-k}$ spreading model.

  Let $F \in \mathcal{S}_{n-k}$. Note that $\{\al_q\}_{q \in \cup_{i\in F} F_i} $ is $\Sn$
admissible. It follows that $\|\sum_{i\in F} x_i^*\| \leqslant 1$. In other words, $\{x^*_i\}_{i \in\nn}$ generates a $c_0^{n-k}$
spreading model.

We now prove (ii).  Let  $w_j = \sum_{i \in F_j} c_i x_i$ be the
$(n-k,\e_j)$ s.c.c.; we claim that $\alpha_{n-1}(\{w_j\}_j) = 0$.
First, pass to a subsequence of $\{w_j\}$ and relabel for
simplicity. Now, fix a sequence $\{\alpha_{q}\}_{q \in
\mathbb{N}}$ of very fast growing $\alpha$-averages and a sequence
$(L_j)_{j \in \mathbb{N}}$ of successive finite subsets
$\mathbb{N}$ such that $\{\alpha_q\}_{q \in L_j}$ is
$\mathcal{S}_{n-1}$ admissible for each $j \inn$.

Let $\e>0$. First, we consider the case $k>0$. Since $\alpha_{k-1}(\{x_i\}_i)=0$ and $\{\alpha_{q}\}_{q \in \mathbb{N}}$ is very fast growing,
by Proposition \ref{alternate} we can find $q_0,i_0 \in \mathbb{N}$
such that for each finite set $L \geqslant q_0$, with $\{\alpha_q\}_{q \in L}$ being $\mathcal{S}_{k-1}$ admissible, and $i \geqslant i_0$, we have
$$ \sum_{q \in L} |\alpha_q (x_i)| < \varepsilon/3.$$
Find $j_0 \in \mathbb{N}$ such that for all $j \geqslant j_0$
\begin{equation}
\min L_{j} \geqslant q_0,~\min F_{j} \geqslant i_0 \mbox{ and } \varepsilon_{j} < \varepsilon/6 \label{e/6}
\end{equation}
Fix $j \geqslant j_0$. We claim that
$$\sum_{q \in L_j} |\alpha_{q}(w_j)| < \varepsilon.$$
This, of course, implies the $\al_{n-1}(\{w_j\}_j)=0$.
To simplify notation, let $L=L_j$ and $F=F_j$. Before passing to the proof we note the following:

For $i \in F$ and $E \subset L$ such that $\{\alpha_q\}_{q \in E}$
is $\mathcal{S}_{k-1}$
admissible, we have
\begin{equation}
\sum_{q \in E} |\alpha_q (x_i)| < \varepsilon /3. \label{e/3}
\end{equation}
Partition $L$ into the following sets:
\begin{equation*}
\begin{split}
& G_1= \{ i \in F : \mbox{ there is a unique $q \in L$ such that } \ran \alpha_q \cap \ran x_i \not= \emptyset \},\\
& G_2= \{ i \in F : \mbox{ there are at least two $q \in L$ such that } \ran \alpha_q \cap \ran x_i \not= \emptyset \}
\end{split}
\end{equation*}
First, consider the case of $G_1$.  For $q \in L$ let
$$H_q= \{ i \in G_1: \ran \alpha_q \cap \ran x_i \not= \emptyset \}.$$
If $q \not= q'$ then $H_q \cap H_{q'}= \emptyset$; and $\cup_{q \in L} H_q \subset F$.  Using (\ref{e/3}) (for singleton subsets of $L$) and the convexity of $(c_i)_{i \in F}$, we have
\begin{equation*}
\begin{split}
\sum_{q \in L} |\alpha_q ( \sum_{i \in G_1} c_i x_i)| &=  \sum_{q \in L} |\alpha_q ( \sum_{i \in H_q} c_i x_i)|\\
& <\frac{\varepsilon}{3}\sum_{q \in L} \sum_{i \in H_q} c_i \leqslant  \frac{\varepsilon}{3}.
\end{split}
\end{equation*}
For $i \in G_2$, set
\begin{equation*}
\begin{split}
& J_i = \{q \in L : \ran \alpha_q \cap \ran x_i \not=\emptyset\}\\
& G_2' =\{ i \in G_2 : \{ \min\supp \alpha_q : q \in J_i\} \not\in S_{k-1}\}.
\end{split}
\end{equation*}
This splits the estimates in the following way:
\begin{equation*}
\begin{split}
\sum_{q \in L} |\alpha_q ( \sum_{i \in G_2} c_i x_i )|&  \leqslant \sum_{i \in G_2} |c_i ( \sum_{q \in J_i} \alpha_q) (x_i)|\\
&= \sum_{i \in G'_2} c_i |( \sum_{q \in J_i} \alpha_q) (x_i)| + \sum_{i \in G_2 \setminus G'_2} c_i |( \sum_{q \in J_i} \alpha_q) (x_i)|
\end{split}
\end{equation*}
Since for each $i \in G_2 \setminus G'_2$, $\{\alpha_q\}_{i \in J_i}$ is $\mathcal{S}_{k-1}$ admissible we can apply (\ref{e/3}) to
conclude that
$$\sum_{i \in G_2 \setminus G'_2} c_i |( \sum_{q \in J_i} \alpha_q) (x_i)| \leqslant  \frac{\varepsilon}{3}
\sum_{i \in G_2 \setminus G'_2} c_i \leqslant
\frac{\varepsilon}{3}.
$$ For the final case, we must observe that
\begin{equation}
\{ \min\supp x_i :  i \in G_2' \} \in 2S_{n-k-1}. \label{2admiss}
\end{equation}
Let $G_2'' = G_2' \setminus \min G_2'$.  For each $i \in G_2''$ it is clear that
\begin{equation}
 \min \supp x_i \geqslant \min\supp \alpha_{\min J_{i'}} \mbox{ for } i' < i \mbox{ and }i' \in G_2'.\label{spread}
\end{equation}
Find $\ell \inn$ such that
$$\{ \min \supp \alpha_{\min J_i} : i \in G_2''\} \in S_\ell.$$
Since
$$ \{ \min\supp\alpha_q : q \in F \} \supset \bigcup_{i \in G_2''} \{ \min\supp \alpha_q : q \in J_i \}.$$
The second set is $\mathcal{S}_{n-1}$ admissible. It is clear that for $i \in G_2''$
$$ \min \supp \alpha_{\min J_i} = \min\{ \min\supp\alpha_q : q \in J_i \}$$
and $\{\min\supp \alpha_q : q \in J_i\} \in S_{d}$, for some $d\geqslant  k$.

The convolution property of the
Schreier sets yields that $\ell + d \leqslant  n-1$.  Therefore $\ell \leqslant  n-d-1 \leqslant  n-k-1$. From (\ref{spread}), it follows that
$$  \{ \min\supp x_i :   i \in G_2''  \}\in S_{n-k-1}.$$
Since we are excluding a singleton, $(\ref{2admiss})$ follows.
Therefore $\sum_{i \in G_2'} c_i < 2\varepsilon_j<\varepsilon/3$, by our choice of $j_0$ (see (\ref{e/6})). Since $\{x_i\}_i \subset Ba(\X)$
$$\sum_{i \in G'_2} c_i ( \sum_{q \in J_i} \alpha_q) (x_i) \leqslant  \sum_{i \in G'_2} c_i < \varepsilon/3.$$
This proves our claim for the case $k>0$.

Now we consider the case $k=0$.  Find $q_0 \in \nn$ such that
$$\frac{1}{s(\alpha_{q_0})}< \varepsilon/2.$$
Now fix $j_0 \inn$ such that for all $j \geqslant j_0$
$$\min L_j \geqslant q_0 \mbox{ and } \e_j < \e/8.$$
Fix $j \geqslant j_0$ and for simplicity let $L=L_j$ and $F =
F_j$.

Using Lemma \ref{lemqqq6} we have
\begin{equation*}
\sum_{q \in L} | \alpha_q (\sum_{i \in F} c_i x_i)| < \frac{\e}{2}
+ 4\cdot\frac{\e}{8} = \e
\end{equation*}
This finishes the proof.
\end{proof}

\begin{prp} Let $\{x_i\}_{i\inn} \subset Ba(\X)$ be a block
sequence such that $\adnminusx = 0$. Then for $\e>0$ there is a subsequence
$\{x'_i\}_{i \inn}$ of $\{x_i\}_{i \inn}$ such that for every $F \in \mathcal{S}_1$
$$\|\sum_{i \in F} x'_i \| < 1+\e.$$
Moreover if $\{x_i\}_{i \inn}$ is normalized there is a subsequence that
generates a spreading model isometric to $c_0$.
\label{cosm}
\end{prp}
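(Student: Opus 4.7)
The plan is to establish $\|\sum_{i \in F} x'_i\| \leqslant 1 + \e$ by bounding $|g(\sum_{i \in F} x'_i)|$ uniformly for $g \in W$, combining the hypothesis $\adnminusx = 0$ with the Schreier convolution identity $\mathcal{S}_n = \mathcal{S}_1 * \mathcal{S}_{n-1}$: the former controls $\mathcal{S}_{n-1}$-admissible very fast growing blocks of $\al$-averages acting on single $x'_k$'s, while the latter allows the decomposition of any $\mathcal{S}_n$-admissible Schreier functional into a bounded number of $\mathcal{S}_{n-1}$-admissible sub-sequences.

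Concretely, first apply Proposition~\ref{alternate} to obtain $j_0, i_0 \inn$ such that $\sum_q |\be_q(x_i)| < \de$ for every $i \geqslant i_0$ and every very fast growing $\mathcal{S}_{n-1}$-admissible sequence $\{\be_q\}$ of $\al$-averages with $s(\be_q) \geqslant j_0$. Then extract a subsequence $\{x'_k\}_{k \inn} = \{x_{n_k}\}_{k \inn}$ with $n_k \geqslant i_0$ and $\min\supp x'_1 \geqslant j_0$, passing to a further diagonal subsequence so that the effective threshold $\de_k$ available at stage $k$ decreases rapidly. For $F = \{k_1 < \cdots < k_m\} \in \mathcal{S}_1$ (so $m \leqslant k_1$) and a Schreier functional $g = \sum_{q=1}^d\al_q$, decompose $\{\min\supp\al_q\}_{q=1}^d = \bigcup_{s=1}^r G_s$ via $\mathcal{S}_n = \mathcal{S}_1 * \mathcal{S}_{n-1}$, with $r \leqslant \min\supp\al_1$ and each $G_s \in \mathcal{S}_{n-1}$; the cases $g \in W_0$ and $g$ an $\al$-average are dispatched by induction on the level of $W$.

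The crux is the following estimate. Writing $J_i = \{q : \ran\al_q \cap \ran x'_{k_i} \neq \varnothing\}$ and letting $q^*$ be the first index overlapping $\sum_i x'_{k_i}$, a chain argument using the very fast growing condition yields $s(\al_q) > \max\supp\al_{q-1} \geqslant \min\supp x'_{k_1} \geqslant j_0$ for every $q \in J_i$ with $q > q^*$. Thus each $\mathcal{S}_{n-1}$-block within $\{q \in J_i : q > q^*\}$ has all sizes at least $j_0$, and $\adnminusx = 0$ gives $\sum_{q \in J_i,\, q > q^*}|\al_q(x'_{k_i})| \leqslant r\de_{k_i}$. The one remaining alpha $\al_{q^*}$, of possibly small size, is handled by applying the desired bound inductively at a lower level of $W$, together with a Lemma~\ref{lem4}-style splitting of $\al_{q^*}$'s internal $f_j$'s into interior ones and a few boundary ones when $\al_{q^*}$ spans several $x'_{k_i}$'s. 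Combining, $|g(\sum_i x'_{k_i})| \leqslant 1 + \e/2 + m r \de$, which is $\leqslant 1 + \e$ once the $\de_k$'s are chosen small enough.

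For the moreover conclusion, iterate with $\e_\ell \downarrow 0$, take a diagonal subsequence, and combine the upper bound with the trivial lower bound $\|\sum_{i \in F} x'_i\| \geqslant \max_i\|x'_i\| = 1$ from the $1$-unconditional basis, giving a spreading model isometric to $c_0$. The main obstacle is the precise treatment of $\al_{q^*}$: since it may simultaneously have small size and span multiple $x'_{k_i}$'s, direct norm bounds are circular, so the induction on the level of $W$ and the boundary analysis must be entwined with the diagonal choice of subsequence so that the parameter $r \leqslant \min\supp\al_1$ does not inflate the error beyond $\e$.
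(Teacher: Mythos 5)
Your proposal follows the same high-level strategy as the paper (diagonal extraction via Proposition~\ref{alternate}, decomposition of the Schreier functional into $\mathcal{S}_{n-1}$-admissible blocks via $\mathcal{S}_n = \mathcal{S}_1 * \mathcal{S}_{n-1}$, and induction on the level of $W$), but it leaves a genuine gap precisely at the point you flag as the ``main obstacle,'' and that obstacle is the heart of the proof, not a detail. You observe that the number $r$ of $\mathcal{S}_{n-1}$-pieces satisfies $r \leqslant \min\supp\al_1$ and that the resulting error term $mr\de$ must be controlled, but you do not specify how the thresholds $\de_k$ should scale relative to the sequence. The paper resolves this by extracting the subsequence so that the $\adnminusx=0$ estimate takes the very specific form
$$\sum_{q=1}^\ell|\al_q(x_i)| < \frac{\e_{i_0}}{i_0\,\max\supp x_{i_0-1}}\qquad\text{(for sizes }\geqslant\min\supp x_{i_0},\ i\geqslant i_0\text{),}$$
where the factor $i_0\,\max\supp x_{i_0-1}$ in the denominator is exactly what cancels the product of the number of vectors ($t\leqslant i_1\leqslant i_0$) and the number of $\mathcal{S}_{n-1}$-blocks ($\nu_0\leqslant\max\supp x_{i_{j_0}}\leqslant\max\supp x_{i_0-1}$). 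Without choosing the thresholds inversely proportional to $i_0\,\max\supp x_{i_0-1}$, the telescoping that makes the error summable simply does not happen, and your bound $1+\e/2+mr\de$ cannot be forced below $1+\e$ uniformly over all $g\in W$.

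There is a second, related imprecision: you cut at $q^*$, the first index such that $\ran\al_{q^*}\cap\ran(\sum_i x'_{k_i})\neq\varnothing$. The paper instead cuts at $q_0 = \min\{q:\max\supp\al_q\geqslant\min\supp x_{i_{j_0+1}}\}$, i.e.\ the first $\al_q$ that extends past the \emph{end} of the first-overlapped vector. This choice is essential: it guarantees that every $\al_q$ with $q>q_0$ has $\min\supp\al_q > \max\supp\al_{q_0}\geqslant\min\supp x_{i_{j_0+1}}$ and size $s(\al_q)>\max\supp\al_{q_0}\geqslant\min\supp x_{i_{j_0+1}}$, so these functionals hit only $x_{i_j}$ with $j\geqslant j_0+1$ and fall under a \emph{single} uniform threshold (namely the one attached to $i_0=i_{j_0+1}$). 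Your $q^*$ does not ensure that the later $\al_q$'s avoid $x'_{k_1}$, so you would have to track thresholds $\de_{k_i}$ separately per vector, and the inductive bookkeeping (your ``entwining'' of induction and boundary analysis) becomes considerably messier. The paper also handles the small-size average $\al_{q_0}$ by a clean two-case split (either the $\al_q$, $q<q_0$, annihilate the vector and one applies the inductive hypothesis directly to $\al_{q_0}$, or $s(\al_{q_0})\geqslant\min\supp x_{i_{j_0}}$ and the $\al_q$ with $q<q_0$ act only on $x_{i_{j_0}}$); invoking a Lemma~\ref{lem4}-style splitting here is not the right tool, since that lemma concerns an $\al$-average acting on a convex combination, which is not the situation at hand.
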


\begin{proof}

Let $\{\e_i\}_{i\inn}$ be a summable sequence of positive reals,
such that $\e_i > 3\sum_{j>i}\e_j$ for all $i\inn$. Using
Proposition \ref{alternate} inductively choose a subsequence,
again denoted by $\{x_i\}_{i\inn}$, such that for any
$i_0\geqslant 2$ and $i\geqslant i_0$, for any
$\{\al_q\}_{q=1}^\ell$ very fast growing and
$\mathcal{S}_{n-1}$-admissible sequence of $\al$-average with
$s(\al_q)\geqslant\min\supp x_{i_0}$ for $q=1,\ldots,\ell$, we
have that
\begin{equation}
\sum_{q=1}^\ell|\al_q(x_i)| < \frac{\e_{i_0}}{i_0\max\supp x_{i_0
- 1}} \label{inductiveest}
\end{equation}

We will show that for any $t \leqslant  i_1 < \ldots < i_t$, $F \subset
\{ 1, \ldots t\}$ we have
$$|\al( \sum_{j\in F} x_{i_j})|<1+2\e_{i_{\min F}}$$
whenever $\al$ is an $\al$-average and
$$|g( \sum_{j \in F} x_{i_j})|< 1 + 3\e_{i_{\min F}}$$
whenever $g$ is a Schreier functional.  This implies the
conclusion of the proposition.

 For functionals in $W_0$ the above is clearly true. Assume, for some $m \geqslant 0$ the above
 holds for any $t \leqslant  i_1 < \ldots < i_t$ and any functional in $W_m$. In the first case, let
 $t \leqslant  i_1 < \ldots < i_t$ and $\al \in W_{m+1}$ with $\al= \frac{1}{\ell} \sum_{q=1}^d f_q, d \leqslant  \ell$.

Set
$$E_1 = \{ q : \mbox{ there exists at most one $j \leqslant  t$ such that } \ran f_q \cap \ran x_{i_j} \not= \emptyset \},$$
and $E_2 = \{1, \ldots , \ell\} \setminus E_1$.  For $q \in E_1$,
we have $|f_q(\sum_{j=1}^n x_{i_j})| \leqslant  1$. Therefore $\sum_{q
\in E_1} | f_q (\sum_{j=1}^n x_{i_j})| \leqslant  \# E_1$.

For  $q $ in $E_2$, let $j_q \in \{1, \ldots, t\}$ be minimum such
that $\ran x_{i_{j_q}} \cap \ran f_q \not= \emptyset$. If $q < q'$
are in $E_2$, $j_q < j_{q'}$.  By the inductive assumption

\begin{equation}
\begin{split}
\sum_{q \in E_2} | f_q ( \sum_{j=1}^t x_{i_j})| &< \sum_{q \in E_2} (1 + 3\e_{i_{j_q}}) \\
& < \# E_2 + 3\e_{i_1} + 3\sum_{j>1} \e_{i_j} < \# E_2 + 4
\e_{i_1}.
\end{split}
\end{equation}

\noindent Therefore

$$|\al( \sum_{j=1}^t x_{i_j})| < \frac{d +4 \e_{i_1}}{\ell} \leqslant 1+2\e_{i_1}.$$

Let $g\in W_{m+1}$ with  $g = \sum_{q=1}^d \al_q$ be a \schr. Set
\begin{eqnarray*}
j_0 &=& \min\{j: \ran g\cap\ran x_{i_j}\neq\varnothing\}\\
q_0 &=& \min\{q: \max \supp \al_q \geqslant \min\supp x_{i_{j_0 + 1}}\}
\end{eqnarray*}
Decompose $\{q: q>q_0\}$ into successive intervals
$\{J_\nu\}_{\nu=1}^{\nu_0}$ such that the following hold:
\begin{itemize}

\item[(i)] $\{q: q>q_0\} = \cup_{\nu=1}^{\nu_0}J_\nu$ and

\item[(ii)] $\{\min\supp\al_q: q\in J_\nu\}$ are maximal
$\mathcal{S}_{n-1}$ sets (except perhaps the last one).

\end{itemize}


Since $\{ \al_q\}_{q=1}^d$ is $\Sn$ admissible,
$\nu_0\leqslant\max\supp x_{i_{j_0}}$. By definition, for $q >
q_0$
$$s(\al_q) > \max \supp \al_{q_0} \geqslant \min \supp x_{i_{j_0 +
1}}.$$
Therefore we can apply $(\ref{inductiveest})$ to conclude that

\begin{equation}
\begin{split}
\sum_{q>q_0}|\al_q(\sum_{j=1}^tx_{i_j})| & =  \sum_{\nu=1}^{\nu_0} \sum_{q \in J_\nu} |\al_q(\sum_{j>j_0}^t x_{i_j})| .\\
&  <  \nu_0 \cdot \frac{\e_{i_{j_0 + 1}}}{i_{j_0 + 1}\max\supp x_{i_{j_0}}} \cdot t \\
& < \e_{i_{j_0}}.\label{otherest}
\end{split}
\end{equation}
For the other part of the functional, we consider two cases.

{\em Case 1:} Assume that for $q < q_0$, $\al_q  ( \sum_{j=1}^t
x_{i_j})=0$.  In this case we simply apply the inductive
assumption to conclude that $\al_{q_0}(\sum_{j=1}^t x_{i_j}) < 1 +
2\e_{i_{j_0}}$.  Combining this with (\ref{otherest}) finishes the
proof.

{\em Case 2:} If the first case does not hold we have that
$s(\al_{q_0}) \geqslant \min \supp x_{i_{j_0}}$.  Using
$(\ref{inductiveest})$ we have
\begin{equation}
\begin{split}
\sum_{q < q_0} |\al_q( \sum_{j=1}^t x_{i_j})| +  |\al_{q_0}( \sum_{j=1}^t x_{i_j})| & = \sum_{q<q_0} |\al_q( x_{i_{j_0}})| + |\al_{q_0}( \sum_{j=j_0}^t x_{i_j})|\\
&<  1 + \e_{i_{j_0}}.
\end{split}
\end{equation}

\noindent Combining this with (\ref{otherest}) gives the desired result.
\end{proof}

\begin{prp} Let $\{x_i\}_{i\inn}$ be a seminormalized block
sequence in $\X$ and $0\leqslant k \leqslant n-1$. The following
assertions are equivalent.
\begin{enumerate}

\item[(i)] $\al_{k^\prime}\big(\{x_i\}_i\big) = 0$ for
$k^\prime<k$.

\item[(ii)] $\{x_i\}_{i\inn}$ has no subsequence generating an
$\ell^{n-k+1}_1$ spreading model.

\end{enumerate}\label{propmmm7}
\end{prp}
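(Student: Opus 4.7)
The plan is to handle the two implications separately: (ii)$\Rightarrow$(i) follows at once from Proposition \ref{prop3}(i), while (i)$\Rightarrow$(ii) requires a contradiction argument combining Proposition \ref{prop3}(ii), Proposition \ref{cosm}, and the Schreier convolution identity $\mathcal{S}_1 * \mathcal{S}_{n-k} = \mathcal{S}_{n-k+1}$ recorded in Section~1.

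For (ii)$\Rightarrow$(i) I would argue by contrapositive. If $\al_{k^\prime}\big(\{x_i\}_i\big) > 0$ for some $k^\prime < k$, Proposition \ref{prop3}(i) produces a subsequence generating a strong $\ell_1^{n-k^\prime}$ spreading model, and in particular an $\ell_1^{n-k^\prime}$ spreading model with some constant $c > 0$. Since $k^\prime \leqslant k-1$, the inclusion $\mathcal{S}_{n-k+1} \subseteq \mathcal{S}_{n-k^\prime}$ holds, so the same lower $\ell_1$-estimate witnesses an $\ell_1^{n-k+1}$ spreading model on that subsequence, contradicting (ii).

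For (i)$\Rightarrow$(ii), assume $\al_{k^\prime}\big(\{x_i\}_i\big) = 0$ for every $k^\prime < k$ and, for contradiction, suppose some subsequence---still denoted $\{x_i\}_i$---generates an $\ell_1^{n-k+1}$ spreading model with constant $c > 0$. Fix $\e_j \searrow 0$ and construct successive $(n-k, \e_j)$ special convex combinations $w_j = \sum_{i \in F_j} c_i x_i$ on tails of $\{x_i\}_i$, arranged so that $\min F_j \geqslant j$. This growth condition guarantees that whenever $F \in \mathcal{S}_1$ has sufficiently large minimum, the set $\{\min F_j : j \in F\}$ lies in $\mathcal{S}_1$ and hence
\[
\bigcup_{j \in F} F_j \;\in\; \mathcal{S}_1 * \mathcal{S}_{n-k} \;=\; \mathcal{S}_{n-k+1}.
\]
Proposition \ref{prop3}(ii) yields $\al_{n-1}\big(\{w_j\}_j\big) = 0$, and each $w_j$ being a convex combination of normalized vectors gives $\{w_j\}_j \subset Ba(\X)$; Proposition \ref{cosm} with $\e = 1$ therefore provides, after passing to a further subsequence (relabelled), the bound $\|\sum_{j \in F} w_j\| < 2$ for every $F \in \mathcal{S}_1$. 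On the other hand, applying the $\ell_1^{n-k+1}$ lower estimate to the $\mathcal{S}_{n-k+1}$-set $\bigcup_{j \in F} F_j$ gives
\[
\Big\|\sum_{j \in F} w_j\Big\| \;=\; \Big\|\sum_{j \in F} \sum_{i \in F_j} c_i x_i\Big\| \;\geqslant\; c \sum_{j \in F}\sum_{i \in F_j} c_i \;=\; c|F|,
\]
so $c|F| < 2$ for all sufficiently late $F \in \mathcal{S}_1$, which fails once $|F| > 2/c$. The delicate point in the argument is coordinating the construction of the $w_j$'s so that, after a single selection of indices, both Proposition \ref{prop3}(ii) (which requires the hypothesis (i)) and the $\ell_1^{n-k+1}$ estimate on the union $\bigcup_{j \in F} F_j$ (which requires the convolution identity to keep the union $\mathcal{S}_{n-k+1}$-admissible) apply to the same sequence $\{w_j\}_j$.
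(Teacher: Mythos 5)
Your proposal is correct and follows essentially the same route as the paper: the (ii)$\Rightarrow$(i) direction reduces at once to Proposition~\ref{prop3}(i) together with the monotonicity of the Schreier families, and for (i)$\Rightarrow$(ii) you build $(n-k,\e_j)$ special convex combinations $w_j$ on tails, apply Proposition~\ref{prop3}(ii) and Proposition~\ref{cosm} to obtain a $c_0$-type upper bound on $\mathcal{S}_1$-sums of the $w_j$, and pit this against the $\ell_1^{n-k+1}$ lower estimate over $\bigcup_{j\in F}F_j\in\mathcal{S}_1*\mathcal{S}_{n-k}=\mathcal{S}_{n-k+1}$. This is precisely the paper's argument, merely with the forward implication recast as a contrapositive.
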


\begin{proof}
First assume that (i) holds. Towards a contradiction, assume that
passing, if necessary, to a subsequence, $\{x_i\}_{i\inn}$
generates an $\ell^{n-k+1}_1$ spreading model, with a lower
constant $\theta
>0$.

We may choose $\{F_j\}_{j\inn}$ increasing $\mathcal{S}_{n-k}$
sets with $F_j\geqslant j$ for all $j\inn$, $\{\e_j\}_{j\inn}$
positive reals with $\lim_j\e_j = 0$ and $\{c_i\}_{\in F_j}$
positive reals, such that $w_j = \sum_{i\in F_j}c_ix_i$ is a
$(n-k,\e_j)$ s.c.c. for all $j\inn$.

If $M = \sup\{\|w_j\|: j\inn\}$, it follows that $\theta < \|w_j\|
\leqslant M$ for all $j\inn$.

For any $t\leqslant j_1 < \cdots < j_t$, $\cup_{q=1}^tF_{j_q}$ is
a $\mathcal{S}_{n-k+1}$ set, therefore

\begin{equation}
\|\sum_{q=1}^tw_{j_q}\| > \theta\cdot t\label{propmmm7eq1}
\end{equation}

Propositions \ref{prop3}(ii) and \ref{cosm}, yield that passing,
if necessary, to subsequence, for any $t\leqslant j_1 < \cdots <
j_t$ the following holds.

\begin{equation}
\|\sum_{q=1}^tw_{j_q}\| < 2M\label{propmmm7eq2}
\end{equation}

For $t$ appropriately large, \eqref{propmmm7eq1} and
\eqref{propmmm7eq2} together yield a contradiction.

Now assume that (ii) is holds. Let $0\leqslant k^\prime \leqslant
n-1$ such that $\al_{k^\prime}\big(\{x_i\}_i\big) > 0$.
Proposition \ref{prop3}(i) yields that passing, if necessary, to a
subsequence, $\{x_i\}_{i\inn}$ generates an $\ell_1^{n-k^\prime}$
spreading model. Since (ii) holds, we have that $n-k^\prime < n-k
+ 1$, therefore $k \leqslant k^\prime$ and this completes the
proof.

\end{proof}

\begin{prp} Let $\{x_i\}_{i\inn}$ be a seminormalized block
sequence in $\X$ and $0\leqslant k \leqslant n-1$. The following
assertions are equivalent.
\begin{enumerate}

\item[(i)] $\al_k\big(\{x_i\}_i\big) > 0$.

\item[(ii)] $\{x_i\}_{i\inn}$ has a subsequence generating a
strong $\ell^{n-k}_1$ spreading model.

\end{enumerate}\label{propmmm8}
\end{prp}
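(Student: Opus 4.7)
The plan is to derive both directions directly from Propositions \ref{prop3}(i) and \ref{propmmm7}, together with a brief monotonicity observation about the $\al$-indices, so essentially no new estimates are needed.

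The direction (i) $\Rightarrow$ (ii) is literally the content of Proposition \ref{prop3}(i): if $\al_k(\{x_i\}_i)>0$, then after passing to a subsequence, $\{x_i\}_i$ generates a strong $\ell_1^{n-k}$ spreading model. So nothing further is required here.

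For (ii) $\Rightarrow$ (i), I would first use the remark following the definition of strong $\ell_1^k$ spreading models to note that a strong $\ell_1^{n-k}$ spreading model is in particular an $\ell_1^{n-k}$ spreading model. Thus $\{x_i\}_i$ has a subsequence generating an $\ell_1^{n-k}$ spreading model. I would then apply Proposition \ref{propmmm7} with the index parameter of that proposition equal to $k+1$ (so that $n-(k+1)+1=n-k$): condition (ii) of that proposition fails, and hence so does condition (i), which yields some $k'<k+1$, i.e.\ $k'\leqslant k$, with $\al_{k'}(\{x_i\}_i)>0$.

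To finish, I would upgrade $\al_{k'}(\{x_i\}_i)>0$ for some $k'\leqslant k$ to $\al_k(\{x_i\}_i)>0$ by a one-line monotonicity remark: since $\mathcal{S}_{k'}\subseteq\mathcal{S}_k$ whenever $k'\leqslant k$, any $\mathcal{S}_{k'}$-admissible collection of $\al$-averages is automatically $\mathcal{S}_k$-admissible, so any witness to $\al_{k'}>0$ is also a witness to $\al_k>0$. No real obstacle is anticipated; the argument is essentially a bookkeeping consequence of the earlier propositions.
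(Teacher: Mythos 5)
Your direction (i) $\Rightarrow$ (ii) is exactly the paper's, and your (ii) $\Rightarrow$ (i) is essentially the contrapositive of the paper's argument: the paper assumes $\al_k(\{x_i\}_i) = 0$, uses the same monotonicity of the $\al$-indices to get $\al_{k'}(\{x_i\}_i) = 0$ for $k' < k+1$, and then invokes Proposition \ref{propmmm7} with index parameter $k+1$ to derive a contradiction. Your monotonicity observation is correct and is indeed implicitly relied upon by the paper.

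There is, however, a genuine gap at the boundary case $k = n-1$. Proposition \ref{propmmm7} is stated only for index parameters in the range $0 \leqslant \cdot \leqslant n-1$, so your application with parameter $k+1$ falls outside its scope precisely when $k = n-1$. The paper explicitly splits off this case and handles it with Proposition \ref{cosm}: if $\al_{n-1}(\{x_i\}_i) = 0$, then a subsequence generates a $c_0$ spreading model, which is incompatible with the assumed $\ell_1^1$ spreading model. Your proof would need this extra step (or, equivalently, a separate argument that Proposition \ref{propmmm7} extends to parameter $n$, which is true via Corollary \ref{cormmm9co} but not stated in the paper). So, contrary to the last sentence of your proposal, there is a real, if minor, obstacle: the boundary case requires an input that the cited propositions do not literally supply.
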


\begin{proof}
If (i) holds, then by Proposition \ref{prop3} so does (ii).

Assume now that (ii) is holds. Pass to a subsequence of
$\{x_i\}_{i\inn}$ generating an $\ell^{n-k}_1$ spreading model and
relabel for simplicity. Towards a contradiction assume that
$\al_k\big(\{x_i\}_i\big) = 0$.

Consider first the case $k = n-1$. Then by Proposition \ref{cosm},
$\{x_i\}_{i\inn}$ has a subsequence generating a $c_0$ spreading
model, which is absurd.

Otherwise, if $k < n-1$, then evidently we have that
$\al_{k^\prime}\big(\{x_i\}_i\big) = 0$ for $k^\prime < k+1$.
Proposition \ref{propmmm7} yields a contradiction.

\end{proof}

Combining Propositions \ref{cosm}, \ref{propmmm7} and
\ref{propmmm8}, we conclude the following.

\begin{cor} Let $\{x_i\}_{i\inn}$ be a normalized block sequence
in $\X$. Then the following assertions are equivalent.

\begin{enumerate}

\item[(i)] Any subsequence of $\{x_i\}_{i\inn}$ has a further
subsequence generating an isometric $c_0$ spreading model.

\item[(ii)] $\al_{n-1}\big(\{x_i\}_i\big) = 0$.

\end{enumerate}\label{cormmm9co}

\end{cor}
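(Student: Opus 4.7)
The plan is to prove the two implications separately, drawing on Propositions \ref{cosm}, \ref{propmmm7} and \ref{propmmm8}.

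For the direction (ii) $\Rightarrow$ (i), the first observation will be that the hypothesis $\adnminusx = 0$ is inherited by every subsequence of $\{x_i\}_i$, since the quantifier over subsequences is already built into the definition of the $\al_{n-1}$-index. Given then an arbitrary subsequence $\{x_{i_j}\}_j$, the identity $\al_{n-1}\big(\{x_{i_j}\}_j\big) = 0$ still holds, and Proposition \ref{cosm} immediately supplies a further subsequence generating a spreading model isometric to $c_0$.

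For the converse (i) $\Rightarrow$ (ii) I will argue by contrapositive. Assuming $\adnminusx > 0$, Proposition \ref{propmmm8} applied with $k = n-1$ produces a subsequence $\{x_{i_j}\}_j$ generating a strong $\ell^1_1$ spreading model; in particular this subsequence generates an $\ell_1$ spreading model with some uniform lower constant $C > 0$. This lower bound passes to every further subsequence, because for $m \leqslant j_1 < \cdots < j_m$ one still has $m \leqslant i_{j_1} < \cdots < i_{j_m}$, so $\{i_{j_1},\ldots, i_{j_m}\} \in \mathcal{S}_1$ and the $\ell_1$ estimate applies. Hence no further subsequence of $\{x_{i_j}\}_j$ can generate a $c_0$ spreading model, contradicting (i).

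Beyond direct citation of the propositions, the only inputs are the two hereditariness statements above, both following immediately from the definitions of the $\al_k$-index and of $\mathcal{S}_1$; so no real obstacle is expected. Proposition \ref{propmmm7} is not strictly required on this route, but the same contrapositive could alternatively be produced by using the monotonicity $\al_k = 0 \Rightarrow \al_{k'} = 0$ for $k' < k$ (which follows from $\mathcal{S}_{k'} \subseteq \mathcal{S}_k$) together with \ref{propmmm7} applied at $k = n$ to extract an $\ell_1$ spreading model as the obstruction.
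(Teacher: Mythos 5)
Your proof is correct and is the natural assembly of the cited propositions, which is all the paper does (it records no argument, only the list of propositions to combine). The forward direction (ii) $\Rightarrow$ (i) correctly observes that $\al_{n-1}=0$ is automatically hereditary along subsequences because the definition already quantifies over all subsequences, and then invokes Proposition \ref{cosm}; the converse correctly goes through Proposition \ref{propmmm8} at $k=n-1$ to produce a subsequence with a uniform $\ell_1^1$ lower estimate, which is hereditary (since $\mathcal{S}_1$ is spreading) and is incompatible with any further subsequence generating $c_0$.

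One small caveat about your closing remark: the alternative route via Proposition \ref{propmmm7} ``applied at $k=n$'' is formally outside that proposition's stated range $0\leqslant k\leqslant n-1$, so as written it is not an application of \ref{propmmm7} but of an unstated extension of it; the route through \ref{propmmm8} that you actually use is the clean one. Also, the indexing in the line ``for $m\leqslant j_1<\cdots<j_m$ one still has $m\leqslant i_{j_1}<\cdots<i_{j_m}$'' is a bit scrambled (the $i$'s and $j$'s are playing two roles at once), but the underlying point — that the $\mathcal{S}_1$-estimate survives passage to a subsequence because the resulting index set is again an $\mathcal{S}_1$ set — is exactly right.
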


\begin{rmk} Every normalized weakly null sequence generating a $c_0$ spreading
model satisfies $\al_{n-1}\big(\{x_i\}_i\big) = 0$. The above
yields that $c_0$ spreading models generated by normalized weakly
null sequences are always isometric to the usual basis of
$c_0$.\label{isomcosm}
\end{rmk}

\begin{cor} Let $\{x_i\}_{i\inn}$ be a normalized block sequence
in $\X$ and $0\leqslant k\leqslant n-1$. Then the following
assertions are equivalent.

\begin{enumerate}

\item[(i)] $\al_k\big(\{x_i\}_i\big) > 0$ and
$\al_{k^\prime}\big(\{x_i\}_i\big) = 0$ for $k^\prime<k$.

\item[(ii)] $\{x_i\}_{i\inn}$ has a subsequence generating a
strong $\ell^{n-k}_1$ spreading model and no subsequence of
$\{x_i\}_{i\inn}$ generates a strong $\ell^{n-k+1}_1$ spreading
model.

\item[(iii)] $\{x_i\}_{i\inn}$ has a subsequence generating an
$\ell^{n-k}_1$ spreading model and no subsequence of
$\{x_i\}_{i\inn}$ generates an $\ell^{n-k+1}_1$ spreading model.

\end{enumerate}
\label{cormmm9ell1}
\end{cor}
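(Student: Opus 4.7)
The plan is to close the cycle $(i)\Rightarrow(ii)\Rightarrow(iii)\Rightarrow(i)$, feeding off Propositions \ref{propmmm7} and \ref{propmmm8} and falling back on Corollary \ref{cormmm9co} for one edge case. Two elementary observations will be used throughout: because $\mathcal{S}_q\subseteq\mathcal{S}_p$ for $q\leqslant p$, a sequence generating an $\ell_1^p$ (respectively strong $\ell_1^p$, respectively $c_0^p$) spreading model automatically generates an $\ell_1^q$ (respectively strong $\ell_1^q$, respectively $c_0^q$) one; and the $\al_k$-indices pass to subsequences, so that $\al_k\big(\{x_{i_j}\}_j\big)>0$ implies $\al_k\big(\{x_i\}_i\big)>0$.

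For $(i)\Rightarrow(ii)$, I would plug $\adkx>0$ into Proposition \ref{propmmm8} to extract a subsequence generating a strong $\ell_1^{n-k}$ spreading model, and then rule out any subsequence generating a strong $\ell_1^{n-k+1}$ spreading model: such a subsequence would a fortiori generate an $\ell_1^{n-k+1}$ spreading model, which by Proposition \ref{propmmm7} is precisely the obstruction to the standing hypothesis $\al_{k'}\big(\{x_i\}_i\big)=0$ for all $k'<k$.

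For $(ii)\Rightarrow(iii)$ the strong $\ell_1^{n-k}$ subsequence is in particular an $\ell_1^{n-k}$ one. If some subsequence $\{y_j\}_j$ of $\{x_i\}_i$ generated an $\ell_1^{n-k+1}$ spreading model, then Proposition \ref{propmmm7} applied to $\{y_j\}_j$ would produce some $k_0<k$ with $\al_{k_0}\big(\{y_j\}_j\big)>0$, and Proposition \ref{propmmm8} applied to $\{y_j\}_j$ would yield a further subsequence generating a strong $\ell_1^{n-k_0}$ spreading model. Since $n-k_0\geqslant n-k+1$, monotonicity would upgrade this to a strong $\ell_1^{n-k+1}$ spreading model, contradicting (ii).

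For $(iii)\Rightarrow(i)$ the absence of an $\ell_1^{n-k+1}$ subsequence gives $\al_{k'}\big(\{x_i\}_i\big)=0$ for $k'<k$ by Proposition \ref{propmmm7}; the remaining task is to show $\adkx>0$. Arguing by contradiction, if $\adkx=0$ then $\al_{k'}\big(\{x_i\}_i\big)=0$ for all $k'<k+1$. When $k\leqslant n-2$, Proposition \ref{propmmm7} applied with $k+1$ in place of $k$ forbids any subsequence of $\{x_i\}_i$ from generating an $\ell_1^{n-k}$ spreading model, contradicting (iii). When $k=n-1$, one has $\adnminusx=0$, so Corollary \ref{cormmm9co} forces every subsequence to contain a further subsequence generating an isometric $c_0$ spreading model; applied to the $\ell_1^1$ subsequence furnished by (iii), this would produce a seminormalized sequence that is simultaneously $c_0$- and $\ell_1^1$-spreading, which is absurd since $\mathcal{S}_1$-sums cannot be both uniformly bounded and of norm proportional to their cardinality. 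The main work is already packaged into Propositions \ref{propmmm7} and \ref{propmmm8}, so only mild index bookkeeping between $n-k$ and $n-k+1$ remains; the one point requiring care is remembering to treat the boundary case $k=n-1$ via Corollary \ref{cormmm9co} rather than Proposition \ref{propmmm7}.
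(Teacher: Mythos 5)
Your proof is correct and follows essentially the same cycle $(i)\Rightarrow(ii)\Rightarrow(iii)\Rightarrow(i)$ as the paper, invoking Propositions \ref{propmmm7} and \ref{propmmm8} for the main implications and Corollary \ref{cormmm9co} for the boundary case $k=n-1$ in the last step. You spell out a few steps the paper leaves terse (the monotonicity of $\ell_1^m$ and strong $\ell_1^m$ spreading models in $m$, and why $c_0$ and $\ell_1^1$ spreading models are incompatible), but the argument is the same.
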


\begin{proof}
Assume first that (i) holds. Propositions \ref{propmmm7} and
\ref{propmmm8} yield that (ii) also holds.

Assume now that (ii) is true. To prove that (iii) is true as well,
all that needs to be shown is that no subsequence of
$\{x_i\}_{i\inn}$ generates an $\ell^{n-k+1}_1$ spreading model.
Towards a contradiction, assume that this is not the case.
Proposition \ref{propmmm7} yields that there exists $k^\prime < k$
such that $\al_{k^\prime}\big(\{x_i\}_i\big) > 0$. In turn,
Proposition \ref{propmmm8} yields that $\{x_i\}_{i\inn}$ has a
subsequence that generates a strong $\ell^{n-k^\prime}_1$
spreading model. The fact that $k^\prime<k$ and no subsequence
$\{x_i\}_{i\inn}$ generates a strong $\ell^{n-k+1}_1$ spreading
model yields a contradiction.

For the last part, assume that (iii) holds. We will show that so
does (i). Proposition \ref{propmmm7} yields that
$\al_{k^\prime}\big(\{x_i\}_i\big) = 0$ for $k^\prime<k$. Towards
a contradiction, assume that $\al_k\big(\{x_i\}_i\big) = 0$.

If $k = n-1$, Corollary \ref{cormmm9co} yields that any
subsequence of $\{x_i\}_{i\inn}$ has a further subsequence
generating a $c_0$ spreading model, which is absurd.

Otherwise, if $k < n-1$, then $\al_{k^\prime}\big(\{x_i\}_i\big) =
0$ for $k^\prime<k + 1$. Once more, Proposition \ref{propmmm7}
yields that no subsequence of $\{x_i\}_{i\inn}$ generates an
$\ell^{n-k}_1$ spreading model, a contradiction which completes
the proof.

\end{proof}

Corollaries \ref{cormmm9co} and \ref{cormmm9ell1} easily yield the
following.

\begin{cor} Let $\{x_i\}_{i\inn}$ be a normalized weakly null
sequence in $\X$. Then passing, if necessary, to a subsequence,
exactly one of the following holds.

\begin{enumerate}

\item[(i)] $\{x_i\}_{i\inn}$ generates an isometric $c_0$
spreading model.

\item[(ii)] There exists $0\leqslant k\leqslant n-1$ such that
$\{x_i\}_{i\inn}$ generates a strong $\ell_1^{n-k}$ spreading
model and no subsequence of it generates an $\ell_1^{n-k+1}$
spreading model.

\end{enumerate}\label{cormmm9dich}

\end{cor}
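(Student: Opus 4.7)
The plan is to reduce to the block sequence case by a standard perturbation and then simply combine Corollaries \ref{cormmm9co} and \ref{cormmm9ell1}. First I would use the usual Bessaga--Pelczynski-type argument: since $\{x_i\}_{i\inn}$ is a normalized weakly null sequence in $\X$, which has a Schauder basis, after extracting a subsequence and relabeling I may find a normalized block sequence $\{y_i\}_{i\inn}$ with $\sum_i\|x_i - y_i\| < \infty$. The transfer principle I will rely on is that up to a further subsequence, $c_0$ and $\ell_1^k$ spreading models are preserved under such $\ell_1$-small perturbations, while the preservation of strong $\ell_1^k$ spreading models is exactly Remark \ref{rmkstrongell1knonblock}.

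Next I would dichotomize on the block sequence $\{y_i\}_{i\inn}$ according to whether $\adnminusx[\{y_i\}_i] = 0$ or not. If it is, Corollary \ref{cormmm9co} gives a further subsequence of $\{y_i\}_{i\inn}$ generating an isometric $c_0$ spreading model; transferring back to $\{x_i\}_{i\inn}$, the corresponding subsequence is still normalized and weakly null, so by Remark \ref{isomcosm} any $c_0$ spreading model it generates is already isometric, giving alternative (i). Otherwise, let $0 \leqslant k \leqslant n-1$ be the smallest index with $\al_k\big(\{y_i\}_i\big) > 0$, so that $\al_{k^\prime}\big(\{y_i\}_i\big) = 0$ for all $k^\prime < k$; then Corollary \ref{cormmm9ell1} produces a subsequence of $\{y_i\}_{i\inn}$ generating a strong $\ell_1^{n-k}$ spreading model while no subsequence of it generates an $\ell_1^{n-k+1}$ one. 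By Remark \ref{rmkstrongell1knonblock} the same strong $\ell_1^{n-k}$ spreading model survives on the corresponding subsequence of $\{x_i\}_{i\inn}$; and if some subsequence of $\{x_i\}_{i\inn}$ generated an $\ell_1^{n-k+1}$ spreading model, the perturbation lemma would push this property onto the matching subsequence of $\{y_i\}_{i\inn}$, contradicting the choice of $k$. This yields alternative (ii).

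Finally, to see that exactly one of (i), (ii) occurs, note that a normalized $c_0$ spreading model is incompatible with any $\ell_1^j$ spreading model for $j \geqslant 1$, so the two alternatives are mutually exclusive. I expect the main obstacle to be the bookkeeping involved in the transfer between $\{x_i\}$ and $\{y_i\}$ — in particular, ensuring that both the existence of a strong $\ell_1^{n-k}$ spreading model and the non-existence of an $\ell_1^{n-k+1}$ one pass back and forth — but both points are already isolated in the preliminary remarks cited above, so the argument should be short.
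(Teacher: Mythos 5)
Your proposal is correct and follows the same route as the paper, which simply states that Corollaries \ref{cormmm9co} and \ref{cormmm9ell1} yield the result; you have merely made explicit the standard block-sequence reduction and the transfer of (strong) spreading models along an $\ell_1$-small perturbation, both of which the paper leaves implicit.
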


\begin{rmk} Corollaries \ref{cormmm9ell1} and \ref{cormmm9dich} yield that
whenever a normalized weakly null sequence generates an
$\ell_1^{n-k}$ spreading model, for some $0\leqslant k\leqslant
n$, then passing, if necessary, to a subsequence, it generates a
strong $\ell_1^{n-k}$ spreading model.\label{strongell1}
\end{rmk}

As we will show in Proposition \ref{propmmm14}, any block subspace
of $\X$, hence any subspace of $\X$, contains a normalized weakly
null sequence generating a $c_0$ spreading model and for any
$0\leqslant k \leqslant n-1$, it contains a normalized weakly null
sequence generating an $\ell_1^{n-k}$ spreading model having no
subsequence generating an $\ell_1^{n-k+1}$ spreading model.

Although in the usual sense of spreading models, any subspace of
$\X$ admits exactly two types of them, in the sense of higher
order spreading models, any subspace of $\X$ admits exactly $n+1$
types.

It is an interesting question, whether for given $n\inn$ there
exists a Banach space $X$, such that any subspace of it admits
exactly $n+1$ types of spreading models, in the usual sense.

\subsection*{Spreading models of subspaces of $\X$}

\begin{prp}
Let $\{x_i\}_{i\inn}$ be a normalized block sequence in $\X$ that
generates a spreading model isometric to $c_0$, $\{F_j\}_{j\inn}$
be a sequence of successive subsets of the naturals, such that
$\#F_j\leqslant\min F_j$, for all $j\inn$ and $\lim_j\#F_j =
\infty$. Then if $y_j = \sum_{i\in F_j}x_i$, there exists a
subsequence of $\{y_j\}_{j\inn}$ generating an $\ell_1^n$
spreading model.\label{propmmm10}
\end{prp}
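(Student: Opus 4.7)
The plan is to exhibit, for any $G=\{j_1<\cdots<j_t\}\in\mathcal{S}_n$ and coefficients $\{a_{j_q}\}$, a Schreier functional in $W$ that witnesses a lower $\ell_1$-estimate on $\sum_q a_{j_q}y_{j_q}$. The key observation is that $y_j=\sum_{i\in F_j}x_i$ is essentially a ``ready-made'' $\alpha$-average: because $\#F_j\leqslant\min F_j$, the $\alpha$-average built from norming functionals for the individual $x_i$'s will have size $\#F_j$ and satisfy $\mathcal{S}_1$-admissibility, and because $\#F_j\to\infty$ we can arrange these averages into a very fast growing $\mathcal{S}_n$-admissible sequence.

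First I would pass to a subsequence of $\{y_j\}_{j\inn}$, relabel, and assume simultaneously that (a) $\min\supp y_j\geqslant j$, (b) $\#F_j\geqslant 2$ is strictly increasing, and (c) $\#F_{j+1}>\max\supp y_j$. Condition (c) is the only non-automatic one and uses $\#F_j\to\infty$. Next, using that $\|x_i\|=1$ and that the 1-unconditional norming set $W$ is closed under restriction to intervals, select $\phi_i\in W$ with $\supp\phi_i\subseteq\ran x_i$ and $\phi_i(x_i)>1/2$. Set
\begin{equation*}
\alpha_q=\frac{1}{\#F_{j_q}}\sum_{i\in F_{j_q}}\phi_i,\qquad q=1,\dots,t.
\end{equation*}
Since $\#F_{j_q}\geqslant 2$ and the $\phi_i$ are successive, each $\alpha_q$ is an $\alpha$-average in $W$ with $s(\alpha_q)=\#F_{j_q}$.

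I would then verify that $\{\alpha_q\}_{q=1}^{t}$ is $\mathcal{S}_n$-admissible and very fast growing. Admissibility follows from $\min\supp\alpha_q\geqslant\min\supp y_{j_q}\geqslant j_q$ together with the spreading property of $\mathcal{S}_n$, since $\{j_q\}\in\mathcal{S}_n$. The very-fast-growing condition $s(\alpha_{q+1})>\max\supp\alpha_q$ reduces to $\#F_{j_{q+1}}>\max\supp y_{j_q}$, which is immediate from (b) and (c). Consequently $g=\sum_{q=1}^t\alpha_q$ is a Schreier functional, hence $g\in W$.

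Finally, because $\supp\alpha_q\subseteq\ran y_{j_q}$, the matrix $(\alpha_q(y_{j_{q'}}))_{q,q'}$ is diagonal, and $\alpha_q(y_{j_q})=\frac{1}{\#F_{j_q}}\sum_{i\in F_{j_q}}\phi_i(x_i)>1/2$. Hence
\begin{equation*}
\Bigl\|\sum_{q=1}^t a_{j_q}y_{j_q}\Bigr\|\geqslant g\Bigl(\sum_{q=1}^t a_{j_q}y_{j_q}\Bigr)=\sum_{q=1}^t a_{j_q}\alpha_q(y_{j_q})>\tfrac{1}{2}\sum_{q=1}^t|a_{j_q}|,
\end{equation*}
for non-negative coefficients; 1-unconditionality extends this to arbitrary scalars. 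Since $\{y_j\}_j$ is seminormalized (bounded above by $\|y_j\|\leqslant$ an absolute constant coming from the isometric $c_0$ spreading model, and bounded below by the preceding estimate applied to singletons), this shows that the subsequence generates an $\ell_1^n$ spreading model with uniform constant. The only point requiring mild care is the routine verification that $W$ is stable under interval restriction, which lets us localise the $\phi_i$ inside $\ran x_i$ and thereby get the clean diagonal action above; everything else is bookkeeping around the subsequence extraction in Step~1.
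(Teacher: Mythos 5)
Your proof is correct and follows essentially the same strategy as the paper: you recognize that each $y_{j}$ carries a natural $\alpha$-average of size $\#F_{j}$ supported in $\ran y_{j}$, use $\#F_j\leqslant\min F_j$ to get $\mathcal{S}_n$-admissibility via the spreading property, and use $\#F_j\to\infty$ to extract a subsequence making these averages very fast growing. The paper compresses the lower-estimate step by observing this gives $\al_0\big(\{y_{j_m}\}_m\big)>0$ and invoking Proposition \ref{prop3}(i), whereas you write out the resulting Schreier functional and the diagonal action explicitly; the underlying construction is identical.
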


\begin{proof}
Since $\{x_i\}_{i\inn}$ generates a spreading model isometric to
$c_0$, it follows that $\|y_j\|\rightarrow 1$. By Proposition
\ref{prop3}, it suffices to choose $\{y_{j_m}\}_{m\inn}$ a
subsequence of $\{y_j\}_{j\inn}$, such that
$\al_0\big(\{y_{j_m}\}_m\big) > 0$. Set $j_1 = 1$ and assume that
$j_1,\ldots,j_{m-1}$ have been chosen. Set $d = \max\{\max\supp
y_{j_{m-1}},\#F_{j_{m-1}}\}$ and choose $j_m
> j_{m-1}$ such that $\#F_{j_m} > d$.

To see that $\{y_{j_m}\}_{m\inn}$ generates an $\ell_1^n$
spreading model, notice that for $m > 1$, there exists an
$\al$-average $\al_m$ with $\ran\al_m \subset \ran y_{j_m}$ and
$s(\al_m) = \#F_{j_m} > \max\{\max\supp \al_{m-1}, s(\al_{m-1})\}$
such that $\al_m(y_{j_m})\rightarrow 1$. Therefore
$\al_0\big(\{y_{j_m}\}_m\big) > 0$.

\end{proof}

\begin{cor} The space $\X$ does not contain seminormalized weakly null sequences generating $c_0^2$ or $\ell^{n+1}_1$ spreading
models.\label{cormmm11}
\end{cor}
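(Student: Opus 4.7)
The plan is to treat the two non-existence claims separately, starting from the dichotomy for normalized weakly null sequences provided by Corollary~\ref{cormmm9dich}. Given a seminormalized weakly null sequence $\{x_i\}_i$ in $\X$, rescaling to a normalized sequence does not change whether it generates a $c_0^2$ or an $\ell_1^{n+1}$ spreading model (only the constant $C$ is affected). Corollary~\ref{cormmm9dich} then provides, after passing to a subsequence, two mutually exclusive options: either $\{x_i\}_i$ generates an isometric $c_0$ spreading model, or it generates a strong $\ell_1^{n-k}$ spreading model for some $0\leqslant k\leqslant n-1$ while no subsequence of it generates an $\ell_1^{n-k+1}$ spreading model.

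The $\ell_1^{n+1}$ part will be a two-line consequence of this dichotomy. The isometric $c_0$ alternative is incompatible with any $\ell_1$-type lower estimate and is thus ruled out. In the other alternative, the inclusion $\mathcal{S}_{n-k+1}\subset\mathcal{S}_{n+1}$ means that an $\ell_1^{n+1}$ lower estimate on $\{x_i\}_i$ automatically descends to an $\ell_1^{n-k+1}$ lower estimate, contradicting the assertion that no subsequence of $\{x_i\}_i$ generates an $\ell_1^{n-k+1}$ spreading model.

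For the $c_0^2$ part, if $\{x_i\}_i$ generates a $c_0^2$ spreading model with constant $C$, then it generates a $c_0$ spreading model, so the $\ell_1^{n-k}$ branch of the dichotomy is impossible and a subsequence must generate an isometric $c_0$ spreading model. The plan is then to combine Proposition~\ref{propmmm10} with the $c_0^2$ upper estimate: choose successive sets $F_j\subset\nn$ with $\#F_j\leqslant\min F_j$, $\min F_j\geqslant j$ and $\#F_j\to\infty$, set $y_j=\sum_{i\in F_j}x_i$, and apply Proposition~\ref{propmmm10} to pass to a subsequence $\{y_{j_m}\}_m$ generating an $\ell_1^n$ spreading model with some constant $c>0$. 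For any $G\in\mathcal{S}_1$ with $\min G$ sufficiently large, the set $\bigcup_{m\in G}F_{j_m}$ is $\mathcal{S}_2$-admissible (by the convolution identity $\mathcal{S}_1\ast\mathcal{S}_1=\mathcal{S}_2$ recalled in Section~1, together with the choice of the $F_j$ and of the indices $j_m$), so the $c_0^2$ hypothesis yields $\|\sum_{m\in G}y_{j_m}\|\leqslant C$, while the $\ell_1^n$ estimate yields $\|\sum_{m\in G}y_{j_m}\|\geqslant c\cdot\#G$. These two bounds contradict each other as $\#G\to\infty$.

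The main obstacle I expect lies in this last argument, namely in the careful bookkeeping of Schreier admissibility when stacking block sums: the subsequence $\{j_m\}_m$ supplied by Proposition~\ref{propmmm10} must be thinned further (if necessary) so that $\min F_{j_m}$ grows fast enough for $\bigcup_{m\in G}F_{j_m}$ to be genuinely $\mathcal{S}_2$-admissible for every relevant $G\in\mathcal{S}_1$. Once that is in place, both of the contradicting estimates fall out immediately from the $c_0^2$ hypothesis and from the tools already established in the section.
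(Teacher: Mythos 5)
Your proposal is correct and follows essentially the same route as the paper: the $\ell_1^{n+1}$ part is read off from the dichotomy in Corollary~\ref{cormmm9dich} (the paper states this without elaboration, and your filling-in via $\mathcal{S}_{n-k+1}\subset\mathcal{S}_{n+1}$ is exactly the intended argument), and the $c_0^2$ part combines Proposition~\ref{propmmm10} with the observation that a union of at most $\min G$ successive $\mathcal{S}_1$-sets indexed by $G\in\mathcal{S}_1$ lands in $\mathcal{S}_2$, producing a lower $\ell_1$ estimate over an $\mathcal{S}_2$-set that clashes with the $c_0^2$ upper bound. The only addition you make is to spell out the thinning/bookkeeping needed to ensure $\mathcal{S}_2$-admissibility, which the paper leaves implicit.
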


\begin{proof}
Assume that there exists a seminormalized weakly null sequence
$\{x_i\}_{i\inn}$ generating a $c_0^2$ spreading model. We may
therefore assume that it is a block sequence. By Proposition
\ref{propmmm10}, it follows that there exist $\{F_j\}_{j\inn}$
increasing, Schreier admissible subsets of the naturals and
$\theta>0$, such that $\|\sum_{q=1}^m\sum_{i\in F_{j_q}}x_i\|
> \theta\cdot m$ for any $m\leqslant j_1 <\ldots< j_m$. Since for any such
$F_{j_1}<\cdots< F_{j_m}$ we have that $\cup_{q=1}^m
F_{j_q}\in\mathcal{S}_2$, it follows that $\{x_i\}_{i\inn}$ does
not generate a $c_0^2$ spreading model.

The fact that $\X$ does not contain seminormalized weakly null
sequences generating $\ell^{n+1}_1$ spreading models follows from
Corollary \ref{cormmm9dich}.

\end{proof}

\begin{prp} Let $0\leqslant k\leqslant n-1$ and $\{x_i\}_{i\inn}$
be normalized block sequence in $\X$ that generates an
$\ell^{n-k}_1$ spreading model and no subsequence of it generates
an $\ell^{n-k+1}_1$ spreading model. Then there exists
$\{F_j\}_{j\inn}$ and increasing sequence of subsets of the
naturals and $\{c_i\}_{i\in F_j}$ non-negative reals with
$\sum_{i\in F_j}c_i = 1$, satisfying the following. If we set $w_j
= \sum_{i\in F_j}c_ix_i$, then $\{w_j\}_{j\inn}$ is seminormalized
and generates a $c_0$ spreading model.\label{propmmm12}
\end{prp}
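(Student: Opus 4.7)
The plan is to combine Corollary \ref{cormmm9ell1} with Propositions \ref{prop3}(ii) and \ref{cosm}. By the hypothesis and the equivalence (i)$\Leftrightarrow$(iii) of Corollary \ref{cormmm9ell1}, we have $\al_k\big(\{x_i\}_i\big) > 0$ and $\al_{k'}\big(\{x_i\}_i\big) = 0$ for every $k' < k$; it is precisely the vanishing of these lower indices that will power Proposition \ref{prop3}(ii).

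Next, I fix a positive null sequence $\e_j \downarrow 0$ and, using the existence result for basic $(n-k,\e)$ special convex combinations (the final proposition of Section 1), inductively select successive finite subsets $F_1 < F_2 < \cdots$ of $\nn$, together with non-negative scalars $\{c_i\}_{i \in F_j}$ summing to $1$ on each $F_j$, so that
\[
w_j = \sum_{i \in F_j} c_i x_i
\]
is an $(n-k,\e_j)$ s.c.c.\ in the sense of Definition \ref{defscc}. Proposition \ref{prop3}(ii), applied to $\{x_i\}_i$ and $\{w_j\}_j$, then yields $\al_{n-1}\big(\{w_j\}_j\big) = 0$.

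The sequence $\{w_j\}_j$ is seminormalized: the upper bound $\|w_j\| \leqslant 1$ is immediate from convexity and $\|x_i\| = 1$, while the lower bound $\|w_j\| \geqslant 1/C$, with $C \geqslant 1$ the constant of the $\ell_1^{n-k}$ spreading model supplied by the hypothesis, follows from $F_j \in \mathcal{S}_{n-k}$. In particular $\{w_j\}_j \subset Ba(\X)$, so Proposition \ref{cosm} applies and, for any prescribed $\e > 0$, furnishes a subsequence $\{w'_j\}_j$ satisfying $\|\sum_{j \in F} w'_j\| < 1+\e$ whenever $F \in \mathcal{S}_1$.

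Combining this upper bound with the $1$-unconditionality of the basis of $\X$ and the uniform lower bound $\|w_j\| \geqslant 1/C$ gives
\[
\tfrac{1}{C}\max_{j \in F}|\la_j| \;\leqslant\; \Big\|\sum_{j \in F}\la_j w'_j\Big\| \;\leqslant\; (1+\e)\max_{j \in F}|\la_j|
\]
for every $F \in \mathcal{S}_1$ and every choice of scalars $(\la_j)_{j \in F}$, which is exactly the statement that $\{w'_j\}_j$ generates a $c_0$ spreading model. No conceptual obstacle is anticipated, as the two key analytic inputs, Propositions \ref{prop3}(ii) and \ref{cosm}, have already been established; the only care required is the standard bookkeeping in the inductive construction of the $F_j$'s to ensure simultaneously the admissibility $F_j \in \mathcal{S}_{n-k}$ and the smallness condition in the definition of an $(n-k,\e_j)$ s.c.c.
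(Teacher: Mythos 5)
Your proof is correct and follows the paper's approach essentially verbatim: extract $\al_k > 0$, $\al_{k'} = 0$ ($k' < k$) from Corollary \ref{cormmm9ell1}, block into $(n-k,\e_j)$ special convex combinations, invoke Proposition \ref{prop3}(ii) to get $\al_{n-1}(\{w_j\}_j) = 0$, and finish with Proposition \ref{cosm}. The only place you add detail beyond the paper is in spelling out how the first part of Proposition \ref{cosm}, combined with seminormalization and $1$-unconditionality, yields the $c_0$ spreading model estimate for a merely seminormalized sequence (the paper's ``moreover'' clause is stated only for normalized sequences), which is a legitimate small gap-fill rather than a different route.
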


\begin{proof}
By Corollary \ref{cormmm9ell1} it follows that
$\al_k\big(\{x_i\}_i\big) > 0$ and
$\al_{k^\prime}\big(\{x_i\}_i\big) = 0$ for $k^\prime<k$. Choose
$\{F_j\}_{j\inn}$ and increasing sequence of $\mathcal{S}_{n-k}$
subsets of the naturals and $\{c_i\}_{i\in F_j}$ non negative
reals such that $w_j = \sum_{i\in F_j}c_ix_i$ is a $(n-k,\e_j)$
s.c.c. with $\lim_j\e_j = 0$.

Since $\{x_i\}_{i\inn}$ generates an $\ell^{n-k}_1$ spreading
model, it follows that $\{w_j\}_{j\inn}$ is seminormalized.
Moreover, Proposition \ref{prop3} (ii) yields that
$\al_{n-1}\big(\{w_j\}_j\big) = 0$. Applying Proposition
\ref{cosm} we conclude the desired result.

\end{proof}

\begin{prp} Let $\{x_i\}_{i\inn}$ be a normalized block sequence
in $\X$ generating an $\ell_1^n$ spreading model and $1\leqslant
k\leqslant n-1$. Then there exists $\{F_j\}_{j\inn}$ and
increasing sequence of subsets of the naturals and $\{c_i\}_{i\in
F_j}$ non-negative reals with $\sum_{i\in F_j}c_i = 1$, satisfying
the following. If we set $w_j = \sum_{i\in F_j}c_ix_i$, then
$\{w_j\}_{j\inn}$ is seminormalized, generates an $\ell^{n-k}_1$
spreading model and no subsequence of it generates an
$\ell^{n-k+1}_1$ spreading model. \label{propmmm13}
\end{prp}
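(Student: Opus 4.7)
The plan is to take $w_j = \sum_{i \in F_j} c_i x_i$ to be a $(k, \e_j)$ special convex combination of a suitably spread block of $\{x_i\}_{i\inn}$, with $\e_j \downarrow 0$ and $\min F_j \geqslant j$; existence of such an s.c.c.\ follows from the $(k,\e)$-s.c.c.\ existence result in the preliminaries, combined with a routine interleaving to keep the $F_j$ successive. The single idea driving the proof is that the $(k,\e_j)$-averaging kills the $\al_{k-1}$-index of the new sequence, while the convolution identity $\mathcal{S}_{n-k} * \mathcal{S}_k = \mathcal{S}_n$ ensures that $n-k$ of these $w_j$'s still produce an $\mathcal{S}_n$-admissible combination of the $x_i$'s, thereby preserving the $\ell_1$-spreading structure one level down.

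The seminormalization and the $\ell_1^{n-k}$-spreading model are easy consequences. With $\theta$ denoting the constant of the $\ell_1^n$-spreading model of $\{x_i\}_{i\inn}$, one has $\|w_j\| \leqslant 1$ by convexity and $\|w_j\| \geqslant \theta$ by applying the spreading-model inequality to $F_j \in \mathcal{S}_k \subset \mathcal{S}_n$. For the $\ell_1^{n-k}$-spreading model of $\{w_j\}_j$, fix $G \in \mathcal{S}_{n-k}$ sufficiently spread: since $\min F_j \geqslant j$, the set $\{\min F_j : j \in G\}$ is a spread of $G$ and hence lies in $\mathcal{S}_{n-k}$, so $\bigcup_{j \in G} F_j \in \mathcal{S}_{n-k} * \mathcal{S}_k = \mathcal{S}_n$, and hence $\|\sum_{j \in G} w_j\| \geqslant \theta|G|$ by applying the $\ell_1^n$-spreading-model lower bound to the combined sum.

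The main obstacle is the ``no $\ell_1^{n-k+1}$ subsequence'' clause. By Proposition \ref{propmmm7} this is equivalent to $\al_{k^\prime}(\{w_j\}_j) = 0$ for all $k^\prime < k$, and since any $\mathcal{S}_{k^\prime}$-admissible family with $k^\prime < k$ is automatically $\mathcal{S}_{k-1}$-admissible, it suffices to show $\al_{k-1}(\{w_j\}_j) = 0$. Here Lemma \ref{lemqqq6} is decisive: applied to a very fast growing, $\mathcal{S}_{k-1}$-admissible sequence of $\al$-averages $\{\al_q\}_{q=1}^d$, it yields $\sum_{q=1}^d |\al_q(w_j)| < 1/s(\al_1) + 6\e_j$. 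Given $\e>0$, choose $j_0 > 2/\e$ and $i_0$ so that $6\e_j < \e/2$ for $j \geqslant i_0$; then the displayed bound is below $\e$ whenever $s(\al_q) \geqslant j_0$ and $j \geqslant i_0$, so Proposition \ref{alternate} gives $\al_{k-1}(\{w_j\}_j) = 0$, completing the proof.
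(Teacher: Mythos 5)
Your proof follows the same skeleton as the paper's: build $w_j$ as a $(k,\varepsilon_j)$ s.c.c.\ with $\varepsilon_j\to 0$, use Lemma \ref{lemqqq6} to show $\alpha_{k-1}(\{w_j\}_j)=0$, and invoke Proposition \ref{propmmm7} (equivalently Corollary \ref{cormmm9ell1}) to exclude the $\ell_1^{n-k+1}$ spreading model. The one place you diverge is the lower bound. You prove the $\ell_1^{n-k}$ estimate directly: since $\min F_j\geqslant j$, the set $\{\min F_j : j\in G\}$ spreads $G\in\mathcal{S}_{n-k}$, and then $\bigcup_{j\in G}F_j\in\mathcal{S}_{n-k}*\mathcal{S}_k=\mathcal{S}_n$ lets you apply the $\ell_1^n$ inequality to the combined coefficient vector. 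The paper instead shows $\alpha_k(\{w_j\}_j)>0$ by exhibiting the witnessing averages $\{\al_i\}_{i\in F_j}$ (coming from $\alpha_0(\{x_i\}_i)>0$) and then cites Corollary \ref{cormmm9ell1}. Your route is shorter and more concrete; the paper's is slightly more robust, because the $\mathcal{S}_k$-admissibility it needs is checked on $\{\min\supp\al_i : i\in F_j\}$, i.e.\ on the $\min\supp x_i$-indices that the s.c.c.\ definition directly controls, whereas your convolution step uses that the index sets $F_j$ themselves lie in $\mathcal{S}_k$. That is a condition one has to engineer along with the s.c.c.\ property (the s.c.c.\ definition only hands you $\{\min\supp x_i : i\in F_j\}\in\mathcal{S}_k$, which is a spread of $F_j$, not $F_j$ itself; Schreier families are spreading but not closed under compression). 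The paper also asserts $F_j\in\mathcal{S}_k$ when choosing the blocks, so this is not a defect of yours alone, but if you keep the direct lower-bound argument you should either say how $F_j\in\mathcal{S}_k$ is arranged (e.g.\ by an appropriate prior thinning of $\{x_i\}_i$) or fall back to the $\alpha_k>0$ argument, which avoids the issue entirely. Everything else — seminormalization, the use of Lemma \ref{lemqqq6} with $j_0$ and $i_0$, the reduction to $\alpha_{k-1}=0$ — is correct and matches the paper.
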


\begin{proof}
By Corollary \ref{cormmm9ell1} and passing, if necessary to a
subsequence, there exists $\{\al_i\}_{i\inn}$ a very fast growing
sequence of $\al$-averages, such that $\ran \al_i\subset \ran x_i$
and $\theta > 0$ such that $\al_i(x_i) > \theta$ for all $i\inn$.
Choose $\{F_j\}_{j\inn}$ and increasing sequence of
$\mathcal{S}_k$ subsets of the naturals and $\{c_i\}_{i\in F_j}$
non-negative reals such that $w_j = \sum_{i\in F_j}c_ix_i$ is a
$(k,\e_j)$ s.c.c. with $\lim_j\e_j = 0$.

Since $\{x_i\}_{i\inn}$ generates an $\ell_1^n$ spreading model
and $k<n$, we have that $\{w_j\}_{j\inn}$ is seminormalized.

To see that $\{w_j\}_{j\inn}$ has a subsequence generating an
$\ell_1^{n-k}$ spreading model, by Corollary \ref{cormmm9ell1} it
is enough to show that $\al_{k}\big(\{w_j\}_j\big) > 0$. It is
straightforward to check that the sequences $\{\al_i\}_{i\inn}$
and $\{F_j\}_{j\inn}$ previously chosen, witness this fact.

It remains to be shown that no subsequence of $\{w_j\}_{j\inn}$
generates an $\ell^{n-k+1}_1$ spreading model. Once more, by
Corollary \ref{cormmm9ell1} it is enough to check that
$\al_{k-1}\big(\{x_i\}_i\big) = 0$.

Pass to a subsequence of $\{w_j\}_{j\inn}$, relabel for
simplicity, let $\{\al_i^\prime\}_{i\inn}$ be a very fast growing
sequence of $\al$-averages and $\{G_j\}_{j\inn}$ be an increasing
sequence of subsets of the naturals such that
$\{\al_i^\prime\}_{i\in G_j}$ is $\mathcal{S}_{k-1}$ admissible
for all $j\inn$. Lemma \ref{lemqqq6} yields the following.

\begin{equation*}
\lim_{j\to\infty}\sum_{i\in G_j}|\al_i^\prime(w_j)| \leqslant
\lim_{j\to\infty}\big(\frac{1}{s(\al_{\min G_j}^\prime)} +
6\e_j\big) = 0
\end{equation*}
By definition, this means that $\al_{k-1}\big(\{x_i\}_i\big) = 0$
and this completes the proof.

\end{proof}

\begin{prp} Let $Y$ be an infinite dimensional closed subspace of
$\X$. Then there exists a normalized weakly null sequence in $Y$
generating an isometric $c_0$ spreading model. Moreover for
$0\leqslant k\leqslant n-1$ there exists a sequence in $Y$ that
generates an $\ell_1^{n-k}$ spreading model and no subsequence of
it generates an $\ell_1^{n-k+1}$ one.\label{propmmm14}
\end{prp}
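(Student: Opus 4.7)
The plan is to reduce all spreading-model questions in $Y$ to the block case. Since $\X$ is reflexive, $Y$ contains a normalized weakly null sequence, and by the Bessaga--Pe\l czy\'nski selection principle we may pass to a subsequence $\{y_i\}_i\subset Y$ with $\sum_i\|y_i - x_i\|$ as small as we wish, for some normalized block sequence $\{x_i\}_i$ in $\X$. All later constructions take place on the block side and are transferred back to $Y$ via sums or convex combinations whose supports are pushed out far enough that the induced perturbation vanishes.

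For the $c_0$ claim, apply the dichotomy of Corollary \ref{cormmm9dich} to $\{x_i\}_i$. Either a subsequence already generates an isometric $c_0$ spreading model, in which case the corresponding subsequence of $\{y_i\}_i\subset Y$ generates a $c_0$ spreading model (isometric by Remark \ref{isomcosm}); or, for some $0\leqslant k\leqslant n-1$, $\{x_i\}_i$ generates a strong $\ell_1^{n-k}$ spreading model with no subsequence generating $\ell_1^{n-k+1}$, and Proposition \ref{propmmm12} produces $(n-k,\e_j)$-s.c.c.\ vectors $w_j=\sum_{i\in F_j}c_ix_i$ which are seminormalized and generate a $c_0$ spreading model. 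The analogous vectors $\tilde w_j=\sum_{i\in F_j}c_i y_i\in Y$ differ from $w_j$ by at most $\max_{i\in F_j}c_i\cdot\sum_i\|y_i-x_i\|$, which vanishes as $j\to\infty$ since $\max c_i<\e_j\to 0$, so $\{\tilde w_j/\|\tilde w_j\|\}_j$ is a normalized weakly null sequence in $Y$ generating an isometric $c_0$ spreading model.

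For the $\ell_1^{n-k}$ claims we iterate. Let $\{z_i\}_i\subset Y$ be the normalized weakly null sequence generating an isometric $c_0$ spreading model just obtained, which is a small perturbation of a normalized block sequence $\{x_i'\}_i\subset\X$ with the same property. For $k=0$, apply Proposition \ref{propmmm10} with, say, $F_j=\{j,\ldots,2j-1\}$: a subsequence of $y_j=\sum_{i\in F_j}x_i'$ generates an $\ell_1^n$ spreading model, and the $Y$-perturbations $\tilde y_j=\sum_{i\in F_j}z_i$ satisfy $\|\tilde y_j-y_j\|\leqslant\sum_{i\geqslant j}\|z_i-x_i'\|\to 0$, so the same subsequence of $\{\tilde y_j/\|\tilde y_j\|\}_j$ generates an $\ell_1^n$ spreading model in $Y$; no subsequence can generate $\ell_1^{n+1}$ by Corollary \ref{cormmm11}. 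For $1\leqslant k\leqslant n-1$, feed the block-side $\ell_1^n$-generator into Proposition \ref{propmmm13} to obtain convex combinations generating an $\ell_1^{n-k}$ spreading model with no subsequence generating $\ell_1^{n-k+1}$, and transfer to $Y$ as before; the ``no subsequence'' clause survives the perturbation because close sequences have the same higher-order spreading models.

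The main technical point is this perturbation bookkeeping: at each transfer step the error is dominated either by the tail $\sum_{i\geqslant j}\|y_i-x_i\|$ (for sums along $F_j$ with $\min F_j\to\infty$) or by $\max_{i\in F_j}c_i$ times the total variation (for s.c.c.\ with vanishing coefficients), both of which tend to zero. This lets us carry every relevant property --- seminormalization, the spreading model type, and the absence of higher-order $\ell_1$ spreading models --- from $\X$ into $Y$ without loss, and so collect all $n+1$ spreading models from the block analysis of the previous subsection inside the arbitrary subspace $Y$.
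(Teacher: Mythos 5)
Your proof is correct and follows essentially the same route as the paper: reduce to a block sequence, use the dichotomy of Corollary \ref{cormmm9dich} (via the $\al$-indices) together with Propositions \ref{propmmm12}, \ref{propmmm10} and \ref{propmmm13} to manufacture the required spreading models on the block side, and transfer back to $Y$. The paper compresses the transfer into the single sentence that every subspace contains a sequence arbitrarily close to a block sequence; you simply make that perturbation bookkeeping explicit, which is harmless and arguably cleaner.
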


\begin{proof}
Assume first that $Y$ is a block subspace. We first show that $Y$
admits an isometric $c_0$ spreading model. Let $\{x_i\}_{i\inn}$
be a normalized block sequence in $Y$. If $\{x_i\}_{i\inn}$ has a
subsequence generating a $c_0$ spreading model, then by Remark
\ref{isomcosm} there is nothing to prove.

If this is not the case, by Corollary \ref{cormmm9co} we conclude
that $\al_{n-1}\big(\{x_i\}_i\big) > 0$. Set $k_0 =
\min\{k^\prime: \al_{k^\prime}\big(\{x_i\}_i\big) > 0\}$.
Corollary \ref{cormmm9ell1} yields that passing, if necessary, to
a subsequence, $\{x_i\}_{i\inn}$ generates an $\ell^{n-k_0}_1$
spreading model and no further subsequence of it generates an
$\ell^{n-k_0+1}_1$ one. Proposition \ref{propmmm12} yields that
$\{x_i\}_{i\inn}$ has a further seminormalized block sequence
$\{w_j\}_{j\inn}$ generating a $c_0$ spreading model. If we set
$y_j = \frac{w_j}{\|w_j\|}$, then by remark \ref{isomcosm}
$\{y_j\}_{j\inn}$ is the desired sequence.

We now prove that $Y$ admits an $\ell^n_1$ spreading model. Take
$\{x_i\}_{i\inn}$ a normalized block sequence in $Y$ generating an
isometric $c_0$ spreading model. By Proposition \ref{propmmm10}
there exists $\{w_j\}_{j\inn}$ a further block sequence of
$\{x_i\}_{i\inn}$ generating an $\ell^n_1$ spreading model. By
Corollary \ref{cormmm11} $\{w_j\}_{j\inn}$ is the desired
sequence.

Let $1\leqslant k\leqslant n-1$. We show that there exists a
sequence in $Y$ that generates an $\ell_1^{n-k}$ spreading model
and no subsequence of it generates an $\ell_1^{n-k+1}$ one. Let
$\{x_i\}_{i\inn}$ be a sequence in $Y$ generating an $\ell^n_1$
spreading model. Simply apply Proposition \ref{propmmm13} to find
the desired sequence.

Therefore the statement is true for block subspaces. The fact that
any subspace of $\X$ contains a sequence arbitrarily close to a
block sequence completes the proof.

\end{proof}

From this it follows that $\X$ cannot contain $c_0$ or $\ell_1$,
therefore from James' Theorem for spaces with an unconditional
basis \cite{J}, the next result follows.

\begin{cor} The space $\X$ is reflexive.\label{cormmm15}
\end{cor}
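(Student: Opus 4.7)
The plan is to invoke James' theorem for Banach spaces with an unconditional basis, which characterizes reflexivity by the absence of isomorphic copies of both $c_0$ and $\ell_1$. Since the basis of $\X$ is $1$-unconditional, it suffices to exclude $c_0$ and $\ell_1$ as subspaces of $\X$, and for this I will combine Proposition \ref{propmmm14} with Corollary \ref{cormmm11}.

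To rule out $\ell_1 \hookrightarrow \X$, suppose for contradiction that $\X$ contains $\ell_1$. A standard Bessaga-Pe\l czy\'nski type argument using the $1$-unconditionality of the basis produces a normalized block sequence $\{x_i\}_{i\inn}$ in $\X$ equivalent to the usual $\ell_1$ basis. The block subspace $Y = [\{x_i\}_{i\inn}]$ is then isomorphic to $\ell_1$, hence has the Schur property, so every weakly null sequence in $Y$ is norm-null. But Proposition \ref{propmmm14} applied to the infinite dimensional subspace $Y$ of $\X$ yields a normalized weakly null sequence in $Y$, a contradiction.

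To rule out $c_0 \hookrightarrow \X$, an analogous gliding hump argument yields a normalized block sequence $\{x_i\}_{i\inn}$ in $\X$ equivalent to the unit vector basis of $c_0$ with uniform constants over all finite subsets. The boundedness of the partial sums forces $\{x_i\}_{i\inn}$ to be weakly null, since any non-null functional limit $x^*(x_{i_k}) \to \lambda \neq 0$ along a subsequence would contradict the uniform $c_0$-type estimate on appropriate sign-adjusted partial sums. Moreover, the uniform equivalence to the $c_0$ basis forces $\{x_i\}_{i\inn}$ to generate a $c_0^k$ spreading model for every $k$, in particular for $k = 2$, contradicting Corollary \ref{cormmm11}. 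At this stage no genuine obstacle remains: all of the substantive work has been carried out in the preceding propositions of Section~3, and the appeal to James' theorem now closes the argument.
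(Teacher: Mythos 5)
Your proposal is correct and follows the same route as the paper: apply James' theorem for spaces with an unconditional basis, after ruling out isomorphic copies of $c_0$ and $\ell_1$ by appealing to the spreading-model results of Section 3 (the paper cites Proposition \ref{propmmm14} as the source of the contradiction in both cases, while you additionally invoke Corollary \ref{cormmm11} for the $c_0$ case, which is an equally valid route). The paper states this with minimal detail; your filled-in arguments (Schur property for $\ell_1$, weak nullness and the $c_0^2$ spreading model obstruction for $c_0$) are the standard ones and are sound.
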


\begin{cor} Let $Y$ be an infinite dimensional, closed  subspace of $\X$. Then $Y^*$ admits a
spreading model isometric to $\ell_1$. Moreover, for $0\leqslant
k\leqslant n-1$ there exists a sequence in $Y^*$ generating a
$c_0^{n-k}$ spreading model, such that no subsequence of it
generates a $c_0^{n-k+1}$ one.\label{cormmm16}
\end{cor}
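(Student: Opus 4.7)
Both parts follow by dualizing the sequences produced by Proposition \ref{propmmm14}, so the plan is to first extract a suitable sequence $\{x_i\}_i$ in $Y$ and then construct the dual sequence of functionals in $Y^*$.

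For the moreover part, fix $0\leqslant k\leqslant n-1$. Apply Proposition \ref{propmmm14} to obtain a normalized weakly null $\{x_i\}_i\subset Y$ generating an $\ell_1^{n-k}$ spreading model but no $\ell_1^{n-k+1}$ one, then pass via Remark \ref{strongell1} to a subsequence generating a \emph{strong} $\ell_1^{n-k}$ spreading model. By definition this yields functionals $\{x_i^*\}_i\subset\X^*$, seminormalized, with $x_i^*(x_i)>\e$ and $x_i^*(x_j)=0$ for $i\neq j$, generating a $c_0^{n-k}$ spreading model. Restrict each to $Y$ to obtain $\{F_i\}_i\subset Y^*$; this sequence is seminormalized (above, since restriction does not increase norms; below, since $F_i(x_i)>\e$ with $\|x_i\|=1$) and still generates a $c_0^{n-k}$ spreading model. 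To rule out a subsequence generating a $c_0^{n-k+1}$ spreading model I argue by contradiction: given such a subsequence $\{F_{i_j}\}_j$ with constant $C'$, evaluating $\sum_{j\in F}\mathrm{sign}(a_j)F_{i_j}$ on $\sum_j a_j x_{i_j}$ for $F\in\mathcal{S}_{n-k+1}$ and using biorthogonality yields the lower estimate $\|\sum_j a_j x_{i_j}\|\geqslant(\e/C')\sum_j|a_j|$, so $\{x_{i_j}\}_j$ generates an $\ell_1^{n-k+1}$ spreading model, contradicting the choice of $\{x_i\}_i$.

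For the first statement, use Proposition \ref{propmmm14} together with Remark \ref{isomcosm} to obtain a normalized weakly null $\{x_i\}_i\subset Y$ generating an isometric $c_0$ spreading model. By a standard perturbation argument I pass to a subsequence arbitrarily close to a normalized block sequence of the basis of $\X$, which forces $\{x_i\}_i$ to be nearly $1$-unconditional basic. Form the biorthogonal functionals of $\{x_i\}_i$ on $[\{x_j\}_j]$ and extend them to $\{x_i^*\}_i\subset Y^*$ via Hahn--Banach. The isometric $c_0$ spreading model together with near $1$-unconditionality forces $|a_i|\leqslant(1+o(1))\|\sum_j a_j x_j\|$ on tails, so $\|x_i^*\|\to 1$. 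Then upper and lower $\ell_1$-type estimates — the upper one the trivial triangle inequality, the lower one obtained by evaluating $\sum_{i\in F}a_i x_i^*$ on the normalized signed sum $\sum_{j\in F}\mathrm{sign}(a_j)x_j/\|\sum_{j\in F}\mathrm{sign}(a_j)x_j\|$ — both tend to $\sum_i|a_i|$ as $|F|\to\infty$, yielding an isometric $\ell_1$ spreading model along a subsequence.

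The main obstacle is the \emph{isometric} (rather than merely isomorphic) nature of the $\ell_1$ spreading model in the first statement: this requires simultaneously pushing the biorthogonal functional norms to $1$ and tightening both sides of the $\ell_1$-estimate, which is only possible by combining the isometry from Remark \ref{isomcosm} with the near $1$-unconditionality coming from approximation by a block sequence of the basis of $\X$. The moreover part, by contrast, is more routine and rests on the already-established correspondence between strong $\ell_1^k$ and $c_0^k$ spreading models together with the duality computation above.
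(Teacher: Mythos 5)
Your proof is correct and follows essentially the same route as the paper: for the first statement, dualize a sequence generating an isometric $c_0$ spreading model; for the moreover part, take the functionals witnessing the strong $\ell_1^{n-k}$ spreading model coming from Remark \ref{strongell1}, restrict them to $Y$, and use biorthogonality against $\{x_i\}_i$ to exclude a $c_0^{n-k+1}$ spreading model. The only place where your route differs slightly is in the first statement: the paper takes arbitrary \emph{normalized} norming functionals ($\|x_i^*\|=1$, $x_i^*(x_i)=1$), so that the upper $\ell_1$ estimate is exact and the work lies in the cross terms, whereas you take Hahn--Banach extensions of the biorthogonal functionals, so the cross terms vanish and the work lies in showing $\|x_i^*\|\to 1$. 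Your justification of $\|x_i^*\|\to 1$ via near $1$-unconditionality ``on tails'' is not quite complete as stated, since $\|x_i^*\|$ is computed on the full span $[\{x_j\}_j]$ and the basis projection onto the tail has norm controlled only by the (fixed) unconditional constant, not by a quantity tending to $1$. The claim is nonetheless true: writing $x_j=b_j+u_j$ with $\{b_j\}_j$ a normalized block sequence and $\sum_j\|u_j\|<\infty$, and letting $b_i^*$ be the norming functional of $b_i$ supported on $\ran b_i$, one has $|c_i|=|b_i^*(\sum_j c_j b_j)|\leqslant \|y\| + B\sum_j|b_i^*(u_j)|$ for $y=\sum_j c_j x_j$ and $B=\sup_j|c_j|$ bounded, and $\sum_j|b_i^*(u_j)|\to 0$ as $i\to\infty$ by dominated convergence (each $|b_i^*(u_j)|\to 0$ since $\supp b_i\to\infty$, dominated by the summable $\|u_j\|$). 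With that repaired, both halves of your argument go through.
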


\begin{proof} Since $Y$ contains a sequence $\{x_i\}_{i\inn}$
generating a spreading model isometric to $c_0$, which we may
assume is unconditional Schauder basic, such that
$\{x_i\}_{i\geqslant j}$ has an unconditional basic constant
$c_j\rightarrow 1$, as $j\rightarrow\infty$, then for any
normalized $\{x_i^*\}_{i\inn}\subset Y^*$, such that $x^*_i(x_i) =
1$, we have that $\{x_i^*\}_{i\inn}$ generates a spreading model
isometric to $\ell_1$.

Let now $0\leqslant k\leqslant n-1$. Use Proposition
\ref{propmmm14} to choose $\{x_i\}_{i\inn}$ a normalized weakly
null sequence in $Y$, generating an $\ell_1^{n-k}$ spreading model,
such that no subsequence of it generates an $\ell_1^{n-k+1}$ one.

By Remark \ref{strongell1} and passing if necessary to a
subsequence, there exist $\e>0$ and $\{x_i^*\}_{i\inn}$ a
seminormalized sequence in $X^*$ generating a $c_0^{n-k}$
spreading model satisfying the following. $x_i^*(x_i) > \e$ for
all $i\inn$ and $\sum_{i\neq j}|x_i^*(x_j)| < \infty$. Since
$\{x_i\}_{i\inn}$ has no subsequence generating an
$\ell_1^{n-k+1}$ spreading model, it follows that
$\{x_i^*\}_{i\inn}$ has no subsequence generating a $c_0^{n-k+1}$
one.

Let $I^*:{\mathfrak{X}_{_{0,1}}^{n*}}\rightarrow Y^*$ be the dual
operator of $I:Y\rightarrow\X$. Then, since $\|I^*\| = 1$, to see
that this generates a $c_0^{n-k}$ spreading model, all that needs to
be shown is that $\{I^*x_i^*\}_{i\inn}$ is bounded from below.
Indeed, $\|I^*x_i^*\|\geqslant (I^*x_i^*)(x_i) = x_i^*(x_i) > \e$.

It remains to be shown $\{I^*x_i^*\}_{i\inn}$ has no subsequence
generating a $c_0^{n-k+1}$ spreading model. Since it is
seminormalized, $(I^*x_i^*)(x_i) = x_i^*(x_i) > \e$ for all
$i\inn$, and $\sum_{i\neq j}|(I^*x_i^*)(x_j)| = \sum_{i\neq
j}|x_i^*(x_j)| < \infty$ and $\{x_i\}_{i\inn}$ has no subsequence
generating an $\ell_1^{n-k+1}$ spreading model, the result easily
follows.

\end{proof}

\section{Equivalent block sequences in $\X$}

In this section we prove that the space $\X$ is quasi minimal by
showing that every two block subspaces have further block
sequences which are equivalent. Our method is based on the
analysis of the functionals of the norming set $W$ and we use some
techniques first appeared in \cite{AD}.\vskip3pt

In Tsirelson space, whenever two seminormalized block sequences
$\{x_m\}_{m\inn}$, $\{y_m\}_{m\inn}$ satisfy $x_m < y_{m+1}$ and
$y_m < x_{m+1}$ for all $m\inn$, then they are equivalent (see
\cite{CS}). In the space $\X$ this is false, since seminormalized
sequences satisfying this condition may be constructed generating
different spreading models, therefore they cannot be equivalent.

Even in the case for sequences satisfying the above condition,
which moreover generate the same spreading model, we are unable to
prove that they have equivalent subsequences, not even if they
only consist of elements of the basis. The reason for this is the
fact that when constructing Schreier functionals in the norming
set $W$, unlike the norming set of Tsirelson space, very fast
growing sequences of $\al$-averages need to be taken.

In order to compensate for this fact, the following is done. Let
$\{x_m\}_{m\inn}$, $\{y_m\}_{m\inn}$ be normalized block
sequences, both generating $\ell_1^n$ spreading models, such that
$x_m < y_{m+1}$ and $y_m < x_{m+1}$ for all $m\inn$. we show that
by appropriately blocking both sequences in the same manner, we
obtain sequences which are equivalent. More precisely, we prove
the following.

\begin{prp}
Let $\{x_m\}_{m\inn}, \{y_m\}_{m\inn}$ be normalized block
sequences in $\X$, both generating $\ell_1^n$ spreading models,
such that $x_m < y_{m+1}$ and $y_m < x_{m+1}$ for all $m\inn$.
Then there exist $\{F_m\}_{m\inn}$ successive subsets of the
naturals and $\{c_i\}_{i\in F_m}$ non-negative reals, for all
$m\inn$, such that if $z_m = \sum_{i\in F_m}c_ix_i$ and $w_m =
\sum_{i\in F_m}c_iy_i$, then $\{z_m\}_{m\inn}$ and
$\{w_m\}_{m\inn}$ are seminormalized and equivalent. \label{cor11}
\end{prp}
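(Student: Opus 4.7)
The plan is first to select blocks. I would take $(F_m)_m$ to be a sequence of successive maximal $\mathcal S_{n-1}$ subsets of $\nn$ with $\min F_m$ growing rapidly, and let $(c_i)_{i\in F_m}$ be the repeated-average coefficients of $F_m$, so that $\sum_{i\in F_m}c_i e_i$ is an $(n-1,3/\min F_m)$ basic s.c.c. The interlacing $x_m<y_{m+1}$ and $y_m<x_{m+1}$ forces $\min\supp x_i$ and $\min\supp y_i$ to agree up to a one-step spread, so by the spreading property of $\mathcal S_{n-1}$ (and after refining if needed) both $\sum_{i\in F_m}c_ie_{\min\supp x_i}$ and $\sum_{i\in F_m}c_ie_{\min\supp y_i}$ would be $(n-1,\e_m)$ basic s.c.c.'s with $\e_m\to 0$. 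Hence $z_m$ and $w_m$ are $(n-1,\e_m)$ s.c.c.'s in the sense of Definition \ref{defscc}, and applying Proposition \ref{propmmm13} (with $k=n-1$) to each of the $\ell_1^n$-spreading-model sequences $\{x_i\}$ and $\{y_i\}$ would make both $\{z_m\}$ and $\{w_m\}$ seminormalized, in fact with $\ell_1$ spreading models.

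For the equivalence, by symmetry it suffices to exhibit an absolute constant $C$ with $\|\sum_m a_m z_m\|\leqslant C\|\sum_m a_m w_m\|$ for every finitely supported $(a_m)$. I would fix a norming functional $f\in W$ and analyse it through its canonical tree decomposition into $\al$-averages and Schreier functionals. The central step would be an inductive transfer: at each internal tree node $h$ with associated functional $f_h$, produce a companion $\tilde f_h$ of the same type and same size whose range meets $\ran w_m$ whenever the original met $\ran z_m$. The interlacing hypotheses are exactly what should permit this range shift without destroying successivity, $\mathcal S_n$-admissibility, or the very-fast-growing condition, because the size function $s(\cdot)$ is a combinatorial feature of the tree and is independent of the actual ranges.

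Given the transfer, one decomposes the evaluation of $f$ along its tree: for each $m$, the pieces of $f$ acting cleanly on a single $x_i$ inside $F_m$ would be absorbed into $\tilde f_h(w_m)$ up to a uniform constant, while pieces that split $z_m$ across several tree nodes would be controlled by Lemmas \ref{lem4} and \ref{lemqqq6}, combined with the $(n-1,\e_m)$ s.c.c.\ structure of $z_m$, bounding the total split mass by $O(1/\min F_m+\e_m)$ per block. Summing over $m$, the boundary error would be at most $\sum_m|a_m|\cdot O(\e_m)$, which a standard rescaling of the sequence $\min F_m$ absorbs into the main estimate.

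The hard part will be the simultaneous bookkeeping in this tree induction. One must verify, all at once, (i) preservation of very-fast-growing $\mathcal S_n$-admissibility under the range shift, (ii) a disjoint accounting of clean versus split contributions at each tree level so that the overall constant $C$ remains independent of $f$ and $(a_m)$, and (iii) compatibility of the transfer $f\mapsto\tilde f$ with composition of tree operations, in particular with the scaling by $1/\ell$ inside $\al$-averages. This is the combinatorial analysis of the norming set $W$ that the section preamble flags as ``rather involved'', extending the techniques of \cite{AD}.
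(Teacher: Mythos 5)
Your plan starts from the right place (block the sequences into special convex combinations, then transfer norming functionals via their tree analysis, using Lemmas~\ref{lem4} and~\ref{lemqqq6} for control), but there are two genuine gaps.

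First, the order of the special convex combination is off by one. You take $(n-1,\e_m)$ s.c.c.'s with $F_m$ a maximal $\mathcal{S}_{n-1}$ set; the paper takes $F_m\in\mathcal{S}_n$ and requires $z_m,w_m$ to be $(n,\e_m)$ s.c.c.'s. The distinction matters: the norming set produces Schreier functionals that are $\mathcal{S}_n$-admissible, so when one controls the tail of a Schreier functional acting on $z_m$ (see Case~6 of Lemma~\ref{prop10} and Case~4 of Lemma~\ref{lemqqq11}) one has to split the tail into maximal $\mathcal{S}_{n-1}$-admissible families and apply Lemma~\ref{lemqqq6} with $k=n$; this requires $z_m$ to be an $(n,\e_m)$ s.c.c., not an $(n-1,\e_m)$ one. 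With your choice, Proposition~\ref{propmmm13} would also give $\{z_m\}$ an $\ell_1$ spreading model rather than the $c_0$ behaviour that the estimates implicitly rely on.

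Second, and more seriously, the way you treat the ``split'' contributions is wrong. You write that the pieces of $f$ which split $z_m$ across several tree nodes can be bounded by $O(1/\min F_m+\e_m)$ per block, so that the boundary error is $\sum_m|a_m|\cdot O(\e_m)$ and gets absorbed. This is false: the initial and final parts $z_m^1,z_m^3$ of $z_m$ (with respect to the node of the tree that covers $z_m$ for the first time) are not small pieces that can be discarded as error; they can make up the entire block (indeed $z_m^2$ can vanish, and then $z_m=z_m^1+z_m^3$). The paper's resolution is precisely the three-functional decomposition that your single transfer $f\mapsto\tilde f$ omits: one decomposes $z_m$ into initial, middle and final parts, proves Lemmas~\ref{prop10}, \ref{lemqqq11} and \ref{lemqqq12} to build three \emph{distinct} functionals $g^1,g^2,g^3\in W$ with $g^i(w_m)$ dominating $\theta f(z_m^i)$ up to a summable error, and only then sums. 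This separation is forced because the boundary sub-functionals can act simultaneously on several consecutive blocks, so a single shifted tree would violate admissibility or the very-fast-growing constraint (exactly the point you raise in (i) of your ``hard part'' list, but without the mechanism to resolve it). Also note that the transfer itself is not a ``range shift'' of $f_h$; it is a wholesale replacement: one lower-bounds $f(z_m^i)$ via Lemma~\ref{lem4}, then uses the $\ell_1^n$ spreading model of $\{y_i\}$ and Lemma~\ref{lem9} to \emph{construct} a new functional on $w_m$ attaining $\theta$ times that value. Only the size of $\alpha$-averages is carried over; the tree below is rebuilt.
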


Our method for showing the equivalence of $\{z_m\}_{m\inn}$ and
$\{w_m\}_{m\inn}$ is based on the following. For every $f$ in $W$
there exist $g^1, g^2, g^3$ in $W$ such that $\theta f(z_m) <
g^1(w_m) + g^2(w_m) + g^3(w_m) + \e_m$, for some fixed constant
$\theta$ and $\{\e_m\}_{m\inn}$ a summable sequence of positive
reals. The choice of the $g^i$ uses the tree analysis of $f$ given
below. Clearly the roles of $\{z_m\}_{m\inn}$ and
$\{w_m\}_{m\inn}$ can be reversed and this yields the equivalence
of the two sequences.

\subsection*{The tree analysis of a functional $\mathbf{f\in W}$} Let $f\in
W$. We construct a finite, single rooted tree $\La$ and choose
$\{f_\la\}_{\la\in\La}\subset W$, which will be called a tree
analysis of $f$.

Set $f_\varnothing =f$, where $\varnothing$ denotes the root of
the tree to be constructed. Choose $m\inn$, such that $f\in W_m$.
If $m = 0$, then the tree analysis of $f$ is $\{f_\varnothing\}$.
Otherwise, if $f$ is a \schr, $f = \sum_{j=1}^df_j$, where
$\{f_j\}_{j=1}^d\subset W_{m-1}$ is a very fast growing and
$\Sn$-admissible sequence of $\al$-averages, set $\{f_j\}_{j=1}^d$
to be the immediate successors of $f_\varnothing$. If $f$ is an
\alf, $f = \frac{1}{n}\sum_{j=1}^df_j$, where $\{f_1<\cdots<
f_d\}\subset W_{m-1}$, set $\{f_j\}_{j=1}^d$ to be the immediate
successors of $f_\varnothing$.

Suppose that the nodes of the tree and the corresponding
functionals have been chosen up to a height $\ell < m$ such that
$f_\la\in W_{m-h(\la)}$. Let $\la$ be such that $h(\la)=\ell$. If
$f_\la\in W_0$, then don't extend any further and $\la$ is a
terminal node of the tree. If $f_\la$ is a \schr, $f_\la =
\sum_{j=1}^df_j$, where $\{f_j\}_{j=1}^d\subset W_{m-\ell-1}$ is a
very fast growing and $\Sn$-admissible sequence of $\al$-averages,
set $\{f_j\}_{j=1}^d$ to be the immediate successors of $f_\la$.

If $f_\la$  is an \alf, $f_\la = \frac{1}{n}\sum_{j=1}^df_j$,
where $\{f_1<\cdots< f_d\}\subset W_{m-\ell-1}$, set
$\{f_j\}_{j=1}^d$ to be the immediate successors of $f_\la$.\\
{\bf Remark:} If $f_{\la^-}$ is a \schr, $f_{\la^-} =
\sum_{j=1}^df_j$ and there exists $j>1$ such that $f_\la = f_j$,
then $f_\la$ is of the form $f_\la = \frac{1}{m}\sum_{j=1}^\ell g_j$, where
$m > \max\supp f_{j-1}$. In this case, set $\{g_j\}_{j=1}^\ell$ to be
the immediate successors of $f_\la$.

It is clear that the procedure ends in at most $m+1$
steps.\vskip6pt

\begin{dfn} Let $x\in \X, f\in W$ such that $\supp f\cap \supp x\neq\varnothing$, $\{f_\la\}_{\la\in\La}$ be a
tree analysis of $f$.
\begin{enumerate}

\item[(i)] We say that $f_\mu$ covers $x$, with respect to
$\{f_\la\}_{\la\in\La}$, for some $\mu\in\La$, if $\supp
f_\mu\cap\supp x = \supp f\cap\supp x$.

\item[(ii)] We say that $f_\mu$ covers $x$ for the first time,
with respect to $\{f_\la\}_{\la\in\La}$, for some $\mu\in\La$, if
$\mu = \max\{\la\in\La: f_\la$ covers $x\}$.
\end{enumerate}
\end{dfn}

\begin{dfn} Let $x\in \X, f\in W, \{f_\la\}_{\la\in\La}$ be a
tree analysis of $f$, $\la\in \La$ be the node of $\La$ such that
$f_\la$ covers $x$ for the first time, with respect to
$\{f_\la\}_{\la\in\La}$. If $\{\mu_j\}_{j=1}^d$ are the immediate
successors of $\la$ in $\La$, $j_1 = \min\{j:\ran
f_{\mu_j}\cap\ran x \neq \varnothing\}, j_2 = \max\{j:\ran
f_{\mu_j}\cap\ran x \neq \varnothing\}$, set $x^1 =
x|_{[1,\ldots,\max\supp f_{\mu_{j_1}}]}$, $x^3 = x|_{[\min\supp
f_{\mu_{j_2}},+\infty)}$, $x^2 = x - x^1 - x^3$. Then $x^1, x^2,
x^3$ are called the initial, the middle and the final part of $x$
respectively, with respect to $\{f_\la\}_{\la\in\La}$.
\end{dfn}

\begin{rmk} If $\supp f\cap\supp x$ is not a singleton, then $x_1$
and $x_3$ are not zero and $x_1 < x_3$. However $x_2$ might be
zero.\label{remqqq1}
\end{rmk}

\begin{lem} Let $\{x_m\}_{m\inn}$ be a block sequence in
$\X$, $f\in W$, $\{f_\la\}_{\la\in\La}$ be a tree analysis of $f$
and $G = \{m\inn: \supp f\cap \supp x_m\neq\varnothing\}$. For
$m\in G$ set $\la_m, \la_m^1$ to be the nodes of $\La$ that cover
$x_m, x_m^1$ for the first time respectively and assume that
$\#\big\{\supp f_{\la_m}\cap \supp x_m\big\} > 1$, for all $m\in
G$. Then:

\begin{enumerate}
\item[(i)] $\la_m^1 > \la_m$ and $\max\supp f_{\la_m^1} <
\max\supp x_m$, for all $m\in G$.

\item[(ii)] For any $m\in G$ and $\la\geqslant\la_m^1$ such that
$\ran f_\la\cap \ran x_m^1\neq\varnothing$ and $\ran f_\la\cap
\ran x_\ell^1\neq\varnothing$, for some $\ell\neq m$, we have that
$\ell<m$ and $\la_\ell^1 > \la$.

\item[(iii)] The map $m\to\la_m^1$ is one to one.
\end{enumerate}\label{lem6}
\end{lem}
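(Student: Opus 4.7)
For part (i), I would let $\{\mu_j\}_{j=1}^d$ be the immediate successors of $\la_m$ and $j_1\leqslant j_2$ the smallest and largest indices with $\ran f_{\mu_j}\cap\ran x_m\neq\varnothing$. The hypothesis $\#\{\supp f_{\la_m}\cap\supp x_m\}>1$ rules out the possibility that $\la_m$ is terminal, and the first-cover property of $\la_m$ then forces $j_1<j_2$: indeed, if only one child $\mu_{j_0}$ met $x_m$, then $\supp f_{\mu_{j_0}}\cap\supp x_m=\supp f_{\la_m}\cap\supp x_m=\supp f\cap\supp x_m$, so $\mu_{j_0}$ would cover $x_m$ strictly below $\la_m$. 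Since by definition $\supp x_m^1\subset\supp f_{\mu_{j_1}}$ and no other child of $\la_m$ meets $x_m^1$, it follows that $f_{\mu_{j_1}}$ covers $x_m^1$, giving $\la_m^1\geqslant\mu_{j_1}>\la_m$. The inequality $\max\supp f_{\la_m^1}\leqslant\max\supp f_{\mu_{j_1}}<\min\supp f_{\mu_{j_2}}\leqslant\max\supp x_m$ then completes (i).

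For (ii), I would fix $\la\geqslant\la_m^1$ with $\ran f_\la$ meeting both $\ran x_m^1$ and $\ran x_\ell^1$. Since $\la$ descends from $\la_m^1$, $\supp f_\la\subset\supp f_{\la_m^1}$, so by (i) we have $\max\supp f_\la<\max\supp x_m$; the block property rules out $\ell>m$, forcing $\ell<m$. To obtain $\la_\ell^1>\la$, I would invoke the structural fact that in the tree analysis the immediate successors of any node are successively supported (both in the Schreier-functional and in the $\al$-average case), so two tree-incomparable nodes have disjoint supports. Since $\la_\ell^1$ covers $x_\ell^1$ we have $\supp f\cap\supp x_\ell^1\subset\supp f_{\la_\ell^1}$, so $\supp f_\la$, which meets $x_\ell^1$, must also meet $\supp f_{\la_\ell^1}$, and hence $\la$ and $\la_\ell^1$ are comparable in $\La$. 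The case ``$\la_\ell^1$ equal to or ancestor of $\la$'' is excluded by applying (i) to $x_\ell$: it would yield $\max\supp f_\la\leqslant\max\supp f_{\la_\ell^1}<\max\supp x_\ell<\min\supp x_m$, contradicting $\ran f_\la\cap\ran x_m^1\neq\varnothing$. Thus $\la_\ell^1$ is a strict descendant of $\la$.

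Part (iii) then follows immediately from (ii): if $\la_m^1=\la_\ell^1=:\la$ for distinct $m,\ell\in G$, the choice $\la\geqslant\la_m^1$ satisfies the hypothesis of (ii) with respect to $m$ and $\ell$, yielding $\la_\ell^1>\la=\la_\ell^1$, which is absurd. The step I expect to be most delicate is the tree-comparability argument in (ii): everything rests on the ``nested or disjoint'' dichotomy for supports of tree nodes, which in turn depends on the successive ordering of children at every decomposition in the tree analysis, and on the first-cover property trapping $\supp f\cap\supp x_\ell^1$ inside $\supp f_{\la_\ell^1}$.
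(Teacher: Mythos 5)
Your proposal is correct and follows essentially the same route as the paper's proof: show $\la_m^1>\la_m$ by exhibiting the leftmost child $\mu_{j_1}$ of $\la_m$ as a cover of $x_m^1$, deduce $\ell<m$ from the support inequality $\max\supp f_\la\leqslant\max\supp f_{\la_m^1}<\max\supp x_m$, obtain comparability of $\la$ and $\la_\ell^1$ from the nested-or-disjoint structure of the tree, exclude $\la_\ell^1\leqslant\la$ by reapplying (i), and derive (iii) from (ii). You fill in a few steps the paper leaves implicit — in particular the explicit chain $\max\supp f_{\la_m^1}\leqslant\max\supp f_{\mu_{j_1}}<\min\supp f_{\mu_{j_2}}\leqslant\max\supp x_m$ in place of the paper's invocation of Remark~\ref{remqqq1}, and the justification that $\la$ and $\la_\ell^1$ must be comparable — but the skeleton is identical.
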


\begin{proof}
Let $m\in G$. Evidently $\la_m^1\geqslant\la_m$. Suppose that
$\la_m^1 = \la_m$. This means that $x_m^1$ and $x_m$ are covered
for the first time simultaneously, which can only be the case if
$\#\big\{\supp f_{\la_m}\cap \supp x_m\big\} = 1$. Moreover
$\max\supp f_{\la_m^1}\leqslant\max\supp x_m^1$ and by Remark
\ref{remqqq1}, we have that $\max\supp x_m^1< \max\supp x_m^3 =
\max\supp x_m$.

For the second statement, notice that since $\la\geqslant
\la_m^1$, it follows that $\max\supp f_\la\leqslant\max\supp
f_{\la_m^1} < \max\supp x_m$, therefore $\ell<m$. Moreover, since
$\supp f_\la\cap\supp x_\ell\neq\varnothing$, $\la$ is comparable
to $\la_\ell^1$. If $\la_\ell^1\leqslant\la$, then $\max\supp
f_\la\leqslant\max\supp f_{\la_\ell^1} < \max\supp x_\ell$, which
contradicts the fact that $\ran f_\la\cap \ran
x_m^1\neq\varnothing$.

The third statement follows from the second one.

\end{proof}
The next lemma is proved in exactly the same way.

\begin{lem} Let $\{x_m\}_{m\inn}$ be a block sequence in
$\X$, $f\in W$, $\{f_\la\}_{\la\in\La}$ be a tree analysis of $f$
and $G = \{m\inn: \supp f\cap \supp x_m\neq\varnothing\}$. For
$m\in G$ set $\la_m, \la_m^3$ to be the nodes of $\La$ that cover
$x_m, x_m^3$ for the first time respectively and assume that
$\#\big\{\supp f_{\la_m}\cap \supp x_m\big\} > 1$, for all $m\in
G$. Then:

\begin{enumerate}
\item[(i)] $\la_m^3 > \la_m$ and $\min\supp f_{\la_m^3} >\min\supp
x_m$, for all $m\in G$.

\item[(ii)] For any $m\in G$ and $\la\geqslant\la_m^3$ such that
$\ran f_\la\cap \ran x_m^3\neq\varnothing$ and $\ran f_\la\cap
\ran x_\ell^3\neq\varnothing$, for some $\ell\neq m$, we have that
$\ell>m$ and $\la_\ell^3 > \la$.

\item[(iii)] The map $m\to\la_m^3$ is one to one.
\end{enumerate}\label{lem7}
\end{lem}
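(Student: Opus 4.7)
The plan is to mirror the proof of Lemma \ref{lem6} by systematically replacing $x_m^1$ with $x_m^3$, \emph{max} with \emph{min}, and reversing the sense of inequalities on the index $m$. Since the notion of first-time covering and the initial/middle/final decomposition of a vector are symmetric with respect to left/right, I expect each of the three steps to carry over essentially verbatim.

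For (i) I would argue by contradiction: if $\la_m^3=\la_m$, then $x_m$ and $x_m^3$ would share a first-time cover, and by the definition of the decomposition this can happen only when $\supp f_{\la_m}\cap\supp x_m$ is a singleton, contradicting the standing hypothesis. For the min-support bound, the node $\la_m^3$ lies (weakly) below the immediate successor $\mu_{j_2}$ of $\la_m$ that is used to cut off $x_m^3$, so $\min\supp f_{\la_m^3}\geqslant\min\supp f_{\mu_{j_2}}=\min\supp x_m^3$; Remark \ref{remqqq1} then provides $\min\supp x_m^3>\min\supp x_m^1=\min\supp x_m$.

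For (ii), I plan to use (i) to extract $\min\supp f_\la\geqslant\min\supp f_{\la_m^3}>\min\supp x_m$. This immediately forces $\ell>m$: otherwise $\max\supp x_\ell\leqslant\max\supp x_m<\min\supp f_\la$, contradicting that $f_\la$ meets $x_\ell$. Next, since both $f_\la$ and $f_{\la_\ell^3}$ touch $x_\ell$, they must be comparable in $\La$ (the immediate successors of any node of a tree analysis are successive in support, so two incomparable nodes cannot both intersect the same vector). If $\la_\ell^3\leqslant\la$, then applying (i) at index $\ell$ gives $\min\supp f_\la\geqslant\min\supp f_{\la_\ell^3}>\min\supp x_\ell>\max\supp x_m$, contradicting $\ran f_\la\cap\ran x_m^3\neq\varnothing$. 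Hence $\la_\ell^3>\la$.

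Part (iii) is then immediate: if $\la_m^3=\la_\ell^3$ for some $\ell\neq m$, setting $\la=\la_m^3$ satisfies the hypotheses of (ii) with respect to both $m$ and $\ell$, and (ii) yields $\la_\ell^3>\la=\la_\ell^3$, a contradiction. The only step I expect to require a moment's thought is the comparability claim in (ii), but this is the same step already invoked silently in the proof of Lemma \ref{lem6} and relies only on the tree structure of $\{f_\la\}_{\la\in\La}$.
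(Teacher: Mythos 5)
Your proposal is correct and follows exactly the route the paper intends: the paper states explicitly that Lemma \ref{lem7} ``is proved in exactly the same way'' as Lemma \ref{lem6}, and you carry out precisely that left/right mirror of the argument, so there is nothing to add about the approach. One small imprecision worth fixing: the chain you write in (i) uses the equality $\min\supp f_{\mu_{j_2}}=\min\supp x_m^3$, which in general is only $\min\supp f_{\mu_{j_2}}\leqslant\min\supp x_m^3$; the conclusion $\min\supp f_{\la_m^3}>\min\supp x_m$ instead follows from $\min\supp f_{\la_m^3}\geqslant\min\supp f_{\mu_{j_2}}>\max\supp f_{\mu_{j_1}}\geqslant\min\supp x_m$ (the last step because $\ran f_{\mu_{j_1}}$ meets $\ran x_m$), which is the same level of care the paper itself uses for Lemma \ref{lem6}, and the corresponding typo in (ii) (you want $\max\supp x_\ell<\min\supp x_m<\min\supp f_\la$, not the comparison to $\max\supp x_m$) is harmless.
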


The proof of the next lemma is even simpler and therefore it is
omitted.

\begin{lem} Let $\{x_m\}_{m\inn}$ be a block sequence in
$\X$, $f\in W$, $\{f_\la\}_{\la\in\La}$ be a tree analysis of $f$
and $G = \{m\inn: \supp f\cap \supp x_m\neq\varnothing\}$. For
$m\in G$ set $\la_m, \la_m^2$ to be the nodes of $\La$ that cover
$x_m, x_m^2$ for the first time respectively and assume that
$\#\big\{\supp f_{\la_m}\cap \supp x_m\big\} > 1$, for all $m\in
G$. Then, for any $m\in G$ with $x_m^2\neq 0$, for any
$\la\leqslant\la_m^2$ such that $\supp f_\la\cap\supp
x_\ell^2\neq\varnothing$, for some $\ell\neq m$, it follows that
$\la\leqslant\la_\ell^2$.\label{lem8}
\end{lem}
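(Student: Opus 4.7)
The plan is to argue by contradiction, in the same vein as the proofs of Lemmas \ref{lem6} and \ref{lem7}. Assume $\la\not\leqslant\la_\ell^2$. The first thing to establish is that $\la$ and $\la_\ell^2$ are comparable in the tree order: since $\la_\ell^2$ covers $x_\ell^2$ we have $\supp f\cap\supp x_\ell^2\subseteq\supp f_{\la_\ell^2}$, so combined with $\supp f_\la\subseteq\supp f$ the hypothesis $\supp f_\la\cap\supp x_\ell^2\neq\varnothing$ upgrades to $\supp f_\la\cap\supp f_{\la_\ell^2}\neq\varnothing$. In any tree analysis, two nodes whose supports meet must be comparable (their subtrees being either nested or disjoint), and therefore the standing assumption forces $\la_\ell^2<\la$.

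Next, I would descend through $\la_\ell$. Because $\la_\ell$ covers $x_\ell$ it covers $x_\ell^2$ a fortiori, so $\la_\ell\leqslant\la_\ell^2<\la$, and thus $\la$ lies in the subtree of some immediate successor $\mu$ of $\la_\ell$. Now $\supp f_\mu\supseteq\supp f_\la$ meets $\supp x_\ell^2$, while by the very definition $x_\ell^2=x_\ell|_{(\max\supp f_{\mu_{j_1}},\min\supp f_{\mu_{j_2}})}$ both $\supp f_{\mu_{j_1}}$ and $\supp f_{\mu_{j_2}}$ are disjoint from $\supp x_\ell^2$. Consequently $\mu$ must be one of the strictly interior immediate successors of $\la_\ell$ that meets $x_\ell$; that is, $\mu=\mu_k$ for some $j_1<k<j_2$.

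Finally, I would invoke $\la\leqslant\la_m^2$ to obtain $\supp f_{\mu_k}\supseteq\supp f_\la\supseteq\supp f_{\la_m^2}\supseteq\supp f\cap\supp x_m^2$, so that $\supp f_{\mu_k}\cap\supp x_m\neq\varnothing$. Assume without loss of generality $\ell<m$; the case $\ell>m$ is analogous with $\mu_{j_1}$ in place of $\mu_{j_2}$. Since $k<j_2$, the block order of the successors of $\la_\ell$ yields $\max\supp f_{\mu_k}<\min\supp f_{\mu_{j_2}}$, and because $\mu_{j_2}$ meets $\supp x_\ell$ we have $\min\supp f_{\mu_{j_2}}\leqslant\max\supp x_\ell<\min\supp x_m$. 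Therefore $\max\supp f_{\mu_k}<\min\supp x_m$, contradicting $\supp f_{\mu_k}\cap\supp x_m\neq\varnothing$. The only delicate step in this plan is the middle-position identification $j_1<k<j_2$, which is precisely what the definition of $x_\ell^2$ is tailored to enable; once this is in hand the contradiction reduces to a routine block-order comparison, which is why this lemma admits a briefer argument than Lemmas \ref{lem6} and \ref{lem7}.
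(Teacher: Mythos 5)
Your proof is correct. The paper omits the proof of this lemma entirely, stating only that it is ``even simpler'' than the proofs of Lemmas~\ref{lem6} and~\ref{lem7}, so there is no paper proof to compare against; your argument realizes that promise faithfully, using the same toolkit as those two proofs: comparability of tree nodes with overlapping supports, descent to the immediate-successor level of $\la_\ell$, the observation that $\supp x_\ell^2$ can only be met through a strictly interior successor $\mu_k$ with $j_1<k<j_2$, and a block-order comparison against $\mu_{j_1}$ or $\mu_{j_2}$ to close the contradiction. One small point worth making explicit: the final step concludes $\supp f_{\mu_k}\cap\supp x_m\neq\varnothing$ from $\supp f_{\mu_k}\supseteq\supp f\cap\supp x_m^2$, which requires $\supp f\cap\supp x_m^2\neq\varnothing$; this is legitimate because the definition of $\la_m^2$ as a first-covering node (Definition after Lemma~\ref{lem8}'s context) is only made under the hypothesis $\supp f\cap\supp x_m^2\neq\varnothing$, and that is the intended reading of ``$x_m^2\neq 0$'' in the lemma's statement. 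With that understood, the argument is complete.
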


\begin{lem} Let $x_1,\ldots, x_m$, $y_1,\ldots,y_m$ be finite
normalized block sequences such that $x_i < y_{i+1}$ and $y_i <
x_{i+1}$ for $i=1,\ldots m-1$. Assume moreover that
$\{x_i\}_{i=1}^m$ and $\{y_i\}_{i=1}^m$ are both equivalent to the
usual basis of $(\mathbb{R}^m,\|\cdot\|_1)$, with a lower constant
$\theta>0$. Let $\{c_i\}_{i=1}^m$ be non negative reals with
$\sum_{i=1}^mc_i = 1$ and set $z = \sum_{i=1}^mc_ix_i$ and $w =
\sum_{i=1}^mc_iy_i$. Then:

\begin{enumerate}

\item[(i)] If $f\in W$ is an $\al$-average of size $s(f) = p$,
then there exists $g\in W$ such that $\ran g\subset \ran f\cap\ran
w$ and $\frac{1}{p}g(w)>\theta f(z) - 3\max\{c_i: i=1,\ldots,m\}$.

\item[(ii)] Let $f\in W$. Then there exists $g\in W$ with $\ran
g\subset \ran f\cap\ran w$, such that $g(w)
> \theta f(z) - 2\max\{c_i: i=1,\ldots,m\}$.

\end{enumerate}\label{lem9}
\end{lem}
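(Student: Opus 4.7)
The plan is to prove (i) and (ii) simultaneously by induction on the minimal $m$ with $f\in W_m$, feeding (ii) at level $m-1$ into (i) at level $m$ and conversely (i) at level $m-1$ into (ii) at level $m$. The base case $m=0$ is immediate: $f=\pm e_j$ gives $|f(z)|\leqslant c_k\leqslant \max_i c_i$ for the unique $k$ (if any) with $j\in\ran x_k$, so the right-hand side of (ii) is nonpositive and any admissible $g$ with $g(w)\geqslant 0$ works (or the claim is vacuous when $\ran f\cap\ran w=\varnothing$).

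For the inductive step, first consider $f = \frac{1}{p}\sum_{j=1}^d f_j\in W_m$ an \alf, with $d\leqslant p$ and $p\geqslant 2$. Applying (ii) inductively to each $f_j$ produces $g_j\in W$ with $\ran g_j\subset\ran f_j\cap\ran w$ and $g_j(w) > \theta f_j(z) - 2\max_i c_i$. Since the $\ran f_j$'s are successive, so are the $\ran g_j$'s, and therefore $\tilde g := \frac{1}{p}\sum_j g_j$ is an \alf{} of size $p$ in $W$. For (ii), set $g = \tilde g$, giving
\[
g(w) = \tfrac{1}{p}\sum_j g_j(w) > \theta f(z) - \tfrac{2d}{p}\max_i c_i \geqslant \theta f(z) - 2\max_i c_i.
\]
For (i), take $g$ to be this same $\tilde g$ (equivalently, the sum $\sum_j g_j$ represented in $W$ via its $\alpha$-average encoding); then $\frac{1}{p}g(w)$ exceeds the same expression, which in turn exceeds $\theta f(z) - 3\max_i c_i$.

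Next consider $f=\sum_{q=1}^d \al_q\in W_m$ a \schr. Apply (i), already established at complexity $m-1$, to each $\al_q$ to obtain $g_q\in W$ with $\ran g_q\subset\ran\al_q\cap\ran w$ and $\frac{1}{s(\al_q)}g_q(w)>\theta\al_q(z) - 3\max_i c_i$. Set $\be_q := \frac{1}{s(\al_q)}g_q$; this is an \alf{} in $W$ of size $s(\be_q)=s(\al_q)$. Since $\ran\be_q\subset\ran\al_q$ and the sizes match, the sequence $\{\be_q\}_q$ inherits both the very fast growing property and the $\Sn$-admissibility from $\{\al_q\}_q$ (the latter via the spreading property of $\Sn$ applied to the inequality $\min\supp\be_q\geqslant\min\supp\al_q$). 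Thus $g := \sum_q\be_q\in W$ is a \schr, and summing the per-term estimates yields a lower bound of the form $g(w) > \theta f(z) - \text{(error)}\cdot\max_i c_i$.

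The main technical hurdle is to keep the error in (ii) at the absolute level $2\max_i c_i$ rather than having it scale with the number $d$ of components of a \schr, since naively summing $d$ copies of $3\max_i c_i$ is too large. Resolving this requires exploiting the interleaving hypotheses $x_m<y_{m+1}$ and $y_m<x_{m+1}$ together with the tree-analysis decomposition of each $x_m$ into its initial, middle, and final parts (introduced in the definitions following the tree analysis subsection): these force the boundary error contributions between adjacent $\al_q$'s to telescope, so that only an absolute number of extremal terms contribute. Throughout the assembly, one must verify that the resulting $g$ genuinely lies in $W$ with $\ran g\subset\ran f\cap\ran w$; this bookkeeping, carried out via Lemmas \ref{lem6}, \ref{lem7}, and \ref{lem8}, is the central technical task.
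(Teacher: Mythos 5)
Your inductive scheme on the complexity of $f$ does not close, and the paper's own proof does not proceed this way at all. Two concrete problems: \emph{First}, in your $\al$-average step for (i), you set $g=\tilde g=\frac{1}{p}\sum_j g_j$ and claim $\frac{1}{p}g(w)$ exceeds $\theta f(z)-\frac{2d}{p}\max_i c_i$; but that expression bounds $g(w)=\tilde g(w)$, not $\frac{1}{p}g(w)=\frac{1}{p^2}\sum_j g_j(w)$, which is smaller by a factor of $p$ and does not beat $\theta f(z)-3\max_i c_i$. The parenthetical ``equivalently, the sum $\sum_j g_j$'' is not a repair, since $\sum_j g_j$ is not an element of $W$ --- the $g_j$ are arbitrary members of $W$, not a very-fast-growing $\Sn$-admissible family of $\al$-averages. \emph{Second}, in the Schreier step your per-term estimates give only $g(w)>\theta f(z)-3d\max_i c_i$, and you concede the error ``scales with $d$'' but offer no actual mechanism to reduce it to the absolute constant $2$; invoking the tree-analysis decomposition and the ``telescoping'' of boundary errors is a gesture, not an argument, and in fact the tree analysis is a tool the paper reserves for the \emph{later} lemmas (the construction of $g^1,g^2,g^3$), not for this one.

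The paper proves Lemma \ref{lem9} directly, with no induction on $f$. Let $i_1<i_2$ be the extremal indices with $\ran f\cap\ran x_i\neq\varnothing$. For (i), Lemma \ref{lem4} gives $f(z)<\frac{1}{p}\sum_{i=i_1}^{i_2}c_i+2\max_i c_i$; for (ii), one only needs the trivial bound $|f(z)|\leqslant\sum_{i=i_1}^{i_2}c_i$. In both cases one then uses the lower $\ell_1$-estimate on $\{y_i\}$ to pick a single norming $g\in W$ for $\sum_{i=i_1+1}^{i_2-1}c_iy_i$, so $g(w)>\theta\sum_{i=i_1}^{i_2}c_i-2\max_i c_i$, and the interleaving $x_{i_1}<y_{i_1+1}$, $y_{i_2-1}<x_{i_2}$ is exactly what guarantees $\ran g\subset\ran f\cap\ran w$. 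Combining the two inequalities (and using $p\geqslant 2$, $\theta\leqslant 1$ in case (i)) yields the stated constants. The $\ell_1$-lower estimate for $\{y_i\}$ is the essential input that your induction never invokes; without it there is no way to produce a single $g$ that is large on $w$ over the whole interval $[i_1,i_2]$ at once, which is what makes the constant stay absolute.
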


\begin{proof}
For the proof of the first statement, set $i_1 = \min\{i: \ran
f\cap\ran x_i \neq \varnothing\}, i_2 = \max\{i: \ran f\cap\ran
x_i \neq \varnothing\}$. By Lemma \ref{lem4}, we conclude that
\begin{equation}
f(z) < \frac{1}{p}\sum_{i=i_1}^{i_2}c_i + 2\max\{c_i:
i=1,\ldots,m\}\label{lem9eq1}
\end{equation}

Since $\|\sum_{i=i_1 +1}^{i_2-1}c_iy_i\|
> \theta\sum_{i=i_1}^{i_2}c_i - 2\max\{c_i: i=1,\ldots,m\}$, we may choose $g\in W$
such that
\begin{equation}
g(\sum_{i=i_1 +1}^{i_2-1}c_iy_i) > \theta\sum_{i=i_1}^{i_2}c_i -
2\max\{c_i: i=1,\ldots,m\}\label{lem9eq2}
\end{equation}

We may clearly assume that $\ran g\subset \ran\big\{\cup_{i=i_1 +
1}^{i_2-1}\ran y_i\big\}\subset \ran f\cap\ran w$.  Finally,
combining \eqref{lem9eq1} and \eqref{lem9eq2}, and doing some easy
calculations we conclude that $g$ is the desired functional.

To prove the second statement, define $i_1, i_2$ as before. Then,
one evidently has that $\|\sum_{i=i_1 +1}^{i_2-1}c_iy_i\|
> \theta\sum_{i=i_1}^{i_2}c_i - 2\max\{c_i: i=1,\ldots,m\}$, therefore there exists $g\in W$
such that
\begin{equation}
g(\sum_{i=i_1 +1}^{i_2-1}c_iy_i) > \theta\sum_{i=i_1}^{i_2}c_i -
2\max\{c_i: i=1,\ldots,m\}\label{lem9eq3}
\end{equation}

It is also clear that

\begin{equation}
f(z) \leqslant \sum_{i=i_1}^{i_2}c_i \label{lem9eq4}
\end{equation}

As previously, we may assume that $\ran g\subset ran f\cap\ran w$.
Combining \eqref{lem9eq3} and \eqref{lem9eq4} we conclude the
desired result.

\end{proof}

For $\{x_m\}_{m\inn}, \{y_m\}_{m\inn}$ normalized block sequences
in $\X$ both generating $\ell_1^n$ spreading models, we
appropriately block both sequences in the same manner to obtain
further seminormalized block sequences $\{z_m\}_{m\inn}$ and
$\{w_m\}_{m\inn}$. For $f$ a given functional in $W$, we decompose
$z_m$ into $z_m^1, z_m^2, z_m^3$ its initial, middle and final
part, as previously described. Next, we proceed to construct $g^1,
g^2$ $g^3$ functionals in $W$, such that each $g^i$ acting on
$w_m$, pointwise dominates $f$ acting on $z^i_m$, for $i=1,2,3$.
The choice of the functionals $g^i$, $i=1,2,3$ is presented in the
following three lemmas.

\begin{lem}
Let $\{x_m\}_{m\inn}, \{y_m\}_{m\inn}$ be normalized block
sequences in $\X$, both generating $\ell_1^n$ spreading models,
with a lower constant $\theta>0$, such that $x_m < y_{m+1}$ and
$y_m < x_{m+1}$ for all $m\inn$. Let $\{F_m\}_{m\inn}$ be
successive subsets of the naturals, $\{c_i\}_{i\inn}$ be non
negative reals and $\{\e_m\}_{m\inn}, \{\de_m\}_{m\inn}$ be
positive reals satisfying the following:
\begin{enumerate}

\item[(i)] $F_m\in\Sn$ and $z_m = \sum_{i\in F_m}c_ix_i, w_m =
\sum_{i\in F_m}c_iy_i$ are both $(n,\e_m)$ s.c.c. for all $m\inn$.

\item[(ii)] $\max\supp z_m\Big(\frac{1}{\min\supp z_{m+1}} +
6\e_{m+1}\Big) < \frac{\de_{m+1}}{4}$, for all $m\inn$.

\end{enumerate}
Let also $f\in W$, with a tree analysis $\{f_\la\}_{\la\in\La}$
and $z_m^1$ be the initial part of $z_m$ with respect to
$\{f_\la\}_{\la\in\La}$, for all $m\inn$. Then there exists
$g^1\in W$, such that
\begin{equation*}
g^1(w_m) > 2\theta f(z^1_m) - 5\de_m,\;\text{for all}\; m\inn.
\end{equation*}\label{prop10}
\end{lem}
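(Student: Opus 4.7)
The plan is to construct $g^1 \in W$ via a \emph{tree-substitution} procedure on the tree analysis $\{f_\la\}_{\la\in\La}$ of $f$. For each $m\in G:=\{m:\supp f\cap\supp z_m\neq\varnothing\}$, I identify the node $\la_m^1\in\La$ that covers $z_m^1$ for the first time. By Lemma \ref{lem6}, $\la_m^1>\la_m$, $\max\supp f_{\la_m^1}<\max\supp z_m$, and the map $m\mapsto\la_m^1$ is injective. The main idea is to replace each $f_{\la_m^1}$ in the tree by a suitably chosen $h_m\in W$ acting on $\ran w_m$ rather than on $\ran z_m^1$, and then rebuild the tree of $f$ around these substitutions so that the result remains a valid element of $W$.

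For each internal-node $\la_m^1$ whose $f_{\la_m^1}$ is an $\al$-average of size $s_m\geqslant 2$, I apply Lemma \ref{lem9}(i) to the restrictions of $z_m, w_m$ to $\ran f_{\la_m^1}$, obtaining $h_m\in W$ with $\ran h_m\subset\ran f_{\la_m^1}\cap\ran w_m$ and $h_m(w_m)>s_m\theta f_{\la_m^1}(z_m^1)-3s_m\max_{i\in F_m^1} c_i$. Since $s_m\geqslant 2$, this already yields the factor $2\theta$ appearing in the desired inequality. When $f_{\la_m^1}$ is a Schreier functional, Lemma \ref{lem9}(ii) plays the analogous role. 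Terminal-leaf cases (when $f_{\la_m^1}=\pm e_j$) and the boundary $x_i$ of $z_m^1$ (a single $x_i$ possibly cut by $\ran f_{\mu_{j_1}}$) are handled by choosing $h_m$ to be a single basis functional $\pm e_k$ with $k\in\supp w_m$, the error absorbed via the spreading-model lower bound $\theta$.

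Next, I rebuild $g^1$ from the bottom up: at every internal node $\la$ of $\La$, combine the substituted children (those $h_m$'s whose $\la_m^1$ is a descendant of $\la$, together with any unchanged siblings) in exactly the same way as $f_\la$ combines its original children---the same $\al$-average normalization $1/s(f_\la)$, or the same Schreier sum. Since $\ran h_m\subset\ran f_{\la_m^1}$, the min-supp ordering and child counts at each node are inherited from $\La$, so the rebuilt tree is a valid tree analysis of some $g^1\in W$. Propagating the per-$m$ estimate upward: if $c_{\la_m^1}$ denotes the product of $1/s(\cdot)$ along the path from the root to $\la_m^1$, then $f$ restricted to $\ran z_m^1$ equals $c_{\la_m^1} f_{\la_m^1}$ there, and $g^1$ restricted to $\ran w_m$ equals $c_{\la_m^1}h_m$. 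Hence $g^1(w_m)>2\theta f(z_m^1)-3c_{\la_m^1}s_m\max_i c_i$; since $c_{\la_m^1}s_m\leqslant 1$, the cumulative error is at most $3\max_i c_i\leqslant 3\e_m$, and condition (ii) of the proposition absorbs this into $5\de_m$.

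The main obstacle is verifying that the rebuilt tree genuinely lies in $W$. At each Schreier-functional ancestor node, the substituted children must form a very fast growing and $\Sn$-admissible sequence of $\al$-averages: the $\Sn$-admissibility and min-supp ordering are immediate from $\ran h_m\subset\ran f_{\la_m^1}$, but the very-fast-growing condition on the sizes $s(h_m)$ must be ensured when invoking Lemma \ref{lem9}, typically by matching $s(h_m)$ to $s(f_{\la_m^1})$. At each $\al$-average ancestor node of size $p$, the substituted children must still number at most $p$, which follows since substitution preserves the number of children. The terminal-leaf and boundary-$x_i$ subcases require separate but routine checking that no admissibility constraint is violated.
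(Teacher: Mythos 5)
Your substitution scheme has a genuine structural gap: it assumes the nodes $\{\la_m^1\}_{m\in G}$ can be replaced independently, but Lemma~\ref{lem6}(ii) exactly allows $\la_\ell^1$ to be a proper \emph{descendant} of $\la_m^1$ for $\ell<m$ (when $f_{\la_m^1}$ intersects both $z_m^1$ and $z_\ell^1$). If you replace the entire subtree at $\la_m^1$ by a single $h_m$ with $\ran h_m\subset\ran w_m$, you have deleted the node $\la_\ell^1$ and there is no longer any place to inject $h_\ell$, so the rebuilt functional evaluates to $0$ on $w_\ell$. The paper's proof is precisely about navigating this conflict: the chain $\mathcal{C}_m^1$, the markers $\nu_m^1$ (last node of $\mathcal{C}_m^1$ touching some other $z_\ell^1$) and $\mu_m$ (first $\al$-average on $\mathcal{C}_m^1$ below $\la_m^1$ with a child purely inside $z_m^1$), and the resulting six cases exist exactly to decide, at each node $\la$, whether to (a) splice in a fresh piece from Lemma~\ref{lem9}, (b) reuse already-built pieces for the $\ell<m$ contributions, or (c) combine both by summing disjointly-ranged functionals $\frac{1}{p}\sum_{j\le j_0}g_{\be_j}^1 + \frac{1}{p}g$. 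There is also an inner induction (Case~6) with a strengthened error bound to control how the error accumulates through the intermediate nodes strictly between $\mu_m$ and $\la_m^1$; your ``propagate the per-$m$ estimate upward'' step quietly assumes away all the intermediate Schreier tails that Lemma~\ref{lemqqq6} is used to discard.

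There is also an arithmetic problem with your claimed constant. You assert $h_m(w_m)>s_m\theta f_{\la_m^1}(z_m^1)-3s_m\max_ic_i$ by taking the raw $g$ from Lemma~\ref{lem9}(i), but then $h_m$ need not be an $\al$-average of the right size, so it cannot legally sit in the position of $f_{\la_m^1}$ under a Schreier-functional parent (the very-fast-growing constraint on sizes breaks). If instead you normalize $h_m=\frac{1}{s_m}g$ to make it a bona fide $\al$-average of size $s_m$ (as the paper does, setting $g_\la^1=\frac{1}{p}g$ in Case~1), then the local bound is $\theta f_{\la_m^1}(z_m^1)-3\max_ic_i$, and you recover exactly the paper's constant $\theta$, not $2\theta$. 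And even in the raw-$g$ variant, your inequality $c_{\la_m^1}s_m\leqslant 1$ fails whenever $\la_m^1$ sits directly under a Schreier functional (so $c_{\la_m^1}=1$ but $s_m\geqslant 2$), so the error term $3s_m c_{\la_m^1}\max_ic_i$ is not controlled by $3\max_ic_i$. In short, the high-level intuition (substitute at the first-cover nodes using Lemma~\ref{lem9}, then reassemble the tree) is the right one and matches the paper, but the execution skips the real work: handling the overlaps between $z_m^1$'s along $\mathcal{C}_m^1$, and carefully tracking the error through the Schreier-functional tails.
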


\begin{proof}
Let $f\in W$. We may assume that $f(e_j)\geqslant 0$, for all
$j\inn$, that $\supp f\subset \cup_{m\inn}\supp z_m$ and that
$e_j^*(z_m)\geqslant 0, e_j^*(w_m)\geqslant 0$ for all $j,k\inn$.
Set $G = \{m\inn:\supp f\cap\supp x_m\neq\varnothing\}$.

We may assume that for any $m\in G$, $\supp f\cap\supp z_m$ is not
a singleton. Otherwise there exists $f^\prime\in W$ that satisfies
this condition for $G^\prime = \{m\inn:\supp f^\prime\cap\supp
z_m\neq\varnothing\}$ and $f^\prime(z_m) \geqslant f(z_m) - \e_m$,
for all $m\inn$.

Let $\{f_\lambda\}_{\lambda\in\Lambda}$ be a tree analysis of $f$.
Denote by $z_m^1$ the initial part of $z_m$ and $\la_m^1$ the node
of $\La$ that cover $z_m^1$ for the first time, for all $m\in G$,
all with respect to $\{f_\lambda\}_{\lambda\in\Lambda}$.

We proceed to the construction of $g^1$. Set
\begin{eqnarray*}
\mathcal{C}_m^1\!\!\!\!\! &=&\!\!\!\!\! \big\{\la\in\La:
\la\geqslant\la_m^1, \min\{\supp f_\la\cap\supp z_m^1\} =
\min\{\supp
f_{\la_m^1}\cap\supp z_m^1\}\big\}\\
&&\cup\{\la\in\La:\la\leqslant \la_m^1\}
\end{eqnarray*}
Notice that $\mathcal{C}_m^1$ is a maximal chain in $\La$. Set
\begin{eqnarray*}
\nu_m^1\!\!\! &=&\!\!\! \max\{\la\in\mathcal{C}_m^1: \ran
f_\la\cap\ran
z_\ell^1\neq\varnothing, \text{for some}\;\ell\neq m\}\\
\mu_m\!\!\! &=&\!\!\! \min\{\la\in\mathcal{C}_m^1: \la\geqslant
\la_m^1, f_\la\; \text{is an}\; \al\text{-average and there
exists}\;\be\in\scc(\la)\\
&&\!\!\!\text{such that}\;\ran f_\be\cap\ran
z_m^1\neq\varnothing\;\text{and}\; \ran f_\be\cap\ran z_\ell^1 =
\varnothing\;\text{for}\;\ell\neq m\}
\end{eqnarray*}
where $\scc(\la)$ are the immediate successors of $\la$ in $\La$.

\begin{clm}If for some $m\in G$ we have that
$\la_m^1\leqslant \nu_m^1 <\mu_m$, then $\nu_m^1 = \mu_m^-$.
\end{clm}

\begin{proof}[Proof of claim]

First notice that in this case $f_{\nu_m^1}$ must be a \schr. If
$f_{\nu_m^1}$ were an $\al$-average, then if we denote its
immediate successor in $\mathcal{C}_m^1$ by $\be$, then $\ran
f_\be\cap\ran z_m^1\neq\varnothing$ and $\ran f_\be\cap\ran
z_\ell^1 = \varnothing$ for $\ell\neq m$, therefore $\mu_m^1$
would not be the minimal element satisfying this condition, since
we assumed that $\nu_m^1 <\mu_m$, a contradiction. Since
$f_{\nu_m^1}$ is a \schr, it follows that if we denote its
immediate successor in $\mathcal{C}_m^1$ by $\be$, then $f_\be$ is
an $\al$-average, such that $\ran f_\be\cap\ran
z_m^1\neq\varnothing$ and $\ran f_\be\cap\ran z_\ell^1 =
\varnothing$ for $\ell\neq m$. Since $\nu_m^1<\mu_m$, it follows
that $\be = \mu_m$.

\end{proof}

Set
\begin{eqnarray*}
\La_1 &=& \{\la\in\La:\;\text{there exists}\;m\in G\;\text{such
that}\;\la\leqslant\la_m^1\}\\
&&\cup\{\la\in\La:\;\text{there exists}\;m\in G\;\text{such
that}\;\la\leqslant\mu_m\;\text{and}\;\nu_m^1\geqslant \la_m^1\}
\end{eqnarray*}
For $\la\in\La_1$, set
\begin{equation*}
G_\la = \{m\in G: \la\leqslant\la_m^1\}\cup\{m\in G:
\la\leqslant\mu_m\;\text{and}\;\nu_m^1\geqslant\la_m^1\}
\end{equation*}
For every $\la\in\La_1$, we will inductively construct $g_\la^1\in
W$ satisfying the following.

\begin{enumerate}

\item[(i)] $g_\la^1(w_m) > \theta f_\la(z_m^1) - 4\de_m$, for all
$m\in G_\la$.

\item[(ii)] $\ran g_\la^1\subset \ran
f_\la\cap\ran\big\{\cup\{\ran w_m : m\in G_\la\}\big\}$.

\item[(iii)] If $f_\la$ is an $\al$-average, then so is $g_\la^1$
and $s(g_\la^1) = s(f_\la)$.

\end{enumerate}

Before proceeding to the construction, we would like to stress out
that Lemma \ref{lem6} assures us that whenever a functional
$f_\la$, $\la\in\La_1$ acts on more than one vectors $z_k^1$, then
all vectors except for the rightmost one, have been covered for
the first time in a previous step. Therefore in this case, we are
free to focus the inductive step on one vector. In particular, if
$\la\in\La, \la\geqslant\la_m^1$ for some $m\in G$, such that
$\ran f_\la\cap\ran z_m^1\neq\varnothing$ and $\ran f_\la\cap\ran
z_\ell^1\neq\varnothing$ for $\ell\neq m$, then besides the fact
that $\ell<m$ and $\la_\ell^1\geqslant\la$, it also follows that
$\la\in\mathcal{C}_m^1$ (as well as $\la\in\mathcal{C}_\ell^1$).

Let $\la\in\La_1$. We distinguish six cases. The first inductive
step falls under the first two.\vskip3pt

\noindent {\em Case 1:} There exists $m\in G$ such that $\la =
\la_m^1 = \mu_m$ and $\nu_m < \la_m^1$.

In this case $f_\la$ is an $\al$-average, $f_\la =
\frac{1}{p}\sum_{j=1}^d f_{\be_j}$, where $\{\be_j\}_{j=1}^d$ are
the immediate successors of $\la$. By Lemma \ref{lem9}, there
exists $g\in G$, such that $\ran g\subset\ran f_\la\cap\ran w_m$
and $\frac{1}{p}g(w_m) > \theta f_\la(z_m^1) - 3\max\{c_i: i\in
F_m\}$. Set $g_\la^1 = \frac{1}{p}g$. Since $\max\{c_i: i\in F_m\}
< \e_m < \de_m$, we conclude that $g_\la^1$ satisfies the
inductive assumption.\vskip3pt

\noindent {\em Case 2:} There exists $m\in G$ such that $\la =
\la_m^1 < \mu_m$ and $\nu_m < \la_m^1$.

Then $f_\la$ is a \schr, $f_\la = \sum_{j=1}^df_{\be_j}$. Then
again by Lemma \ref{lem9}, there exists $g\in W$ such that $\ran
g\subset\ran f_\la\cap\ran w_m$ and $g(w_m) > \theta f(z_m^1) -
3\max\{c_i: i\in F_m\}$. Set $g_\la^1 = g$. As in the previous
case, we conclude that $g_\la^1$ satisfies the inductive
assumption.\vskip3pt

\noindent {\em Case 3:} For any $m\in G$ such that $\ran
f_\la\cap\ran z_m^1\neq\varnothing$, we have that $\la<\la_m^1$.

Then if $f_\la = \sum_{j=1}^df_{\be_j}$ (or $f_\la =
\frac{1}{p}\sum_{j=1}^df_{\be_j}$), for $j=1,\ldots,d$ there exist
$g_{\be_j}^1$, already satisfying the inductive assumption. Then
it is easy to see that $g_\la^1 = \sum_{j=1}^dg^1_{\be_j}\in W$
(or $g^1_\la = \frac{1}{p}\sum_{j=1}^dg^1_{\be_j}\in W$) and is
the desired functional.\vskip3pt

\noindent {\em Case 4:} There exists $m\in G$ such that
$\la>\mu_m$.

Since $\la\in\La_1$, there exists at least one $\ell<m$ in $G$,
such that $\la<\la_\ell^1$. If $f_\la = \sum_{j=1}^df_{\be_j}$ (or
$f_\la = \frac{1}{p}\sum_{j=1}^df_{\be_j}$), set $j_0 =
\max\{j:$there exists $\ell<k$ such that $\ran f_{\be_j}\cap\ran
z_\ell^1\neq\varnothing\}$. Then it is easy to see that $g_\la^1 =
\sum_{j=1}^{j_0}g^1_{\be_j}\in W$ (or $g^1_\la =
\frac{1}{p}\sum_{j=1}^{j_0}g^1_{\be_j}\in W$) and satisfies the
inductive assumption.\vskip3pt

\noindent {\em Case 5:} There exists $m\in G$ such that $\la =
\mu_m$ and $\la_m^1\leqslant\nu_m^1$.

This both covers the case when $\mu_m = \la_m^1$ and when $\mu_m >
\la_m^1$. The claim yields that in either case $\nu_m^1\geqslant
\mu_m^-$.

If $\nu_m^1 = \mu_m^-$, simply repeat what was done in case 1.
Otherwise, $\nu_m^1\geqslant\mu_m$ and there exist at least one
$\ell<m$ in $G$, such that $\ran f_\la\cap\ran
z_\ell^1\neq\varnothing$. If $f_\la =
\frac{1}{p}\sum_{j=1}^df_{\be_j}$, set $j_0 = \max\{j:$ there
exists $\ell<m$ such that $\ran f_{\be_j}\cap\ran
z_\ell^1\neq\varnothing\}$. Since $\la = \mu_m$, we have that $j_0
< d$. Apply Lemma \ref{lem9} and find $g\in W, \ran g\subset\ran
f_\la\cap\ran w_m$ such that $\frac{1}{p}g(w_k) > \theta
f_\la(z_k^1) - 3\max\{c_i: i\in F_m\}$. Set $g_\la^1 =
\frac{1}{p}\sum_{j=1}^{j_0}g_{\be_j}^1 + \frac{1}{p}g$. Then
$g_\la^1\in W$ and satisfies the inductive assumption. In
particular, note that $g_\la^1(w_m) > \theta f_\la(z_m^1) -
3\de_m$.\vskip3pt

\noindent {\em Case 6:} There exists $m\in G$, such that
$\la_m^1\leqslant\la<\mu_m$ and $\nu_m^1\geqslant \la_m^1$.

We will prove by induction on $q = |\la| - |\mu_m|$ that there
exists $g_\la^1\in W$ satisfying conditions (i), (ii) and (iii)
from our initial inductive assumption and moreover a stronger
version of condition (i). In particular:\vskip3pt

\noindent If $f_\la$ is an $\al$-average, then $g_\la^1(w_m) >
\theta f_\la(z_m^1) - 3\de_m - \frac{\de_m}{4}$.\vskip3pt

\noindent If $f_\la$ is a \schr, then $g_\la^1(w_m) > \theta
f_\la(z_m^1) - 3\de_m - \frac{\de_m}{2}$.\vskip3pt

For convenience start the induction for $q = 0$, i.e. $\la =
\mu_m$. As we have noted in this case $g_\la^1(w_m) > \theta
f_\la(z_m^1) - 3\de_m$.

Assume that it is true for some $q< |\la_m^1| - |\mu_m|$. Then for
$\la$ such that $|\la| - |\mu_m| = q + 1$, the claim yields that
$\nu_m^1\geqslant\la$.

If $f_\la$ is an $\al$-average, $f_\la =
\frac{1}{p}\sum_{j=1}^df_{\be_j}$, since $\la\leqslant \nu_m^1$,
$\la<\mu_m$, we have that $\ran f_{\be_d}\cap\ran
z_\ell^1\neq\varnothing$, for some $\ell<m$. Therefore $\ran
f_{\be_j}\cap\ran z_m^1=\varnothing$ for $j<d$ and there exists
$g_{\be_d}^1$ satisfying the stronger inductive assumption. Set
$g_\la^1 = \frac{1}{p}\sum_{j=1}^dg_{\be_j}^1$. As always
$g_\la^1\in W$ and it satisfies the initial inductive assumption.
It also satisfies the stronger one. Indeed, $g_\la^1(w_m) =
\frac{1}{p}g_{\be_d}^1(w_m) > \frac{1}{p}\big(\theta
f_{\be_d}(z_m^1) - 3\de_m - \frac{\de_m}{2}\big) =
\frac{1}{p}\big(p\theta f_\la(z_m^1) - 3\de_m -
\frac{\de_m}{2}\big) = \theta f_\la(z_m^1) - \frac{3\de_m}{p} -
\frac{\de_m}{2p} > \theta f_\la(z_m^1) - 3\de_m -
\frac{\de_m}{4}$.

If $f_\la$ is a \schr, $f_\la = \sum_{j=1}^df_{\be_j}$, since
$\la\leqslant \nu_m^1$ we have that $\ran f_{\be_j}\cap\ran
z_\ell^1\neq\varnothing$, for some $\ell<m$ and some $j\leqslant
d$. Set $j_0 = \min\{j:\ran f_{\be_j}\cap\ran
z_m^1\neq\varnothing\}$. Therefore $\ran f_{\be_j}\cap\ran
z_m^1=\varnothing$ for $j<j_0$ and there exists an $\al$-average
$g_{\be_{j_0}}^1$ satisfying the stronger inductive assumption.

Choose $\{J_r\}_{r=1}^{r_0}$ successive subsets of the naturals
satisfying the following.
\begin{enumerate}

\item[(i)] $\cup_{r=1}^{r_0}J_r = \{j: j_0 < j\leqslant d\}$

\item[(ii)] $\{\min\supp f_j: j\in J_r\}$ is a maximal
$\mathcal{S}_{n-1}$ set for $r<r_0$ and $\{\min\supp f_j: j\in
J_{r_0}\}\in\mathcal{S}_{n-1}$

\end{enumerate}
We conclude that $r_0 \leqslant\max\supp z_{m-1}$. Moreover, Lemma
\ref{lemqqq6} yields that for $r\leqslant r_0$
\begin{equation*}
\sum_{j\in J_r}f_j(z_m^1) < \frac{1}{\min\supp z_m} + 6\e_m\
\end{equation*}
Assumption (ii) of the proposition yields that
$\sum_{j>j_0}f_{\be_j}(z_m^1) < \frac{\de_m}{4}$.

Set $g_\la^1 = \sum_{j=1}^{j_0}g_{\be_j}^1$. Then $g_\la^1(w_m) =
g_{\be_{j_0}}^1(w_m) > \theta f_{\be_{j_0}}(z_m^1) - 3\de_m -
\frac{\de_m}{4} = \theta\sum_{j=1}^{j_0}f_{\be_j}(z_m^1) - 3\de_m
- \frac{\de_m}{4} > \theta f_\la(z_m^1) - \frac{\de_m}{4} - 3\de_m
- \frac{\de_m}{4} = \theta f_\la(z_m^1) - 3\de_m -
\frac{\de_m}{2}$. This ends the inductive step in case 6 and also
the initial induction.

Set $g^1 = g^1_\varnothing$. Then:
\begin{equation*}
g^1(w_m) > \theta f(z_m^1) - 4\de_m,\;\text{for all}\;m\in G
\end{equation*}

Lifting the restriction that for any $m\in G$, $\supp f\cap\supp
z_m$ is not a singleton, in the general case we conclude that
$g^1(w_m)
> \theta f(z_m^1) - 5\de_m,\;\text{for all}\;m\in G$

\end{proof}

\begin{lem}
Let $\{x_m\}_{m\inn}, \{y_m\}_{m\inn}$ be normalized block
sequences in $\X$, both generating $\ell_1^n$ spreading models,
with a lower constant $\theta>0$, such that $x_m < y_{m+1}$ and
$y_m < x_{m+1}$ for all $m\inn$. Let $\{F_m\}_{m\inn}$ be
successive subsets of the naturals, $\{c_i\}_{i\inn}$ be non
negative reals and $\{\e_m\}_{m\inn}, \{\de_m\}_{m\inn}$ be
positive reals satisfying the following:
\begin{enumerate}

\item[(i)] $F_m\in\Sn$ and $z_m = \sum_{i\in F_m}c_ix_i, w_m =
\sum_{i\in F_m}c_iy_i$ are both $(n,\e_m)$ s.c.c. for all $m\inn$.

\item[(ii)] $\max\supp z_m\Big(\frac{1}{\min\supp z_{m+1}} +
6\e_{m+1}\Big) < \frac{\de_{m+1}}{4}$, for all $m\inn$.

\end{enumerate}
Let also $f\in W$, with a tree analysis $\{f_\la\}_{\la\in\La}$
and $z_m^3$ be the final part of $z_m$ with respect to
$\{f_\la\}_{\la\in\La}$, for all $m\inn$. Then there exists
$g^3\in W$, such that
\begin{equation*}
g^3(w_m) > \frac{\theta}{2} f(z^3_m) - 3\de_m,\;\text{for all}\;
m\inn.
\end{equation*}\label{lemqqq11}
\end{lem}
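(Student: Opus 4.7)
The plan is to mirror the construction from Lemma \ref{prop10} with the orientation reversed, using the final parts $z_m^3$ in place of the initial parts $z_m^1$. As in that proof, we may first reduce to the case where $f \geq 0$ coordinatewise, $\supp f \subset \cup_m \supp z_m$, and $\supp f \cap \supp z_m$ is not a singleton for any $m \in G = \{m : \supp f \cap \supp z_m \neq \varnothing\}$; the error incurred in this reduction is absorbed into $\de_m$. Fix a tree analysis $\{f_\la\}_{\la \in \La}$, and for each $m \in G$ let $\la_m^3$ be the node that covers $z_m^3$ for the first time.

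The key structural replacement is Lemma \ref{lem7} in place of Lemma \ref{lem6}: when $\la \geq \la_m^3$ and $f_\la$ meets both $\ran z_m^3$ and $\ran z_\ell^3$ for some $\ell \neq m$, now $\ell > m$ and $\la_\ell^3 > \la$. Consequently we build the maximal chain
\[
\mathcal{C}_m^3 = \{\la \leq \la_m^3\} \cup \{\la \geq \la_m^3 : \max\{\supp f_\la \cap \supp z_m^3\} = \max\{\supp f_{\la_m^3} \cap \supp z_m^3\}\},
\]
which tracks the rightmost intersection with $\supp z_m^3$ (whereas $\mathcal{C}_m^1$ tracked the leftmost intersection with $\supp z_m^1$), and we define $\nu_m^3$ and $\mu_m$ as the obvious analogues of $\nu_m^1$ and $\mu_m$. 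The same argument as in Lemma \ref{prop10} gives $\nu_m^3 = \mu_m^-$ whenever $\la_m^3 \leq \nu_m^3 < \mu_m$. One then constructs $g_\la^3 \in W$ by induction on $\la$ through the six cases: invoking Lemma \ref{lem9} in the base cases where $\la = \la_m^3$ and $\nu_m^3 < \la_m^3$, and passing to sums or averages of children $g_{\be_j}^3$ in the cases where $f_\la$ sits strictly above every relevant $\la_m^3$, or sees only further-out vectors.

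The main obstacle is the analogue of case 6, where $\la_m^3 \leq \la < \mu_m$ and $\nu_m^3 \geq \la_m^3$, handled by a secondary induction on $|\la| - |\mu_m|$. When $f_\la = \sum_{j=1}^d f_{\be_j}$ is a Schreier functional, one sets $j_0 = \max\{j : \ran f_{\be_j} \cap \ran z_m^3 \neq \varnothing\}$; the successors with $j > j_0$ meet only $z_\ell^3$ for $\ell > m$ and are controlled inductively, $f_{\be_{j_0}}$ delivers the essential $w_m$-mass through $g_{\be_{j_0}}^3$, and the successors with $j < j_0$ act solely on $z_m^3$ and are dropped. The discarded contribution $\sum_{j < j_0} f_{\be_j}(z_m^3)$ is bounded by $\de_m/4$ by partitioning $\{1,\ldots,j_0-1\}$ into maximal $\mathcal{S}_{n-1}$-admissible groupings of the very fast growing sequence $\{f_{\be_j}\}$, applying Lemma \ref{lemqqq6} to each group, and using hypothesis (ii). The $\al$-average subcase of the secondary induction is parallel, using only the first successor in place of the last. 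Setting $g^3 = g_\varnothing^3$ and lifting the singleton reduction yields the stated estimate.
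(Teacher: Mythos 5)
Your proposal contains a genuine gap: the "dropped contribution" bound in your Case~6 analogue does not hold, and the reason is a structural asymmetry between the initial and final parts that your mirroring misses.

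In the proof of Lemma~\ref{prop10} (the $g^1$ case), the node $\la$ in Case~6 carries the chain child $f_{\be_{j_0}}$ with $j_0 = \min\{j:\ran f_{\be_j}\cap\ran z_m^1\neq\varnothing\}$, and the children that are \emph{dropped} are the \emph{later} ones, $j>j_0$. The $\de_m/4$ bound for $\sum_{j>j_0}f_{\be_j}(z_m^1)$ works because these are non-first children of a Schreier functional, so $s(f_{\be_j}) > \max\supp f_{\be_{j-1}} \geq \max\supp f_{\be_{j_0}} \geq \min\supp z_m$, and therefore each maximal $\mathcal{S}_{n-1}$-block contributes at most $\frac{1}{\min\supp z_m}+6\e_m$ via Lemma~\ref{lemqqq6}, with at most $\max\supp z_{m-1}$ blocks. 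In your proposed $g^3$ Case~6, you set $j_0 = \max\{j:\ran f_{\be_j}\cap\ran z_m^3\neq\varnothing\}$ and drop the \emph{earlier} children $j<j_0$. But this set includes the \emph{first} child $f_{\be_1}$, whose size $s(f_{\be_1})$ can be as small as $2$; the very fast growing condition constrains only later children relative to earlier ones. Applying Lemma~\ref{lemqqq6} to the first $\mathcal{S}_{n-1}$-block therefore yields at best $\frac{1}{s(f_{\be_1})}+6\e_m$, which can be of order $\frac{1}{2}$, not $\de_m/4$. The discarded quantity $\sum_{j<j_0}f_{\be_j}(z_m^3)$ can carry a large fraction of $f_\la(z_m^3)$, so this step fails.

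The paper's actual proof of Lemma~\ref{lemqqq11} is genuinely \emph{simpler}, not a mirror image: it has only four cases, no $\mu_m$, and no secondary induction. The crucial point is handled at $\la = \la_m^3$ (the paper's Case~4): one sets $j_0 = \min\{j:\ran f_{\be_j}\cap\ran z_\ell^3\neq\varnothing$ for some $\ell>m\}$, notes that $j_0>1$, and applies Lemma~\ref{lem9} directly to $\sum_{j<j_0}f_{\be_j}$ to obtain a functional $g\in W$ supported in $\ran w_m$ with $g(w_m) > \theta\sum_{j<j_0}f_{\be_j}(z_m^3) - 2\max\{c_i\}$. The only contribution actually dropped is the \emph{single} transitional child $f_{\be_{j_0}}(z_m^3)$, which is small because $j_0>1$ forces $s(f_{\be_{j_0}}) > \max\supp f_{\be_{j_0-1}} \geq \min\supp z_m$, and Lemma~\ref{lem4} plus hypothesis~(ii) give $f_{\be_{j_0}}(z_m^3) < \de_m/4$. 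Then $g_\la^3 = \frac{1}{2}g + \sum_{j\geq j_0}g_{\be_j}^3$ is a legitimate Schreier functional precisely because $\frac{1}{2}g$ is inserted at the \emph{front}, where the very fast growth of the subsequent $\al$-averages is already guaranteed (this is where the extra factor $\frac{1}{2}$ in the statement comes from). The corresponding move is unavailable in the $g^1$ setting — one cannot append a Lemma~\ref{lem9}-functional at the \emph{end} of a Schreier combination without controlling its size — and that is exactly why Lemma~\ref{prop10} needs the $\mu_m$-descent and secondary induction while Lemma~\ref{lemqqq11} does not.
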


\begin{proof}
Let $f\in W$. As in the previous proof, assume that
$f(e_j)\geqslant 0$, for all $j\inn$, that $\supp f\subset
\cup_{m\inn}\supp z_m$ and that $e_j^*(z_m)\geqslant 0,
e_j^*(w_m)\geqslant 0$ for all $j,k\inn$. Set $G = \{m\inn:\supp
f\cap\supp x_m\neq\varnothing\}$.

Assume again that for any $m\in G$, $\supp f\cap\supp z_m$ is not
a singleton. Otherwise there exists $f^\prime\in W$ that satisfies
this condition for $G^\prime = \{m\inn:\supp f^\prime\cap\supp
z_m\neq\varnothing\}$ and $f^\prime(z_m) \geqslant f(z_m) - \e_m$,
for all $m\inn$.

Let $\{f_\lambda\}_{\lambda\in\Lambda}$ be a tree analysis of $f$.
Denote by $z_m^3$ the final part of $z_m$ and $\la_m^3$ the node
of $\La$ that cover $z_m^3$ for the first time, for all $m\in G$,
all with respect to $\{f_\lambda\}_{\lambda\in\Lambda}$.

Set
\begin{eqnarray*}
\mathcal{C}_m^3\!\!\! &=&\!\!\! \big\{\la\in\La:
\la\geqslant\la_m^3, \max\{\supp f_\la\cap\supp z_m^3\} =
\max\{\supp
f_{\la_m^3}\cap\supp z_m^3\}\big\}\\
&&\cup\{\la\in\La:\la\leqslant \la_m^3\}\\
\nu_m^3\!\!\! &=&\!\!\! \max\{\la\in\mathcal{C}_m^3: \ran
f_\la\cap\ran z_\ell^3\neq\varnothing, \text{for some}\;\ell\neq
m\}
\end{eqnarray*}
Set
\begin{equation*}
\La_3 = \{\la\in\La:\;\text{there exists}\;m\in G\;\text{such
that}\;\la\leqslant\la_m^3\}
\end{equation*}
For every $\la\in\La_3$, we will inductively construct $g_\la^3\in
W$ satisfying the following.
\begin{enumerate}

\item[(i)] $g_\la^3(w_m) > \frac{\theta}{2} f_\la(z_m^3) -
2\de_m$, for all $m\in G$ such that $\la_m^3\geqslant\la$.

\item[(ii)] $\ran g_\la^3\subset \ran
f_\la\cap\ran\big\{\cup\{\ran w_m : \la_m^3\geqslant\la\}\big\}$.

\item[(iii)] If $f_\la$ is an $\al$-average, then so is $g_\la^3$
and $s(g_\la^3) = s(f_\la)$.

\end{enumerate}

Just as in the construction of $g^1$, Lemma \ref{lem7} assures us
that whenever a functional $f_\la$, $\la\in\La_2$ acts on more
than one vectors $z_m^3$, then all vectors except for the leftmost
one, have been covered for the first time in a previous step.

Let $\la\in\La_3$. We distinguish 4 cases, the first inductive
step falls under the first case.\vskip3pt

\noindent {\em Case 1:} There exists $m\in G$, such that $\la =
\la_m^3$ and $\nu_m^3 < \la_m^3$.

If $f_\la$ is an $\al$-average, $f_\la =
\frac{1}{p}\sum_{j=1}^df_{\be_j}$, by Lemma \ref{lem9} there
exists $g\in W$ such that $\ran g\subset\ran f_\la\cap\ran w_m$
and $\frac{1}{p}g(w_m) > \theta f_\la(z_m^3) - 3\max\{c_i: i\in
F_m\}$. Set $g_\la^3 = \frac{1}{p}g$.

If $f_\la$ is a \schr, then by Lemma \ref{lem9} there exists $g\in
W$ such that $\ran g\subset\ran f_\la\cap\ran w_m$ and $g(w_m) >
f_\la(z_m^3) - 2\max\{c_i: i\in F_m\}$. Set $g_\la^3 =
g$.\vskip3pt

\noindent {\em Case 2:} For any $m\in G$ such that $\ran
f_\la\cap\ran z_m^3\neq\varnothing$, we have that $\la < \la_m^3$.
If $f_\la = \sum_{j=1}^df_{\be_j}$ (or $f_\la =
\frac{1}{p}\sum_{j=1}^df_{\be_j}$), set $g_\la^3 =
\sum_{j=1}^dg_{\be_j}^3$ (or $g_\la^3 =
\frac{1}{p}\sum_{j=1}^dg_{\be_j}^3$).\vskip3pt

\noindent {\em Case 3:} There exists $m\in G$, such that
$\la>\la_m^3$.

Since $\la\in\La_3$, there exists at least one $\ell>m$ such that
$\la_\ell^3>\la$. If $f_\la = \sum_{j=1}^df_{\be_j}$ (or $f_\la =
\frac{1}{p}\sum_{j=1}^df_{\be_j}$), set $j_0 = \min\{j: \ran
f_{\be_j}\cap\ran z_\ell^3\neq\varnothing$, for some $\ell>k\}$.
Set $g_\la^3 = \sum_{j=j_0}^dg_{\be_j}^3$ (or $g_\la^3 =
\frac{1}{p}\sum_{j=j_0}^dg_{\be_j}^3$).\vskip3pt

\noindent {\em Case 4:} There exists $m\in G$, such that $\la =
\la_m^3$ and $\nu_m^3\geqslant\la_m^3$.

If $f_\la$ is an $\al$-average, $f_\la =
\frac{1}{p}\sum_{j=1}^df_{\be_j}$, set $j_0 = \min\{j: \ran
f_{\be_j}\cap\ran z_\ell^3\neq\varnothing$ for some $\ell>m\}$.
Then $j_0 > 1$, otherwise $z_m^3$ would have been covered for the
first time in a previous step. By Lemma \ref{lem9} there exists
$g\in W$ such that $\ran g\subset\ran f_\la\cap\ran w_m$ and
$\frac{1}{p}g(w_m) > \theta f_\la(z_m^3) - 3\max\{c_i: i\in
F_m\}$. Set $g_\la^3 = \frac{1}{p}g +
\frac{1}{p}\sum_{j=j_0}^dg_{\be_j}^3$.

If $f_\la$ is a \schr, $f_\la = \sum_{j=1}^df_{\be_j}$, set again
$j_0 = \min\{j: \ran f_{\be_j}\cap\ran z_\ell^3\neq\varnothing$
for some $\ell>m\}$ and as before $j_0>1$. By Lemma \ref{lem9}
there exists $g\in W$ such that $\ran g\subset\ran(\sum_{j <
j_0}f_{\be_j})\cap\ran w_m$ and $g(w_m) > \theta \sum_{j <
j_0}f_{\be_j}(z_m^3) - 2\max\{c_i: i\in F_m\}$. Since $j_0>1$, it
follows that $s(f_{\be_{j_0}}) > \min\supp z_m$. By Lemma
\ref{lem4} and assumption (ii) of the proposition, we conclude the
following.
\begin{equation*}
f_{\be_{j_0}}(z_m^3) < \frac{1}{\min\supp z_m} + 2\max\{c_i: i\in
F_m\} < \frac{1}{\min\supp z_m} + 2\e_m < \frac{\de_m}{4}
\end{equation*}
Set $g_\la^3 = \frac{1}{2}g + \sum_{j=j_0}^dg_{\be_j}^3$. Then
$g_\la(w_m) > \frac{\theta}{2}f_\la(z_m^3) - 2\de_m$.\vskip3pt

This ends induction. Set $g^3 = g^3_\varnothing$. Then:
\begin{equation*}
g^3(w_m) > \frac{\theta}{2} f(z_m^3) - 2\de_m,\;\text{for
all}\;k\in G
\end{equation*}

Lifting the restriction that for any $m\in G$, $\supp f\cap\supp
z_m$ is not a singleton, in the general case we conclude that
$g^3(w_m)
> \frac{\theta}{2} f(z_m^3) - 3\de_m,\;\text{for all}\;m\in G$.

\end{proof}

\begin{lem}
Let $\{x_m\}_{m\inn}, \{y_m\}_{m\inn}$ be normalized block
sequences in $\X$, both generating $\ell_1^n$ spreading models,
with a lower constant $\theta>0$, such that $x_m < y_{m+1}$ and
$y_m < x_{m+1}$ for all $m\inn$. Let $\{F_m\}_{m\inn}$ be
successive subsets of the naturals, $\{c_i\}_{i\inn}$ be non
negative reals and $\{\e_m\}_{m\inn}, \{\de_m\}_{m\inn}$ be
positive reals satisfying the following:
\begin{enumerate}

\item[(i)] $F_m\in\Sn$ and $z_m = \sum_{i\in F_m}c_ix_i, w_m =
\sum_{i\in F_m}c_iy_i$ are both $(n,\e_m)$ s.c.c. for all $m\inn$.

\item[(ii)] $\max\supp z_m\Big(\frac{1}{\min\supp z_{m+1}} +
6\e_{m+1}\Big) < \frac{\de_{m+1}}{4}$, for all $m\inn$.

\end{enumerate}
Let also $f\in W$, with a tree analysis $\{f_\la\}_{\la\in\La}$
and $z_m^2$ be the middle part of $z_m$ with respect to
$\{f_\la\}_{\la\in\La}$, for all $m\inn$. Then there exists
$g^2\in W$, such that
\begin{equation*}
g^2(w_m) > \frac{\theta}{2} f(z^2_m) - 5\de_m,\;\text{for all}\;
m\inn.
\end{equation*}\label{lemqqq12}
\end{lem}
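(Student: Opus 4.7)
The strategy is to mimic the inductive tree constructions of Lemmas \ref{prop10} and \ref{lemqqq11}, but with the role of the extremal covering nodes $\la_m^1, \la_m^3$ replaced by the node $\la_m^2$ covering the middle part. As before we may reduce to the case where $\supp f \cap \supp z_m$ is not a singleton for any $m \in G = \{m : \supp f \cap \supp z_m \neq \varnothing\}$, paying the price of an extra $\e_m$ summand. Fix a tree analysis $\{f_\la\}_{\la\in\La}$ of $f$ and, for $m \in G$ with $z_m^2 \neq 0$, let $\la_m^2$ be the node covering $z_m^2$ for the first time. The key structural ingredient is Lemma \ref{lem8}: any $\la \leqslant \la_m^2$ whose support meets some other middle part $z_\ell^2$ satisfies $\la \leqslant \la_\ell^2$ as well. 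So below the covering nodes $\la_m^2$, the tree behaves uniformly in $m$: a single $f_\la$ either meets no $z_m^2$ or meets several simultaneously, all of which are covered strictly later. This removes the asymmetric "leftmost/rightmost" issues that drove the case analysis for $g^1$ and $g^3$.

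Set $\La_2 = \{\la \in \La : \la \leqslant \la_m^2 \text{ for some } m \in G\}$. Working upward from the covering nodes, I will inductively construct $g_\la^2 \in W$ for $\la \in \La_2$ satisfying
\begin{enumerate}
\item[(i)] $g_\la^2(w_m) > \frac{\theta}{2} f_\la(z_m^2) - 4\de_m$ for all $m \in G$ with $\la_m^2 \geqslant \la$;
\item[(ii)] $\ran g_\la^2 \subset \ran f_\la \cap \ran\bigl\{\bigcup\{\ran w_m : \la_m^2 \geqslant \la\}\bigr\}$;
\item[(iii)] if $f_\la$ is an $\al$-average then so is $g_\la^2$, with $s(g_\la^2) = s(f_\la)$.
\end{enumerate}
The base case $\la = \la_m^2$ proceeds as in the first two cases of Lemma \ref{prop10}: apply Lemma \ref{lem9} (either part, depending on whether $f_\la$ is an $\al$-average or a \schr) to obtain $g \in W$ with $\ran g \subset \ran f_\la \cap \ran w_m$ realising $g(w_m) > \theta f_\la(z_m^2) - 3\max\{c_i : i \in F_m\}$, and take $g_\la^2 = g$ (possibly pre-multiplied by $1/s(f_\la)$). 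Since $\max\{c_i : i \in F_m\} < \e_m < \de_m$, the inductive conditions hold, even with the better constant $\theta$ rather than $\theta/2$.

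The inductive step splits into two cases that together are simpler than the six cases for $g^1$. If for every $m \in G$ with $\ran f_\la \cap \ran z_m^2 \neq \varnothing$ we have $\la < \la_m^2$, then (by Lemma \ref{lem8}) $g_\la^2$ is just assembled from the already-constructed $g_{\be_j}^2$'s: set $g_\la^2 = \sum_j g_{\be_j}^2$ or $g_\la^2 = \frac{1}{p}\sum_j g_{\be_j}^2$ according as $f_\la$ is a \schr{} or an $\al$-average. If instead $\la = \la_m^2$ for some $m$ and additionally meets another middle part at a descendant (the analogue of case 4 in Lemma \ref{lemqqq11}), we split off the single child $f_{\be_{j_0}}$ which is the first to meet $z_m^2$, invoke Lemma \ref{lem9} on $\sum_{j < j_0} f_{\be_j}$ to build a $g$ whose range fits inside $\ran w_m$, and combine as $g_\la^2 = \frac{1}{2} g + \sum_{j \geqslant j_0} g_{\be_j}^2$; the factor $1/2$ guarantees membership in $W$, and accounts for the factor $\theta/2$ appearing in the conclusion, exactly as in Lemma \ref{lemqqq11}. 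Lemma \ref{lemqqq6} together with hypothesis (ii) bounds the crossover error by $\de_m/4$, exactly as used at the end of case 6 of Lemma \ref{prop10}.

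The main obstacle, and the reason this is the most delicate of the three constructions, is the bookkeeping when a node $\la$ equal to $\la_m^2$ simultaneously sits on the chains of several distinct middle parts; here Lemma \ref{lem8} has to be applied in both directions (left and right neighbours can appear at descendants of $\la_m^2$, not just on one side as in Lemmas \ref{lem6} and \ref{lem7}). Setting $g^2 = g_\varnothing^2$, condition (i) specialised to the root gives $g^2(w_m) > \frac{\theta}{2} f(z_m^2) - 4\de_m$ for all $m \in G$; restoring the singleton-overlap reduction costs an additional $\e_m < \de_m$, yielding the stated bound with $5\de_m$ and completing the proof.
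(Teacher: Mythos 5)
Your overall skeleton is right: define $\La_2$, use Lemma \ref{lem8} to observe that below a covering node $\la_m^2$ the tree behaves uniformly, and set up the same three inductive conditions. You also correctly flag, at the end, the key new difficulty — that a single node $\la$ can equal $\la_m^2$ simultaneously for several distinct indices $m$. But you then do not actually overcome it, and your sketched constructions only ever produce a functional serving one middle part at a time. That is the genuine gap.

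Concretely: your base case ``apply Lemma \ref{lem9} \dots and take $g_\la^2 = g$'' handles one $m$ with $\la_m^2 = \la$, but says nothing if there is a second $\ell$ with $\la_\ell^2 = \la$ and $z_\ell^2 \neq 0$. Likewise your combined formula $g_\la^2 = \frac{1}{2} g + \sum_{j \geqslant j_0} g_{\be_j}^2$ contributes a single $g$, supported in a single $\ran w_m$. The paper, by contrast, at each node introduces $D = \{m \in G : \la = \la_m^2\}$ and $H$ (the children meeting middle parts covered strictly later), applies Lemma \ref{lem9} once \emph{per} $m \in D$ to get a family $\{g_m\}_{m\in D}$, and sets $g_\la^2 = \frac{1}{p}\sum_{m\in D} g_m + \frac{1}{p}\sum_{j\in H} g_{\be_j}^2$ in the $\al$-average case (with the counting bound $\#H < p - \#D$ ensuring this stays an $\al$-average of size $p$), and the analogous sum in the \schr{} case (with the separate treatment of $m_1 = \min D$ producing the $\frac{1}{2}g_{m_1}$ term). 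Without this ``for each $m \in D$'' structure, you simply lose all but one of the simultaneously covered middle parts, and the inductive condition (i) fails for those $m$.

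A smaller but real inaccuracy: in your case-4-style step you declare $f_{\be_{j_0}}$ the ``first child to meet $z_m^2$'' and then apply Lemma \ref{lem9} to $\sum_{j<j_0} f_{\be_j}$. That orientation is backwards — $\sum_{j<j_0} f_{\be_j}$ would act to the left of $z_m^2$, not on it. In the paper's \schr{} construction, for $m = m_1$ one takes $j_0 = \max\{j : \ran f_{\be_j}\cap\ran z_{m_1}^2\neq\varnothing\}$, applies Lemma \ref{lem9} to $\sum_{j<j_0} f_{\be_j}$, and the bound $f_{\be_{j_0}}(z_{m_1}^2) < \de_m/4$ controls the discarded rightmost child; while for $m > m_1$ in $D$ one uses $j_m = \min\{j : \ran f_{\be_j}\cap\ran z_m^2\neq\varnothing\}$ and exploits that $\ran f_{\be_{j_m}}\subset\ran z_m^2$. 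Your description mixes the two roles. So the proposal needs both the multi-$m$ combination with the counting argument for $W$-membership and the correct handling of which children to split off at each $m\in D$.
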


\begin{proof}
Let $f\in W$. As usually, assume that $f(e_j)\geqslant 0$, for all
$j\inn$, that $\supp f\subset \cup_{m\inn}\supp z_m$ and that
$e_j^*(z_m)\geqslant 0, e_j^*(w_m)\geqslant 0$ for all $j,k\inn$.
Set $G = \{m\inn:\supp f\cap\supp x_m\neq\varnothing\}$.

Assume again that for any $m\in G$, $\supp f\cap\supp z_m$ is not
a singleton. Otherwise there exists $f^\prime\in W$ that satisfies
this condition for $G^\prime = \{m\inn:\supp f^\prime\cap\supp
z_m\neq\varnothing\}$ and $f^\prime(z_m) \geqslant f(z_m) - \e_m$,
for all $m\inn$.

Let $\{f_\lambda\}_{\lambda\in\Lambda}$ be a tree analysis of $f$.
Denote by $z_m^2$ the middle part of $z_m$ and $\la_m^2$ the node
of $\La$ that cover $z_m^2$ for the first time, for all $m\in G$,
all with respect to $\{f_\lambda\}_{\lambda\in\Lambda}$.

Set
\begin{equation*}
\La_2 = \{\la\in\La:\;\text{there exists}\;m\in G\;\text{such
that}\;z_m^2\neq 0\;\text{and}\;\la\leqslant\la_m^2\}
\end{equation*}

For every $\la\in\La_2$, we will inductively construct $g_\la^2\in
W$ such that:
\begin{enumerate}

\item[(i)] $g_\la^2(w_m) > \frac{\theta}{2} f_\la(z_m^2) -
4\de_m$, for all $m\in G$.

\item[(ii)] $\ran g_\la^2\subset \ran f_\la$.

\item[(iii)] If $f_\la$ is an $\al$-average, then so is $g_\la^2$
and $s(g_\la^2) = s(f_\la)$.

\end{enumerate}

By Lemma \ref{lem8} it follows that whenever $\la\in\La_2$ such
that $f_\la\cap\ran z_m^2$, for some $m$, then
$\la\leqslant\la_m^2$. Therefore, although it might be the case
that $f_\la$ covers many $z_m^2$ for the first time
simultaneously, it cannot act on any $z_m^2$ without covering it.

The first inductive step it similar to the general one, therefore
let $\la\in\La_2$ and assume that the inductive assumption holds
for any $\mu>\la$.\vskip3pt

\noindent {\em Case 1:} $f_\la$ is an $\al$-average.

Set
\begin{equation*}
D = \{m\in G: \la = \la_m^2\},\quad E = \{m\in G: \la < \la_m^2\}
\end{equation*}
If $f_\la = \frac{1}{p}\sum_{j=1}^df_{\be_j}$, set
\begin{equation*}
H = \{j: \ran f_{\be_j}\cap\ran z_m^2\neq\varnothing\;\text{for
some}\;m\in E\}
\end{equation*}
As we have noted, $\ran f_{\be_j}\cap\ran z_m^2 = \varnothing$,
for any $j\in H, m\in D$.

For $m\in D$, since $\la = \la_m^2$, there exists at least one
$j_m$, such that $\ran f_{\be_{j_m}}\cap\ran z_\ell^2 =
\varnothing$ for any $\ell\neq m$, in fact there exist
$j_{m_1}<j_{m_2}$ such that $\ran f_{\be_{j_{m_i}}}\subset\ran
z_m^2$, for $i=1,2$. Therefore $\#H < p - \#D$.

For $m\in D$ apply Lemma \ref{lem9} and find $g_m\in W$, such that
$\ran g_m\subset\ran f_\la\cap\ran w_m$ and
$\frac{1}{p}g(w_m)\geqslant\theta f_\la(z_2^m) - 3\max\{c_i: i\in
F_m\}$. We may assume that $\ran g\subset \ran z_m^2$ (to see this
restrict $f_\la$ to the range of $z_m^2$).

Set $g_\la^2 = \frac{1}{p}\sum_{m\in D}g_m + \frac{1}{p}\sum_{j\in
H}g_{\be_j}^2$. By the above it follows that $g_\la^2\in W$ and
that it satisfies the inductive assumption.\vskip3pt

\noindent {\em Case 2:} $f_\la$ is a \schr.

\begin{equation*}
D = \{m\in G: \la = \la_m^2\},\quad E = \{m\in G: \la < \la_m^2\}
\end{equation*}
If $f_\la = \sum_{j=1}^df_{\be_j}$, set
\begin{equation*} H = \{j:
\ran f_{\be_j}\cap\ran z_m^2\neq\varnothing\;\text{for some}\;m\in
E\}
\end{equation*}
Again, $\ran f_{\be_j}\cap\ran z_m^2 = \varnothing$, for any $j\in
H, m\in D$.

Set $m_1 = \min\{m: \ran f_\la\cap\ran z_m^2\neq\varnothing\}$.
Let $m\in D, m>m_1$. Set $j_m = \min\{j: \ran f_{\be_j}\cap\ran
z_m^2\neq\varnothing\}$. Then $\ran f_{\be_{j_m}}\subset\ran
z_m^2$.

By applying Lemma \ref{lem9} find $g_m\in W$ an $\al$-functional
of size $s(g_m) = s(f_{\be_{j_m}})$ such that $\ran g_m\subset\ran
f_{\be_{j_m}}\cap\ran w_m$ and $g_m
> \theta f_{\be_{j_m}}(z_m^2) - 3\max\{c_i: i\in
F_m\}$. By the fact that $\{f_{\be_j}\}_{j=1}^d$ is admissible and
very fast growing, just as in case 6 of the proof of Lemma
\ref{prop10}, it follows that $\sum_{j>j_m}f_{\be_j}(z_m^2) <
\frac{\de_m}{4}$.

If $\min D > m_1$, set $g_\la^2 = \sum_{j\in H}g_{\be_j}^2 +
\sum_{m\in D} g_m$.

If $\min D = m_1$, set $j_0 = \max\{j: \ran f_{\be_j}\cap\ran
z_{m_1}^2\neq\varnothing\}$. Just as in case 4 of the proof of
Lemma \ref{lemqqq11}, find $g_{m_1}\in W$, such that $\ran
g_{m_1}\subset\ran(\sum_{j<j_0}f_{\be_j})\cap\ran w_{m_1}$ and
$g_{m_1}(w_{m_1})
> \theta \sum_{j<j_0}f_{\be_j}(z_{m_1}^2) - 2\max\{c_i: i\in
F_m\}$. Again we have that $f_{\be_{j_0}}(z_{m_1}^2) <
\frac{\de_m}{4}$. Set $g_\la^2 = \frac{1}{2}g_{m_1} + \sum_{j\in
H}g_{\be_j}^2 + \sum_{m\in D\setminus\{m_1\}} g_m$.\vskip3pt

The inductive construction is complete. Set $g^2 =
g^2_\varnothing$. Then:
\begin{equation*}
g^2(w_m) > \frac{\theta}{2} f(z_m^2) - 4\de_m,\;\text{for
all}\;m\in G
\end{equation*}

Lifting the restriction that for any $m\in G$, $\supp f\cap\supp
z_m$ is not a singleton, in the general case we conclude that
$g^2(w_m)
> \frac{\theta}{2} f(z_m^2) - 5\de_m,\;\text{for all}\;m\in G$.

\end{proof}

We are now ready to prove the main result of this section.

\begin{proof}[Proof of Proposition \ref{cor11}]
Fix $\theta > 0$ such that both $\{x_m\}_{m\inn}$ and
$\{y_m\}_{m\inn}$ generate $\ell_1^n$ spreading models with a
lower constant $\theta$. Fix $\{\de_m\}_{m\inn}$ a sequence of
positive reals, such that $\sum_{m=1}^\infty\de_m <
\frac{\theta^2}{13}$. Inductively choose $\{F_m\}_{m\inn}$
successive subsets of the naturals and $\{c_i\}_{i\in F_m}$
non-negative reals, satisfying the following:\vskip3pt

\noindent (i) $F_m\in\Sn$ and $z_m = \sum_{i\in F_m}c_ix_i, w_m =
\sum_{i\in F_m}c_iy_i$ are both $(n,\e_m)$ s.c.c. for all
$m\inn$.\vskip3pt

\noindent (ii) If we set
\begin{eqnarray*}
M_m &=& \max\{\max\supp z_m, \max\supp w_m\}\\
N_m &=& \min\{\min\supp z_{m}, \min\supp w_{m}\}
\end{eqnarray*}
then $M_m\big(\frac{1}{N_{m+1}} + 6\e_{m+1}\big) <
\frac{\de_{m+1}}{4}$, for all $m\inn$.\vskip3pt

We will show that for any $\{r_m\}_{m=1}^d\subset\mathbb{R}$, we
have that $\|\sum_{m=1}^d r_m w_m\| > \frac{\theta^2}{3}
\|\sum_{m=1}^d c_m z_m\|$.

Let $f\in W$. As always may assume that $1\geqslant r_m\geqslant
0, e_j^*(z_m)\geqslant 0, e_j^*(w_m)\geqslant 0, f(e_j)\geqslant
0$, for all $m,j\inn$. We may also assume that
$1\geqslant\|\sum_{m=1}^n r_k z_m\|>\theta$, therefore we may
assume that $1\geqslant f(\sum_{m=1}^d c_m z_m) > \theta$. By
Lemmas \ref{prop10}, \ref{lemqqq11} and \ref{lemqqq12}, there
exist $g^1, g^2, g^3\in W$, such that $(g^1 + g^2 +
g^3)(\sum_{m=1}^d r_m w_m)
> 2\theta f(\sum_{m=1}^d r_m z_m) -
13\sum_{m=1}^\infty\de_m
> 2\theta^2 - \theta^2 = \theta^2$. Hence $\|\sum_{m=1}^d r_m
w_m\|
>\frac{\theta^2}{3}$ and this means that $\|\sum_{m=1}^d r_m w_m\| > \frac{\theta^2}{3}
\|\sum_{m=1}^d r_m z_m\|$.

By symmetricity of the arguments it follows that $\{z_m\}_{m\inn}$
also dominates $\{w_m\}_{m\inn}$ , therefore $\{z_m\}_{m\inn}$ and
$\{w_m\}_{m\inn}$ are equivalent.

\end{proof}

\begin{cor}
The space $\X$ is quasi-minimal.\label{cor12}
\end{cor}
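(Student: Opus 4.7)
The plan is to invoke Proposition \ref{cor11} to produce equivalent block sequences lying inside any two given subspaces (up to a small perturbation). Let $Y_1, Y_2$ be infinite dimensional closed subspaces of $\X$. By a standard application of the Bessaga--Pe\l{}czy\'nski selection principle, each $Y_i$ contains a normalized basic sequence which is arbitrarily close, in the sense of a rapidly summable perturbation, to a normalized block sequence of the basis of $\X$. Combining this with Proposition \ref{propmmm14} (applied in each block subspace obtained from $Y_1$ and $Y_2$), we obtain two normalized block sequences $\{x_m\}_{m\inn}$ and $\{y_m\}_{m\inn}$ in $\X$, both generating $\ell_1^n$ spreading models, together with sequences $\{x_m'\}_{m\inn}\subset Y_1$ and $\{y_m'\}_{m\inn}\subset Y_2$ such that $\sum_{m=1}^\infty\|x_m-x_m'\|<\infty$ and $\sum_{m=1}^\infty\|y_m-y_m'\|<\infty$, with the sums chosen as small as we please.

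Next I would pass to subsequences of $\{x_m\}$ and $\{y_m\}$ alternately, preserving the corresponding subsequences in $\{x_m'\}$ and $\{y_m'\}$, in order to guarantee the interlacing condition $x_m<y_{m+1}$ and $y_m<x_{m+1}$ for every $m\inn$. After this relabelling Proposition \ref{cor11} applies directly, yielding successive subsets $\{F_m\}_{m\inn}$ of $\nn$ and non-negative reals $\{c_i\}_{i\in F_m}$ such that the blocked vectors $z_m=\sum_{i\in F_m}c_ix_i$ and $w_m=\sum_{i\in F_m}c_iy_i$ form seminormalized and equivalent sequences in $\X$.

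Finally, consider the corresponding vectors in $Y_1$ and $Y_2$, namely $z_m'=\sum_{i\in F_m}c_ix_i'\in Y_1$ and $w_m'=\sum_{i\in F_m}c_iy_i'\in Y_2$. Since the $c_i$ are non-negative and sum to $1$ on each $F_m$, we have $\|z_m-z_m'\|\leqslant\sum_{i\in F_m}\|x_i-x_i'\|$, and analogously for $w_m-w_m'$, so both perturbations are summable in $m$. A standard small perturbation argument for seminormalized basic sequences then gives that $\{z_m'\}_{m\inn}$ and $\{w_m'\}_{m\inn}$ remain basic and equivalent (choosing the initial perturbation small enough in comparison with the basis constants and the equivalence constant provided by Proposition \ref{cor11}). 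Consequently $\overline{\spn}\{z_m'\}_{m\inn}\subset Y_1$ is isomorphic to $\overline{\spn}\{w_m'\}_{m\inn}\subset Y_2$, which establishes quasi-minimality.

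The main obstacle in this plan is not any single step but the simultaneous control required when interlacing: one must be sure that after passing to subsequences the same index set $F_m$ can be used to block both $\{x_m\}$ and $\{y_m\}$ (so that Proposition \ref{cor11} applies), and at the same time keep the perturbation sequences $\{x_m'-x_m\}$ and $\{y_m'-y_m\}$ summable enough to survive the blocking. This is purely a bookkeeping matter once the ingredients from Propositions \ref{propmmm14} and \ref{cor11} are in place.
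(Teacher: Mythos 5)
Your proposal is correct and follows essentially the same route as the paper's proof: reduce to block subspaces, use Proposition \ref{propmmm14} to find normalized block sequences generating $\ell_1^n$ spreading models in each, interlace, and apply Proposition \ref{cor11}. The only cosmetic difference is that the paper dispenses with the passage from general to block subspaces in one sentence (``any subspace contains an isomorph of a block subspace'') and lets transitivity of isomorphism finish, whereas you explicitly carry the summable perturbation through the blocking; both are standard and land in the same place.
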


\begin{proof}
If $X, Y$ are block subspaces of $\X$, choose $\{x_k\}_{k\inn}$ in
$X$ and $\{y_k\}_{k\inn}$ in $Y$ normalized block sequences both
generating $\ell_1^n$ spreading models. Then obviously one may
pass to subsequences satisfying the assumption of Proposition
\ref{cor11}, therefore $X,Y$ contain further subspaces that are
isomorphic. Since any subspace contains an isomorph of a block
subspace, the result follows.
\end{proof}

\section{Strictly singular operators}
In this section we provide necessary and sufficient conditions for
a bounded operator defined on a subspace of $\X$, to be non
strictly singular. The proof of this is based on results from the
previous section and yields the following. For any $Y$ subspace of
$\X$ and $S_1,S_2,\ldots, S_{n+1}$ strictly singular operators on
$Y$, the composition $S_1S_2\cdots S_{n+1}$ is a compact operator.
We show that the strictly singular operators on the subspaces of
$\X$ admit non-trivial hyperinvariant subspaces. Next, we provide
a method for constructing strictly singular operators on subspaces
of $\X$, which is used to prove the non-separability of
$\mathcal{S}(Y)$ and also to build $S_1,\ldots,S_n$ in
$\mathcal{S}(Y)$, such that the composition $S_1\cdots S_n$ is
non-compact. We close this section by combining the above results with the properties of the $\al$-indices to show that $\{\mathcal{SS}_k(Y)\}_{k=1}^n$ is a strictly increasing family of two sided ideals.

\begin{thm} Let $Y$ be an infinite dimensional closed subspace of
$\X$ and $T:Y\rightarrow \X$ be a bounded linear operator. Then
the following assertions are equivalent.

\begin{itemize}

\item[(i)] $T$ is not strictly singular.

\item[(ii)] There exists a sequence $\{x_m\}_{m\inn}$ in $Y$
generating a $c_0$ spreading model, such that $\{Tx_m\}_{m\inn}$
generates a $c_0$ spreading model.

\item[(iii)] There exists $1\leqslant k \leqslant n$ and a
sequence $\{x_m\}_{m\inn}$ in $Y$, such that both
$\{x_m\}_{m\inn}$ and $\{Tx_m\}_{m\inn}$ generate an $\ell_1^k$
spreading model but no subsequences of $\{x_m\}_{m\inn}$ and
$\{Tx_m\}_{m\inn}$ generate an $\ell_1^{k+1}$ one.

\end{itemize}\label{prop13}

\end{thm}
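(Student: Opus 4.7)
The plan is to prove the cycle (i)$\Rightarrow$(ii)$\vee$(iii), (iii)$\Rightarrow$(ii), (ii)$\Rightarrow$(i), so that the reverse implications into (i) funnel through the $c_0$ case. For (i)$\Rightarrow$(ii)$\vee$(iii), I would pick an infinite dimensional closed subspace $Z\subseteq Y$ on which $T$ is bounded below and use reflexivity of $\X$ (Corollary \ref{cormmm15}) to extract a normalized weakly null sequence in $Z$. A standard summable gliding-hump perturbation and a passage to a subsequence yield a normalized block sequence $\{x_m\}_m$ along which $\{x_m\}_m$ and $\{Tx_m\}_m$ remain uniformly equivalent via $T$ and its bounded inverse on $Z$. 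Corollary \ref{cormmm9dich} places $\{x_m\}_m$, after a further subsequence, either in the isometric $c_0$ class or in the strong $\ell_1^p$ class for some $1\leqslant p\leqslant n$ with no subsequence generating $\ell_1^{p+1}$. Equivalence transfers the type and index $p$ to $\{Tx_m\}_m$; the ``no $\ell_1^{p+1}$ subsequence'' property passes to $\{Tx_m\}_m$ as well, since any such subsequence would, upon pull-back through $T^{-1}|_{T[Z]}$, produce the forbidden subsequence of $\{x_m\}_m$. So (ii) or (iii) holds.

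For (iii)$\Rightarrow$(ii), let $\{x_m\}_m$ witness (iii) with index $k$. Corollary \ref{cormmm9ell1} yields $\al_{n-k}(\{x_m\}_m)>0$, $\al_{n-k}(\{Tx_m\}_m)>0$ and $\al_{k'}(\cdot)=0$ for $k'<n-k$ on both sequences. Choose successive $\{F_j\}_j$ and coefficients $\{c_i\}_{i\in F_j}$ making $w_j=\sum_{i\in F_j}c_ix_i$ a $(k,\e_j)$ s.c.c. with $\e_j\to 0$, as in Proposition \ref{propmmm12}, and set $Tw_j=\sum_{i\in F_j}c_iTx_i$ using the same $F_j$ and $c_i$. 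Both $\{w_j\}_j$ and $\{Tw_j\}_j$ are seminormalized because of the lower $\ell_1^k$ estimates on the originals. Applying Proposition \ref{prop3}(ii) separately to $\{x_m\}_m$ and $\{Tx_m\}_m$ gives $\al_{n-1}(\{w_j\}_j)=\al_{n-1}(\{Tw_j\}_j)=0$, and two successive applications of Proposition \ref{cosm} (first to $\{w_j\}_j$, then to the surviving $\{Tw_j\}_j$) produce a common subsequence along which both sequences generate $c_0$ spreading models, yielding (ii).

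For (ii)$\Rightarrow$(i), perturb $\{x_m\}_m$ and then $\{Tx_m\}_m$ to normalized block sequences with summable errors, and thin so that the pair interlocks: $x_m<Tx_{m+1}$ and $Tx_m<x_{m+1}$ for all $m$. Both generate $c_0$ spreading models, isometric by Remark \ref{isomcosm}. Choose $\{F_j\}_j$ with $\#F_j\leqslant\min F_j$ and $\#F_j\to\infty$, and set $y_j=\sum_{i\in F_j}x_i$, $Ty_j=\sum_{i\in F_j}Tx_i$. Proposition \ref{propmmm10} applies in parallel to both base sequences, so after passing to a common subsequence both $\{y_j\}_j$ and $\{Ty_j\}_j$ generate $\ell_1^n$ spreading models and remain (approximately) interlocking normalized block sequences. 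Invoking Proposition \ref{cor11} on the pair $\{y_j\}_j,\{Ty_j\}_j$ produces common data $\{F_m'\}_m,\{d_i\}_{i\in F_m'}$ such that the resulting $z_m=\sum_{i\in F_m'}d_iy_i$ and $\sum_{i\in F_m'}d_iTy_i=Tz_m$ are seminormalized and equivalent, the identity $\sum d_iTy_i=Tz_m$ being immediate from the linearity of $T$. Therefore $T$ is bounded below on $[\{z_m\}_m]$, contradicting strict singularity.

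The main obstacle is the (ii)$\Rightarrow$(i) step: arranging the interlocking hypothesis of Proposition \ref{cor11} demands a careful joint perturbation of $\{x_m\}_m$ and $\{Tx_m\}_m$ that simultaneously preserves the $c_0$-spreading-model data, keeps both sequences weakly null, and does not destroy the linearity-mediated identification $Tz_m=\sum d_iTy_i$ that turns the equivalence $\{z_m\}_m\sim\{Tz_m\}_m$ into a genuine lower bound for $T$. A secondary technical point, absorbed into the same perturbation, is the normalization of $\{Ty_j\}_j$: the scalar factor $\|Ty_j\|\to d>0$ has to be tracked so that after renormalization the conclusion of Proposition \ref{cor11} still translates back, up to the constant $d$, into equivalence of $\{z_m\}_m$ with $\{Tz_m\}_m$.
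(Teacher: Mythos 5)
Your cycle (i)$\Rightarrow$(ii)$\vee$(iii), (iii)$\Rightarrow$(ii), (ii)$\Rightarrow$(i) does not close the equivalence. It yields (i)$\Leftrightarrow$(ii) and (iii)$\Rightarrow$(i), but never (i)$\Rightarrow$(iii) (equivalently (ii)$\Rightarrow$(iii)): when the dichotomy of Corollary \ref{cormmm9dich} lands your chosen sequence in the $c_0$ class you obtain (ii) but nothing about (iii), and neither of the other two implications you prove points into (iii). The paper avoids this by proving (i)$\Rightarrow$(iii) outright, running the clean cycle (i)$\Rightarrow$(iii)$\Rightarrow$(ii)$\Rightarrow$(i).

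The fix requires no new machinery: rather than applying the dichotomy to an arbitrary normalized weakly null sequence in the subspace $Z$ on which $T$ is an isomorphism, use Proposition \ref{propmmm14} to \emph{select} a sequence in $Z$ generating an $\ell_1^k$ spreading model with no subsequence generating an $\ell_1^{k+1}$ one (such a sequence exists in every subspace, for every $1\leqslant k\leqslant n$), and push it through the isomorphism $T|_Z$ exactly as you already do in the $\ell_1^p$ branch; this gives (i)$\Rightarrow$(iii) unconditionally. Your treatments of (iii)$\Rightarrow$(ii) and (ii)$\Rightarrow$(i) otherwise match the paper's route. One point to make explicit in (iii)$\Rightarrow$(ii): the $\{F_j\}$ and $\{c_i\}$ must be chosen so that \emph{both} $\sum_{i\in F_j}c_ix_i$ and $\sum_{i\in F_j}c_iTx_i$ are $(k,\e_j)$ s.c.c.'s (the paper secures this by passing to a subsequence so that $I_m=\ran(\ran x_m\cup\ran Tx_m)$ are increasing), which is what lets you invoke Proposition \ref{prop3}(ii) for $\{Tx_m\}$ as well as for $\{x_m\}$.
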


\begin{proof}

Assume that there exists $1\leqslant k \leqslant n$ and a sequence
$\{x_m\}_{m\inn}$ in $Y$, such that both $\{x_m\}_{m\inn}$ and
$\{Tx_m\}_{m\inn}$ generate an $\ell_1^k$ spreading model but no
subsequences of $\{x_m\}_{m\inn}$ and $\{Tx_m\}_{m\inn}$ generate
an $\ell_1^{k+1}$ one.

If $\{x_m\}_{m\inn}$ converges weakly to a non-zero element $x$,
then $\{x_m - x\}_{m\inn}$, as well as $\{Tx_m - Tx\}_{m\inn}$
generate $\ell_1^k$ spreading models and no subsequences of them
generate an $\ell_1^{k+1}$ one. Therefore we may assume that they
are both normalized block sequences. Set $I_m = \ran(\ran
x_m\cup\ran Tx_m)$ and passing, if necessary, to a subsequence of
$\{x_m\}_{m\inn}$, $\{I_m\}_{m\inn}$ are increasing subsets of the
naturals.

Corollary \ref{cormmm9ell1} yields that
$\al_{n-k}\big(\{x_m\}_m\big)>0, \al_{n-k}\big(\{Tx_m\}_m\big)>0$
as well as $\al_{k^\prime}\big(\{x_m\}_m\big)=0,
\al_{k^\prime}\big(\{Tx_m\}_m\big)=0$, for $k^\prime < n-k$.

Choose $\{F_m\}_{m\inn}$ increasing subsets of the naturals
$\{c_i\}_{i\in F_m}$ non negative reals for all $m\inn$ such that
the following are satisfied.
\begin{enumerate}

\item[(i)] $\sum_{i\in F_m}c_ix_i$ as well as $\sum_{i\in
F_m}c_iTx_i$ are $(k,\e_m)$ s.c.c. with $\lim_m\e_m = 0$.

\item[(ii)] $F_m\in\mathcal{S}_k$

\end{enumerate}
Since $F_m\in\mathcal{S}_k$ and $\{x_m\}_{m\inn}$,
$\{Tx_m\}_{m\inn}$ generate $\ell_1^k$ spreading models, we
conclude that, if $z_m = \sum_{i\in F_m}c_ix_i$ for all $m\inn$,
then $\{z_m\}_{m\inn}$, as well as $\{Tz_m\}_{m\inn}$ are
seminormalized. Moreover, since
$\al_{k^\prime}\big(\{x_m\}_m\big)=0,
\al_{k^\prime}\big(\{Tx_m\}_m\big)=0$, for $k^\prime < n-k$, by
Proposition \ref{prop3} (ii) we conclude that
$\al_{n-1}\big(\{z_m\}_m\big)=0$ as well as
$\al_{n-1}\big(\{Tz_m\}_m\big)=0$. By Proposition \ref{cosm} we
conclude that passing, if necessary to a subsequence, both
$\{z_m\}_{m\inn}$ and $\{Tz_m\}_{m\inn}$ generate $c_0$ spreading
models.

Assume now that there exists a sequence $\{x_m\}_{m\inn}$ in $Y$
generating a $c_0$ spreading model, such that $\{T(x_m)\}_{m\inn}$
generates a $c_0$ spreading model. This means that
$\{x_m\}_{m\inn}$, as well as $\{Tx_m\}_{m\inn}$ are weakly null,
we may therefore assume that they are both normalized block
sequences. Apply Proposition \ref{propmmm10} and find
$\{F_m\}_{m\inn}$ increasing subsets of the naturals, such that if
$y_m = \sum_{i\in F_m}y_i$, then both $\{y_m\}_{m\inn}$ and
$\{Ty_m)\}_{m\inn}$ generate $\ell_1^n$ spreading models. Set $I_m
= \ran(\ran y_m\cup\ran Ty_m)$ and passing, if necessary, to a
subsequence of $\{y_m\}_{m\inn}$, $\{I_m\}_{m\inn}$ are increasing
subsets of the naturals. This means that the assumption of
Proposition \ref{cor11} is satisfied. Hence, there exists a
further block sequence $\{w_m\}_{m\inn}$ of $\{y_m\}_{m\inn}$,
such that $\{w_m\}_{m\inn}$ is equivalent to $\{Tw_m\}_{m\inn}$.
We conclude that $T$ is not strictly singular.

Assume now, that $T$ is not strictly singular and let $1\leqslant
k\leqslant n$. Then there exists $Z$ a subspace of $Y$, such that
$T|_Z$ is an isomorphism. Proposition \ref{propmmm14} yields that
any subspace of $\X$ contains a sequence generating an $\ell_1^k$
spreading model, such that no subsequence of it generates an
$\ell_1^{k+1}$ one, thus so does $Z$. Since $T|_Z$ is an
isomorphism, the third assertion must be true.

\end{proof}

The following definition is from \cite{ADST}

\begin{dfn}
Let $X$ be a Banach space and $k$ be a natural number. We denote by $\mathcal{SS}_k(X)$ the set of all bounded linear operators $T:X\rightarrow X$ satisfying the following: for every Schauder basic sequence $\{x_i\}_i$ in $X$ and $\e>0$, there exists $F\in\mathcal{S}_k$ and a vector $x$ in the linear span of $\{x_i\}_{i\in F}$ such that $\|Tx\| < \e\|x\|$.\label{defssk}
\end{dfn}

\begin{prp}
Let $Y$ be an infinite dimensional closed subspace of $\X$, $T:Y\rightarrow Y$ be a bounded linear operator and $1\leqslant k \leqslant n$. The following assertions are equivalent.
\begin{itemize}

\item[(i)] The operator $T$ is in $\mathcal{SS}_k(Y)$.

\item[(ii)] For every seminormalized weakly null sequence $\{x_i\}_i$ in $Y$, $\{Tx_i\}_i$ does not admit an $\ell_1^k$ spreading model.

\end{itemize}\label{propsskchar}
\end{prp}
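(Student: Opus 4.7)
The proof proceeds by contrapositive in each direction.

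For (i)$\Rightarrow$(ii): Suppose there is a seminormalized weakly null sequence $\{x_i\}_i$ in $Y$ such that $\{Tx_i\}_i$ admits an $\ell_1^k$ spreading model with lower constant $C$. Passing to a basic subsequence, for every $F\in\mathcal{S}_k$ with $\min F$ sufficiently large and every $x=\sum_{i\in F}a_ix_i$,
\[
\|Tx\|=\Bigl\|\sum_{i\in F}a_iTx_i\Bigr\|\geqslant C\sum_{i\in F}|a_i|\geqslant\frac{C}{\sup_i\|x_i\|}\|x\|.
\]
This uniform $\e$-lower bound of $T$ on every $\mathcal{S}_k$-span of the basic sequence $\{x_i\}$ directly witnesses $T\notin\mathcal{SS}_k(Y)$.

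For (ii)$\Rightarrow$(i): Assuming $T\notin\mathcal{SS}_k(Y)$, I construct a weakly null sequence $\{y_l\}$ in $Y$ such that $\{Ty_l\}$ admits an $\ell_1^k$ spreading model. The hypothesis provides a basic sequence $\{x_i\}$ in $Y$ and $\e>0$ with $\|Tx\|\geqslant\e\|x\|$ for every $x\in\mathrm{span}\{x_i\}_{i\in F}$, $F\in\mathcal{S}_k$. Using reflexivity of $\X$ (Corollary \ref{cormmm15}) and a standard perturbation argument, I may assume that $\{x_i\}$ is a normalized weakly null block basis. By Corollary \ref{cormmm9dich}, after passing to a further subsequence $\{x_i\}$ generates either an isometric $c_0$ spreading model (set $d=0$) or a strong $\ell_1^d$ spreading model with no higher, for a unique $1\leqslant d\leqslant n$. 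If $d\geqslant k$, the $\ell_1^d$-lower bound $\|\sum a_ix_i\|\geqslant C'\sum|a_i|$ on $\mathcal{S}_k\subseteq\mathcal{S}_d$ combines with the $T$-estimate to give $\|\sum a_iTx_i\|\geqslant\e C'\sum|a_i|$, so $\{Tx_i\}$ admits an $\ell_1^k$ spreading model and $\{y_l\}:=\{x_l\}$ finishes this case.

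The delicate case is $d<k$. Define $z_j=x_j$ when $d=0$ and, when $d\geqslant 1$, apply Proposition \ref{propmmm12} to obtain a $(d,\e_j)$ special convex combination block $z_j=\sum_{i\in F_j^{(1)}}c_i^{(j)}x_i$ with $F_j^{(1)}\in\mathcal{S}_d$, $\lim_j\e_j=0$, so that $\{z_j\}$ generates a $c_0$ spreading model. In both situations the supports of the $z_j$'s lie in $\mathcal{S}_d\subseteq\mathcal{S}_k$, hence $\|Tz_j\|\geqslant\e\|z_j\|$ and $\{Tz_j\}$ is seminormalized and weakly null in $Y$. By Remark \ref{remarklifting}, $\{Tx_i\}$ admits at most an $\ell_1^d$ spreading model, so $\al_{k'}(\{Tx_i\})=0$ for every $k'<n-d$. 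When $d\geqslant 1$, Proposition \ref{prop3}(ii) applied to $\{Tx_i\}$ and the $(d,\e_j)$ s.c.c.\ $\{Tz_j\}=\sum c_i^{(j)}Tx_i$ yields $\al_{n-1}(\{Tz_j\})=0$; when $d=0$ this follows directly from Corollary \ref{cormmm9co} applied to $\{Tz_j\}=\{Tx_j\}$. By Proposition \ref{cosm} together with Remark \ref{isomcosm}, a subsequence of $\{Tz_j\}$ generates an isometric $c_0$ spreading model. Finally apply Proposition \ref{propmmm10} to both $\{z_j\}$ and $\{Tz_j\}$ with a common family $\{F_l^{(2)}\}\subseteq\mathcal{S}_1$, $\#F_l^{(2)}\to\infty$, and pass to a common subsequence of $l$: setting $y_l=\sum_{j\in F_l^{(2)}}z_j$, the $c_0$ behaviour of $\{z_j\}$ gives $\|y_l\|\to 1$, so $\{y_l\}$ is seminormalized and weakly null in $Y$, while $Ty_l=\sum_{j\in F_l^{(2)}}Tz_j$ generates an $\ell_1^n\supseteq\ell_1^k$ spreading model.

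The main technical obstacle is that the $\mathcal{S}_k$-constraint inherent in $\mathcal{SS}_k$ cannot be transferred directly to sums of blocks of $\{x_i\}$, because unions of $\mathcal{S}_k$-sets indexed by $\mathcal{S}_k$-sets escape $\mathcal{S}_k$. The proof circumvents this by routing through the $c_0$-generating intermediate $\{z_j\}$ whose supports remain in $\mathcal{S}_k$, using Proposition \ref{prop3}(ii) to propagate the $c_0$ structure to $\{Tz_j\}$, and then invoking Proposition \ref{propmmm10} in parallel on $\{z_j\}$ and $\{Tz_j\}$ to simultaneously lift both sequences to the $\ell_1^n$ level.
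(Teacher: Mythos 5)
Your direction (i)$\Rightarrow$(ii) is correct and is essentially what the paper leaves to the reader via Remark~\ref{remarklifting}. The direction (ii)$\Rightarrow$(i) is where you diverge substantially from the paper: the paper runs a short contradiction argument that treats Theorem~\ref{prop13} as a black box (it first shows $T$ is strictly singular, then that $\{Tx_i\}$ cannot admit a $c_0$ spreading model, then reads off natural numbers $d+1\leqslant m$ from Corollary~\ref{cormmm9dich} and Theorem~\ref{prop13}, and finally finds an $F\in\mathcal{S}_{d+1}\subseteq\mathcal{S}_k$ violating the $\mathcal{SS}_k$-lower-bound), whereas you attempt a direct construction of a weakly null witness $\{y_l\}$ with $\{Ty_l\}$ generating an $\ell_1^k$ spreading model. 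In doing so you are effectively re-proving the (ii)$\Leftrightarrow$(iii) content of Theorem~\ref{prop13}, rather than invoking it.

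The genuine gap is in the sentence ``Proposition~\ref{prop3}(ii) applied to $\{Tx_i\}$ and the $(d,\e_j)$ s.c.c.\ $\{Tz_j\}=\sum c_i^{(j)}Tx_i$.'' Proposition~\ref{propmmm12} hands you $F_j^{(1)}$ and $c_i^{(j)}$ so that $z_j=\sum_{i\in F_j^{(1)}}c_i^{(j)}x_i$ is a $(d,\e_j)$ s.c.c.\ of the sequence $\{x_i\}$, i.e.\ $\sum_{i\in F_j^{(1)}}c_i^{(j)}e_{\min\supp x_i}$ is a $(d,\e_j)$ basic s.c.c.\ in the sense of Definition~\ref{defscc}. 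Whether $Tz_j=\sum_{i\in F_j^{(1)}}c_i^{(j)}Tx_i$ is a s.c.c.\ of $\{Tx_i\}$ is a condition on $\sum_{i\in F_j^{(1)}}c_i^{(j)}e_{\min\supp Tx_i}$, and this does not follow: Schreier membership is preserved under spreading (increasing the entries) but not under shrinking, so a subset $G\subset F_j^{(1)}$ with $\{\min\supp Tx_i:i\in G\}\in\mathcal{S}_{d-1}$ need not have $\{\min\supp x_i:i\in G\}\in\mathcal{S}_{d-1}$, and then the smallness bound $\sum_{i\in G}c_i^{(j)}<\e_j$ is simply unavailable. Consequently the hypothesis of Proposition~\ref{prop3}(ii) is not verified for $\{Tx_i\}$ and $\{Tz_j\}$, and the crucial conclusion $\al_{n-1}(\{Tz_j\})=0$ is unsupported. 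This is precisely the point the paper's Theorem~\ref{prop13} proof addresses by passing to a subsequence with the intervals $I_m=\ran(\ran x_m\cup\ran Tx_m)$ increasing and then choosing $F_m,\{c_i\}$ so that $\sum_{i\in F_m}c_ix_i$ and $\sum_{i\in F_m}c_iTx_i$ are \emph{simultaneously} $(k,\e_m)$ s.c.c.'s; you need that simultaneous construction, not Proposition~\ref{propmmm12} applied to $\{x_i\}$ alone. A secondary, fillable issue: in the last step you apply Proposition~\ref{propmmm10} in parallel to $\{z_j\}$ and $\{Tz_j\}$ ``with a common family $\{F_l^{(2)}\}$,'' but that proposition only produces, for each sequence separately, a subsequence of the block sums generating $\ell_1^n$; to get a common subsequence one must run its proof (the inductive choice of $j_m$ controlling $\max\supp y_{j_{m-1}}$ and $\#F_{j_{m-1}}$) simultaneously for both. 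The cleanest repair of your proof would be to delete this entire re-derivation and, once you are in the case $d<k$, cite Theorem~\ref{prop13} directly — either to reach the contradiction the paper reaches, or (in your constructive style) to extract from its proof the block $\{z_j\}$ and $\{Tz_j\}$ that are simultaneously s.c.c.'s.
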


\begin{proof}
The implication (i)$\Rightarrow$(ii) follows easily using Remark \ref{remarklifting} and therefore we omit it. Let us assume that (ii) holds, and towards a contradiction suppose that $T$ is not in $\mathcal{SS}_k(Y)$, i.e. there exist a normalized weakly null sequence $\{x_i\}_i$ in $Y$ and $\e>0$ satisfying the following: for every $F\in\mathcal{S}_k$ and real numbers $\{c_i\}_{i\in F}$ we have that
\begin{equation}
\|T(\sum_{i\in F}c_ix_i)\| \geqslant \e\|\sum_{i\in F}c_ix_i\|.\label{propsskchareq1}
\end{equation}

Let us first notice that $T$ is strictly singular. Indeed, if not then there exists a closed infinite dimensional subspace $Z$ of $Y$ such that $T|_Z$ is an isomorphism. Proposition \ref{propmmm14} yields that there exists a normalized weakly null sequence $\{z_i\}_i$ in $Z$ generating an $\ell_1^k$ spreading model. Since $T|_Z$ is an isomorphism, $\{Tz_i\}_i$ generates an $\ell_1^k$ spreading model as well, which contradicts (ii).

We shall now show that $\{Tx_i\}_i$ does not admit a $c_0$ spreading model. Assume that this is not the case, pass to a subsequence of $\{x_i\}_i$ and relabel so that $\{Tx_i\}_i$ generates a $c_0$ spreading model. Applying Theorem \ref{prop13} and Corollary \ref{cormmm9dich}, we may assume that $\{x_i\}_i$ generates an $\ell_1$ spreading model. This implies that there exists $F\in\mathcal{S}_1$ such that $\|T(\frac{1}{\#F}\sum_{i\in F}x_i)\| < \e\|\frac{1}{\#F}\sum_{i\in F}x_i\|$, which contradicts \eqref{propsskchareq1}.

Corollary \ref{cormmm9dich} and Remark \ref{remarklifting} imply that there exist natural numbers $1\leqslant d\leqslant m\leqslant n$ and a subsequence of $\{x_i\}_i$, again denoted by $\{x_i\}_i$, such that $\{Tx_i\}_i$ generates an $\ell_1^d$ spreading model and does not admit an $\ell_1^{d+1}$ one, while $\{x_i\}_i$ generates an $\ell_1^m$ spreading model and does not admit an $\ell_1^{m+1}$ one. Theorem \ref{prop13} implies that $d+1\leqslant m$. Combining the above it is easy to see that there exists $F\in\mathcal{S}_{d+1}$ and real numbers $\{c_i\}_{i\in F}$ such that $\|T(\sum_{i\in F}c_ix_i)\| < \e\|\sum_{i\in F}c_ix_i\|$. However, (ii) yields that $d+1\leqslant k$ and hence $F\in\mathcal{S}_k$ which contradicts \eqref{propsskchareq1}.
\end{proof}

\begin{prp}
Let $Y$ be an infinite dimensional closed subspace of $\X$, and $\{x_m\}_{m\inn}$ be a
seminormalized weakly null sequence in $Y$. Then for every $1\leqslant k \leqslant n$ and
$S_1,S_2,\cdots S_k:Y\rightarrow Y$ strictly singular operators,
$\{S_1S_2\cdots S_kx_m\}_{m\inn}$ has no subsequence generating an $\ell_1^{n+1-k}$ spreading model. In particular $S_1S_2\cdots S_k$ is in $\mathcal{SS}_{n+1-k}(Y)$.\label{propqqq14}
\end{prp}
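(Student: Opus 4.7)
The plan is to proceed by induction on $k$, driving everything through the equivalence in Theorem \ref{prop13} (non-strict-singularity of $T:Y\to Y$ is witnessed by a seminormalized weakly null sequence $\{y_m\}_m$ such that both $\{y_m\}_m$ and $\{Ty_m\}_m$ generate an $\ell_1^j$ spreading model and no $\ell_1^{j+1}$ subsequence, for some $1\leqslant j\leqslant n$), together with Remark \ref{remarklifting} (any $\ell_1^k$ spreading model on $\{Tx_m\}_m$ lifts, on a subsequence, to an $\ell_1^d$ spreading model on $\{x_m\}_m$ with $d\geqslant k$) and Corollary \ref{cormmm11} (no seminormalized weakly null sequence in $\X$ generates an $\ell_1^{n+1}$ spreading model). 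The final statement $S_1\cdots S_k\in\mathcal{SS}_{n+1-k}(Y)$ will then be a direct invocation of Proposition \ref{propsskchar}.

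For the base case $k=1$, suppose towards a contradiction that a subsequence of $\{S_1 x_m\}_m$ generates an $\ell_1^n$ spreading model. Remark \ref{remarklifting} (plus a further subsequence) gives $\{x_m\}_m$ generating an $\ell_1^d$ spreading model with $d\geqslant n$, and Corollary \ref{cormmm11} forces $d=n$ and moreover kills any $\ell_1^{n+1}$ subsequence of either sequence. Condition (iii) of Theorem \ref{prop13}, with parameter $n$, is then fulfilled for $S_1$, contradicting its strict singularity.

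For the inductive step, assume the assertion for compositions of $k-1$ strictly singular operators and set $y_m=S_2\cdots S_k x_m$. If $\{S_1 y_m\}_m$ had a subsequence generating $\ell_1^{n+1-k}$, the inductive hypothesis applied to the operators $S_2,\ldots,S_k$ on $\{x_m\}_m$ would prevent any subsequence of $\{y_m\}_m$ from generating an $\ell_1^{n+2-k}$ spreading model. Remark \ref{remarklifting} applied to $S_1$ and $\{y_m\}_m$ then squeezes the spreading model of a subsequence of $\{y_m\}_m$ to be exactly $\ell_1^{n+1-k}$ (the lifted index $d$ must satisfy $n+1-k\leqslant d\leqslant n+1-k$). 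The same Remark, applied contrapositively, also excludes an $\ell_1^{n+2-k}$ subsequence of $\{S_1 y_m\}_m$, since such a subsequence would lift to one of $\{y_m\}_m$ with index $\geqslant n+2-k$, contradicting the inductive hypothesis. Both $\{y_m\}_m$ and $\{S_1 y_m\}_m$ now generate $\ell_1^{n+1-k}$ spreading models with no $\ell_1^{n+2-k}$ subsequence, and Theorem \ref{prop13} (iii)$\Rightarrow$(i), with parameter $n+1-k\in\{1,\ldots,n\}$, yields that $S_1$ is not strictly singular, a contradiction. The ``in particular'' claim is immediate from Proposition \ref{propsskchar}, since the induction has verified condition (ii) there for $T=S_1\cdots S_k$ and parameter $n+1-k$.

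The main obstacle is the careful bookkeeping of the double application of Remark \ref{remarklifting}: once forward, to pin down the spreading-model index of $\{y_m\}_m$ from below using the hypothesis on $\{S_1 y_m\}_m$, and once contrapositively, to transfer the non-admission condition on $\{y_m\}_m$ back up to $\{S_1 y_m\}_m$. One must ensure compatibility of the subsequences chosen and verify that $\{y_m\}_m$ remains seminormalized and weakly null; the first is automatic once $\{y_m\}_m$ is shown to generate an $\ell_1^{n+1-k}$ spreading model on a subsequence, and the second is automatic from boundedness of $S_2\cdots S_k$ and weak nullity of $\{x_m\}_m$.
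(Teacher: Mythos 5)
Your proof is correct and follows essentially the same strategy as the paper's: induction on the number of factors, using Remark \ref{remarklifting} to lift the $\ell_1$-spreading-model index through a bounded operator, Corollary \ref{cormmm11} (resp. the inductive hypothesis) to cap it from above, and Theorem \ref{prop13}(iii) to contradict strict singularity of the leftmost factor. The only differences are cosmetic (you index the induction from $k-1$ to $k$ rather than $k$ to $k+1$) and a bit more explicitness about the two directions in which Remark \ref{remarklifting} is applied, which the paper leaves compressed.
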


\begin{proof}
The second assertion of this proposition evidently follows from the first one and Proposition \ref{propsskchar}.
We prove the first assertion by induction on $k$. For $k = 1$ and
$S:Y\rightarrow Y$ a strictly singular operator, assume that $\{Sx_m\}_m$ generates an $\ell_1^n$ spreading model. The boundedness of $S$ yields that $\{x_m\}_m$ must also generate an $\ell_1^n$ spreading model, while by Corollary \ref{cormmm11} neither $\{x_m\}_m$ nor $\{Sx_m\}_m$ admit an $\ell_1^{n+1}$ spreading model. Theorem \ref{prop13} yields that $S$ cannot be strictly singular which is absurd.

Assume now that the statement holds for some $1\leqslant k <n$ and let $S_1,\ldots,S_{k+1}:Y\rightarrow Y$ be strictly singular operators. If $\{S_1S_2\cdots S_{k+1}x_m\}_m$ generates an $\ell_1^{n-k}$ spreading model, then the boundedness of the operators yields that $\{S_2\cdots S_{k+1}x_m\}_m$ generates an $\ell_1^{n-k}$ spreading model as well. By the inductive assumption it follows that neither of the sequences $\{S_1S_2\cdots S_{k+1}x_m\}_m$, $\{S_2\cdots S_{k+1}x_m\}_m$ admits an $\ell_1^{n+1-k}$ spreading model. Once more, Theorem \ref{prop13} yields that $S_1$ cannot be strictly singular, a contradiction which completes the proof.

\end{proof}

\begin{prp}Let $Y$ be an infinite dimensional closed subspace of
$\X$ and $S_1,S_2,\ldots,S_{n+1}:Y\rightarrow Y$ be strictly
singular operators. Then $S_1S_2\cdots S_{n+1}$ is
compact.\label{cor14}
\end{prp}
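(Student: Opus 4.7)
The plan is to assume $T=S_1S_2\cdots S_{n+1}$ is not compact and derive a contradiction by chaining together Proposition \ref{propqqq14}, Corollary \ref{cormmm9dich}, and Theorem \ref{prop13}. Since $\X$ is reflexive by Corollary \ref{cormmm15}, so is the closed subspace $Y$. If $T$ were not compact there would be a bounded sequence $\{x_m\}_{m\inn}\subset Y$ such that $\{Tx_m\}_{m\inn}$ has no norm-convergent subsequence; passing to a weakly convergent subsequence $x_m\rightharpoonup x$ and subtracting the weak limit, I obtain a bounded weakly null sequence $\{y_m\}_{m\inn}$ together with an $\e>0$ such that $\|Ty_m\|\geqslant\e$ for all $m$. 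In particular $\{y_m\}_{m\inn}$ is seminormalized.

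Next I would analyze the intermediate sequence $z_m=S_2\cdots S_{n+1}y_m$. It is weakly null by weak-to-weak continuity of bounded operators and seminormalized because $\e\leqslant\|S_1z_m\|\leqslant\|S_1\|\cdot\|z_m\|$. Applying Proposition \ref{propqqq14} with $k=n$ to the $n$ strictly singular operators $S_2,\ldots,S_{n+1}$ and the seminormalized weakly null sequence $\{y_m\}_{m\inn}$ shows that no subsequence of $\{z_m\}_{m\inn}$ generates an $\ell_1^{n+1-n}=\ell_1$ spreading model. Combined with Corollary \ref{cormmm9dich}, this leaves only one possibility: after passing to a subsequence, $\{z_m\}_{m\inn}$ generates an isometric $c_0$ spreading model.

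Finally I consider $\{S_1z_m\}_{m\inn}=\{Ty_m\}_{m\inn}$, which is weakly null and satisfies $\|S_1z_m\|\geqslant\e$, hence is seminormalized. Corollary \ref{cormmm9dich} again forces a further subsequence to generate either a $c_0$ or an $\ell_1^k$ spreading model. Remark \ref{remarklifting} rules out the $\ell_1^k$ case, since otherwise $\{z_m\}_{m\inn}$ would admit an $\ell_1^d$ spreading model for some $d\geqslant 1$, contradicting the previous paragraph. Thus both $\{z_m\}_{m\inn}$ and $\{S_1z_m\}_{m\inn}$ generate $c_0$ spreading models on a common subsequence, and Theorem \ref{prop13} applied to $S_1:Y\rightarrow\X$ implies that $S_1$ is not strictly singular, a contradiction.

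The only real obstacle is bookkeeping: at each of the three stages one must verify that the sequence in question remains seminormalized and weakly null so that Corollary \ref{cormmm9dich} can be invoked. The uniform lower bound $\|Ty_m\|\geqslant\e$, propagated through the bounded operators $S_1$ and $S_2\cdots S_{n+1}$, prevents any intermediate sequence from being forced to zero along a subsequence, and this is what allows the argument to telescope cleanly into the final application of Theorem \ref{prop13}.
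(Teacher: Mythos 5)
Your proof is correct and follows essentially the same route as the paper's: apply Proposition \ref{propqqq14} (with $k=n$) to $S_2\cdots S_{n+1}$ to rule out $\ell_1$ spreading models for the intermediate sequence, invoke Corollary \ref{cormmm9dich} and Remark \ref{remarklifting} to force $c_0$ spreading models for both it and its image under $S_1$, and then conclude via Theorem \ref{prop13} that $S_1$ cannot be strictly singular. The only difference is presentational: you frame the argument as a contradiction starting from non-compactness, while the paper argues directly that $S_1S_2\cdots S_{n+1}$ maps weakly null sequences to norm-null ones, leaving the final appeal to Corollary \ref{cormmm9dich} and Remark \ref{remarklifting} implicit.
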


\begin{proof}
Since $\X$ is reflexive, it is enough to show that for any weakly
null sequence $\{x_m\}_{m\inn}$, we have that $\{S_1S_2\cdots
S_{n+1}x_m\}_{m\inn}$ norm converges to zero. By Proposition \ref{propqqq14}, the sequence $\{S_2\cdots S_{n+1}x_m\}_m$ does not admit an $\ell_1$ spreading model and hence, by Corollary \ref{cormmm9dich} it is either norm null or it has some subsequence generating a $c_0$ spreading model.

If it is norm null, then there is nothing to prove. If, on the other hand, $\{S_2\cdots S_{n+1}x_m\}_m$ generates a $c_0$ spreading model, then Theorem \ref{prop13} and the fact that $S_1$ is strictly singular yield that $\{S_1S_2\cdots
S_{n+1}x_m\}_{m\inn}$ norm converges to zero.

\end{proof}

\begin{cor}Let $Y$ be an infinite dimensional closed subspace of
$\X$ and $S:Y\rightarrow Y$ be a non zero strictly singular
operator. Then $S$ has a non-trivial closed hyperinvariant
subspace.\label{ssisp}
\end{cor}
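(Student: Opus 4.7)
The plan is to deduce this as an immediate consequence of Proposition \ref{cor14} applied diagonally, combined with the real Banach space version of Lomonosov's invariant subspace theorem due to Hooker \cite{H} and Sirotkin \cite{S}. That theorem says that any bounded linear operator on an infinite dimensional real Banach space which commutes with a non-zero compact operator admits a non-trivial closed hyperinvariant subspace.

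The first step is to apply Proposition \ref{cor14} with $S_1=S_2=\cdots=S_{n+1}=S$ to conclude that $K:=S^{n+1}$ is a compact operator on $Y$. Evidently $K$ commutes with $S$. I would then split into two cases depending on whether $K$ vanishes.

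In the generic case $S^{n+1}\neq 0$, the operator $K$ is a non-zero compact operator in the commutant of $S$, and so the Hooker--Sirotkin theorem directly hands us the non-trivial closed hyperinvariant subspace for $S$. In the degenerate case $S^{n+1}=0$, the operator $S$ is nilpotent and non-zero, so one may pick the smallest $1\leqslant k\leqslant n+1$ with $S^k=0$; the closed subspace $Z:=\ker S$ then contains $S^{k-1}(Y)\neq\{0\}$ and is properly contained in $Y$ since $S\neq 0$, and it is hyperinvariant because $TS=ST$ trivially forces $T(\ker S)\subseteq\ker S$.

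There is no real obstacle at this stage, since Proposition \ref{cor14} is precisely the non-trivial input that provides the compact operator in the commutant of $S$; the two cases above are then completely routine. The only mild subtlety is remembering to invoke the real version of Lomonosov's theorem rather than the classical complex one, since the space $\X$ is a real Banach space.
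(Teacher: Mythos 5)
Your overall structure — split on whether $S^{n+1}$ vanishes, use Proposition \ref{cor14} to get a compact power, then appeal to the real Lomonosov theorem — matches the paper. However, there is a genuine gap: your statement of the Hooker--Sirotkin theorem is not correct. It is \emph{not} true that every bounded operator on a real Banach space that commutes with a nonzero compact operator admits a nontrivial closed hyperinvariant subspace; $S = \alpha I$ commutes with every compact operator but has no nontrivial hyperinvariant subspace at all. The real version (Sirotkin's Theorem 2.1, also Hooker) carries an extra hypothesis precisely to rule out the ``complex scalar'' behaviour of $S$: one must check that $(\alpha I - S)^2 + \beta^2 I \neq 0$ for every $\alpha,\beta\in\mathbb{R}$ with $\beta\neq 0$. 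The paper's proof verifies this condition, and this verification is where the strict singularity of $S$ is used a second time (beyond providing compactness of $S^{n+1}$): if $(\alpha I - S)^2 + \beta^2 I = 0$ with $\beta\neq 0$, then $I = \frac{1}{\alpha^2+\beta^2}\bigl(2\alpha S - S^2\bigr)$ would be strictly singular, which is impossible on an infinite dimensional space. Without stating and checking that hypothesis, the invocation of the real Lomonosov theorem does not go through.
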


\begin{proof}
Assume first that $S^{n+1} = 0$. Then it is straightforward to
check that $\ker S$ is a non-trivial closed hyperinvariant
subspace of $S$.

Otherwise, if $S^{n+1} \neq 0$, then Cor. \ref{cor14} yields that
$S^{n+1}$ is compact and non zero. Since $S$ commutes with
$S^{n+1}$, by Theorem 2.1 from \cite{S}, it is enough to check
that for any $\al,\be\in\mathbb{R}$ such that $\be\neq 0$, we have
that $(\al I - S)^2 + \be^2I \neq 0$ (see also \cite[Theorem 2]{H}). Since $S$ is strictly
singular, it is easy to see that this condition is satisfied.
\end{proof}

\begin{rmk} The space $\X$ is also defined over the complex field,
satisfying all the above and following properties. For the complex
$\X$ the above Corollary is an immediate consequence of the
classical Lomonosov Theorem \cite{L}.
\end{rmk}

\begin{rmk} A well known result due to M. Aronszajn and K. T. Smith \cite{AS},
asserts that compact operators always admit non-trivial invariant
subspaces. As it is shown by C. J. Read in \cite{R}, there do
exist strictly singular operators on Banach spaces, not admitting
any non-trivial invariant subspaces. Therefore, one may not hope
to extend M. Aronszajn's and K. T. Smith's result to strictly
singular operators. In \cite{AH} a hereditarily indecomposable
Banach space $\mathfrak{X}_{K}$ is presented satisfying the scalar
plus compact property. It follows that any operator acting on this
space, admits a non-trivial closed invariant subspace. Moreover,
in \cite{AM} a reflexive hereditarily indecomposable Banach space
$\mathfrak{X}_{ISP}$ is constructed such that any operator acting
on a subspace of $\mathfrak{X}_{ISP}$, admits a non-trivial closed
invariant subspace.

\end{rmk}

The next Corollary is an immediate consequence of the previous
one.

\begin{cor}Let $Y$ be an infinite dimensional closed subspace of
$\X$ and $T:Y\rightarrow Y$ be a linear operator that commutes
with a non zero strictly singular operator. Then $T$ admits a
non-trivial closed invariant subspace.\label{corx20}
\end{cor}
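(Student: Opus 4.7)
The plan is to invoke Corollary \ref{ssisp} directly. Recall that a closed subspace $Z$ of $Y$ is called hyperinvariant for an operator $S$ if $Z$ is invariant under every bounded linear operator on $Y$ that commutes with $S$. Thus, by the very definition of hyperinvariance, any operator commuting with $S$ must leave $Z$ invariant.

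More precisely, first I would let $S:Y\rightarrow Y$ denote the non-zero strictly singular operator with which $T$ commutes, whose existence is the hypothesis of the corollary. Applying Corollary \ref{ssisp} to $S$, we obtain a non-trivial closed hyperinvariant subspace $Z$ of $Y$ for $S$. Since $T$ is a bounded linear operator on $Y$ that commutes with $S$, the hyperinvariance of $Z$ immediately yields $T(Z)\subseteq Z$. Therefore $Z$ is a non-trivial closed invariant subspace for $T$, as desired.

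There is essentially no obstacle here, because all the substantive work has already been done in establishing Corollary \ref{ssisp}: that corollary produced a hyperinvariant subspace (not merely an invariant one) precisely so that this deduction about commutants would be immediate. The only minor point to check is that $T$ is assumed to be a bounded linear operator on $Y$, so that it lies in the class of operators under which the hyperinvariant subspace of $S$ is, by definition, invariant.
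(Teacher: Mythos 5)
Your proof is correct and takes exactly the approach the paper intends: the paper itself only remarks that the corollary is ``an immediate consequence of the previous one,'' and the immediate consequence is precisely the unwinding of the definition of hyperinvariance that you spell out. Your observation that $T$ should be read as a bounded operator is appropriate but harmless, since that is clearly the intended hypothesis.
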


Before stating the next theorem we need the following lemma
concerning sequences that do not have a subsequence generating an
$\ell^{k+1}_1$ spreading model.

\begin{lem} Let $0\leqslant k\leqslant n-1$ and $\{x_i\}_{i\inn} \subset Ba(\X)$ be a block
sequence such that no subsequence of it, generates an
$\ell_1^{k+1}$ spreading model. Then for every $m\inn$ there
exists $L\in[\mathbb{N}]^\infty$ such that for any $m\leqslant
F_1<\cdots< F_m$ maximal $\mathcal{S}_k$ subsets of $L$ the
following holds.
\begin{equation*}
\|\sum_{j=1}^m\sum_{i\in F_j}c_i^{F_j}x_i\| < 2
\end{equation*}\label{fixedm}
\end{lem}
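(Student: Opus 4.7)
The plan is to combine Propositions \ref{propmmm7}, \ref{prop3}(ii), and \ref{cosm} in a uniform-in-$F_j$ fashion. By Proposition \ref{propmmm7} applied with $k$ replaced by $n-k$, the hypothesis is equivalent to $\al_{k^\prime}\big(\{x_i\}_i\big) = 0$ for every $k^\prime<n-k$. The case $k=0$ reduces to $\al_{n-1}\big(\{x_i\}_i\big)=0$, and the conclusion then follows from Proposition \ref{cosm} applied with $\e=1$, since maximal $\mathcal{S}_0$ subsets of $L$ are singletons and $\bigcup_{j=1}^m F_j$ is an $\mathcal{S}_1$ set as soon as $m\leqslant\min F_1$.

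For the main case $1\leqslant k\leqslant n-1$, fix $m\inn$ and a rapidly decreasing summable sequence $\{\e_i\}_i$ with $\e_i>3\sum_{j>i}\e_j$ and $\sum_i\e_i$ small. Mimicking the subsequence selection in the proof of Proposition \ref{cosm}, I would use Proposition \ref{alternate} with index $n-k-1$ to inductively pick $L\in[\nn]^\infty$ so that, after relabeling $\{x_i\}_{i\in L}$ as $\{x_i\}_i$, the following analog of \eqref{inductiveest} holds: for every $i_0\geqslant 2$, $i\geqslant i_0$, and every very fast growing, $\mathcal{S}_{n-k-1}$-admissible sequence $\{\al_q\}_{q=1}^\ell$ of $\al$-averages with $s(\al_q)\geqslant\min\supp x_{i_0}$,
\begin{equation*}
\sum_{q=1}^\ell|\al_q(x_i)| < \frac{\e_{i_0}}{i_0\max\supp x_{i_0-1}}.
\end{equation*}

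Given maximal $\mathcal{S}_k$ subsets $m\leqslant F_1<\cdots<F_m$ of $L$, form the $(k,3/\min F_j)$ s.c.c.'s $w_j = \sum_{i\in F_j}c_i^{F_j}x_i$; these satisfy $\|w_j\|\leqslant 1$. The main claim to establish is, by induction on the complexity of $f\in W$: for every $F\subset\{1,\ldots,m\}$, $|f(\sum_{j\in F}w_j)| < 1 + O(\e_{\min F})$. Taking $F=\{1,\ldots,m\}$ and choosing $\sum_i\e_i$ sufficiently small then yields $\|w_1+\cdots+w_m\|<2$. The induction itself is the one in the proof of Proposition \ref{cosm}, applied to the block sequence $\{w_j\}_{j=1}^m$ in place of $\{x_i\}_i$.

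The main obstacle, and the one genuinely new ingredient required at the Schreier-functional step of the induction, is an analog of \eqref{inductiveest} at the level of the $w_j$'s themselves: for any very fast growing, $\mathcal{S}_{n-1}$-admissible sequence $\{\al_q\}$ of $\al$-averages of sufficiently large sizes, $\sum_q|\al_q(w_j)|$ must be tiny, uniformly in the choice of $F_j$. This is proved exactly as in the case $k>0$ of Proposition \ref{prop3}(ii): decompose the support $F_j$ of $w_j$ into $G_1$ (each $x_i$ meets at most one $\al_q$), $G_2\setminus G_2'$ (the $\al_q$'s meeting $x_i$ form an $\mathcal{S}_{n-k-1}$-admissible family), and the exceptional set $G_2'$. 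The first two contributions are controlled by the $L$-selection estimate above; for $G_2'$, the Schreier convolution $\mathcal{S}_{n-1}=\mathcal{S}_k*\mathcal{S}_{n-k-1}$ forces $\{\min\supp x_i:i\in G_2'\}$ to lie in $\mathcal{S}_{k-1}$ up to a singleton, whence the s.c.c.\ property of $w_j$ gives $\sum_{i\in G_2'}c_i^{F_j}<6/\min F_j$. Once this estimate is in hand, the remainder of the induction runs as in Proposition \ref{cosm}.
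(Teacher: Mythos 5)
Your proposal takes a genuinely different route from the paper. The paper's proof is a \emph{Ramsey-theoretic argument by contradiction}: if the conclusion fails for every $L$, then the set
$\mathcal{G}$ of unions $\cup_{j=1}^m F_j$ (with $m\leqslant F_1<\cdots<F_m$ maximal $\mathcal{S}_k$, norm $\geqslant 2$) is \emph{large}, so Nash-Williams' theorem \cite{Nash1} yields an $L$ in which $\mathcal{G}$ is \emph{very large}. One then fixes a single increasing sequence $(F_j)_j$ of maximal $\mathcal{S}_k$ subsets of $L$, forms $y_j=\sum_{i\in F_j}c_i^{F_j}x_i$, and applies Proposition \ref{propmmm7} (to get $\al_{n-k-1}(\{x_i\}_i)=0$), Proposition \ref{prop3}(ii) (to get $\al_{n-1}(\{y_j\}_j)=0$), and Proposition \ref{cosm} (to get $\|\sum_{j=1}^m y'_{k_j}\|<2$ for a subsequence), directly contradicting very-largeness. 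The entire argument uses those propositions as black boxes applied to one fixed block sequence.

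Your approach is a \emph{direct construction} that attempts to extract a single $L$ working for all $(F_j)_j$ simultaneously, by re-running the induction of Proposition \ref{cosm} at the level of the $w_j$'s and supplying, as the new ingredient, a $G_1/G_2\setminus G_2'/G_2'$ estimate in the spirit of Proposition \ref{prop3}(ii). Conceptually the pieces you identify (the index translation, the Schreier-convolution bound giving $\{\min\supp x_i: i\in G_2'\}\in 2\mathcal{S}_{k-1}$, the s.c.c.\ control $\sum_{i\in G_2'}c_i^{F_j}<6/\min F_j$) are all correct, and the $k=0$ reduction to Proposition \ref{cosm} is also fine. What the sketch does not resolve is the uniformity of the estimate that replaces \eqref{inductiveest} at the $w_j$-level: that bound has $\max\supp x_{i_0-1}$ in the denominator, and in your setting the analogous quantity $\max\supp w_{j_0-1}$ varies with the (not-yet-chosen) $F_{j_0-1}$. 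Since $\nu_0$ in the calculation \eqref{otherest} is controlled by exactly this quantity, you have to ensure the $x_i$-level decay rate built into $L$ dominates the worst case over \emph{all} maximal $\mathcal{S}_k$ subsets $F_{j_0-1}<F_{j_0}$ of $L$; this is plausible but requires genuine bookkeeping that the proposal omits. The trade-off, then, is that your route avoids Nash-Williams at the cost of re-proving a uniform-in-$(F_j)_j$ version of Propositions \ref{prop3}(ii) and \ref{cosm}, while the paper's route is shorter and invokes them only for a single sequence.
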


\begin{proof}
Fix $m \inn$  and let  $\mathcal{G}$ to be the collection of
finite sets $F$ satisfying $F = \cup_{j=1}^m F_j$, where $m \leq
F_1 < \ldots < F_m$ are maximal $\Sk$ sets for all $i \in \{1,
\ldots , m\}$ and

$$ \| \sum_{i=1}^m \sum_{i \in F_j} c_i^{F_j} x_i \|  \geqslant 2.$$

Assume the conclusion of the lemma is false. Then, by definition,
the collection $\mathcal{G}$ is large in the $\nn$. A theorem of
Nash-Williams \cite{Nash1} gives us an $L \in [\nn]$ such that
$\mathcal{G}$ for all $M \in [L]$ and initial segment of $M$ is in
$\mathcal{G}$ (i.e. $\mathcal{G}$ is very large in $L$).

Therefore for any $F_1 <\ldots <F_m$ (assume $\min L \geqslant m$) maximal $\Sk$ subsets of $L$ we have

\begin{equation}
\| \sum_{i=1}^m \sum_{i \in F_j} c_i^{F_j} x_i \|  \geqslant 2
\label{geq2}
\end{equation}

We show this yields a contradiction. Let $(F_j)_j$ be an
increasing sequence of maximal $\Sk$ subset of $L$ and define $y_j
= \sum_{i \in F_j} c_i^{F_j} x_i$.  By Proposition \ref{propmmm7},
$\al_{n-k-1}(\{x_i\}_i)=0$. Since each $\{y_j\}_j \subset Ba(\X)$
and each $y_j$ is a $(k,3/\min F_j)$ s.c.c Proposition
\ref{prop3}(2), implies that $\al_{n-1}(\{y_j\}_j) =0$.  By
Proposition \ref{cosm} there is a subsequence of $\{y'_j\}_{j
\inn}$ of $\{y_j\}_{j \inn}$ such that for $m \leqslant  k_1 < \cdots <
k_m$ we have
$$\| \sum_{j =1}^m \sum_{i \in F_{k_j}} c_i^{F_{k_j}} x_i \| = \| \sum_{j=1}^m y'_{k_j} \|  <2.$$
This contradicts (\ref{geq2}).
\end{proof}

The next proposition is an intermediate step towards showing that
for any $Y$ infinite dimensional closed subspace of $\X$, there
exist $S_1,\ldots,S_n:Y\rightarrow Y$ strictly singular operators,
such that $S_1\cdots S_n$ is non compact.

\begin{prp} Let $0 \leqslant k \leqslant n-1$ and $Y$ be an infinite dimensional closed subspace of
$\X$. Let also $\{x_i^*\}_{i\inn}$ be a sequence in
${\mathfrak{X}_{_{0,1}}^{n*}}$ generating a $c_0^{k+1}$ spreading
model and $\{x_i\}_{i\inn}$ be a seminormalized weakly null
sequence in $Y$, such that no subsequence of it generates an
$\ell_1^{k+1}$ spreading model. Then passing, if necessary, to
subsequences of $\{x_i^*\}_{i\inn}$ and $\{x_i\}_{i\inn}$, the map
$T:Y\rightarrow Y$ with $Tx = \sum_{1=1}^\infty x_{i}^*(x)x_i$ is
bounded, strictly singular and non-compact.\label{operator}
\end{prp}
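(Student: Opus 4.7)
My plan is to pass to subsequences of both $\{x_i\}$ and $\{x_i^*\}$ and then treat boundedness, non-compactness and strict singularity separately. Using reflexivity of $\X$ (Corollary \ref{cormmm15}), the sequence $\{x_i^*\}$ is $w^*$-null and so, by Bessaga--Pe\l{}czy\'nski applied twice, we may assume that $\{x_i\}$ is a normalized block basic sequence in $\X$ and $\{x_i^*\}$ a block basic sequence in the dual. Since no subsequence of $\{x_i\}$ generates an $\ell_1^{k+1}$ spreading model, Lemma \ref{fixedm} applies and we refine the subsequence to satisfy its conclusion. A further refinement ensures that $\{x_i^*|_Y\}$ remains a seminormalized basic sequence in $Y^*$ generating a $c_0^{k+1}$ spreading model (if $\|x_i^*|_Y\|\to 0$ then $T$ is a norm-limit of finite rank operators and in particular compact, so this degenerate case can be excluded by a small perturbation of $\{x_i^*\}$).

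\textbf{Boundedness.} For $x\in Y$, set $a_i=x_i^*(x)$. The $c_0^{k+1}$ spreading-model property gives, for every $F\in\mathcal{S}_{k+1}$,
\[
\sum_{i\in F}|a_i|=\Big(\sum_{i\in F}\operatorname{sgn}(a_i)\,x_i^*\Big)(x)\leqslant C\|x\|,
\]
so the coefficient sequence $(a_i)$ is uniformly Schreier-controlled of index $k+1$. To bound $\|Tx\|=\sup_{f\in W}|\sum_i a_i f(x_i)|$, I would use the tree analysis of $f$ developed in Section 4 and argue node by node, in the spirit of Proposition \ref{cosm}. Proposition \ref{propmmm7} gives $\alpha_{k'}(\{x_i\}_i)=0$ for every $k'<n-k$; combining this vanishing (which controls the contribution of each very fast growing $\alpha$-average of low admissibility) with the Schreier bound above (which controls each coefficient-weighted Schreier-admissible sum) yields the desired uniform estimate $\|Tx\|\leqslant C'\|x\|$.

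\textbf{Non-compactness.} Since $\{x_i^*|_Y\}$ is a seminormalized basic sequence in $Y^*$ and $Y$ is reflexive, biorthogonal functionals $\{f_i\}\subset Y^{**}=Y$ exist with $x_j^*(f_i)=\delta_{ij}$ and $\|f_i\|\leqslant K$, where $K$ is controlled by the basic constant of $\{x_i^*|_Y\}$. A direct computation yields $Tf_i=\sum_j x_j^*(f_i)\,x_j=x_i$. Since $\{x_i\}$ is seminormalized and weakly null it has no norm-Cauchy subsequence, so neither does $\{Tf_i\}=\{x_i\}$; as $\{f_i\}\subset Y$ is bounded, $T$ is not compact.

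\textbf{Strict singularity.} I would prove the stronger statement $T\in\mathcal{SS}_{k+1}(Y)$. By Proposition \ref{propsskchar} this reduces to showing that no seminormalized weakly null $\{y_m\}\subset Y$ satisfies that $\{Ty_m\}$ admits an $\ell_1^{k+1}$ spreading model. Suppose such $\{y_m\}$ exists; passing to a subsequence, $\{Ty_m\}=\{\sum_i x_i^*(y_m)x_i\}$ generates an $\ell_1^{k+1}$ spreading model. Using the Schreier-control of the coefficient sequences $(x_i^*(y_m))_i$ together with the hypothesis that $\{x_i\}$ has no $\ell_1^{k+1}$ spreading model, one forms appropriate $(k+1,\varepsilon_m)$ special convex combinations of $\{Ty_m\}$ and, via the equivalence machinery of Section 4 (in particular Proposition \ref{cor11}), shows that they are equivalent to corresponding convex combinations of a block subsequence of $\{x_i\}$. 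This forces a subsequence of $\{x_i\}$ to generate an $\ell_1^{k+1}$ spreading model, contradicting the hypothesis. I expect the main obstacle to be precisely this $\ell_1^{k+1}$-transfer step, which demands the careful coefficient-and-support bookkeeping developed in Section 4.
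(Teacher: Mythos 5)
Your overall decomposition (reduce to block sequences, invoke Lemma \ref{fixedm}, treat boundedness, non-compactness and strict singularity separately) matches the paper's skeleton, and your non-compactness argument via biorthogonal functionals is essentially what the paper does. However, in the two harder parts there are genuine gaps.

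\textbf{Boundedness.} This is where the real work lives and your sketch does not contain the proof's key mechanism. Proposition \ref{propmmm7} gives $\alpha_{k'}(\{x_i\}_i)=0$ for $k'<n-k$, but that alone controls the action of low-complexity very fast growing $\alpha$-averages on a \emph{fixed} block sequence, not the norm of $\sum_i a_i x_i$ for an arbitrary coefficient sequence $(a_i)$ subject only to a Schreier-$\ell_1$ bound of order $k+1$. The paper's actual argument has two ingredients you are missing. First, a crucial re-indexing: one chooses a subsequence $(i_j)$ of $\nn$ with $i_j\geqslant 2^{j+3}+1$ and defines $Tx=\sum_{j\in L} x_{i_j}^*(x)\,x_j$, deliberately offsetting the dual index from the primal index; this offset is what later allows the relevant index sets to land in $\mathcal{S}_{k+1}$. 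Second, a dyadic decomposition of $L$ by the magnitude of $x^*(x_j)$ into levels $B_q$ where $2^{-(q+1)}<x^*(x_j)\leqslant 2^{-q}$; on each level one uses Lemma \ref{fixedm} (the norm bound $<2$ on repeated averages of $\{x_i\}$) to bound the number of maximal $\mathcal{S}_k$ blocks that the level can contain by $2^{q+3}$, so that after re-indexing the corresponding set of $i_j$'s is $\mathcal{S}_{k+1}$-admissible and the $c_0^{k+1}$ spreading model of $\{x_i^*\}$ gives a contribution $O((q+1)/2^q)$ per level, summing to a finite bound. A ``node by node'' tree analysis in the style of Section~4 does not substitute for this counting argument, and without the offset in the index subsequence the Schreier admissibility step fails.

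\textbf{Strict singularity.} Your proposed route — prove $T\in\mathcal{SS}_{k+1}(Y)$ via Proposition \ref{propsskchar} and then ``transfer'' a hypothetical $\ell_1^{k+1}$ spreading model of $\{Ty_m\}$ back to $\{x_i\}$ using the equivalence machinery of Section 4 — has a hole. Proposition \ref{cor11} requires both sequences to generate $\ell_1^n$ spreading models and is designed for comparing two sequences that interlace in a precise way; it is not a tool for showing that a sequence of the form $\sum_i a_i^{(m)}x_i$ inherits an $\ell_1^{k+1}$ spreading model from $\{x_i\}$, and indeed convex blocks of a $c_0$-generating sequence can well generate high-order $\ell_1$ spreading models (Proposition \ref{propmmm10}), so there is no such automatic transfer. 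The paper's argument is different and simpler: the boundedness computation is sharpened to the estimate $\|Tx\|\leqslant \tfrac{q_0^2+3q_0}{2}\sup_j|x^*_{i_j}(x)|+2\sum_{q\geqslant q_0}\tfrac{q+1}{2^q}$, and since no infinite dimensional subspace $Z$ of $Y$ contains $c_0$, every such $Z$ contains norm-one vectors with $\sup_j|x^*_{i_j}(x)|$ arbitrarily small; choosing $q_0$ and the threshold appropriately yields $\|Tx\|<\varepsilon$, so $T$ is strictly singular. Note this strict-singularity argument is downstream of the dyadic estimate, which is another reason the boundedness gap must be filled first.
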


\begin{proof}
Passing, if necessary, to a subsequence, we may assume that
$\{x_i\}_{i\inn}$ is a normalized block sequence.

It follows from Lemma \ref{fixedm} and a standard diagonal argument that there
is an $L \in [\nn]$  such for all $m \in \nn$ and  $m\leqslant F_1<\cdots< F_m$ maximal
$\mathcal{S}_k$ sets in $L$

\begin{equation}
\| \sum_{j=1}^m \sum_{i \in F_j} c^{F_j}_i x_i \| < 2.
\label{Q3eq1}
\end{equation}

Choose a subsequence $(i_j)_{j \in \nn}$ of $\nn$
such that $i_j\geqslant 2^{j+3}+1$ for all
$j\inn$. We claim that the map
$$Tx = \sum_{j\in L} x_{i_j}^*(x)x_{j}$$
is the desired one.

Let $x\in Y, \|x\| = 1$ and $x^*\in Y^*, \|x^*\| = 1$. We may
assume that $x^*(x_j) \geqslant 0$ for all $j\in L$. We partition
$L$ in the following way: For $q=0,1,\ldots$ set
\begin{eqnarray*}
B_q &=& \{j \in L : \frac{1}{2^{q+1}} < x^*(x_j) \leqslant
\frac{1}{2^q}\}\\
C_q &=& \{j\in B_q: j\geqslant q+1\}\\
D_q &=& \{j \in B_q: j\leqslant q\}
\end{eqnarray*}

Evidently we have
\begin{equation}
|\sum_{j\in D_q}x^*(x_j)x^*_{i_j}(x)| \leqslant \frac{q}{2^q}
\label{Q3eq2}
\end{equation}
Decompose $C_q$ into successive subsets
$\{C_q^\ell\}_{\ell=0}^{p_q}$ of $L$ such that the following are
satisfied:
\begin{itemize}

\item[(i)] $C_q = \cup_{\ell = 0}^{p_q}C_q^\ell$

\item[(ii)] $C_q^0 = C_q\cap\{q+1,\ldots,2^{q+1}\}$ and for
$\ell>0$ $C_q^\ell$ is a maximal $\mathcal{S}_k$ set (except
perhaps the last one).

\end{itemize}
We claim that $p_q< 2^{q+3}$.  Let $I_q \subset \{1, \ldots
,p_q\}$ be an $\mathcal{S}_1$ set such that $\# I_q \geqslant p_q/2$.
From \eqref{Q3eq1} and the definition of $B_q$ we have
\begin{equation*}
\begin{split}
2 &> \|\sum_{\ell \in I_q}\sum_{j \in C_q^\ell} c_j^{C_q^\ell}
x_{j} \| \geqslant \sum_{\ell \in I_q}\sum_{j \in
C_q^\ell}c_j^{C_q^\ell} x^*(x_{j}) \geqslant \frac{p_q}{2^{q+2}}
\end{split}
\end{equation*}
Therefore $p_q < 2^{q+3}$.

Now set
\begin{equation*}
G_q^\ell = \{i_j: j\in
C_q^\ell\}\quad\text{for}\;\ell=0,\ldots,p_q.
\end{equation*}
Then it is easy to check the following.
\begin{itemize}

\item[(i)] $G_q^0\in\mathcal{S}_1$ and $\min G_q^0 >
2^{q+3}$

\item[(ii)] $G_q^\ell\in\mathcal{S}_k$ for $\ell>0$.

\end{itemize}
Since $p_q< 2^{q+3}$, the set $G_q =
\cup_{\ell = 0}^{p_q}G_q^\ell \in\mathcal{S}_{k+1}$.
Since $\{x_i^*\}_{i\inn}$ generates a $c_0^{k+1}$
spreading model, we conclude the following:
\begin{equation}
|\sum_{j\in C_q}x^*(x_j)x^*_{i_j}(x)| < 2\max\{|x^*(x_j)|: j\in
C_q\}\label{Q3eq3}
\end{equation}

Summing up \eqref{Q3eq2} and \eqref{Q3eq3} we conclude that
$\|T\|\leqslant 2\sum_{q = 0}^\infty\frac{1+q}{2^q}$.

To see that $T$ is non-compact consider the biorthogonal functionals
$\{f_k\}_{k \in L}$ of $\{x_{i_j}^*\}_{j \in L}$.  Since $\{f_k\}_{k \in L}$ is
a seminormalized sequence we have
$$\| T(f_k - f_m)\| = \| x_k - x_m\|$$
for $m \not= k$ in $L$.  Therefore $\{T(f_k)\}_{k \inn}$ has no norm convergent
subsequence.

To prove that $S$ is strictly singular, first notice that for
$x\in Y, \|x\| = 1, x^*\in Y^*, \|x^*\| = 1, j_0\inn$, we have
that

\begin{eqnarray*}
x^*\big(Tx\big) &\leqslant& \sum_{q=0}^{q_0-1}|\sum_{j\in B_q}
x^*_{i_j}(x)x^*(x_j)| + 2\sum_{q=q_0}^\infty\frac{(q+1)}{2^q}\\
&\leqslant& \sum_{q=0}^{q_0-1}\Big(|\sum_{j\in D_q}
x^*_{i_j}(x)x^*(x_j)| + |\sum_{j\in C_q}
x^*_{i_j}(x)x^*(x_j)|\Big) +
2\sum_{q=q_0}^\infty\frac{(q+1)}{2^q}\\
&<&\sum_{q=0}^{q_0-1}(q+2)\sup\{|x^*_{i_j}(x)|: j\inn\} +
2\sum_{q=q_0}^\infty\frac{(q+1)}{2^q}
\end{eqnarray*}

Therefore $\displaystyle{\|Tx\| \leqslant \frac{q_0^2 +
3q_0}{2}\sup\{|x^*_{i_j}(x)|: j\inn\} +
2\sum_{q=q_0}^\infty\frac{(q+1)}{2^q}}$

Let $Z$ be an infinite dimensional closed subspace of $Y$ and
$\e>0$. Since $Z$ does not contain $c_0$, it follows that for any
$\delta>0$ there exists $x\in Z, \|x\| = 1$, such that
$\sup\{|x^*_{i_j}(x)|: j\inn\}<\delta$. For appropriate choices of
$q_0$ and $\delta$, it follows that there exists $x\in X, \|x\| =
1$ such that $\|Tx\| < \e$, thus $T$ is strictly singular.

The proof of the boundedness is based on the proof of Proposition
3.1 from \cite{ADT} and the proof of the strict singularity of $T$
originated from an unpublished result due to A. Pelczar-Barwacz.

\end{proof}

\begin{rmk} The proof of the above proposition actually yields,
that for $L,M$ infinite subsets of the naturals, the map $T_{L,M}
= \sum_{i=1}^\infty x^*_{L(i)}(x)x_{M(i)}$ remains bounded,
strictly singular and non compact.\label{remarkoperator}
\end{rmk}

\begin{cor} For any  infinite dimensional closed subspace $Y$ of
$\X$, the ideal $\mathcal{S}(Y)$ of strictly singular operators is
non-separable.\label{cornonseparability}
\end{cor}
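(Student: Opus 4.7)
The plan is to parameterize an uncountable family of strictly singular operators on $Y$ by an almost disjoint family of infinite subsets of $\nn$, and to show that these operators are uniformly separated in operator norm.

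First, I would apply Proposition \ref{propmmm14} to choose in $Y$ a normalized weakly null block sequence $\{x_j\}_{j\inn}$ generating a $c_0$ spreading model (which, in particular, has no $\ell_1$ subsequence), together with a normalized weakly null block sequence $\{y_i\}_{i\inn}$ generating an $\ell_1$ spreading model. By Remark \ref{strongell1}, after passing to a subsequence we may assume that $\{y_i\}_i$ generates a strong $\ell_1$ spreading model; that is, there exist $\e>0$ and a seminormalized sequence $\{y_i^*\}_i \subset \X^*$ generating a $c_0$ spreading model, with $y_i^*(y_i)>\e$ for all $i\inn$ and $y_i^*(y_j)=0$ for $i\neq j$. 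The hypotheses of Proposition \ref{operator} are then satisfied with $k=0$ for the pair $\{y_i^*\}_i$, $\{x_j\}_j$.

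Next, I would fix an almost disjoint family $\{L_\al\}_{\al\in 2^{\aleph_0}}$ of infinite subsets of $\nn$, i.e.\ $L_\al\cap L_\be$ is finite for every $\al\neq\be$ (a standard set-theoretic construction). For each $\al$, Remark \ref{remarkoperator} applied with $L=M=L_\al$ yields a bounded, strictly singular operator on $Y$ given by
\begin{equation*}
T_\al(x)\;=\;\sum_{i=1}^\infty y_{L_\al(i)}^*(x)\, x_{L_\al(i)}.
\end{equation*}
I claim that $\|T_\al-T_\be\|>\e$ whenever $\al\neq\be$. Indeed, $L_\al\setminus L_\be$ is infinite, so pick any $j\in L_\al\setminus L_\be$. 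The biorthogonality $y_i^*(y_j)=\delta_{ij}y_j^*(y_j)$ leaves a single nonzero term in $T_\al(y_j)$, namely $y_j^*(y_j)x_j$, while $T_\be(y_j)=0$ since $j\notin L_\be$. Hence $\|(T_\al-T_\be)(y_j)\|=y_j^*(y_j)\|x_j\|>\e$, and since $\|y_j\|=1$ this gives $\|T_\al-T_\be\|>\e$.

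The family $\{T_\al\}_{\al\in 2^{\aleph_0}}$ is therefore an uncountable, pairwise $\e$-separated subset of $\mathcal{S}(Y)$, which proves non-separability. The main point to check carefully is that although $\{y_i^*\}_i$ is constructed in $\X^*$ rather than $Y^*$, evaluation on vectors of $Y$ is unambiguous, so both the application of Proposition \ref{operator} (whose hypotheses permit $\{y_i^*\}_i\subset\X^*$) and the lower bound on $\|T_\al-T_\be\|$ (which uses only the values $y_i^*(y_j)$ on vectors $y_j\in Y$) go through without modification; indeed, $T_\al$ takes values in $\overline{\mathrm{span}}\{x_{L_\al(i)}\}\subset Y$ by construction.
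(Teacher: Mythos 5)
Your proposal is correct and follows essentially the same strategy as the paper: produce one operator via Proposition \ref{operator}, then invoke Remark \ref{remarkoperator} to obtain an uncountable $\e$-separated family of strictly singular operators indexed by infinite subsets of $\nn$. The only differences are cosmetic (you take $k=0$ rather than $k=n-1$, vary both coordinate sets $L=M=L_\al$ rather than just $L$, and use an almost disjoint family where any two distinct infinite sets would do); your explicit biorthogonality computation of the $\e$-separation is a nice clarification of a step the paper leaves implicit.
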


\begin{proof}
Choose $\{x_i^*\}_{i\inn}$ a seminormalized sequence in
${\mathfrak{X}_{_{0,1}}^{n*}}$ generating a $c_0^n$ spreading
model and $\{x_i\}_{i\inn}$ a seminormalized weakly null sequence
in $Y$ not having a subsequence generating an $\ell_1^n$ spreading
model, such that the map $T:Y\rightarrow Y$ with $Tx =
\sum_{i=1}^\infty x_i^*(x)x_i$ is bounded, strictly singular and
non-compact. By the Remark \ref{remarkoperator}, for any $L$
infinite subset of the naturals, the operator $T_L:Y\rightarrow Y$
with $T_Lx = \sum_{i=1}^\infty x_L(i)^*(x)x_i$ is bounded,
strictly singular and non-compact. Therefore $\mathcal{S}(Y)$
contains an uncountable $\e$-separated subset, hence it is
non-separable.

\end{proof}

\begin{prp} Let $Y$ be an infinite dimensional closed subspace of
$\X$. Then there exist $S_1,\ldots,S_n:Y\rightarrow Y$ strictly
singular operators, such that for $0\leqslant k \leqslant n-2$ the composition $S_{n-k}\cdots S_n$
is in $\mathcal{SS}_{n-k}(Y)$ and not in $\mathcal{SS}_{n-k-1}(Y)$ and $S_1\cdots S_n$ is in $\mathcal{SS}_1(Y)$ and it is not compact.\label{counter}
\end{prp}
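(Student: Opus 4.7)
The plan is to define each $S_k$ via the construction of Proposition \ref{operator}, calibrated so that the compositions $S_{n-k}\cdots S_n$ chain through a biorthogonal system that drops the $\ell_1$ spreading-model index by exactly one per factor.

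First, using Proposition \ref{propmmm14}, for each $0\leqslant k\leqslant n$ I pick a normalized weakly null sequence $\{w^{(k)}_i\}_{i\inn}$ in $Y$: for $k=0$ one generating a $c_0$ spreading model, and for $1\leqslant k\leqslant n$ one generating an $\ell_1^k$ spreading model but no subsequence generating an $\ell_1^{k+1}$ one. By Remark \ref{strongell1} I pass to subsequences so that each $\{w^{(k)}_i\}_i$ with $k\geqslant 1$ generates a strong $\ell_1^k$ spreading model; unraveling the definition, this supplies seminormalized functionals $\{v^{(k)}_i\}_{i\inn}\subset Y^*$ generating a $c_0^k$ spreading model, with $v^{(k)}_i(w^{(k)}_i) > \e_k > 0$ and $v^{(k)}_i(w^{(k)}_j)=0$ for $i\neq j$. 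For each $k=1,\ldots,n$, the pair $(\{v^{(k)}_i\}_i,\{w^{(k-1)}_j\}_j)$ meets the hypotheses of Proposition \ref{operator}: $\{v^{(k)}_i\}_i$ generates a $c_0^k$ spreading model, while $\{w^{(k-1)}_j\}_j$ has no subsequence generating an $\ell_1^k$ one (for $k=1$ this is because a sequence generating a $c_0$ spreading model cannot admit $\ell_1$, by Corollary \ref{cormmm9dich}).

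By Remark \ref{remarkoperator} I fix a single strictly increasing $L\colon\nn\to\nn$ of sufficiently rapid growth that, for every $k=1,\ldots,n$,
\[
S_k x \;:=\; \sum_{j=1}^{\infty} v^{(k)}_{L(j)}(x)\, w^{(k-1)}_j
\]
defines a bounded, strictly singular, non-compact operator from $Y$ to $Y$. Exact biorthogonality yields the pointwise identity $S_k w^{(k)}_{L(j)} = v^{(k)}_{L(j)}\!\big(w^{(k)}_{L(j)}\big)\, w^{(k-1)}_j$, a positive multiple of $w^{(k-1)}_j$ with scalar in $[\e_k, M_k]$ for some uniform upper bound $M_k$. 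Iterating this along the $n$-fold composition $L^n := L\circ\cdots\circ L$ and setting $\widehat w_m := w^{(n)}_{L^n(m)}$ for $m\inn$, one obtains
\[
S_{n-k}\cdots S_n\,\widehat w_m \;=\; \theta^{(k)}_m\, w^{(n-k-1)}_{L^{n-k-1}(m)},
\]
where the scalars $\theta^{(k)}_m$ are uniformly bounded below by $\prod_{\ell=n-k}^{n}\e_\ell > 0$ and above by $\prod_{\ell=n-k}^{n} M_\ell$. Since $\{w^{(n-k-1)}_{L^{n-k-1}(m)}\}_m$ is a subsequence of $\{w^{(n-k-1)}_i\}_i$, it inherits the spreading-model properties of the latter. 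For $0\leqslant k\leqslant n-2$ this means the image $\{S_{n-k}\cdots S_n\,\widehat w_m\}_m$ admits an $\ell_1^{n-k-1}$ spreading model, so Proposition \ref{propsskchar} gives $S_{n-k}\cdots S_n \notin \mathcal{SS}_{n-k-1}(Y)$; combined with Proposition \ref{propqqq14} applied to the $(k+1)$-fold composition of strictly singular operators, $S_{n-k}\cdots S_n \in \mathcal{SS}_{n-k}(Y)\setminus\mathcal{SS}_{n-k-1}(Y)$. For $k=n-1$, the image is a positive multiple of the subsequence $\{w^{(0)}_m\}_m$ of $\{w^{(0)}_i\}_i$, which generates a $c_0$ spreading model; hence it is seminormalized and weakly null but not norm-null, so $S_1\cdots S_n$ is non-compact, and Proposition \ref{propqqq14} places it in $\mathcal{SS}_1(Y)$.

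The main obstacle I foresee is the compatibility of the growth conditions for $L$ across the $n$ levels: the proof of Proposition \ref{operator} requires an indexing growing at least like $2^{j+3}+1$, so the iterates $L^p$ for $p=1,\ldots,n$ must simultaneously outpace this rate. Choosing $L$ to grow like a double-exponential (for instance $L(j) = 2^{2^{j}}$) forces every iterate $L^p$ to grow at least exponentially, which suffices for the boundedness estimates in the proof of Proposition \ref{operator} to apply uniformly to each $S_k$, so no new analytic estimate beyond those of Section 3 and Proposition \ref{operator} is needed.
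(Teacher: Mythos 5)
Your proof is correct and follows essentially the same route as the paper: pick from each spreading-model level a weakly null sequence and its associated $c_0^k$-biorthogonal functionals (via Proposition \ref{propmmm14} and Remark \ref{strongell1}), build the $S_k$ through Proposition \ref{operator} and Remark \ref{remarkoperator}, chain them by exact biorthogonality so each factor drops the $\ell_1$-index by one, and conclude with Propositions \ref{propqqq14} and \ref{propsskchar}. The only difference is cosmetic: where the paper absorbs the index shifts into a relabeling and an inductive formula for the composition, you carry a single rapidly growing $L$ and its iterates explicitly, which actually makes the biorthogonality bookkeeping cleaner than the paper's own writeup.
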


\begin{proof}
Using Proposition \ref{propmmm14}, Remark \ref{strongell1},
Proposition \ref{operator} and Remark \ref{remarkoperator}, for
$k=1,\ldots,n$ choose $\{x_{k,i}\}_{i\inn}$ normalized weakly null
sequences in $Y$ and $\{x_{k,i}^*\}_{i\inn}$ normalized weakly
null sequences in ${\mathfrak{X}_{_{0,1}}^{n*}}$ satisfying the
following.

\begin{enumerate}

\item[(i)] $\{x_{k,i}\}_{i\inn}$ generates an $\ell_1^{k-1}$
spreading model and no subsequence of it generates an $\ell_1^{k}$
one for $k=2,\ldots,n$, while $\{x_{1,i}\}_{i\inn}$ generates a
$c_0$ spreading model.

\item[(ii)] $\{x_{k,i}^*\}_{i\inn}$ generates a $c_0^k$ spreading
model for $k=1,\ldots,n$.

\item[(iii)] There exists $\e_k>0$ such that $x^*_{k+1,i}(x_{k,i})
> \e_k$ for all $i\inn$ and $x^*_{k+1,i}(x_{k,j}) = 0$ for $i\neq j$, $k=1,\ldots,n-1$.

\item[(iv)] The map $S_k:Y\rightarrow Y$ with $S_k(x) =
\sum_{i=1}^\infty x_{k,i}^*(x)x_{k,i}$ is bounded strictly
singular and non-compact.

\end{enumerate}

We shall inductively prove the following. For $k=0,\ldots,n-1$
there exists  a sequence of seminormalized
positive real numbers $\{c_{k,i}\}_{i\inn}$ such
that

\begin{equation*}
S_{n-k}\cdots S_{n-1}S_nx = \sum_{i=1}^\infty
c_{k,i}x^*_{n,i}(x)x_{n-k,i}
\end{equation*}

For $k = 0$, the assumption holds, for $c_{0,i} = 1$ for all
$i\inn$. Assume that it holds for some
$k< n-1$. Then, by the inductive assumption

\begin{eqnarray*}
S_{n-k-1}\cdots S_nx &=& \sum_{i=1}^\infty
x_{n-k-1,i}^*(\sum_{j=1}^\infty
c_{k,j}x^*_{n,j}(x)x_{n-k,j})x_{k,i}\\
&=& \sum_{i=1}^\infty
c_{k,i}x_{n-k-1,i}^*(x_{n-k,i})x^*_{n,i}(x)x_{k,i}
\end{eqnarray*}

Set $c_{k+1,i} = c_{k,i}x_{n-k-1,i}^*(x_{n-k,i})$ for all $i\inn$.
Then $c_{k+1,i} > c_{k,i}\e_{n-k}$, for all $i\inn$, therefore
$\{c_{k+1,i}\}_{i\inn}$ is seminormalized. The induction is complete.

Let now $0\leqslant k \leqslant n-2$. Proposition \ref{propqqq14} yields that $S_{n-k}\cdots S_n$ is in $\mathcal{SS}_{n-k}(Y)$. Moreover, if we consider $\{y_i\}_i$ to be a seminormalized sequence in $Y$, biorthogonal to $\{x^*_{n,i}\}_{i\inn}$, then $S_{n-k}\cdots S_ny_i = c_{k,i}x_{n-k,i}$ and therefore by (i) $\{S_ny_i\}_i$ generates an $\ell_1^{n-k-1}$ spreading model. Proposition \ref{propsskchar} yields that $S_{n-k}\cdots S_n$ is not in $\mathcal{SS}_{n-k-1}(Y)$

The fact that $S_1\cdots S_n$ is in $\mathcal{SS}_1(Y)$ and it is not compact is proved similarly.

\end{proof}

\begin{prp}
Let $Y$ be an infinite dimensional closed subspace of $\X$. Then $\mathcal{K}(Y)\subsetneq \mathcal{SS}_1(Y)\subsetneq \mathcal{SS}_2(Y)\subsetneq\cdots\subsetneq\mathcal{SS}_n(Y) = \mathcal{S}(Y)$ and for every $1\leqslant k\leqslant n$, $\mathcal{SS}_k(Y)$ is a two sided ideal.\label{propsskfinal}
\end{prp}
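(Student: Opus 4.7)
The plan is to dispatch the easy containments and the equality $\mathcal{SS}_n(Y)=\mathcal{S}(Y)$ first, cite Proposition \ref{counter} for the strict inclusions, and then prove the ideal property using the spreading model characterization of Proposition \ref{propsskchar} combined with the $\al$-index machinery of Section 3.

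For the chain of inclusions I will first note that $\mathcal{SS}_k(Y)\subseteq\mathcal{SS}_{k+1}(Y)$ follows from $\mathcal{S}_k\subset\mathcal{S}_{k+1}$, that $\mathcal{K}(Y)\subseteq\mathcal{SS}_1(Y)$ follows because a compact operator sends a bounded weakly null sequence to a norm null one which admits no $\ell_1^1$ spreading model, and that $\mathcal{SS}_k(Y)\subseteq\mathcal{S}(Y)$ follows from Definition \ref{defssk} because an isomorphism on an infinite dimensional subspace would provide a uniform lower bound on all Schreier blocks. To upgrade the last inclusion to the equality $\mathcal{SS}_n(Y)=\mathcal{S}(Y)$, given $T\in\mathcal{S}(Y)\setminus\mathcal{SS}_n(Y)$, Proposition \ref{propsskchar} yields a seminormalized weakly null $\{x_i\}_i$ whose image $\{Tx_i\}_i$ admits an $\ell_1^n$ spreading model; since Corollary \ref{cormmm11} bars $\ell_1^{n+1}$ spreading models in $\X$, after passing to a subsequence Remark \ref{remarklifting} forces both $\{x_i\}_i$ and $\{Tx_i\}_i$ to generate $\ell_1^n$ spreading models with no subsequence generating $\ell_1^{n+1}$, contradicting Theorem \ref{prop13}. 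The strictness of each inclusion is then read off from Proposition \ref{counter}: for $0\leqslant k\leqslant n-2$ the composition $S_{n-k}\cdots S_n$ lies in $\mathcal{SS}_{n-k}(Y)\setminus\mathcal{SS}_{n-k-1}(Y)$, and $S_1\cdots S_n$ lies in $\mathcal{SS}_1(Y)\setminus\mathcal{K}(Y)$.

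The substantive step is the two sided ideal property of each $\mathcal{SS}_k(Y)$, which I will obtain through Proposition \ref{propsskchar}. For a left multiplication $RT$ with $R$ bounded and $T\in\mathcal{SS}_k(Y)$, if some seminormalized weakly null $\{x_i\}_i$ produced $\{RTx_i\}_i$ admitting an $\ell_1^k$ spreading model, Remark \ref{remarklifting} would lift this to an $\ell_1^d$ spreading model of $\{Tx_i\}_i$ with $d\geqslant k$, contradicting $T\in\mathcal{SS}_k(Y)$. For a right multiplication $TR$, if some subsequence $\{TRx_{i_m}\}_m$ generates an $\ell_1^k$ spreading model, then $\{TRx_{i_m}\}_m$ and hence $\{Rx_{i_m}\}_m$ is bounded below, so $\{Rx_{i_m}\}_m$ is seminormalized weakly null and applying $T\in\mathcal{SS}_k(Y)$ to it yields the contradiction.

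For closure under addition, let $T_1,T_2\in\mathcal{SS}_k(Y)$ and assume towards a contradiction that some seminormalized weakly null $\{x_i\}_i$ in $Y$ makes $\{(T_1+T_2)x_i\}_i$ admit an $\ell_1^k$ spreading model. My plan is to pass to a subsequence and perform a standard summable perturbation, exploiting the Schauder basis of $\X$ and the fact that spreading models are preserved under such perturbations, so that $\{x_i\}_i$, $\{T_1x_i\}_i$, $\{T_2x_i\}_i$ and $\{(T_1+T_2)x_i\}_i$ may all be treated as block sequences and $\{(T_1+T_2)x_i\}_i$ still generates an $\ell_1^k$ spreading model. Corollary \ref{cormmm9ell1} then produces some $k^\prime\leqslant n-k$ with $\al_{k^\prime}\big(\{(T_1+T_2)x_i\}_i\big)>0$, whereas the hypothesis $T_j\in\mathcal{SS}_k(Y)$ together with Corollary \ref{cormmm9ell1} forces $\al_{k^\prime}\big(\{T_jx_i\}_i\big)=0$ for $j=1,2$ and every $k^\prime\leqslant n-k$. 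Feeding the triangle inequality $|\al_q((T_1+T_2)x_i)|\leqslant|\al_q(T_1x_i)|+|\al_q(T_2x_i)|$ into the criterion of Proposition \ref{alternate} then yields $\al_{k^\prime}\big(\{(T_1+T_2)x_i\}_i\big)=0$, the desired contradiction. The main obstacle I anticipate is this simultaneous reduction to block sequences for three coupled images; once that is arranged, the subadditivity of the $\al$-index is automatic from the triangle inequality.
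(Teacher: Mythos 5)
Your proof is correct and follows essentially the same route as the paper: strictness of the chain is cited from Proposition \ref{counter}, the equality $\mathcal{SS}_n(Y)=\mathcal{S}(Y)$ comes down to Theorem \ref{prop13} (the paper packages this as Proposition \ref{propqqq14}), and closure under addition is reduced via Proposition \ref{propsskchar} and Proposition \ref{propmmm7} to the vanishing of the relevant $\al_{k'}$-indices, which is subadditive by the triangle inequality. The only cosmetic differences are that you spell out the left/right multiplication closure (the paper defers these to \cite{ADST}) and you verify $\mathcal{S}(Y)\subseteq\mathcal{SS}_n(Y)$ inline rather than citing Proposition \ref{propqqq14}.
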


\begin{proof}
The fact that $\mathcal{SS}_n(Y) = \mathcal{S}(Y)$ follows from Proposition \ref{propqqq14} while the fact that $\mathcal{K}(Y)\subsetneq \mathcal{SS}_1(Y)\subsetneq \mathcal{SS}_2(Y)\subsetneq\cdots\subsetneq\mathcal{SS}_n(Y)$ follows from Proposition \ref{counter}. Fix $1\leqslant k \leqslant n$. We will show that $\mathcal{SS}_k(Y)$ is a two sided ideal  and for that it is enough to show that whenever $S, T$ are in $\mathcal{SS}_k(Y)$, then so is $S + T$. The other properties of an ideal were verified in \cite{ADST} and hold for any space.  

We shall show that for every seminormalized weakly null  sequence $\{x_i\}_i$ in $Y$, $\{(S+T)x_i\}_i$ does not admit an $\ell_1^k$ spreading model and by Proposition \ref{propsskchar} we will be done.

We may assume that $\{Sx_i\}_i, \{Tx_i\}_i$ and $\{(S+T)x_i\}_i$ are all seminormalized block sequences. Since $S$ and $T$ are both in $\mathcal{SS}_k(Y)$, by Proposition \ref{propsskchar}  neither $\{Sx_i\}_i$ nor $\{Tx_i\}_i$ admits an $\ell_1^k$ spreading model. Proposition \ref{propmmm7} yields that $\al_{k^\prime}\big(\{Sx_i\}_i\big) = 0$ as well as $\al_{k^\prime}\big(\{Tx_i\}_i\big) = 0$ for $k^\prime < n-k+1$. It immediately follows from the definition of the $\al$-index that $\al_{k^\prime}\big(\{(S+T)x_i\}_i\big) = 0$ for $k^\prime < n-k+1$. Once more, Proposition \ref{propmmm7} yields that $\{(S+T)x_i\}_i$ does not admit an $\ell_1^k$ spreading model.

\end{proof}

\section*{The space $\mathfrak{X}_{0,1}^\omega$}

Recall that
$$\mathcal{S}_\omega=\{ F \subset \nn : n \leq F \mbox{ and } F \in \mathcal{S}_n \mbox{ for some }n \inn\}.$$
The space $\Xomega$ is defined in the natural way allowing $\mathcal{S}_\omega$-admissible successive subsets of $\nn$.
In this section let $W$
denote the norming set of $\Xomega$.
For this space we have the following proposition.

\begin{prp}
The following hold for $\Xomega$.
\begin{itemize}
\item[(i)] Every normalized weakly null sequence has a subsequence generating a $c_0$ or $\ell^\omega_1$ spreading model.
\item[(ii)] Every non-trivial spreading model of $\Xomega$ is either isomorphic to $c_0$ or $\ell_1$.
\item[(iii)] Every subspace of $\Xomega$ admits a spreading model isometric to $c_0$ and a spreading model isometric to $\ell_1$.
\item[(iv)] Let $Y$ be an infinite dimensional subspace of $\Xomega$. The following are equivalent.
\begin{itemize}
\item[(a)] $T:Y\to Y$ is a strictly singular
\item[(b)] There is a weakly null sequence $\{x_i\}_{i \inn}$ such that both $\{x_i\}_{i \inn}$ and  $\{T x_i\}_{i \inn}$ generate a $\ell_1^\omega$ spreading model
\item[(c)] There is a
weakly null sequence $\{y_i\}_{i \inn}$ such that both $\{y_i\}_{i \inn}$ and  $\{T y_i\}_{i \inn}$ generate a $c_0$ spreading model.
\end{itemize}
\end{itemize}
\label{omegaprop}
\end{prp}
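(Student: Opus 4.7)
The plan is to transfer the machinery of Sections 3, 4, and 5 to $\Xomega$, replacing $\Sn$ with $\mathcal{S}_\omega$ everywhere and replacing the single ``top'' index $\adnminusx$ with the countable family $\{\al_k(\{x_i\}_i)\}_{k<\omega}$ defined exactly as in Definition 3.1. The structurally new feature is that in the $\omega$ setting there is no ceiling $n$, so once any $\al_k$-index is positive one can iterate upward through the Schreier hierarchy without obstruction; this is what produces the $\ell_1^\omega$ (rather than a single $\ell_1^{n-k}$) alternative in (i).

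For (i), I would first prove the dichotomy: if $\al_k(\{x_i\}_i)=0$ for every $k<\omega$ then, passing to a subsequence, $\{x_i\}_i$ generates an isometric $c_0$ spreading model, and otherwise some subsequence generates an $\ell_1^\omega$ spreading model. The $c_0$ direction is an almost verbatim rerun of Proposition \ref{cosm}: in that proof the only role played by $n$ is to bound, via a convolution estimate, the number of maximal $\mathcal{S}_{n-1}$ blocks into which the successors of a \schr\ can be split; in $\Xomega$ one instead uses $\mathcal{S}_\omega=\bigcup_k\mathcal{S}_k$ together with the very fast growing condition, diagonalising over $k$, to recover the same telescoping estimate. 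For the $\ell_1^\omega$ direction, once $\al_{k_0}(\{x_i\}_i)>0$ for some $k_0$, the construction in the proof of Proposition \ref{prop3}(i) still yields a subsequence with a strong $\ell_1^{k_0+1}$ spreading model witnessed by functionals $\sum_{q\in F_i}\al_q$; iterating this for $k=k_0,k_0+1,\dots$ and extracting a diagonal subsequence whose witnessing supports are sufficiently spread, the ``outer'' index sets satisfy $\{\min\supp x_{i_j}\}_{j\in F}\in\mathcal{S}_\omega$ for every finite $F$, so the pieced-together functionals are admissible and yield an $\ell_1^\omega$ spreading model. Assertion (ii) is then immediate, since an $\ell_1^\omega$ spreading model is in particular isomorphic to $\ell_1$ while the only remaining alternative is $c_0$.

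For (iii), I would adapt Proposition \ref{propmmm14}. Reducing to block subspaces, every normalized block sequence in $Y$ falls into one of the two cases of (i); in the $\ell_1^\omega$ case the analog of Proposition \ref{propmmm12} (averaging by $(k,\e_j)$ s.c.c.\ with $k$ growing) kills all $\al_k$-indices of the resulting blocks $\{w_j\}_j$, so $\{w_j\}_j$ admits an isometric $c_0$ spreading model. Conversely, starting from an isometric $c_0$ spreading model one uses the analog of Proposition \ref{propmmm10} with $\#F_j\to\infty$ sufficiently fast to manufacture $\al$-average witnesses at each Schreier level, yielding an $\ell_1^\omega$ spreading model. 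Reflexivity of $\Xomega$ (the analog of Corollary \ref{cormmm15}) is then a formal consequence.

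For (iv), reading (a) as ``$T$ is not strictly singular'' in line with Theorem \ref{prop13}, the implications (b)$\Rightarrow$(a) and (c)$\Rightarrow$(a) will rest on the $\Xomega$ analog of Proposition \ref{cor11}: two normalized block sequences $\{x_m\}_m,\{y_m\}_m$ in $\Xomega$ generating $\ell_1^\omega$ spreading models with $x_m<y_{m+1}$ and $y_m<x_{m+1}$ admit common averages $\{z_m\}_m,\{w_m\}_m$ that are equivalent, while (a)$\Rightarrow$(b),(c) will use (iii) applied to a subspace on which $T$ is an isomorphism. The hard part will be the tree-analysis portion of Lemmas \ref{prop10}, \ref{lemqqq11}, and \ref{lemqqq12}: in $\X$ the admissibility level $n$ is fixed a priori, whereas each \schr\ of $\Xomega$ carries its own implicit level, visible only through the minima of its constituent $\al$-averages. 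I plan to handle this by working, for a given finite block $z_1,\dots,z_m$, with the effective level $N=\max\supp z_m$, at which every \schr\ that interacts nontrivially with the $z_i$ is automatically $\mathcal{S}_N$-admissible; the arguments of Case 6 of Lemma \ref{prop10} and the corresponding cases in Lemmas \ref{lemqqq11}, \ref{lemqqq12} then apply with $N$ in place of $n$, and the estimates remain uniform across $m$ provided the $\{F_m\}_m$ are chosen so that $\max\supp z_m\bigl(1/\min\supp z_{m+1}+6\e_{m+1}\bigr)$ decays fast enough, exactly as in hypothesis (ii) of Lemma \ref{prop10}. Once this equivalence of block sequences is in place, the proof of Theorem \ref{prop13} transfers unchanged.
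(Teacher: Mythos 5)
Your overall plan matches the paper's: collapse the $\al_k$-indices into a single $\al_{<\omega}$-index (the paper's Definition~6.2), run the analogue of Proposition~\ref{cosm} to get the $c_0$ alternative, and transfer the tree-analysis machinery of Lemmas~\ref{prop10}--\ref{lemqqq12} to $\Xomega$ by reading the admissibility level of each \schr{} off its leftmost successor. The effective-level observation you make for (iv) --- that a \schr{} meeting $z_1,\dots,z_m$ is automatically $\mathcal{S}_{\max\supp z_m}$-admissible because its level is bounded by where it starts --- is exactly the structural fact that makes the transfer go through, and is also used by the paper in its sketch of the $\Xomega$ version of Proposition~\ref{cosm}. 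So for (ii), (iii), (iv) your route is essentially the paper's.

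There is, however, a genuine gap in your $\ell_1^\omega$ direction of (i). You claim that $\al_{k_0}(\{x_i\}_i)>0$ ``still yields a subsequence with a strong $\ell_1^{k_0+1}$ spreading model'' and then propose to iterate over $k=k_0,k_0+1,\dots$ and diagonalize. First, the arithmetic is off: in the finite-order model Proposition~\ref{prop3}(i) gives $\al_k>0\Rightarrow\ell_1^{n-k}$, not $\ell_1^{k+1}$, and there is no finite exponent to target in $\Xomega$. Second, and more to the point, no iteration is needed, and I don't see how the diagonal you describe produces $\mathcal{S}_\omega$-admissible witnessing families. The correct one-step argument, which is what the paper does: from $\al_{<\omega}>0$ extract a single $d\inn$, a very fast growing sequence $\{\al_q\}$, and blocks $\{F_i\}$ with $\{\al_q\}_{q\in F_i}$ $\mathcal{S}_d$-admissible and $\sum_{q\in F_i}|\al_q(x_i)|>\e$. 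Set $x_i^*=\sum_{q\in F_i}\al_q$ and pass to a subsequence so that the $x_i^*$ have successive supports with $\min\supp x_i^*$ growing at least linearly in $i$ (in particular $\geqslant i+d$). Then for $G\in\mathcal{S}_\omega$, say $G\in\mathcal{S}_m$ with $m\leqslant\min G$, the convolution $\mathcal{S}_m*\mathcal{S}_d=\mathcal{S}_{m+d}$ gives $\{\min\supp\al_q:q\in\cup_{i\in G}F_i\}\in\mathcal{S}_{m+d}$, while $\min\big(\cup_{i\in G}F_i\big)\geqslant\min G+d\geqslant m+d$, so $\sum_{i\in G}x_i^*$ is a \schr{} in $W$. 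Hence $\{x_i^*\}_i$ generates a $c_0^\omega$ spreading model and $\{x_i\}_i$ a strong $\ell_1^\omega$ one, with no need to revisit higher $k$. You should also be aware that the $\Xomega$ analogue of Proposition~\ref{alternate} (the paper's Proposition~6.3) acquires an extra quantifier layer (for every $\e$ there is $j_0$ such that for every $j\geqslant j_0$ there is $k_j$ such that for every $k\geqslant k_j$ \dots), which the paper explicitly flags as not completely routine; your phrase ``diagonalising over $k$'' gestures at this but does not engage with it, and it is precisely this version of the index criterion that is invoked both in the $c_0$ alternative of (i) and in the averaging step for (iii).
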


Since the proof of (ii) and (iv) are almost identical to the finite order case, we omit them. Below we include the sketches of the proofs
of (i) and (iii). These are also similar to the
corresponding proofs for $\X$, however, there are some technical differences that are worth pointing out.

Clearly for each $1 \leq \xi <\omega_1$ the space $\Xxi$ can be defined using the Schreier family
$\mathcal{S}_\xi$ where appropriate. See \cite{AA} for the definition of $\mathcal{S}_\xi$. Whenever $\xi$ is a countable limit ordinal satisfying $\eta + \xi = \xi$ for all $\eta<\xi$, we claim that the above proposition
holds replacing $\omega$ with $\xi$. If $\xi$ is of the form $\xi = \zeta +(n-1)$, where $\zeta$ is a limit ordinal satisfying the above condition
 and $n \inn$, we have observed that  the spreading models in
this space behave analogously to those in $\X$. The technical difficulty in including the proofs of these results is that they require us to introduce the higher order repeated averages and modify the proofs to accommodate
more complicated nature of the Schreier sets of transfinite order. However, there does not seem to be any non-technical obstruction to proceeding in this direction.

It is worth pointing out that for countable ordinal numbers $\xi$ failing the condition $\eta + \xi = \xi$ for all $\eta<\xi$, the space $\Xxi$ fails to satisfy (i). For example in the space $\mathfrak{X}_{_{^{0,1}}}^{\omega\cdot2}$ every seminormalized weakly null sequence admits either $c_0$ as a spreading model, or $\ell_1^\zeta$, for $\omega\leqslant\zeta\leqslant\omega\cdot2$.

The following definition is found in \cite[Definition 3.1]{AM}.

\begin{dfn}
Let $\{ x_k \}_{k \inn}$ be a block sequence in $\Xomega$.

We write
$\alpha_{<\omega}(\{x_i\}_{i \inn})=0$ if for any $n \inn$, any fast growing sequence $\{\alpha_q\}_{q \inn}$
of $\alpha$-averges in $W$ and for any $\{F_k\}_{k \inn}$ increasing sequence of subsets of $\nn$, such
that $\{\alpha_q\}_{q \in F_k}$ is $\mathcal{S}_{n}$, the following holds: For any subsequence $\{x_{n_k}\}_{k \inn}$
of $\{x_k\}_{k \inn}$ we have $\lim_{k}\sum_{q \in F_k} |\alpha_q (x_{n_k})|=0$. If this is not the case, we write $\alpha_{<\omega}(\{x_i\}_{i \inn})>0$. \label{omegaindex}
\end{dfn}

Notice that for any limit ordinal $\xi <\omega_1$
it is easy to define the corresponding index $\alpha_{<\xi}$ using the sequence or ordinals increasing up to $\xi$. The next proposition is proved in \cite[Proposition 3.3]{AM}. We note that in contrast with the finite order case, the argument is not completely trivial; however, for the sake of
brevity we omit it.

\begin{prp}
Let $\{x_k\}_{k \inn}$ be a block sequence in $\Xomega$. The following are equivalent.
\begin{itemize}
\item[(i)] $\alpha_{<\omega} (\{x_k\}) =0$
\item[(ii)] For any $\e >0$ there exists $j_0 \inn$ such that for any $j \geqslant j_0$ there is an $k_j \inn$ such that for
any $k \geqslant k_j$, and for any $\{\alpha_q\}_{q =1}^d$ $\mathcal{S}_{j}$-admissible and very fast growing sequence
of $\alpha$-averages such that $s(\alpha_q)>j_0$ for $q = 1, \ldots , d$, we have that $\sum_{q=1}^d |\alpha_q(x_k)| <\e$.
\end{itemize}
\label{higherorderindex}
\end{prp}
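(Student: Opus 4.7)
The direction (ii)$\Rightarrow$(i) is a direct unraveling of the definitions. Given an $n\inn$, a very fast growing sequence $\{\alpha_q\}_q$ in $W$, an increasing sequence of index sets $\{F_k\}_k$ with each $\{\alpha_q\}_{q\in F_k}$ being $\mathcal{S}_n$-admissible, a subsequence $\{x_{m_k}\}_k$ of $\{x_k\}_k$, and $\e>0$, I would apply (ii) with $j:=\max(n,j_0)$, where $j_0$ is the threshold furnished by (ii). Since the Schreier families satisfy $\mathcal{S}_n\subset\mathcal{S}_j$ for $n\leqslant j$, each $\{\alpha_q\}_{q\in F_k}$ is automatically $\mathcal{S}_j$-admissible; because $\min F_k\to\infty$ and sizes grow strictly along very fast growing sequences, $s(\alpha_q)>j_0$ for every $q\in F_k$ once $k$ is large, while $m_k\geqslant k_j$ eventually. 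Invoking (ii) gives $\sum_{q\in F_k}|\alpha_q(x_{m_k})|<\e$ for all large $k$, so the limit is $0$.

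For the direction (i)$\Rightarrow$(ii) I would argue contrapositively. The argument of Proposition~\ref{alternate} transfers verbatim to $\Xomega$ at each fixed Schreier level, so $\alpha_{<\omega}(\{x_k\})=0$---which, by Definition~\ref{omegaindex}, amounts to $\alpha_n(\{x_k\})=0$ for every $n\inn$---provides, for each $n$, a minimal threshold $j_0^{(n)}<\infty$ with the property that whenever $j_0\geqslant j_0^{(n)}$ there is $k_0$ such that $\sum_{q=1}^d|\alpha_q(x_k)|<\e$ for all $k\geqslant k_0$ and every very fast growing $\mathcal{S}_n$-admissible $\{\alpha_q\}_{q=1}^d$ with $s(\alpha_q)>j_0$. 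The inclusions $\mathcal{S}_n\subset\mathcal{S}_{n+1}$ make $(j_0^{(n)})_n$ non-decreasing, and a short quantifier manipulation identifies (ii) with the boundedness of this sequence. The remaining task is thus to rule out $j_0^{(n)}\to\infty$ under (i).

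Assuming for contradiction $j_0^{(n)}\to\infty$, I would, for each $\ell\inn$, pick $j_\ell$ with $j_0^{(j_\ell)}>\ell$; the failure of the $\mathcal{S}_{j_\ell}$-property at size-threshold $\ell$ then yields very fast growing $\mathcal{S}_{j_\ell}$-admissible families $\{\alpha_q^{(\ell)}\}_{q=1}^{d_\ell}$ with $s(\alpha_q^{(\ell)})>\ell$ and $\sum_q|\alpha_q^{(\ell)}(x_{k^{(\ell)}})|\geqslant\e$ at arbitrarily large $k^{(\ell)}$. Choosing $k^{(\ell)}$ sufficiently large places the contributing $\alpha$-averages in pairwise-disjoint far-right ranges, so the families concatenate into a single very fast growing sequence whose $\ell$-th block is $\{\alpha_q^{(\ell)}\}$.

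The main obstacle will be that this $\ell$-th block is $\mathcal{S}_{j_\ell}$-admissible with $j_\ell\to\infty$, while Definition~\ref{omegaindex} requires a single fixed $n$ making every block $\mathcal{S}_n$-admissible. The resolution, carried out in detail in \cite[Proposition~3.3]{AM}, uses the convolution identity $\mathcal{S}_j=\mathcal{S}_n*\mathcal{S}_{j-n}$ recalled in Section~1 to decompose each $\{\alpha_q^{(\ell)}\}$ into successive $\mathcal{S}_{j_\ell-n}$-admissible sub-blocks whose minimal supports form an $\mathcal{S}_n$-admissible skeleton; a pigeonhole-style selection on where the mass $\e$ concentrates across these decompositions then isolates, for a suitable fixed $n^*$, an infinite family of $\mathcal{S}_{n^*}$-admissible subfamilies along which a definite fraction of the mass $\e$ persists. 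Via Proposition~\ref{alternate} this contradicts $\alpha_{n^*}(\{x_k\})=0$, and thereby $\alpha_{<\omega}(\{x_k\})=0$, closing the argument.
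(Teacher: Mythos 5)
The paper offers no proof of this proposition at all---it simply cites \cite[Proposition 3.3]{AM}, remarking that the argument ``is not completely trivial'' but omitted for brevity---so there is no in-paper argument to compare against, and any evaluation of the hard direction must be on internal coherence.

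Your (ii)$\Rightarrow$(i) reduction is correct: given a fixed $n$, a very fast growing $\{\al_q\}$, and $\mathcal{S}_n$-admissible blocks $\{F_k\}$, apply (ii) with $j=\max(n,j_0)$; since sizes grow along a very fast growing sequence and $\min F_k\to\infty$, eventually $s(\al_q)>j_0$ for $q\in F_k$, and since any subsequence index $n_k\geqslant k\to\infty$ eventually exceeds $k_j$, the sums drop below $\e$. Your reformulation of (ii) as boundedness of the sequence of minimal thresholds $(j_0^{(n)})_n$ furnished levelwise by Proposition~\ref{alternate} is also correct, using the monotonicity coming from $\mathcal{S}_n\subset\mathcal{S}_{n+1}$.

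There are, however, two concrete concerns with your sketch of (i)$\Rightarrow$(ii). First, a minor one: ``choosing $k^{(\ell)}$ sufficiently large places the contributing $\al$-averages in pairwise-disjoint far-right ranges, so the families concatenate into a single very fast growing sequence'' is not quite right as stated. Disjointness of ranges does not by itself secure $s(\al^{(\ell)}_1)>\max\supp\al^{(\ell-1)}_{d_{\ell-1}}$; all you control is $s(\al^{(\ell)}_q)>\ell$, so the threshold $\ell$ (and with it the level $j_\ell$) must be chosen diagonally, \emph{after} the $(\ell-1)$-st block is in hand. This is fixable but should be made explicit. Second, and more seriously, the decomposition-plus-pigeonhole step is not an argument as written. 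Decomposing an $\mathcal{S}_{j_\ell}$-admissible family via $\mathcal{S}_{j_\ell}=\mathcal{S}_n*\mathcal{S}_{j_\ell-n}$ produces pieces that are $\mathcal{S}_{j_\ell-n}$-admissible, a level that still tends to infinity with $\ell$; the $\mathcal{S}_n$-skeleton governs the \emph{number} of pieces, which is not uniformly bounded across $\ell$, so a pigeonhole on ``where the mass $\e$ concentrates'' does not obviously hand you a definite fraction of $\e$ concentrated on an $\mathcal{S}_{n^*}$-admissible subfamily for any single fixed $n^*$. Whether this is actually how \cite[Proposition 3.3]{AM} proceeds cannot be determined from the present paper; but as sketched, this step does not close, and it is precisely the step the paper flags as non-trivial.
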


As in the finite case we need use the index to establish existence of the spreading models.

\begin{prp}
Let $\{ x_i \}_{i \inn}$ be normalized block sequence in $\Xomega$. Then the following hold:
\begin{itemize}
\item[(i)] If $\alpha_{<\omega}(\{x_i\}) >0$, then, by passing to a subsequence, $\{ x_i \}_{i \inn }$
generates a strong $\ell_1^\omega$ spreading model.
\item[(ii)] If $\alpha_{<\omega}(\{x_i\}) =0$ then there is a sequence of $\{x_i\}$ that generates a $c_0$ spreading model.
\end{itemize}
\end{prp}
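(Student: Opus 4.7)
For part (i), assume $\alpha_{<\omega}(\{x_i\}_i) > 0$. Unwinding Definition \ref{omegaindex}, there exist $n\inn$, $\e>0$, a very fast growing sequence of $\al$-averages $\{\al_q\}_q \subset W$, an increasing sequence of finite sets $\{F_j\}_j$ with $\{\al_q\}_{q\in F_j}$ being $\mathcal{S}_n$-admissible, and a subsequence $\{x_{i_j}\}_j$ with $\sum_{q\in F_j}|\al_q(x_{i_j})|\geqslant \e$ for all $j$. By passing to further subsequences and relabelling, we may arrange that $\max\supp(\sum_{q\in F_j}\al_q) < \min\supp x_{i_{j+1}}$ and $\min\supp\al_{\min F_j} \geqslant n + j$ for each $j$. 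Setting $x_j^* = \sum_{q\in F_j}\al_q$ then yields $x_j^*\in W$, $x_j^*(x_{i_j}) > \e$, and $x_k^*(x_{i_j}) = 0$ for $k\neq j$. To verify that $\{x_j^*\}_j$ generates a $c_0^\omega$ spreading model, let $F\in\mathcal{S}_\omega$ with $F\in\mathcal{S}_m$, $m\leqslant\min F$. Then $\{\al_q : q\in\cup_{j\in F}F_j\}$ remains very fast growing, and by $\mathcal{S}_m*\mathcal{S}_n = \mathcal{S}_{m+n}$ the set $\{\min\supp\al_q : q\in\cup_{j\in F}F_j\}$ lies in $\mathcal{S}_{m+n}$. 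Since $m + n \leqslant \min F + n \leqslant \min\supp\al_{\min F_{\min F}}$, this set is $\mathcal{S}_\omega$-admissible; hence $\sum_{j\in F} x_j^*$ is a Schreier functional in $W$ and so has dual norm at most $1$, which together with the $1$-unconditionality of the basis yields the $c_0^\omega$ upper estimate.

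For part (ii), the argument adapts Proposition \ref{cosm}. Fix $\{\e_i\}_i$ summable with $\e_i > 3\sum_{j>i}\e_j$. Using Proposition \ref{higherorderindex}, inductively extract a subsequence, again denoted $\{x_i\}_{i\inn}$, such that for every $i_0\geqslant 2$ and $i\geqslant i_0$ and every very fast growing $\mathcal{S}_{i_0}$-admissible sequence $\{\al_q\}_{q=1}^\ell$ of $\al$-averages with $s(\al_q)\geqslant\min\supp x_{i_0}$, one has $\sum_{q=1}^\ell|\al_q(x_i)| < \e_{i_0}/(i_0\max\supp x_{i_0-1})$. Then prove by induction on the minimal depth $m$ with $f\in W_m$ that for any $t\leqslant i_1<\cdots< i_t$ and $F\subset\{1,\ldots,t\}$, $|\al(\sum_{j\in F}x_{i_j})| < 1 + 2\e_{i_{\min F}}$ when $\al$ is an $\al$-average in $W_m$ and $|g(\sum_{j\in F}x_{i_j})| < 1 + 3\e_{i_{\min F}}$ when $g\in W_m$ is a \schr. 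The $\al$-average case carries over verbatim from Proposition \ref{cosm}. For a \schr\ $g = \sum_{q=1}^d\al_q$, admissibility provides some $k\leqslant\min\supp\al_1$ with $\{\min\supp\al_q\}\in\Sk$; split $\{q:q>q_0\}$ into successive intervals whose $\min\supp$-images are maximal $\mathcal{S}_{k-1}$ sets (with the last possibly partial), control the number of intervals by the convolution property, and apply the extracted estimate with $\mathcal{S}_{i_0}$-admissibility level matching $k$ (made possible by $k\leqslant\min\supp\al_1\leqslant\min\supp x_{i_{j_0+1}}$) to obtain the $\e_{i_{j_0}}$ correction.

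The principal technical obstacle is precisely this Schreier-functional step: whereas in the finite case the admissibility class of the decomposition is bounded by the fixed constant $n$, in $\Xomega$ it can be arbitrarily large, so one cannot extract a subsequence uniform in $k$. The remedy is to exploit the fact that $\mathcal{S}_\omega$-admissibility forces $k\leqslant\min\supp\al_1$, so that for the portion of the functional acting non-trivially on $x_{i_j}$, the value of $k$ is bounded by the minimum support of that vector. This is precisely the reason Proposition \ref{higherorderindex} permits the threshold $j_0$ governing $\mathcal{S}_j$-admissibility to depend on the minimum support of the vector tested against. Diagonalising carefully so that these dependencies are consistent throughout the tree induction is the main bookkeeping challenge; once handled, the rest of the argument mirrors the finite case and produces a subsequence generating a spreading model isometric to $c_0$.
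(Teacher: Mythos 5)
Part (i) of your argument is essentially the paper's proof, with a welcome extra explicitness: you impose $\min\supp\al_{\min F_j}\geqslant n+j$ to make the $\mathcal{S}_\omega$-admissibility of $\{\al_q : q\in\cup_{j\in F}F_j\}$ transparent via the convolution $\mathcal{S}_m*\mathcal{S}_n=\mathcal{S}_{m+n}$, which the paper leaves implicit. That part is fine.

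In part (ii) there is a genuine gap in the extraction. You pass to a subsequence so that for every $i_0\geqslant 2$ and every $\mathcal{S}_{i_0}$-admissible very fast growing family $\{\al_q\}_{q=1}^\ell$ with $s(\al_q)\geqslant\min\supp x_{i_0}$, the bound $\sum_q|\al_q(x_i)|<\e_{i_0}/(i_0\max\supp x_{i_0-1})$ holds for $i\geqslant i_0$. But in the Schreier-functional step, the admissibility level that actually arises is $k=\min\supp\al_1$, which (exactly as you observe) is controlled only by the \emph{support} of the first $x_{i_j}$ that $g$ touches, not by its index. Since $\min\supp\al_1$ can be far larger than any index $i_0$, the extracted condition with $\mathcal{S}_{i_0}$-admissibility does not cover the case you need, and the step ``apply the extracted estimate with $\mathcal{S}_{i_0}$-admissibility level matching $k$'' is not justified. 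The paper calibrates the extraction to a support-level quantity, taking $j_0=\max\supp x_{i_0-1}$ as the admissibility level, so that once $i_0=i_2$ one has $\min\supp\al_1\leqslant\max\supp x_{i_1}\leqslant\max\supp x_{i_2-1}=j_0$ and the extracted estimate applies. Your verbal discussion shows you understand this is the crucial point, but the displayed extraction condition does not implement it; as written, the proof does not close.

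A secondary point: your decomposition of $\{q:q>q_0\}$ into maximal $\mathcal{S}_{k-1}$ intervals, carried over from the proof of Proposition \ref{cosm}, is unnecessary here. In the finite-order space the extraction is pinned at the fixed level $n-1$, so one is forced to cut an $\mathcal{S}_n$-admissible family into $\mathcal{S}_{n-1}$-admissible pieces. In $\Xomega$, once the extraction is calibrated to $j_0=\max\supp x_{i_0-1}$, the \emph{entire} tail $\{\al_q:q>q_0\}$ is already $\mathcal{S}_{\min\supp\al_1}$-admissible with $\min\supp\al_1\leqslant j_0$, so the extracted bound applies in one shot with no interval decomposition and no appeal to the convolution property. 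This is what the paper does, and it removes the extra bookkeeping your route would require.
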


\begin{proof}
First we prove (i). By Definition \ref{omegaindex} there is an $d \inn$, a very fast growing
sequence of $\alpha$-averages $\{\alpha_q\}_{q \inn}$ in $W$, and sequence $\{F_i\}_{i \inn}$
of successive finite subsets such that $\{\alpha_q\}_{q \in F_i}$ is $\mathcal{S}_{d}$ for each $i \inn$ and
 $$\sum_{q \in F_i} |\alpha_q (x_i)|>\e.$$
Relabeling so that $F_1 \geqslant d$ we have that  $(F_i)_{i \inn}$ that for $G \in \mathcal{S}_\omega
$, we have $\cup_{i \in G} F_i \in S_\omega$.  Pass to a further subsequence such that of $\{x_i\}_{i \inn}$ such that
$$\max \supp ( \sum_{q \in F_i} \alpha_q ) < \min \supp x_{i+1}.$$
Let $x^*_i = \sum_{q \in F_i} \alpha_q$.  Note that $\e < \|x_i^*\| \leqslant 1$. If $G \in S_\xi$ the above argument yields that $\sum_{i \in G} x^*_i $ is a Schreier functional.
Therefore $\| \sum_{i \in G} x^*_i \| \leq 1$. This implies $\{x_i^*\}_{i \inn}$ generates a $c_0^\xi$ spreading model, as desired.

The proof has the same structure as the proof of Proposition \ref{cosm} and so we will sketch some of the details.
Let $\{\e_i\}_{i \inn}$ be a summable sequence of positive reals such that $\e_i>3 \sum_{j>i} \e_j$
for all $i \inn$. Using Proposition \ref{higherorderindex}, inductively choose a subsequence, again denoted
by $\{ x_i\}_{i \inn}$, such that for $i_0 \geqslant 2$ and $j_0=\max\supp x_{i_0-1}$ if $\{\alpha_q\}_{q=1}^\ell$ is $\mathcal{S}_{j_0}$-
admissible $s(\alpha_q) \geqslant \min\supp x_{i_0}$ then for all $i \geq i_0$
$$\sum_{q=1}^\ell |\alpha_q (x_i)| < \frac{\e_{i_0}}{i_0 \max \supp x_{i_0 -1}}.$$
As before, we will show that for any $t \leqslant i_1 < \ldots < i_t$, $F \subset\{1, \ldots , t\}$ we have
$$|\alpha(\sum_{j \in F} x_{i_j})|< 1+ 2 \e_{i_{\min F}}.$$
whenever $\alpha$ is an $\alpha$-average and
$$|g(\sum_{j \in F} x_{i_j})|< 1+ 3 \e_{i_{\min F}}.$$
whenever $g$ is Schreier functional. This implies the proposition.

For functionals in $W_0$ the above is clearly true. Assume for $m \geqslant 0$ that above holds for
$t \leqslant i_1 < \ldots < i_t$ and any functional in $W_m$. In the first case, let  $t \leqslant i_1 < \ldots < i_t$
and $\alpha \in W_{m+1}$. In this case, we refer the reader to the analogous step in the proof of Proposition \ref{cosm}.

Let $g \in W_{m+1}$ such that $g= \sum_{q=1}^d \alpha_q$ be a Schreier functional. We
assume without loss of generality that
\begin{equation}
\ran g \cap \ran x_{i_j} \neq \varnothing \mbox{ for all } j = 1, \ldots t.
\label{wlog}
\end{equation}
Set
\begin{equation*}
q_0 = \min\{q: \max \supp \alpha_q \geqslant \min\supp x_{i_{2}}\}.
\end{equation*}
By definition of $\mathcal{S}_\omega$, $\{ \alpha_q\}_{q=1}^d$ is $S_{\min\supp \alpha_1}$-admissible.
Also, by definition, for $q>q_0$
$$s(\alpha_q) > \max\supp \alpha_{q_0} \geq \min \supp x_{i_2}.$$
Using (\ref{wlog})
$$ \min\supp \alpha_1 \leqslant \max \supp x_{i_1}.$$
These facts together allow us to use or initial assumption on the sequence $\{x_i\}_{i \inn}$ (for $i_0=i_2$) and conclude
that for $j \geqslant 2$
 \begin{equation}
 \sum_{q>q_0} |\alpha_q (x_{i_j})| < \frac{\e_{i_2}}{i_2 \max\supp x_{i_1}}.
 \end{equation}
Using the fact that $i_2 \geqslant t$, it follows that
$$\sum_{q>q_0} |\alpha_q (\sum_{j=1}^t x_{i_j})| < \e_{i_{1}}.$$
As before we consider two more cases.

{\em Case 1:} Assume that for $q< q_0$, $\alpha_q( \sum_{j=1}^t x_{i_j})=0$. In this case apply the induction
for $\alpha_{q_0}.$

{\em Case 2:} Alternatively, assume $s(\alpha_{q_0}) \geqslant \min \supp x_{i_{1}}$. In this case, since the singleton $\alpha_{q_0}$ is $\mathcal{S}_0$ admissible, we can apply our initial assume to conclude that $|\alpha_{q_0}( \sum_{j=1}^t x_{i_j})| < \e_{j_1}.$
Combining previous estimates gives the desired result.

 \end{proof}

 The next proposition implies item (iii) of Proposition \ref{omegaprop}

 \begin{prp}
 Let $\{x_k\}_{k \inn}$ be a normalized block sequence in $\Xomega$ and $\{F_k\}$ be an sequence of successive subsets of naturals such
 that $\lim_{k \to \infty} \# F_k = \infty$.
 \begin{itemize}
 \item[(i)] If $\{x_k\}_{k \inn}$ generates a spreading model equivalent to $c_0$, $F_k \in \mathcal{S}_1$ for $k \inn$ and $y_k = \sum_{i \in F_k} x_i$, then a subsequence of $\{y_k\}_{k \inn}$ generates an $\ell_1^\omega$ spreading model.
 \item[(ii)] Suppose $\{x_k\}_{k \inn}$ generates an $\ell_1^\omega$ spreading model, $F_k \in \mathcal{S}_\omega$ and $F_k$ is maximal $\mathcal{S}_\omega$ for each $k\inn$ (i.e. maximal in $\mathcal{S}_{\min F_k}$).  Let  $w_k=\sum_{j \in F_k} c_j x_i$ where $w_k$ is $(\min F_k, 3/\min F_k)$ s.c.c. Then a subsequence of $\{w_k\}_{k \inn}$ generates a $c_0$ spreading model.
 \end{itemize}
 \end{prp}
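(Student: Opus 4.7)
The plan is to reduce both parts to the previous proposition, which identifies generation of a $c_0$ or a strong $\ell_1^\omega$ spreading model with the vanishing, respectively non-vanishing, of the transfinite index $\alpha_{<\omega}$. The overall strategy closely parallels that of Propositions \ref{propmmm10} and \ref{propmmm12} from the finite order case.

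For (i), I would produce a subsequence on which $\alpha_{<\omega}(\{y_{k_m}\}_m)>0$ and then invoke part (i) of the previous proposition. Inductively pick $\{k_m\}_m$ with $\#F_{k_m}>\max\{\max\supp y_{k_{m-1}},\#F_{k_{m-1}}\}$, and for each $i\in F_{k_m}$ choose $x_i^*\in W$ with $\ran x_i^*\subset\ran x_i$ and $x_i^*(x_i)>1-1/m$ (such functionals exist because $W$ is closed under restriction to intervals and norms every normalized vector). The functional
\[
\alpha_m=\frac{1}{\#F_{k_m}}\sum_{i\in F_{k_m}}x_i^*
\]
is an $\alpha$-average in $W$ of size $s(\alpha_m)=\#F_{k_m}$, with $\ran\alpha_m\subset\ran y_{k_m}$ and $\alpha_m(y_{k_m})\to 1$; the choice of subsequence forces $\{\alpha_m\}_m$ to be very fast growing. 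Taking singletons $\{m\}\in\mathcal{S}_0$ in Definition \ref{omegaindex} gives $\alpha_{<\omega}(\{y_{k_m}\}_m)>0$, as desired.

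For (ii), I would first note that $\{w_k\}_k$ is seminormalized: $\|w_k\|\leqslant 1$ holds by convexity, and $\|w_k\|\geqslant C>0$ follows from the assumption that $\{x_i\}$ generates an $\ell_1^\omega$ spreading model, since $F_k\in\mathcal{S}_\omega$. The core of the argument is to establish $\alpha_{<\omega}(\{w_k\}_k)=0$; part (ii) of the previous proposition then yields a subsequence generating a $c_0$ spreading model. Fix $d\inn$, a very fast growing sequence $\{\alpha_q\}_q$ of $\alpha$-averages, and successive finite sets $L_1<L_2<\cdots$ with $\{\alpha_q\}_{q\in L_k}$ being $\mathcal{S}_d$-admissible. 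Since the $F_k$ are successive with $\#F_k\to\infty$, also $\min F_k\to\infty$; eventually $\min F_k-1\geqslant d$, and $\mathcal{S}_d\subset\mathcal{S}_{\min F_k-1}$, so $\{\alpha_q\}_{q\in L_k}$ is $\mathcal{S}_{\min F_k-1}$-admissible. The proof of Lemma \ref{lemqqq6} uses only the admissibility parameter of the $\alpha$-averages and the s.c.c.\ structure, with no dependence on the bound $k\leqslant n$; applied with s.c.c.\ order $\min F_k$ and parameter $3/\min F_k$ it yields
\[
\sum_{q\in L_k}|\alpha_q(w_k)|<\frac{1}{s(\alpha_{\min L_k})}+\frac{18}{\min F_k}.
\]
Both terms tend to $0$ (the first because $\min L_k\to\infty$ and $\{\alpha_q\}$ is very fast growing, the second because $\min F_k\to\infty$), whence $\alpha_{<\omega}(\{w_k\}_k)=0$.

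The main technical point is verifying that Lemma \ref{lemqqq6} transfers to $\Xomega$ uniformly as the s.c.c.\ order $\min F_k$ varies; however, since the proof of that lemma never uses the finiteness constraint $k\leqslant n$, the transfer is painless, and no additional combinatorial input is required beyond the observation that $\min F_k$ eventually dominates any prescribed $d\inn$.
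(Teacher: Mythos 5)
Your proof is correct and follows essentially the same route as the paper's: part (i) transposes the proof of Proposition \ref{propmmm10}, producing a very fast growing sequence of $\alpha$-averages witnessing $\alpha_{<\omega}(\{y_{k_m}\}_m)>0$, and part (ii) reduces to $\alpha_{<\omega}(\{w_k\}_k)=0$ by applying Lemma \ref{lemqqq6} with s.c.c.\ order $\min F_k$. The only cosmetic difference is that the paper verifies the vanishing of the index through the reformulation in Proposition \ref{higherorderindex}, while you argue directly from Definition \ref{omegaindex}; both approaches come down to the same estimate, although when working directly from the definition one should note (as your argument implicitly does) that the bound also holds along an arbitrary subsequence $\{w_{n_k}\}_k$ because $\min F_{n_k}\geqslant\min F_k\to\infty$.
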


 \begin{proof}
The proof of (i) is identical to that of Proposition \ref{propmmm10}.

To prove (ii) it suffices to show $\alpha_{<\omega}(\{w_{k}\})=0$. We use Proposition \ref{higherorderindex}. Let $\e>0$. Find $j_0> 2/\e$. Let $j \geq j_0$ and
let $k_j \inn$ such that $36/\min F_{k_j} < \e$.  Let $k \geq k_j$, $\{\alpha_{q}\}_{q=1}^d$ be $\mathcal{S}_j$-admissible and very fast
growing sequence of $\alpha$-averages such that $s(\alpha_q) > j_0$ for $q = 1, \ldots , d$. Clearly, $j < F_{k_j}$. Using Lemma \ref{lemqqq6}
$$ \sum_{q=1}^d |\alpha_q(\sum_{j \in F_k} c^{F_k}_j x_j)| < \frac{1}{s(\alpha_1)} + 6\frac{3}{\min F_k}<\e.$$
 \end{proof}


\section*{Problems and Questions}
There are some questions and problems concerning the structure of
$\X$ and its dual which are open for us.\vskip3pt

\noindent {\bf Problem 1:} (i) Is $\X$ minimal?\vskip3pt

\noindent(ii) Does any sequence generating a $c_0$ spreading model
have a subsequence equivalent to some subsequence of the basis?

If this is true, then Proposition \ref{propmmm14} yields that $\X$
is sequentially minimal.

In particular, it is open to us whether two subsequences
$\{e_{i_m}\}_{m\inn}, \{e_{j_m}\}_{m\inn}$ of the basis, such that
$i_m < j_{m+1}$ and $j_m < i_{m+1}$ for all $m\inn$, are
equivalent.

Moreover, we do not know which class of Banach spaces in the
classification appearing in \cite{FR} the subspaces of $\X$
belong to.\vskip3pt

The next problem concerns the structure of
${\mathfrak{X}_{_{0,1}}^{n*}}$ and its strictly singular
operators.

\noindent {\bf Problem 2:} (i) Does any block sequence in
${\mathfrak{X}_{_{0,1}}^{n*}}$ contain a subsequence generating a
$c_0^k$, $k=1,\ldots,n$ or $\ell_1$ spreading model?\vskip3pt

\noindent (ii) Does any subspace of ${\mathfrak{X}_{_{0,1}}^{n*}}$
admit $c_0^k, k=1,\ldots,n$ and $\ell_1$ spreading models?

The latter is equivalent to the corresponding problem for
quotients of $\X$, namely if every quotient of $\X$ admits $c_0$
and $\ell_1^k$, $k=1,\ldots,n$ spreading models. Note that Cor.
\ref{cormmm16} yields that the same question for quotients of
${\mathfrak{X}_{_{0,1}}^{n*}}$ has an affirmative answer.\vskip3pt

\noindent (iii) Does ${\mathfrak{X}_{_{0,1}}^{n*}}$ satisfy that
whenever $S_1,\ldots,
S_{n+1}:{\mathfrak{X}_{_{0,1}}^{n*}}\rightarrow
{\mathfrak{X}_{_{0,1}}^{n*}}$ are strictly singular, then the
composition $S_1\cdots S_{n+1}$ is compact, as in $\X$?

A way of answering this affirmatively is to show that any subspace
$Y$ of $\X$, contains a further subspace which is complemented in
$\X$, which seems possible. \vskip6pt

As it was pointed out to us by Anna Pelczar-Barwacz, since $c_0$ and $\ell_1$ are both block finitely representable in every subspace of $\X$, it follows that $\X$ is arbitrarily distortable.




\end{document}